\theoremstyle{plain}
\newtheorem{theorem}{Theorem}[section]
\newtheorem{lemma}[theorem]{Lemma}
\newtheorem{proposition}[theorem]{Proposition}
\newtheorem{corollary}[theorem]{Corollary}
\theoremstyle{definition}
\newtheorem{definition}[theorem]{Definition}
\newtheorem{example}[theorem]{Example}
\newtheorem{remark}[theorem]{Remark}
\newcommand{\R}{\mathbb{R}}
\newcommand{\N}{\mathbb{N}}
\newcommand*\mathinhead[2]{\texorpdfstring{$\boldsymbol{#1}$}{#2}}
\title{$\mathcal{P}$-Sensitive Functions and Localizations\footnote{funded by the Deutsche Forschungsgemeinschaft (DFG, German Research Foundation) – 471178162}}
\author{Johannes Langner\footnote{johannes.langner@insurance.uni-hannover.de}  \, \& Gregor Svindland\footnote{gregor.svindland@insurance.uni-hannover.de}}
\affil{\small Institute for Actuarial and Financial Mathematics and House of Insurance\\ Gottfried Wilhelm Leibniz Universit\"at Hannover}
\date{25.01.2026}
\begin{document}

\maketitle

\begin{abstract}
    This paper assumes a robust stochastic model where a set $\mathcal{P}$ of probability measures replaces the single probability measure of dominated models. We introduce and study $\mathcal{P}$-sensitive functions defined on robust function spaces of random variables. We show that $\mathcal{P}$-sensitive functions are precisely those that admit a representation via so-called functional localization.
    The theory is applied to solving robust optimization problems, to convex risk measures, and to the study of no arbitrage in robust one-period financial models. \\
    {\bf Keywords:} robustness, non-dominated set of probabilities, $\mathcal{P}$-sensitivity, functional localization, convex risk measures, superhedging functional \\
    {{\bf MSC2020:} 46A20, 46E30, 46N10, 46N30, 60B11, 91G80} \\
    {{\bf JEL:} C65, D80}
\end{abstract}

\section{Introduction}

We consider a robust probabilistic model given by a measure space $(\Omega, \mathcal{F})$ and a non-empty set of probability measures $\mathcal{P}$ on $(\Omega, \mathcal{F})$. The set $\mathcal{P}$ describes the degree of ambiguity, or Knightian uncertainty, inherent in the model. This includes the case where $\mathcal{P} = \{P\}$, where there is no ambiguity, as well as the case in which there exists a probability measure $P$ dominating each element in $\mathcal{P}$ and ambiguity may be present. In both cases, the mathematical machinery used to handle such models typically relies on the dominating probability measure $P$, which, among other things, determines relevant model spaces of random variables and provides exhaustion and approximation methods. If $\mathcal{P}$ is not dominated by a probability measure, then the aforementioned approaches and techniques fail, and we are in a truly robust setting. It is this latter situation that we have in mind throughout this study, even though we do not exclude the other cases. Some prominent examples of robust stochastic models and robust model spaces considered in financial mathematics are the volatility uncertainty models studied in, e.g., \cite{BKN2021}, \cite{C2012}, \cite{MPZ2013}, \cite{NS2012}, \cite{STZ2011}, \cite{STZ2012}, and \cite{STZ2013}, as well as the models applied to study the Fundamental Theorem of Asset Pricing and the superhedging problem in \cite{BKN2020}, \cite{BC2020}, \cite{BN2015}, \cite{BFH2019}, \cite{BM2020}, \cite{CFR2022}, and \cite{HO2018}. For further references, we refer to the references in those articles.

Let $(\Omega, \mathcal{F}, \mathcal{P})$ be a robust probabilistic model and define the upper probability given by $\mathcal{P}$ as
\begin{equation*}
    c(A):=\sup_{P\in \mathcal{P}}P(A), \quad A\in \mathcal{F}.
\end{equation*}
The robust function space $L^0_c$ consists of equivalence classes of random variables, which are identified up to $P$-almost sure equality under each $P\in \mathcal{P}$ (see Section~\ref{sec:prelim:not}). If $\mathcal{P}=\{P\}$, then, of course, $L^0_c = L^0_P := L^0(\Omega, \mathcal{F}, P)$. 

In this paper, we consider functions $f$ mapping subsets $\mathcal{X}$ of $L^0_c$ to the extended real numbers. Examples of such functions in financial mathematics include robust risk measures, the superhedging functional in robust financial market models, and robust utilities. The primary question we address is under which conditions such a function $f \colon \mathcal{X} \to [-\infty, \infty]$ can be represented as
\begin{equation} \label{eq:robust:rep}
    f(X)=\sup_{Q\in \mathcal{Q}}f^{Q}(X), \quad X\in \mathcal{X}.
\end{equation}
Here $\mathcal{Q}$ is a set of probability measures on $(\Omega,\mathcal{F})$, and $f^{Q} \colon \mathcal{X} \to [-\infty, \infty]$, $Q \in \mathcal{Q}$, are functions that are consistent with the respective $Q$ in the sense that, whenever $X, Y \in \mathcal{X}$ look identical under $Q$, that is, $Q(x = y) = 1$ for all representatives $x \in X$ and $y \in Y$, we have $f^{Q}(X) = f^{Q}(Y)$. Due to this consistency, each $f^{Q}$ may also be identified with a function on a subset of the reduced model space $L^0_Q$.

The existence of a \textit{robust representation} \eqref{eq:robust:rep} is very useful when handling robust models, since it allows to break down the mathematical reasoning for $f$ to the classical dominated level, that is, to argue for $f^{Q}$ under each $Q\in \mathcal{Q}$, and then to aggregate over the probability measures $Q \in \mathcal{Q}$. We show that the existence of a representation \eqref{eq:robust:rep} is equivalent to a property called \textit{$\mathcal{P}$-sensitivity} of $f$. For sets of (equivalence classes of) random variables, this property has already been studied, for instance, in \cite{BM2020}, \cite{LS2025}, and \cite{MMS2018}. We extend the definition of $\mathcal{P}$-sensitivity from sets to functions and illustrate its usefulness.

A family of functions $(f^{Q})_{Q \in \mathcal{Q}}$ such that \eqref{eq:robust:rep} holds, is called a \textit{functional localization} of $f$. This localization of $f$ is not unique, and there are several ways to construct a localizing family depending on the properties of the given function $f$. In particular, we study two canonical types of localizations. One is based on a primal reduction of the function $f$ given a probability measure $Q$ via its level sets, while the other takes a dual approach in the sense of the Fenchel-Moreau theorem. We derive conditions under which both localizations coincide.

For illustration, the theory is applied to optimization problems, convex risk measures, and the superhedging functional in a robust one-period financial market model. For $\mathcal{P}$-sensitive functions $f$ with localization $(f^{Q})_{Q \in \mathcal{Q}}$, we show how solving optimization problems for $f$ may be reduced to solving corresponding optimization problems for each $f^{Q}$ in a dominated framework and then aggregating the optimizers. As regards convex risk measures, we are interested in the interplay of a given risk measure defined on the robust model space under $\mathcal{P}$---which is understood as an aggregate of risk opinions---with the elements of its two canonical localizations, which may be interpreted as individual risk opinions under some probability measure $Q$. We show that there may be inconsistencies, which we call localization bubbles, when breaking down the risk measure to the local dominated level, and we provide conditions under which such localization bubbles do not appear. In our study of robust one-period financial market models, suitable localizations of the superhedging functional provide different robust versions of the Fundamental Theorem of Asset Pricing.

\paragraph{Further Related Literature} 

\cite{DHP2011} investigate capacities and robust function spaces based on sublinear expectations, in particular, $G$-expectation.
\cite{LMS2022} approach model uncertainty from a reverse perspective, aiming to understand the conditions that a probabilistic model must satisfy to obtain robust analogs of useful properties known in dominated frameworks.
In \cite{BK2012}, risk measures under model uncertainty are studied with a focus on dual representations. Robust duality has further been explored by \cite{BD2018}, who examine general equilibrium theory under Knightian uncertainty.

\paragraph{Outline} The paper is organized as follows. In Section~\ref{sec:prelim:not}, we introduce some notation and present a first discussion of $\mathcal{P}$-sensitivity for both sets and functions. Section~\ref{sec:FL} studies the relation between $\mathcal{P}$-sensitivity and functional localization. Moreover, we define and analyze the different canonical localizations mentioned above. Lastly, in Section~\ref{sec:appl}, we consider applications to optimization problems (see Section~\ref{sec:opt:prob}), convex risk measures (see Section~\ref{sec:rm}), and the superhedging functional (see Section~\ref{sec:FTAP}).

\section{Preliminaries and Notation} \label{sec:prelim:not}

\subsection{Basics}

Throughout this paper, $(\Omega, \mathcal{F})$ is an arbitrary measurable space. We denote by $ca$ the real vector space of all countably additive finite variation set functions $\mu \colon \mathcal{F} \rightarrow \mathbb{R}$, and by $ca_+$ its positive elements ($\mu\in ca_+ \Leftrightarrow \forall A \in \mathcal{F} \ \mu(A) \geq 0$), that is, all finite measures on $(\Omega, \mathcal{F})$. 
\begin{equation}\label{eq:tv}
    \lvert \mu \rvert(A) := \sup \{\mu(B) - \mu(A \setminus B) \mid B \in \mathcal{F}, B \subseteq A\}, \quad  A\in \mathcal F,
\end{equation}
denotes the total variation of $\mu\in ca$, which is a measure $|\mu|\in ca_+$. Given non-empty subsets $\mathfrak{G}$ and $\mathfrak{I}$ of $ca_+$, we say that $\mathfrak{I}$ dominates $\mathfrak{G}$ ($\mathfrak{G} \ll \mathfrak{I}$) if for all $N \in \mathcal{F}$ satisfying $\sup_{\nu \in \mathfrak{I}}  \nu(N) = 0$, we have $\sup_{\mu \in \mathfrak{G}}  \mu(N) = 0$. $\mathfrak{G}$ and $\mathfrak{I}$ are equivalent ($\mathfrak{G} \approx \mathfrak{I}$) if $\mathfrak{G} \ll \mathfrak{I}$ and $\mathfrak{I} \ll \mathfrak{G}$. For the sake of brevity, for $\mu \in ca_+$ we shall write $\mathfrak{G} \ll \mu$, $\mu \ll \mathfrak{I}$, and $\mu \approx \mathfrak{G}$ instead of $\mathfrak{G} \ll \{\mu\}$, $\{\mu\} \ll \mathfrak{I}$, and $\{\mu\} \approx \mathfrak{G}$, respectively.

$\mathfrak{P}(\Omega) \subseteq ca_+$ denotes the set of probability measures on $(\Omega, \mathcal{F})$ and the letters $\mathcal{P}$ and $\mathcal{Q}$ are used to denote non-empty subsets of $\mathfrak{P}(\Omega)$. 
Fix such a set $\mathcal{P}$. We then write $c$ for the induced upper probability $c \colon \mathcal{F} \rightarrow [0, 1]$ defined by
\begin{equation}\label{eq:c}
    c(A) := \sup_{P \in \mathcal{P}} P(A), \quad A \in \mathcal{F}.
\end{equation}
An event $A \in \mathcal{F}$ is called $\mathcal{P}$-polar if $c(A) = 0$. A property holds $\mathcal{P}$-quasi surely (q.s.)\ if it holds outside a $\mathcal{P}$-polar event. We set $ca_{c} := \{\mu\in ca \mid |\mu| \ll \mathcal P\}$, $ca_{c+} :=ca_{+}\cap ca_{c}$, and $\mathfrak{P}_c(\Omega):=\mathfrak{P}(\Omega)\cap ca_{c}$. 

Consider the $\R$-vector space $\mathcal{L}^{0} := \mathcal{L}^{0}(\Omega, \mathcal{F})$ of all real-valued random variables $x \colon \Omega \rightarrow \R$ as well as its subspace $\mathcal{N} := \{x \in \mathcal{L}^{0} \mid c(\lvert x \rvert > 0) = 0\}$. The quotient space $L^{0}_{c} := \mathcal{L}^{0}/\mathcal{N}$ consists of equivalence classes $X$ of random variables up to $\mathcal{P}$-q.s.\ equality comprising representatives $x \in X$. The equivalence class induced by $x \in \mathcal{L}^0$ in $L^{0}_{c}$ is denoted by $[x]_c$. The space $L^{0}_{c}$ carries the so-called $\mathcal{P}$-quasi-sure order $\preccurlyeq_{\mathcal P}$ as a natural vector space order: $X, Y \in L^{0}_{c}$ satisfy $X \preccurlyeq_{\mathcal P} Y$ if for $x \in X$ and $y \in Y$, $x \leq y$ $\mathcal{P}$-q.s., that is, $\{x > y\}$ is $\mathcal P$-polar. In order to facilitate the notation, we suppress the dependence of $\preccurlyeq_{\mathcal P}$ on $\mathcal{P}$ and simply write $\preccurlyeq$ if there is no risk of confusion.

For an event $A \in \mathcal{F}$, $\chi_{A}$ denotes the indicator of the event (i.e., $\chi_{A}(\omega) = 1$ if and only if $\omega \in A$, and $\chi_{A}(\omega) = 0$ otherwise), while $\mathbf{1}_{A} := [\chi_{A}]_{c}$ denotes the generated equivalence class in $L^{0}_{c}$. Throughout the paper, for convenience, we identify the constants $m\in \R$ with the (equivalence classes of) constant random variables they induce. In particular, $m=[m]_c=m\cdot \mathbf{1}_\Omega$.

A subspace of $L^{0}_{c}$ which will turn out to be important for our studies is the space $L^{\infty}_{c}$ of equivalence classes of $\mathcal{P}$-q.s.\ bounded random variables, i.e.,
\begin{equation*}
    L^{\infty}_{c} := \{X \in L^{0}_{c} \mid \exists m > 0 \colon \lvert X \rvert \preccurlyeq m\}.
\end{equation*}
$L^{\infty}_{c}$ is a Banach lattice when endowed with the norm
\begin{equation*}
    \lVert X \rVert_{c,\infty} := \inf \{ m > 0 \mid \lvert X \rvert \preccurlyeq m\}, \quad X \in L^{0}_{c}.
\end{equation*}
$L^{0}_{c+}$ and $L^{\infty}_{c+}$ denote the positive cones of $L^{0}_{c}$ and $L^{\infty}_{c}$, respectively. If $\mathcal{P} = \{P\}$ is given by a singleton and thus $c=P$, we write $L^{0}_{P}$, $L^{\infty}_{P}$, and $[x]_P$ instead of $L^{0}_c$, $L^{\infty}_{c}$, and $[x]_{c}$, and similarly for other expressions in which $c$ appears. Also, the $\mathcal P$-q.s.\ order in this case coincides with the $P$-almost-sure (a.s.)\ order, which we will denote by $\leq_P$ when working with both the $\mathcal P$-q.s.\ order $\preccurlyeq$ for some set $\mathcal P\subseteq \mathfrak{P}(\Omega)$ and the $P$-a.s.\ order for some $P\in \mathfrak{P}_c(\Omega)$. In that respect, note that $\leq_P$ is well defined on $L^0_c$ for any $P\in \mathfrak{P}_c(\Omega)$.

Often we will, as is common practice, identify equivalence classes of random variables with their representatives. However, sometimes it will be helpful to distinguish between them to avoid confusion. Let us clarify this further: For any $X\in L^0_c$, we have $X = \{x \in \mathcal{L}^0 \mid x \sim y\}$ for some $y \in \mathcal{L}^0$ where we write $x \sim y$ to indicate that $\{x\neq y\}$ is $\mathcal{P}$-polar. Any measure $P \in \mathfrak{P}_c(\Omega)$ is consistent with the equivalence relation $\sim$ in the sense that
\begin{equation*}
    \forall x, y\in \mathcal{L}^0 \colon \quad x \sim y \Longrightarrow P(x = y) = 1.
\end{equation*}
In that case we write, for instance, $E_P[X]$ for the expectation of $X$ under $P$, which actually means $E_P[x]$ for any $x \in X$ provided the latter integral is well-defined. Also, we  write expressions like $P(X = Y)$, where $Y\in L^0_c$, actually meaning $P(x = y)$ for arbitrary $x \in X$ and $y \in Y$.

\subsection{\mathinhead{\mathcal{P}}{mathcal{P}}-sensitive Sets and Functions}

Let $\mathcal{P} \subseteq \mathfrak{P}(\Omega)$ and denote by $c$ the corresponding upper probability \eqref{eq:c}. For any given $Q\in \mathfrak{P}_c(\Omega)$, the following map identifies any $X, Y \in L^0_c$ which appear to coincide under $Q$, that is, $Q(x = y) = 1$ for all $x \in X$ and $y \in Y$:
\begin{equation*}
    j_{Q} \colon L^{0}_{c} \rightarrow L^{0}_{Q}, \quad [x]_c \mapsto [x]_Q.
\end{equation*}
Note that $j_Q$ is linear and monotone, in the sense that $X\preccurlyeq Y$ implies $j_Q(X)\leq_Q j_Q(Y)$, and that $j_Q(m)=m$ for all $m\in \R$.

In the sequel, let $\mathcal{X}\subseteq L^0_c$ be non-empty. 

\begin{definition}\label{def:sensitivity}
    A set $\mathcal{C} \subseteq \mathcal{X}$ is called $\mathcal{P}$-sensitive (in $\mathcal{X}$) if either $\mathcal{C}=\emptyset$ or if for all $X\in \mathcal{X}$
    \begin{equation*}
        X\in \mathcal{C} \quad \Leftrightarrow \quad \forall Q\in \mathfrak{P}_c(\Omega)\colon \, j_Q(X)\in j_Q(\mathcal{C}).
    \end{equation*}
\end{definition}

Clearly, the non-trivial implication in Definition~\ref{def:sensitivity} is that $\forall Q\in \mathfrak{P}_c(\Omega)\colon \, j_Q(X)\in j_Q(\mathcal{C})$ implies $X\in \mathcal{C}$. Indeed, a set $\mathcal{C}$ is $\mathcal{P}$-sensitive if it is completely determined by its image under each model $Q \in \mathfrak{P}_c(\Omega)$. Thus, if $X\in \mathcal{X}$ looks like a member of $\mathcal{C}$ under each $Q\in \mathfrak{P}_c(\Omega)$, i.e., $j_{Q}(X) \in j_{Q}(\mathcal{C})$ for all $Q \in  \mathfrak{P}_c(\Omega)$, then in fact $X \in \mathcal{C}$. Trivially, if $\mathcal{P}=\{P\}$, then every set $\mathcal{C}\subseteq L^0_P$ is $P$-sensitive. It has been noticed earlier in, e.g., \cite{LS2025} and \cite{MMS2018} that $\mathcal{P}$-sensitivity of a set $\mathcal{C}$  is an essential property to handle sets in non-dominated robust models.

Before we extend the definition of $\mathcal{P}$-sensitivity from sets to functions, let us collect some notions and useful results for $\mathcal{P}$-sensitive sets. Firstly, sometimes it is useful to work with a stronger representation of $\mathcal{C}$.
 
\begin{definition}
    Let $\mathcal{C} \subseteq \mathcal{X}$. $\mathcal{Q} \subseteq \mathfrak{P}_c(\Omega)$ is called a reduction set for $\mathcal{C}$ if $\mathcal{Q}\neq \emptyset$ and for all $X\in \mathcal{X}$
    \begin{equation} \label{eq:reduction}
        X\in \mathcal{C} \quad \Leftrightarrow \quad \forall Q\in \mathcal{Q}\colon \, j_Q(X)\in j_Q(\mathcal{C}).
    \end{equation}
\end{definition}
 
Clearly, any $\mathcal{P}$-sensitive set admits the reduction set $\mathfrak{P}_c(\Omega)$. The following lemma relates reduction sets to each other and, in particular, shows that any set satisfying \eqref{eq:reduction} is indeed $\mathcal{P}$-sensitive. Its straightforward proof is provided in \cite{LS2025}. 

\begin{lemma}[{\cite[Lemma~2.10]{LS2025}}] \label{lem:set:pSensi:redSet}
    Let $\mathcal{C}\subseteq \mathcal{X}$. 
    \begin{enumerate}
        \item[(i)] Consider a reduction set $\mathcal{Q}_1$ for $\mathcal{C}$ and any other set of probability measures $\mathcal{Q}_{2} \subseteq \mathfrak{P}_c(\Omega)$ such that $\mathcal{Q}_{1} \subseteq \mathcal{Q}_{2}$. Then $\mathcal{Q}_{2}$ is also a reduction set for $\mathcal C$. 
        
        \item[(ii)] $\mathcal C$ is $\mathcal{P}$-sensitive if and only if there exists a reduction set for $\mathcal{C}$.
    \end{enumerate}
\end{lemma}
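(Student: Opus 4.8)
The plan is to observe that everything reduces to unwinding the two definitions, the only genuine content being that the ``reduction'' relation behaves monotonically in the set of probability measures, plus some bookkeeping around the case $\mathcal{C}=\emptyset$ and the (easy) non-emptiness of $\mathfrak{P}_c(\Omega)$.

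For part (i), I would fix an arbitrary $X\in\mathcal{X}$ and check the biconditional \eqref{eq:reduction} with $\mathcal{Q}_2$ in place of $\mathcal{Q}$. The forward implication ``$X\in\mathcal{C}\Rightarrow \forall Q\in\mathcal{Q}_2\colon j_Q(X)\in j_Q(\mathcal{C})$'' holds for \emph{any} family of measures, simply because $j_Q$ maps $\mathcal{C}$ onto $j_Q(\mathcal{C})$ by definition. For the converse, if $j_Q(X)\in j_Q(\mathcal{C})$ for every $Q\in\mathcal{Q}_2$, then in particular this holds for every $Q\in\mathcal{Q}_1$, since $\mathcal{Q}_1\subseteq\mathcal{Q}_2$; because $\mathcal{Q}_1$ is a reduction set for $\mathcal{C}$, this forces $X\in\mathcal{C}$. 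Finally $\mathcal{Q}_2\neq\emptyset$ because $\mathcal{Q}_1\neq\emptyset$, so $\mathcal{Q}_2$ is indeed a reduction set for $\mathcal{C}$.

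For part (ii), I would first record that $\mathfrak{P}_c(\Omega)\neq\emptyset$: since $\mathcal{P}\neq\emptyset$ and every $P\in\mathcal{P}$ satisfies $P\ll\mathcal{P}$, each such $P$ lies in $\mathfrak{P}_c(\Omega)$. For the ``only if'' direction, assume $\mathcal{C}$ is $\mathcal{P}$-sensitive. If $\mathcal{C}\neq\emptyset$, the defining equivalence in Definition~\ref{def:sensitivity} is literally the assertion that $\mathfrak{P}_c(\Omega)$ is a reduction set for $\mathcal{C}$; if $\mathcal{C}=\emptyset$, then for every $X$ both sides of \eqref{eq:reduction} are false ($j_Q(\mathcal{C})=\emptyset$), so $\mathfrak{P}_c(\Omega)$ is vacuously a reduction set. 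Either way a reduction set exists. For the ``if'' direction, let $\mathcal{Q}$ be a reduction set for $\mathcal{C}$; applying part (i) with $\mathcal{Q}_1=\mathcal{Q}$ and $\mathcal{Q}_2=\mathfrak{P}_c(\Omega)$ shows that $\mathfrak{P}_c(\Omega)$ is a reduction set for $\mathcal{C}$, which (combined with the trivial case $\mathcal{C}=\emptyset$, covered directly by the first clause of Definition~\ref{def:sensitivity}) is exactly the statement that $\mathcal{C}$ is $\mathcal{P}$-sensitive.

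There is essentially no hard step here; the proof is purely set-theoretic manipulation of the definitions. The only points requiring mild care are the treatment of the degenerate case $\mathcal{C}=\emptyset$ (so that the definition of a reduction set is satisfied for the right reason, not merely vacuously) and the remark that $\mathfrak{P}_c(\Omega)$ is non-empty, which is what makes the whole reduction-set machinery non-trivial.
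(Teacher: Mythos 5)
Your proof is correct and is exactly the straightforward definition-unwinding argument the paper has in mind: the paper does not reprove this lemma but defers to \cite[Lemma~2.10]{LS2025}, describing the proof as straightforward. Your treatment of the degenerate case $\mathcal{C}=\emptyset$ and of the non-emptiness of $\mathfrak{P}_c(\Omega)$ is careful and complete, so there is nothing to add.
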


The reason for considering other reduction sets than simply $\mathfrak{P}_c(\Omega)$ will become evident throughout the paper.

\begin{lemma}[{\cite[Lemma~2.11]{LS2025}}] \label{lem:inf:set}
    Let $I$ be a non-empty index set and let $\mathcal{C}_{\alpha} \subseteq \mathcal{X}$, $\alpha\in I$, be $\mathcal{P}$-sensitive. Then
    \begin{equation*}
        \mathcal{C} := \bigcap_{\alpha \in I} \mathcal{C}_{\alpha}
    \end{equation*}
    is also $\mathcal{P}$-sensitive. If for each $\alpha\in I$, $\mathcal{Q}_\alpha \subseteq \mathfrak{P}_c(\Omega)$ is a reduction set for $\mathcal{C}_{\alpha}$, then $\mathcal Q := \bigcup_{\alpha\in I} \mathcal{Q}_\alpha$ is a reduction set for $\mathcal C$.
\end{lemma}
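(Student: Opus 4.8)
The plan is to prove the (stronger) second assertion directly, namely that $\mathcal{Q} := \bigcup_{\alpha \in I} \mathcal{Q}_\alpha$ is a reduction set for $\mathcal{C} := \bigcap_{\alpha \in I} \mathcal{C}_\alpha$ whenever each $\mathcal{Q}_\alpha$ is a reduction set for $\mathcal{C}_\alpha$, and then to deduce $\mathcal{P}$-sensitivity of $\mathcal{C}$ from Lemma~\ref{lem:set:pSensi:redSet}(ii). The first assertion is then a special case: by $\mathcal{P}$-sensitivity of each $\mathcal{C}_\alpha$ and Lemma~\ref{lem:set:pSensi:redSet}(ii), each $\mathcal{C}_\alpha$ admits a reduction set (e.g.\ $\mathfrak{P}_c(\Omega)$). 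Before the main argument I would dispose of the degenerate case: if $\mathcal{C}_{\alpha_0} = \emptyset$ for some $\alpha_0 \in I$, then $\mathcal{C} = \emptyset$ is $\mathcal{P}$-sensitive by definition, and \eqref{eq:reduction} holds for $\mathcal{C}$ and $\mathcal{Q}$ since both of its sides are then false for every $X$ (recall $\mathcal{Q} \neq \emptyset$ as $I \neq \emptyset$ and each $\mathcal{Q}_\alpha \neq \emptyset$, and $j_Q(X) \notin \emptyset = j_Q(\mathcal{C})$). So from now on assume all $\mathcal{C}_\alpha \neq \emptyset$; note that the argument below does not require $\mathcal{C} \neq \emptyset$.

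Fix $X \in \mathcal{X}$ and verify the two implications of \eqref{eq:reduction} for $\mathcal{C}$ with the candidate reduction set $\mathcal{Q}$. The forward implication is trivial: if $X \in \mathcal{C}$, then $j_Q(X) \in j_Q(\mathcal{C}) = \{\, j_Q(Y) \mid Y \in \mathcal{C} \,\}$ for every $Q \in \mathcal{Q}$. For the converse, assume $j_Q(X) \in j_Q(\mathcal{C})$ for all $Q \in \mathcal{Q}$ and fix an arbitrary $\alpha \in I$. For every $Q \in \mathcal{Q}_\alpha \subseteq \mathcal{Q}$ we then get $j_Q(X) \in j_Q(\mathcal{C}) \subseteq j_Q(\mathcal{C}_\alpha)$, the inclusion holding because $\mathcal{C} \subseteq \mathcal{C}_\alpha$ and $j_Q$ respects set inclusion. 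Since $\mathcal{Q}_\alpha$ is a reduction set for $\mathcal{C}_\alpha$, this forces $X \in \mathcal{C}_\alpha$. As $\alpha \in I$ was arbitrary, $X \in \bigcap_{\alpha \in I} \mathcal{C}_\alpha = \mathcal{C}$, completing the verification of \eqref{eq:reduction}.

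Thus $\mathcal{Q}$ is a reduction set for $\mathcal{C}$, and Lemma~\ref{lem:set:pSensi:redSet}(ii) gives that $\mathcal{C}$ is $\mathcal{P}$-sensitive, proving both assertions. There is no genuinely hard step: the proof is essentially bookkeeping. The only two points deserving mild attention are the empty-set degeneracies and the direction of the inclusion $j_Q(\mathcal{C}) \subseteq j_Q(\mathcal{C}_\alpha)$ --- it points the way we need precisely because $\mathcal{C}$ is the intersection (not the union) of the $\mathcal{C}_\alpha$, which is why the analogous statement for unions would fail in general.
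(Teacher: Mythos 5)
Your proof is correct; the paper itself gives no argument here (it cites \cite[Lemma~2.11]{LS2025}), and your verification of \eqref{eq:reduction} for $\mathcal{Q}=\bigcup_{\alpha}\mathcal{Q}_\alpha$ via the inclusion $j_Q(\mathcal{C})\subseteq j_Q(\mathcal{C}_\alpha)$ together with Lemma~\ref{lem:set:pSensi:redSet}(ii) is exactly the natural argument, with the empty-set degeneracies handled properly.
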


\begin{lemma}\label{lem:back:forward}
    Let $\mathcal{Q}\subseteq \mathfrak{P}_c(\Omega)$ be non-empty and consider subsets $E_Q\subseteq L^0_Q $, $Q\in \mathcal{Q}$. Define
    \begin{equation*}
        E:=\{X\in \mathcal{X}\mid \forall Q\in \mathcal{Q}\colon j_Q(X)\in E_Q\}.
    \end{equation*}
    Then $E$ is $\mathcal{P}$-sensitive with reduction set $\mathcal{Q}$.
\end{lemma}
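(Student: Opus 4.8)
The plan is to verify directly that $\mathcal{Q}$ is a reduction set for $E$ in the sense of \eqref{eq:reduction}, and then to invoke Lemma~\ref{lem:set:pSensi:redSet}(ii) to conclude that $E$ is $\mathcal{P}$-sensitive. Since $\mathcal{Q} \neq \emptyset$ holds by assumption, the only thing to check is the equivalence $X \in E \Leftrightarrow \forall Q \in \mathcal{Q}\colon j_Q(X) \in j_Q(E)$ for every $X \in \mathcal{X}$.

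The forward implication will be immediate from the definition of the image: if $X \in E$, then trivially $j_Q(X) \in j_Q(E)$ for every $Q \in \mathcal{Q}$. For the converse, I would fix $X \in \mathcal{X}$ with $j_Q(X) \in j_Q(E)$ for all $Q \in \mathcal{Q}$, then fix an arbitrary $Q \in \mathcal{Q}$ and pick $Y \in E$ with $j_Q(X) = j_Q(Y)$; the defining property of $E$ gives $j_Q(Y) \in E_Q$, hence $j_Q(X) \in E_Q$. Since $Q \in \mathcal{Q}$ was arbitrary, the definition of $E$ yields $X \in E$. This establishes the equivalence, so $\mathcal{Q}$ is a reduction set for $E$, and Lemma~\ref{lem:set:pSensi:redSet}(ii) then gives that $E$ is $\mathcal{P}$-sensitive.

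I would also remark that the argument is insensitive to whether $E$ is empty: if $E = \emptyset$, then $j_Q(E) = \emptyset$ for each $Q$, and since $\mathcal{Q} \neq \emptyset$ both sides of the equivalence are false for every $X$, so it still holds (alternatively, $\emptyset$ is $\mathcal{P}$-sensitive by convention). There is no genuine obstacle here; the only subtlety worth flagging is that $j_Q(E)$ need not coincide with $E_Q$—in general only $j_Q(E) \subseteq E_Q$—but this one containment is exactly what the converse implication exploits, so no strengthening of the hypotheses is required.
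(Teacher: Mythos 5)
Your proof is correct and follows essentially the same route as the paper's: both arguments hinge on the containment $j_Q(E)\subseteq E_Q$ for every $Q\in\mathcal{Q}$ (which you establish by picking a representative $Y\in E$ with $j_Q(X)=j_Q(Y)$) and then feed this back into the defining condition of $E$ to conclude $X\in E$. The paper's version is simply terser and omits both the trivial forward implication and the explicit appeal to Lemma~\ref{lem:set:pSensi:redSet}(ii), but the substance is identical; your remark on the empty case is a harmless and correct addition.
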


\begin{proof}
Suppose that $X\in \mathcal{X}$ is such that $j_Q(X)\in j_Q(E)$ for all $Q\in \mathcal{Q}$. As $j_Q(E)\subseteq E_Q$ for all $Q\in \mathcal{Q}$, we conclude that $X\in E$.
\end{proof}

Note that in the following we write $[-\infty,\infty]$ for the set $\R\cup \{-\infty,\infty\}$ and $(-\infty,\infty]$ for $\R\cup \{\infty\}$.

\begin{definition}
    A function $f \colon \mathcal{X} \to [-\infty,\infty]$ is called \textit{(lower) $\mathcal{P}$-sensitive} if each (lower) level set $E_{r} := \{X \in \mathcal{X} \mid f(X) \leq r\}$, $r\in \R$, is $\mathcal{P}$-sensitive.

    Similarly, $f$ is \textit{upper $\mathcal{P}$-sensitive} if each upper level set $\{X \in \mathcal{X} \mid f(X) \geq r\}$, $r \in \R$, is $\mathcal{P}$-sensitive.
\end{definition}

Note that $f$ is upper $\mathcal{P}$-sensitive if and only if $-f$ is lower $\mathcal{P}$-sensitive. For that reason, we will mainly focus on lower $\mathcal{P}$-sensitive functions in the following and simply refer to them as $\mathcal{P}$-sensitive if there is no risk of confusion.

\begin{definition}
    Let $f \colon \mathcal{X} \to [-\infty,\infty]$. $\mathcal{Q} \subseteq \mathfrak{P}_c(\Omega)$ is called a reduction set for $f$ if $\mathcal{Q}\neq \emptyset$ and $\mathcal{Q}$ is a joint reduction set for all lower level sets $E_r$, $r\in \R$, of $f$.
\end{definition}

Clearly, $\mathfrak{P}_c(\Omega)$ is  a reduction set for every $\mathcal{P}$-sensitive function $f$. In analogy to Lemmas~\ref{lem:set:pSensi:redSet} and~\ref{lem:inf:set}, we obtain the corresponding results for functions:

\begin{lemma} \label{lem:fct:pSensi:redSet}
    Let $f \colon \mathcal{X} \to [-\infty,\infty]$. 
    \begin{enumerate}
        \item[(i)] Consider a reduction set $\mathcal{Q}_1$ for $f$ and any other set $\mathcal{Q}_{2} \subseteq \mathfrak{P}_c(\Omega)$ such that $\mathcal{Q}_{1} \subseteq \mathcal{Q}_{2}$. Then $\mathcal{Q}_{2}$ is a reduction set for $f$ too. 
        
        \item[(ii)] $f$ is $\mathcal{P}$-sensitive if and only if there exists a reduction set for $f$.
    \end{enumerate}
\end{lemma}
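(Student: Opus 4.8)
The plan is to reduce everything to Lemma~\ref{lem:set:pSensi:redSet}. By definition, a reduction set for $f$ is nothing but a \emph{joint} reduction set for the family of lower level sets $(E_r)_{r\in\R}$, and $\mathcal{P}$-sensitivity of $f$ means precisely $\mathcal{P}$-sensitivity of each $E_r$. So both assertions should follow from the corresponding set-level statements in Lemma~\ref{lem:set:pSensi:redSet}, the only genuine point being the word ``joint''.

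For (i), I would argue as follows. Let $\mathcal{Q}_1$ be a reduction set for $f$ and let $\mathcal{Q}_1\subseteq\mathcal{Q}_2\subseteq\mathfrak{P}_c(\Omega)$. Then $\mathcal{Q}_1$ is a reduction set for each lower level set $E_r$, $r\in\R$. By Lemma~\ref{lem:set:pSensi:redSet}(i), $\mathcal{Q}_2$ is a reduction set for each $E_r$ as well. Hence $\mathcal{Q}_2$ is a joint reduction set for all the $E_r$, i.e.\ a reduction set for $f$.

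For (ii), the ``if'' direction is immediate: if $\mathcal{Q}$ is a reduction set for $f$, then it is a reduction set for each $E_r$, so by Lemma~\ref{lem:set:pSensi:redSet}(ii) each $E_r$ is $\mathcal{P}$-sensitive, which by definition means $f$ is $\mathcal{P}$-sensitive. For the ``only if'' direction, suppose $f$ is $\mathcal{P}$-sensitive, so that each $E_r$ is $\mathcal{P}$-sensitive and thus, by Definition~\ref{def:sensitivity}, admits $\mathfrak{P}_c(\Omega)$ as a reduction set; since this is one and the same set of measures for every $r$, it is a joint reduction set, hence a reduction set for $f$. (Alternatively, one may invoke Lemma~\ref{lem:set:pSensi:redSet}(ii) to pick a reduction set $\mathcal{Q}_r$ for each $E_r$ and then use Lemma~\ref{lem:set:pSensi:redSet}(i) to see that $\bigcup_{r\in\R}\mathcal{Q}_r$ is simultaneously a reduction set for every $E_r$, hence a reduction set for $f$.)

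I do not expect any real obstacle here: the statement is the functional counterpart of Lemma~\ref{lem:set:pSensi:redSet} and its proof is essentially a direct transcription. The only thing to keep an eye on is that a ``reduction set for $f$'' requires a single set of measures to reduce \emph{all} level sets at once, which is exactly why the monotonicity property Lemma~\ref{lem:set:pSensi:redSet}(i) — or simply the canonical uniform choice $\mathfrak{P}_c(\Omega)$ — is invoked to pass from per-level-set reduction sets to a joint one.
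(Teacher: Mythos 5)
Your proof is correct and follows exactly the route the paper intends: the paper's own proof is simply ``Recall Lemma~\ref{lem:set:pSensi:redSet}'', and your argument is the explicit unpacking of that reduction, including the correct observation that the only point needing care is passing from per-level-set reduction sets to a joint one (handled either by the canonical choice $\mathfrak{P}_c(\Omega)$ or by enlarging via Lemma~\ref{lem:set:pSensi:redSet}(i)).
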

\begin{proof}
    Recall Lemma~\ref{lem:set:pSensi:redSet}.
\end{proof}

\begin{lemma}
    Suppose that the members of the family of functions $f_\alpha:\mathcal{X}\to [-\infty,\infty]$ , $\alpha \in I$, for some non-empty index set $I$ are all $\mathcal{P}$-sensitive. Then $f:\mathcal{X}\to [-\infty,\infty], \, X\mapsto \sup_{\alpha\in I} f_\alpha(X)$, is also $\mathcal{P}$-sensitive.
\end{lemma}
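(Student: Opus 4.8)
The plan is to reduce the claim to the level-set description of $\mathcal{P}$-sensitivity for functions and then invoke Lemma~\ref{lem:inf:set}. First I would fix an arbitrary $r\in\R$ and identify the lower level set of $f$ at $r$. Since $f(X)=\sup_{\alpha\in I}f_\alpha(X)$ is a supremum in the complete lattice $[-\infty,\infty]$, we have $f(X)\leq r$ if and only if $f_\alpha(X)\leq r$ for every $\alpha\in I$; here one only has to be mildly careful with the extended-real arithmetic, noting that the equivalence remains valid when some $f_\alpha(X)$ equals $\pm\infty$. Consequently the lower level set $E_r:=\{X\in\mathcal{X}\mid f(X)\leq r\}$ satisfies $E_r=\bigcap_{\alpha\in I}E_r^\alpha$, where $E_r^\alpha:=\{X\in\mathcal{X}\mid f_\alpha(X)\leq r\}$.

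Next, since each $f_\alpha$ is $\mathcal{P}$-sensitive by hypothesis, every set $E_r^\alpha$ is a $\mathcal{P}$-sensitive subset of $\mathcal{X}$. Applying Lemma~\ref{lem:inf:set} to the family $(E_r^\alpha)_{\alpha\in I}$ shows that $E_r=\bigcap_{\alpha\in I}E_r^\alpha$ is $\mathcal{P}$-sensitive. As $r\in\R$ was arbitrary, every lower level set of $f$ is $\mathcal{P}$-sensitive, which is precisely the definition of $f$ being $\mathcal{P}$-sensitive.

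As a byproduct I would also record the reduction-set statement, which comes for free from the second assertion of Lemma~\ref{lem:inf:set}: if for each $\alpha\in I$ the set $\mathcal{Q}_\alpha\subseteq\mathfrak{P}_c(\Omega)$ is a reduction set for $f_\alpha$ (equivalently, a joint reduction set for all lower level sets $E_r^\alpha$, $r\in\R$), then $\bigcup_{\alpha\in I}\mathcal{Q}_\alpha$ is a reduction set for each $E_r$, hence a reduction set for $f$.

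There is essentially no genuine obstacle in this argument; it is a routine combination of the definitions with Lemma~\ref{lem:inf:set}. The only points requiring a word of care are the extended-real computation in the first step and the degenerate situations in which some $E_r^\alpha$, or the intersection $E_r$ itself, is empty — but these are already accommodated by the convention built into Definition~\ref{def:sensitivity} and handled within Lemma~\ref{lem:inf:set}.
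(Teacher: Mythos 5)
Your proof is correct and is exactly the argument the paper intends: its entire proof reads ``Recall Lemma~\ref{lem:inf:set}'', and you have simply spelled out the identification $E_r=\bigcap_{\alpha\in I}E_r^\alpha$ together with the application of that lemma, including the reduction-set byproduct.
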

\begin{proof}
    Recall Lemma~\ref{lem:inf:set}.
\end{proof}

\begin{definition}
    Let $\mathcal{Q} \subseteq \mathfrak{P}_{c}(\Omega)$ be non-empty and $Q \in \mathfrak{P}_{c}(\Omega)$. A function $f \colon \mathcal{X} \to [-\infty, \infty]$ is called \textit{$Q$-consistent} (on $\mathcal{X}$) if for any $X, Y \in \mathcal{X}$ with $j_{Q}(X) = j_{Q}(Y)$, it holds that $f(X) = f(Y)$.
    
    A family of functions $f^{Q} \colon \mathcal{X} \to [-\infty, \infty]$, $Q \in \mathcal{Q}$, is called \textit{$\mathcal{Q}$-consistent} (on $\mathcal{X}$) if $f^Q$ is $Q$-consistent for all $Q \in \mathcal{Q}$.
\end{definition}

\begin{remark}\label{rem:local} Let $Q \in \mathfrak{P}_{c}(\Omega)$.
    Any function $g \colon j_{Q}(\mathcal{X}) \to [-\infty, \infty]$ defines a canonical $Q$-consistent function $f\colon \mathcal{X} \to [-\infty, \infty]$ by $f(X) := g(j_{Q}(X))$, $X \in \mathcal{X}$. Conversely, any $Q$-consistent function $f\colon \mathcal{X} \to [-\infty, \infty]$ defines a function $g \colon j_{Q}(\mathcal{X}) \to [-\infty, \infty]$ by $g(X)=f(Y)$ for arbitrary $Y\in j_Q^{-1}(X)\cap \mathcal{X}$. We will, with some abuse of notation,  denote this function $g$ by $f\circ j_Q^{-1}$. \hfill $\diamond$
\end{remark}

Again, analogously to Lemma~\ref{lem:back:forward}, we obtain the following results for functions:

\begin{lemma}\label{lem:loc:rep}
    Let $\mathcal{Q} \subseteq \mathfrak{P}_c(\Omega)$ be non-empty and let $(f^{Q})_{Q \in \mathcal{Q}}$ be a $\mathcal{Q}$-consistent family of functions on $\mathcal{X}$. Then the function
    \begin{equation*}
        f(X) := \sup_{Q \in \mathcal{Q}} f^Q(X), \quad X \in \mathcal{X},
    \end{equation*}
    is $\mathcal{P}$-sensitive with reduction set $\mathcal{Q}$.
\end{lemma}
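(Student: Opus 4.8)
The plan is to deduce the claim from Lemma~\ref{lem:back:forward} by examining the lower level sets of $f$. First I would note that, because $f = \sup_{Q \in \mathcal{Q}} f^Q$, for every $r \in \R$ we have
\[
    E_r := \{X \in \mathcal{X} \mid f(X) \leq r\} = \bigcap_{Q \in \mathcal{Q}} \{X \in \mathcal{X} \mid f^Q(X) \leq r\},
\]
since $f(X) \leq r$ holds precisely when $f^Q(X) \leq r$ for all $Q \in \mathcal{Q}$.

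The next step is to use the $Q$-consistency of each $f^Q$ to realize the $Q$-th set on the right-hand side as a preimage under $j_Q$. By Remark~\ref{rem:local}, $f^Q$ factors as $f^Q = (f^Q \circ j_Q^{-1}) \circ j_Q$, with $f^Q \circ j_Q^{-1} \colon j_Q(\mathcal{X}) \to [-\infty,\infty]$. Hence, putting
\[
    E_r^Q := \{Z \in L^0_Q \mid Z \in j_Q(\mathcal{X}) \text{ and } (f^Q \circ j_Q^{-1})(Z) \leq r\} \subseteq L^0_Q,
\]
one checks that $\{X \in \mathcal{X} \mid f^Q(X) \leq r\} = \{X \in \mathcal{X} \mid j_Q(X) \in E_r^Q\}$ for each $Q \in \mathcal{Q}$: the inclusion ``$\subseteq$'' is immediate by taking $X$ as its own $j_Q$-preimage, and ``$\supseteq$'' uses that $(f^Q \circ j_Q^{-1})(j_Q(X)) = f^Q(X)$ by the very definition of $f^Q \circ j_Q^{-1}$ and $Q$-consistency. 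Intersecting over $Q$ yields
\[
    E_r = \{X \in \mathcal{X} \mid \forall Q \in \mathcal{Q} \colon j_Q(X) \in E_r^Q\}.
\]

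Now Lemma~\ref{lem:back:forward}, applied to the family $(E_r^Q)_{Q \in \mathcal{Q}}$, shows that $E_r$ is $\mathcal{P}$-sensitive with reduction set $\mathcal{Q}$. As $r \in \R$ was arbitrary, $\mathcal{Q}$ is a joint reduction set for all lower level sets of $f$, i.e.\ a reduction set for $f$, and by Lemma~\ref{lem:fct:pSensi:redSet}(ii) the function $f$ is $\mathcal{P}$-sensitive. I do not anticipate a genuine obstacle here; the only point requiring care is the verification of the two displayed set identities, and that verification is precisely where $Q$-consistency is indispensable, since it is what makes the passage through $f^Q \circ j_Q^{-1}$ legitimate and lets membership of $j_Q(X)$ in $E_r^Q$ transfer back to the bound $f^Q(X) \leq r$.
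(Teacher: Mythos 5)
Your proof is correct, and it reaches the conclusion by a slightly different route than the paper. You rewrite each lower level set $E_r$ of $f=\sup_{Q\in\mathcal Q}f^Q$ in the form
\[
E_r=\{X\in\mathcal{X}\mid \forall Q\in\mathcal{Q}\colon j_Q(X)\in E_r^Q\},
\]
where $E_r^Q\subseteq L^0_Q$ is the $r$-sublevel set of $f^Q\circ j_Q^{-1}$, and then invoke Lemma~\ref{lem:back:forward} to conclude that $E_r$ is $\mathcal{P}$-sensitive with reduction set $\mathcal{Q}$. The paper does not factor through Lemma~\ref{lem:back:forward}; it argues directly inside the level set: given $X$ with $j_Q(X)\in j_Q(E_r)$ for all $Q$, it picks $X^Q\in E_r$ with $j_Q(X)=j_Q(X^Q)$ and uses $Q$-consistency to get $f^Q(X)=f^Q(X^Q)\leq f(X^Q)\leq r$, hence $f(X)\leq r$. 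The underlying mechanism is identical in both cases --- $Q$-consistency is what lets membership in a sublevel set travel through $j_Q$ --- so your argument is essentially a modular repackaging of the paper's inline unwinding. What your version buys is reuse of the already-proved Lemma~\ref{lem:back:forward} rather than re-deriving its content; what the paper's version buys is a self-contained two-line proof without introducing the auxiliary sets $E_r^Q$. The only delicate point in yours, as you note, is the set identity $\{X\in\mathcal{X}\mid f^Q(X)\leq r\}=\{X\in\mathcal{X}\mid j_Q(X)\in E_r^Q\}$, and you handle it correctly via $(f^Q\circ j_Q^{-1})(j_Q(X))=f^Q(X)$.
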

\begin{proof}
    Fix $r \in \R$ and let $E_r:=\{Y\in \mathcal{X}\mid f(Y)\leq r\}$. Consider $X \in \mathcal{X}$ such that $j_{Q}(X) \in j_{Q}(E_{r})$ for all $Q \in \mathcal{Q}$. Then, for any $Q \in \mathcal{Q}$, there exists $X^{Q} \in E_{r}$ such that $j_{Q}(X) = j_{Q}(X^{Q})$. As $(f^{Q})_{Q \in \mathcal{Q}}$ is $\mathcal{Q}$-consistent, it follows that for any $Q \in \mathcal{Q}$
    \begin{equation*}
        f^{Q}(X) = f^{Q}(X^{Q}) \leq f(X^Q) \leq r.
    \end{equation*}
    Consequently,
    \begin{equation*}
        f(X)=\sup_{Q \in \mathcal{Q}} f^Q(X)\leq r,
    \end{equation*}
    that is, $X \in E_{r}$. Hence, $E_{r}$ is $\mathcal{P}$-sensitive with reduction set $\mathcal{Q}$.
\end{proof}

Of course, a set is $\mathcal{P}$-sensitive if and only if its indictor is $\mathcal{P}$-sensitive as a function:

\begin{lemma}
    A set $\mathcal{C}\subseteq \mathcal{X}$ is $\mathcal{P}$-sensitive if and only if its convex analytic indicator function
    \begin{equation*}
        \delta(X\mid \mathcal{C}) := 
        \begin{cases}
            0, &\text{if } X \in \mathcal{C}, \\
            \infty, &\text{if } X \not\in \mathcal{C},
        \end{cases}
        \quad X\in \mathcal{X},
    \end{equation*}
    is $\mathcal{P}$-sensitive.
\end{lemma}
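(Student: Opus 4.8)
The plan is to compute the lower level sets of the indicator function explicitly and invoke the definition of $\mathcal{P}$-sensitivity for functions directly. First I would observe that, since $\delta(\cdot \mid \mathcal{C})$ takes only the two values $0$ and $\infty$, for every $r \in \R$ the lower level set $E_r := \{X \in \mathcal{X} \mid \delta(X \mid \mathcal{C}) \leq r\}$ equals $\emptyset$ when $r < 0$ and equals $\mathcal{C}$ when $r \geq 0$ (because for $0 \le r < \infty$ the inequality $\delta(X\mid\mathcal C)\le r$ holds precisely when $\delta(X\mid\mathcal C)=0$, i.e.\ when $X\in\mathcal C$).

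Next I would recall that the empty set is $\mathcal{P}$-sensitive by Definition~\ref{def:sensitivity}. Hence the collection of lower level sets of $\delta(\cdot\mid\mathcal{C})$ consists, up to the trivially $\mathcal{P}$-sensitive set $\emptyset$, only of the single set $\mathcal{C}$. By the definition of a (lower) $\mathcal{P}$-sensitive function, $\delta(\cdot\mid\mathcal{C})$ is $\mathcal{P}$-sensitive if and only if every $E_r$, $r \in \R$, is $\mathcal{P}$-sensitive, which by the above is equivalent to $\mathcal{C}$ being $\mathcal{P}$-sensitive. This establishes both implications simultaneously.

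I do not expect any genuine obstacle here; the statement is essentially a bookkeeping consequence of the definitions, the only point requiring a word being the case distinction $r<0$ versus $r\ge 0$ in identifying the level sets, together with the convention that $\emptyset$ counts as $\mathcal{P}$-sensitive. If desired, one could additionally remark that $\mathfrak{P}_c(\Omega)$ (or any reduction set for $\mathcal{C}$) is then automatically a reduction set for $\delta(\cdot\mid\mathcal{C})$, by Lemma~\ref{lem:fct:pSensi:redSet}.
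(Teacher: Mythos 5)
Your proof is correct and is exactly the argument the paper leaves implicit (the lemma is stated without proof as an immediate consequence of the definitions): the lower level sets of $\delta(\cdot\mid\mathcal{C})$ are $\emptyset$ for $r<0$ and $\mathcal{C}$ for $r\geq 0$, and $\emptyset$ is $\mathcal{P}$-sensitive by Definition~\ref{def:sensitivity}, so the equivalence follows. Nothing is missing.
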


The following proposition shows that $\mathcal{P}$-sensitivity is automatically satisfied for a large class of lower semicontinuous quasi-convex functions under mild assumptions on the dual space.

\begin{proposition}
    Let  $\mathcal{X}\subseteq L^0_c$ and $\mathcal{Y} \subseteq \mathrm{ca}_{c}$ be subspaces such that $\langle \mathcal{X},\mathcal{Y}\rangle$ is a dual pair, and $\mathcal{Y}$ satisfies $\mu \in \mathcal{Y} \, \Rightarrow |\mu|\in \mathcal{Y}$. Then each $\sigma(\mathcal{X}, \mathcal{Y})$-lower semicontinuous quasi-convex function $f \colon \mathcal{X} \to [-\infty,\infty]$ is $\mathcal{P}$-sensitive.
\end{proposition}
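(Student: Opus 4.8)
The plan is to reduce $\mathcal{P}$-sensitivity of $f$ to $\mathcal{P}$-sensitivity of a family of half-spaces. It suffices to show that every lower level set $E_r := \{X \in \mathcal{X} \mid f(X) \le r\}$, $r \in \R$, is $\mathcal{P}$-sensitive. Quasi-convexity of $f$ makes $E_r$ convex, and $\sigma(\mathcal{X},\mathcal{Y})$-lower semicontinuity makes $E_r$ $\sigma(\mathcal{X},\mathcal{Y})$-closed; the degenerate cases $E_r = \emptyset$ (which is $\mathcal{P}$-sensitive by definition) and $E_r = \mathcal{X}$ (trivially $\mathcal{P}$-sensitive) are handled separately, so I assume $\emptyset \ne E_r \ne \mathcal{X}$. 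Since $\langle \mathcal{X}, \mathcal{Y}\rangle$ is a dual pair, the $\sigma(\mathcal{X},\mathcal{Y})$-continuous linear functionals on $\mathcal{X}$ are exactly the maps $X \mapsto \langle X, \mu\rangle$ with $\mu \in \mathcal{Y}$, so the Hahn--Banach separation theorem represents $E_r$ as the intersection of the closed half-spaces containing it:
\begin{equation*}
    E_r = \bigcap_{i \in I} H_i, \qquad H_i := \{X \in \mathcal{X} \mid \langle X, \mu_i \rangle \le c_i\},
\end{equation*}
for a non-empty family $(\mu_i, c_i)_{i \in I} \subseteq \mathcal{Y} \times \R$. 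By Lemma~\ref{lem:inf:set} it then remains to show that each $H_i$ is $\mathcal{P}$-sensitive.

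For this, fix $i$ and write $\mu := \mu_i$, $c := c_i$; the case $\mu = 0$ is again degenerate ($H_i \in \{\emptyset, \mathcal{X}\}$), so assume $\mu \ne 0$ and set $\nu := |\mu|/|\mu|(\Omega)$, a probability measure. Since $\mu \in ca_c$ we have $|\mu| \ll \mathcal{P}$, hence $\nu \in \mathfrak{P}_c(\Omega)$, so $j_\nu$ is available. The hypothesis $|\mu| \in \mathcal{Y}$ guarantees $\mathcal{X} \subseteq L^1(|\mu|)$, so that $\langle X, \mu\rangle = \int x\, d\mu$ for any representative $x \in X$ and this number depends only on the $|\mu|$-a.s.\ --- equivalently, the $\nu$-a.s.\ --- class of $x$; that is, $j_\nu(X) = j_\nu(Y)$ forces $\langle X, \mu\rangle = \langle Y, \mu\rangle$ for $X, Y \in \mathcal{X}$. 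Hence, with $E_\nu := \{j_\nu(X) \mid X \in \mathcal{X},\ \langle X, \mu\rangle \le c\} \subseteq L^0_\nu$, one checks that $H_i = \{X \in \mathcal{X} \mid j_\nu(X) \in E_\nu\}$: the inclusion ``$\subseteq$'' is immediate, and if $j_\nu(X) \in E_\nu$, say $j_\nu(X) = j_\nu(X')$ with $\langle X', \mu\rangle \le c$, then $\langle X, \mu\rangle = \langle X', \mu\rangle \le c$. Applying Lemma~\ref{lem:back:forward} with $\mathcal{Q} = \{\nu\}$ then yields that $H_i$ is $\mathcal{P}$-sensitive (with reduction set $\{\nu\}$), which completes the argument.

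The step I expect to be the main obstacle is handling a single half-space $H_i$, and in particular choosing the right reduction set. The tempting choice --- the two normalized Jordan parts $\mu_i^+/\mu_i^+(\Omega)$ and $\mu_i^-/\mu_i^-(\Omega)$ --- does not work: from $j_{Q^+}(X) \in j_{Q^+}(H_i)$ and $j_{Q^-}(X) \in j_{Q^-}(H_i)$ one only extracts two separate witnesses, each satisfying the defining inequality of $H_i$ with respect to its \emph{own} positive and negative masses, and these cannot be recombined into a single inequality for $X$. Passing to the single probability measure $\nu = |\mu_i|/|\mu_i|(\Omega)$, under which $\langle\,\cdot\,,\mu_i\rangle \le c_i$ collapses to one condition on $j_\nu(X)$, is what makes the reduction go through. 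The remaining delicate point is purely measure-theoretic: one must ensure the pairing $\langle X, \mu\rangle$ is a genuine Lebesgue integral against $\mu$ (and not merely an abstract finite bilinear form), so that $\nu$-a.s.\ equality of representatives truly implies equality of $\mu$-integrals; this is precisely what the hypothesis $|\mu| \in \mathcal{Y}$ secures.
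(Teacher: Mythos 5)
Your proof is correct, and it differs from the paper's in that it is self-contained where the paper is not: the paper's proof observes (as you do) that quasi-convexity plus lower semicontinuity make each level set convex and $\sigma(\mathcal{X},\mathcal{Y})$-closed, and then simply cites the external result \cite[Theorem 5.13]{LS2025} to conclude $\mathcal{P}$-sensitivity. You instead reprove that ingredient: Hahn--Banach writes each nontrivial level set as an intersection of closed half-spaces, Lemma~\ref{lem:inf:set} reduces the problem to showing that each half-space $H = \{X \mid \langle X, \mu\rangle \leq c\}$ is $\mathcal{P}$-sensitive, and you exhibit the singleton $\{\nu\}$ with $\nu = |\mu|/|\mu|(\Omega) \in \mathfrak{P}_c(\Omega)$ as a reduction set via Lemma~\ref{lem:back:forward}, the key point being that $|\mu| \in \mathcal{Y}$ (hence $|\mu| \in ca_c(\mathcal{X})$, since the pairing is integration) makes $\langle \cdot, \mu\rangle$ a genuine Lebesgue integral that factors through $j_\nu$. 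Your aside explaining why the normalized Jordan parts $\mu^{+}$ and $\mu^{-}$ would \emph{not} serve as a two-element reduction set, while the total variation collapses the half-space constraint to a single $\nu$-local condition, is a genuine bonus: it makes visible exactly why the hypothesis on $\mathcal{Y}$ is closure under $\mu \mapsto |\mu|$ and not something weaker. Both routes reach the same conclusion; yours adds transparency at the cost of length.
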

\begin{proof}
    If $f$ is quasi-convex and lower semicontinuous, then each level set $E_{r}$ is convex and $\sigma(\mathcal{X}, \mathcal{Y})$-closed. According to \cite[Theorem 5.13]{LS2025}, $E_{r}$ is $\mathcal{P}$-sensitive. 
\end{proof}

Regarding sufficient properties to ensure that a set is $\mathcal{P}$-sensitive, we refer to \cite{LS2025}. In the case of $\mathcal{P}$-sensitive functions, sufficient conditions will be provided throughout the remainder of this paper. The question we address next is whether there are robust models $(\Omega, \mathcal{F}, \mathcal{P})$ such that all sets $\mathcal{C} \subseteq \mathcal{X}$, and thus all functions $f:\mathcal{X} \to [-\infty, \infty]$, are $\mathcal{P}$-sensitive:

\begin{theorem}\label{thm:all:Psensi}
    Suppose that $\{\mathbf{1}_A\mid A\in \mathcal{F}\}\subseteq \mathcal{X}$. All subsets of $\mathcal{X}$ are $\mathcal{P}$-sensitive if and only if $\mathcal{P}$ is dominated, that is, there is $Q\in \mathfrak{P}(\Omega)$ such that $\mathcal{P}\ll Q$. 
\end{theorem}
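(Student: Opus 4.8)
The plan is to prove the two implications separately. The hypothesis $\{\mathbf 1_A\mid A\in\mathcal F\}\subseteq\mathcal X$ will only be needed for the forward direction \emph{(all subsets $\mathcal P$-sensitive $\Rightarrow$ $\mathcal P$ dominated)}, which I would establish contrapositively by exhibiting an explicit non-$\mathcal P$-sensitive set of indicators; the other direction is essentially a consequence of the Halmos--Savage theorem.

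For the direction \emph{$\mathcal P$ dominated $\Rightarrow$ all subsets $\mathcal P$-sensitive}, I would first upgrade the dominating measure: if $\mathcal P\ll Q_0$ for some $Q_0\in\mathfrak P(\Omega)$, the Halmos--Savage theorem provides a countable convex combination $Q:=\sum_n\lambda_n P_n$ of elements $P_n\in\mathcal P$ with $\mathcal P\approx Q$, so that $Q\in\mathfrak P_c(\Omega)$ and $\mathcal P\ll Q$. The relation $\mathcal P\ll Q$ says precisely that every $Q$-null event is $\mathcal P$-polar, hence $j_Q\colon L^0_c\to L^0_Q$ is injective. I would then observe that $\{Q\}$ is a reduction set for \emph{every} $\mathcal C\subseteq\mathcal X$: the implication $X\in\mathcal C\Rightarrow j_Q(X)\in j_Q(\mathcal C)$ is trivial, and conversely, if $X\in\mathcal X$ satisfies $j_{Q'}(X)\in j_{Q'}(\mathcal C)$ for all $Q'\in\mathfrak P_c(\Omega)$, then in particular $j_Q(X)=j_Q(C)$ for some $C\in\mathcal C$, whence $X=C\in\mathcal C$ by injectivity of $j_Q$. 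Lemma~\ref{lem:set:pSensi:redSet}(ii) then yields that every $\mathcal C\subseteq\mathcal X$ is $\mathcal P$-sensitive (with the empty set being $\mathcal P$-sensitive by definition).

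For the converse I would argue by contraposition. If $\mathcal P$ is not dominated, then in particular $\mathcal P\not\ll Q$ for every $Q\in\mathfrak P_c(\Omega)$, so for each such $Q$ there is an event $N_Q\in\mathcal F$ with $Q(N_Q)=0$ but $c(N_Q)>0$. Using $\{\mathbf 1_A\mid A\in\mathcal F\}\subseteq\mathcal X$, I would set
\[
    \mathcal C:=\{\mathbf 1_A\mid A\in\mathcal F,\ c(\Omega\setminus A)>0\}=\{\mathbf 1_A\mid A\in\mathcal F\}\setminus\{\mathbf 1_\Omega\}\subseteq\mathcal X ,
\]
which is non-empty (it contains $\mathbf 1_\emptyset=\mathbf 0$, since $c(\Omega)=1>0$) and does not contain $\mathbf 1_\Omega$. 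Given any $Q\in\mathfrak P_c(\Omega)$, putting $A:=\Omega\setminus N_Q$ gives $c(\Omega\setminus A)=c(N_Q)>0$, so $\mathbf 1_A\in\mathcal C$, while $Q(A)=1$ forces $j_Q(\mathbf 1_A)=j_Q(\mathbf 1_\Omega)$. Hence $j_Q(\mathbf 1_\Omega)\in j_Q(\mathcal C)$ for every $Q\in\mathfrak P_c(\Omega)$, yet $\mathbf 1_\Omega\notin\mathcal C$, so $\mathcal C$ is not $\mathcal P$-sensitive, contradicting the hypothesis; therefore $\mathcal P$ must be dominated.

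The main obstacle, such as it is, is the Halmos--Savage reduction in the first implication: one must pass from an arbitrary dominating measure to one that is equivalent to $\mathcal P$ (hence lies in $\mathfrak P_c(\Omega)$), in order to have an injective $j_Q$ at one's disposal. Everything else is bookkeeping; the construction in the converse is elementary once one recognizes that non-domination of $\mathcal P$ is exactly the statement that every $Q\in\mathfrak P_c(\Omega)$ overlooks some non-polar event $N_Q$, which is precisely what allows $\mathbf 1_\Omega$ to impersonate a member of $\mathcal C$ under each model $Q$ while remaining globally distinct from all of them.
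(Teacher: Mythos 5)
Your proof is correct, and it differs from the paper's argument in a way worth noting. For the direction ``$\mathcal{P}$ dominated $\Rightarrow$ all subsets $\mathcal{P}$-sensitive,'' your route and the paper's are essentially the same in spirit: both pass from an arbitrary dominating measure to a $Q\in\mathfrak{P}_c(\Omega)$ with $Q\approx\mathcal{P}$ and then observe that $\{Q\}$ is a universal reduction set. You get this $Q$ from Halmos--Savage as a countable convex combination of elements of $\mathcal{P}$, whereas the paper constructs it directly by conditioning the dominating measure on the event where the essential supremum of the densities $dP/dQ$ is positive; the paper then identifies $L^0_c$ with $L^0_{\widetilde Q}$ outright, which is the same content as your injectivity of $j_Q$. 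The real divergence is in the converse. The paper first establishes an aggregation-theoretic characterization (Lemma~\ref{lem:all:sets:Psensi:2}: all sets are $\mathcal{P}$-sensitive iff $1$ is only ever a \emph{trivial} $\mathfrak{P}_c(\Omega)$-aggregator of full-measure indicator families) and then extracts a dominating $R\in\mathfrak{P}_c(\Omega)$ from the triviality assertion. You instead argue contrapositively with a single explicit non-$\mathcal{P}$-sensitive set, namely $\mathcal{C}=\{\mathbf{1}_A\mid A\in\mathcal{F}\}\setminus\{\mathbf{1}_\Omega\}$, and this works: non-domination hands you, for each $Q\in\mathfrak{P}_c(\Omega)$, an event $N_Q$ with $Q(N_Q)=0$ and $c(N_Q)>0$, so $\mathbf{1}_{\Omega\setminus N_Q}\in\mathcal{C}$ witnesses $j_Q(\mathbf{1}_\Omega)\in j_Q(\mathcal{C})$ while $\mathbf{1}_\Omega\notin\mathcal{C}$. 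Your counterexample is more elementary and self-contained (it does not require the aggregation machinery), and it is essentially a sharpened version of the paper's own post-theorem example. What the paper's route buys is that Lemmas~\ref{lem:all:sets:Psensi:1} and~\ref{lem:all:sets:Psensi:2} are of independent interest: they make the link between $\mathcal{P}$-sensitivity and aggregation explicit, which the paper wants to emphasize anyway. One small remark: in your forward direction the statement that $\{Q\}$ is a reduction set for every $\mathcal{C}$ already follows from injectivity of $j_Q$ alone; the detour through ``for all $Q'\in\mathfrak{P}_c(\Omega)$'' is unnecessary, though harmless.
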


The condition $\{\mathbf{1}_A\mid A\in \mathcal{F}\}\subseteq \mathcal{X}$ is in particular satisfied if $L^\infty_c\subseteq \mathcal{X}$. 
The proof of Theorem~\ref{thm:all:Psensi} needs some preparation. As already observed in \cite{LS2025}, $\mathcal{P}$-sensitivity is closely related to the problem of aggregation of families of (equivalence classes of) random variables as introduced in the following. 

\begin{definition}
    Let $\mathcal{Q}\subseteq \mathfrak{P}_c(\Omega)$ be non-empty. 
    \begin{enumerate}
        \item A family $(X^Q)_{Q\in \mathcal{Q}}\subseteq \mathcal{X}$ is called \textit{$\mathcal{Q}$-coherent} (in $\mathcal{X}$) if there exists $X\in \mathcal{X}$ such that $j_Q(X^Q)=j_Q(X)$ for all $Q\in \mathcal{Q}$. In that case, $X$ is called a \textit{$\mathcal{Q}$-aggregator} (in $\mathcal{X}$) of the family $(X^Q)_{Q\in \mathcal{Q}}$.
        \item A set $\mathcal{C}\subseteq \mathcal{X}$ is \textit{$\mathcal{Q}$-stable} (in $\mathcal{X}$) if $\mathcal{C}$ contains all $\mathcal{Q}$-aggregators (in $\mathcal{X}$) of all $\mathcal{Q}$-coherent families $(X^Q)_{Q\in \mathcal{Q}}\subseteq \mathcal{C}$.
        \item Let $(X^Q)_{Q\in \mathcal{Q}}$ be $\mathcal{Q}$-coherent in $\mathcal{X}$. A $\mathcal{Q}$-aggregator $X$ of $(X^Q)_{Q\in \mathcal{Q}}$ in $\mathcal{X}$ is called \textit{trivial} if there exists $Q\in \mathcal{Q}$ such that $X = X^Q$. $X$ is called \textit{non-trivial} if $X \neq X^Q$ for all $Q\in \mathcal{Q}$ (i.e., for all $Q\in \mathcal{Q}$ there exists $P\in \mathcal{P}$ such that $P(X \neq X^Q) > 0$). 
    \end{enumerate}
\end{definition}

The following is easily verified, see \cite[Proposition 5.8]{LS2025} for a proof in case $\mathcal{X}=L^0_c$.

\begin{lemma}\label{lem:Qstable}
    A set $\mathcal{C}\subseteq \mathcal{X}$ is $\mathcal{P}$-sensitive with reduction set $\mathcal{Q}$ if any only if $\mathcal{C}$ is $\mathcal{Q}$-stable.
\end{lemma}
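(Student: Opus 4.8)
The plan is to unwind the definitions of \emph{reduction set} and \emph{$\mathcal{Q}$-stability} and observe that, once one reads through the notion of $\mathcal{Q}$-coherence, the two conditions say the same thing. Throughout, $\mathcal{Q}\subseteq\mathfrak{P}_c(\Omega)$ denotes the fixed non-empty set appearing in both formulations. The case $\mathcal{C}=\emptyset$ is degenerate and will fall out of the argument automatically (there are no $\mathcal{Q}$-coherent families inside $\emptyset$, and $j_Q(\emptyset)=\emptyset$ makes both sides of \eqref{eq:reduction} false), so no separate treatment is needed.

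For the implication ``$\mathcal{Q}$ is a reduction set for $\mathcal{C}$ $\Rightarrow$ $\mathcal{C}$ is $\mathcal{Q}$-stable'': I would take an arbitrary $\mathcal{Q}$-coherent family $(X^Q)_{Q\in\mathcal{Q}}\subseteq\mathcal{C}$ with some $\mathcal{Q}$-aggregator $X\in\mathcal{X}$, so that $j_Q(X)=j_Q(X^Q)$ for every $Q\in\mathcal{Q}$. Since $X^Q\in\mathcal{C}$ gives $j_Q(X^Q)\in j_Q(\mathcal{C})$, we obtain $j_Q(X)\in j_Q(\mathcal{C})$ for all $Q\in\mathcal{Q}$, and the reduction-set biconditional \eqref{eq:reduction} then forces $X\in\mathcal{C}$. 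As $X$ was an arbitrary aggregator of an arbitrary such family, $\mathcal{C}$ is $\mathcal{Q}$-stable.

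For the converse, assume $\mathcal{C}$ is $\mathcal{Q}$-stable and fix $X\in\mathcal{X}$; I must verify \eqref{eq:reduction}. The forward direction is immediate from $j_Q(X)\in j_Q(\mathcal{C})$ whenever $X\in\mathcal{C}$. For the backward direction, suppose $j_Q(X)\in j_Q(\mathcal{C})$ for every $Q\in\mathcal{Q}$; using the axiom of choice, pick for each $Q\in\mathcal{Q}$ some $X^Q\in\mathcal{C}$ with $j_Q(X^Q)=j_Q(X)$. By construction the family $(X^Q)_{Q\in\mathcal{Q}}\subseteq\mathcal{C}$ is $\mathcal{Q}$-coherent and has $X\in\mathcal{X}$ as a $\mathcal{Q}$-aggregator, so $\mathcal{Q}$-stability of $\mathcal{C}$ yields $X\in\mathcal{C}$. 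Hence \eqref{eq:reduction} holds for all $X$, i.e.\ $\mathcal{Q}$ is a reduction set for $\mathcal{C}$ (which in particular makes $\mathcal{C}$ $\mathcal{P}$-sensitive).

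I do not expect a genuine obstacle: the proof is a direct translation between the two vocabularies, exactly mirroring \cite[Proposition~5.8]{LS2025}, with the restriction to a subset $\mathcal{X}\subseteq L^0_c$ playing no role. The only points warranting care are (i) recording that $\mathcal{Q}\neq\emptyset$ in both definitions, so that the families under consideration are honest families; (ii) the appeal to choice when selecting the witnesses $X^Q$; and (iii) the observation that an aggregator is by definition required to lie in $\mathcal{X}$, which is precisely what permits feeding $X$ back into the reduction-set biconditional.
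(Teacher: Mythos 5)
Your proof is correct and is precisely the direct definition-unwinding that the paper has in mind when it calls the result "easily verified" and cites \cite[Proposition~5.8]{LS2025} for the case $\mathcal{X}=L^0_c$; the handling of the empty case and the note that an aggregator is by definition in $\mathcal{X}$ are exactly the right points to flag.
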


In particular, $\mathcal{C}$ is $\mathcal{P}$-sensitive if and only if  $\mathcal{C}$ is $\mathfrak{P}_c(\Omega)$-stable. 

\begin{lemma}\label{lem:all:sets:Psensi:1}
    Every set $\mathcal{C}\subseteq \mathcal{X}$ is $\mathcal{P}$-sensitive if and only if every $\mathfrak{P}_c(\Omega)$-aggregator (of any $\mathfrak{P}_c(\Omega)$-coherent family) is trivial.
\end{lemma}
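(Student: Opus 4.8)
The plan is to prove the biconditional in Lemma~\ref{lem:all:sets:Psensi:1} directly, exploiting Lemma~\ref{lem:Qstable} which characterizes $\mathcal{P}$-sensitivity of a set $\mathcal{C}$ by $\mathfrak{P}_c(\Omega)$-stability. Recall that $\mathcal{C}$ is $\mathfrak{P}_c(\Omega)$-stable precisely when $\mathcal{C}$ contains every $\mathfrak{P}_c(\Omega)$-aggregator of every $\mathfrak{P}_c(\Omega)$-coherent family $(X^Q)_{Q\in\mathfrak{P}_c(\Omega)}\subseteq\mathcal{C}$. So the task reduces to relating the statement ``all $\mathcal{C}$ are stable'' to the statement ``all aggregators are trivial''.

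For the ``if'' direction, suppose every $\mathfrak{P}_c(\Omega)$-aggregator of any $\mathfrak{P}_c(\Omega)$-coherent family in $\mathcal{X}$ is trivial. Take an arbitrary $\mathcal{C}\subseteq\mathcal{X}$ and an arbitrary $\mathfrak{P}_c(\Omega)$-coherent family $(X^Q)_{Q\in\mathfrak{P}_c(\Omega)}\subseteq\mathcal{C}$ with aggregator $X\in\mathcal{X}$. By hypothesis, $X$ is trivial, i.e.\ $X=X^{Q_0}$ for some $Q_0\in\mathfrak{P}_c(\Omega)$, hence $X\in\mathcal{C}$. Thus $\mathcal{C}$ is $\mathfrak{P}_c(\Omega)$-stable, and by Lemma~\ref{lem:Qstable} it is $\mathcal{P}$-sensitive. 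Since $\mathcal{C}$ was arbitrary, every subset of $\mathcal{X}$ is $\mathcal{P}$-sensitive.

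For the ``only if'' direction, I would argue by contraposition: assume there exists a $\mathfrak{P}_c(\Omega)$-coherent family $(X^Q)_{Q\in\mathfrak{P}_c(\Omega)}\subseteq\mathcal{X}$ with a \emph{non-trivial} aggregator $X\in\mathcal{X}$, meaning $X\neq X^Q$ for every $Q$. The natural candidate for a non-$\mathcal{P}$-sensitive set is $\mathcal{C}:=\{X^Q\mid Q\in\mathfrak{P}_c(\Omega)\}$. This is a $\mathfrak{P}_c(\Omega)$-coherent family inside $\mathcal{C}$ (each $X^Q$ certainly looks under $Q$ like the member $X^Q\in\mathcal{C}$, so $j_Q(X)\in j_Q(\mathcal{C})$ via $j_Q(X)=j_Q(X^Q)$), and $X$ is an aggregator, but $X\notin\mathcal{C}$ because non-triviality says $X\neq X^Q$ for all $Q$. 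Hence $\mathcal{C}$ is not $\mathfrak{P}_c(\Omega)$-stable, so by Lemma~\ref{lem:Qstable} not $\mathcal{P}$-sensitive, contradicting the assumption that all subsets of $\mathcal{X}$ are $\mathcal{P}$-sensitive.

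The only subtlety, and thus the main point requiring care rather than a genuine obstacle, is checking that in the ``only if'' direction the set $\mathcal{C}=\{X^Q\mid Q\}$ really does have $X$ as a $\mathfrak{P}_c(\Omega)$-aggregator of a coherent family drawn from $\mathcal{C}$ itself: one must use the defining property $j_Q(X)=j_Q(X^Q)$ for all $Q$ to see both that the family is coherent with aggregator $X$ and that each $j_Q(X)\in j_Q(\mathcal{C})$. The step $X\notin\mathcal{C}$ is then immediate from the definition of non-triviality. No additional structure on $\mathcal{X}$ (beyond being a non-empty subset of $L^0_c$) is needed here; the hypothesis $\{\mathbf 1_A\mid A\in\mathcal F\}\subseteq\mathcal{X}$ from Theorem~\ref{thm:all:Psensi} will only enter later when one shows the existence of non-trivial aggregators is equivalent to $\mathcal{P}$ being non-dominated.
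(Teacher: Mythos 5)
Your proposal is correct and follows essentially the same route as the paper: the ``if'' direction is identical, and your contrapositive argument for the ``only if'' direction uses the same key construction $\mathcal{C}=\{X^Q\mid Q\in\mathfrak{P}_c(\Omega)\}$ that the paper applies directly (stability forces the aggregator into $\mathcal{C}$, i.e.\ to be trivial). The two formulations are logically equivalent, and your remark that no extra hypothesis on $\mathcal{X}$ is needed here matches the paper.
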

\begin{proof}
    Suppose that every set $\mathcal{C}\subseteq \mathcal{X}$ is $\mathcal{P}$-sensitive, and let $(X^Q)_{Q\in \mathcal{Q}}\subseteq L^0_c$ be $\mathfrak{P}_c(\Omega)$-coherent. By assumption, $\mathcal{C}:=\{X^Q\mid Q\in \mathfrak{P}_c(\Omega)\}$ is $\mathcal{P}$-sensitive and thus $\mathfrak{P}_c(\Omega)$-stable. Hence, any $\mathfrak{P}_c(\Omega)$-aggregator $X\in \mathcal{X}$ of $(X^Q)_{Q\in \mathcal{Q}}$ must satisfy $X\in \mathcal{C}$.
    
    Conversely, suppose that every $\mathfrak{P}_c(\Omega)$-aggregator is trivial. Consider an arbitrary non-empty set $\mathcal{C}\subseteq \mathcal{X}$ and any $\mathfrak{P}_c(\Omega)$-coherent family $(X^Q)_{Q\in \mathcal{Q}}\subseteq \mathcal{C}$ with $\mathfrak{P}_c(\Omega)$-aggregator $X\in \mathcal{X}$. Then $X\in \mathcal{C}$, because $X=X^Q\in \mathcal{C}$ for some $Q\in \mathfrak{P}_c(\Omega)$. Hence, $\mathcal{C}$ is $\mathfrak{P}_c(\Omega)$-stable and thus $\mathcal{P}$-sensitive. 
\end{proof}

Lemma~\ref{lem:all:sets:Psensi:1} can be further refined:

\begin{lemma}\label{lem:all:sets:Psensi:2} 
    Suppose that $\{\mathbf{1}_A\mid A\in \mathcal{F}\}\subseteq \mathcal{X}$. All sets $\mathcal{C}\subseteq \mathcal{X}$ are $\mathcal{P}$-sensitive if and only if $1$ is a trivial $\mathfrak{P}_c(\Omega)$-aggregator of every family of indicators $(\mathbf{1}_{A^Q})_{Q\in \mathfrak{P}_c(\Omega)}$ such that $Q(A^Q)=1$ for all $Q\in \mathfrak{P}_c(\Omega)$.
\end{lemma}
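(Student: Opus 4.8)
The plan is to reduce everything to Lemma~\ref{lem:all:sets:Psensi:1}, which already identifies the statement ``all subsets of $\mathcal{X}$ are $\mathcal{P}$-sensitive'' with ``every $\mathfrak{P}_c(\Omega)$-aggregator of any $\mathfrak{P}_c(\Omega)$-coherent family is trivial''. So it suffices to prove that this triviality condition is equivalent to the indicator condition in the statement. The workhorse is a pair of translations between classes and representatives: for $A \in \mathcal{F}$ one has $j_Q(\mathbf{1}_A) = j_Q(1)$ if and only if $Q(A) = 1$, and $\mathbf{1}_A = 1$ in $L^0_c$ if and only if $c(A^c) = 0$. (Observe also that $\mathfrak{P}_c(\Omega) \supseteq \mathcal{P} \neq \emptyset$, so the notion of coherence is not vacuous.)

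For the ``only if'' direction: given a family of indicators $(\mathbf{1}_{A^Q})_{Q\in \mathfrak{P}_c(\Omega)}$ with $Q(A^Q) = 1$ for all $Q$, the first translation shows it is $\mathfrak{P}_c(\Omega)$-coherent with $1 = \mathbf{1}_\Omega$ as an aggregator; the hypothesis $\{\mathbf{1}_A \mid A \in \mathcal{F}\} \subseteq \mathcal{X}$ guarantees that all the $\mathbf{1}_{A^Q}$ and $1$ lie in $\mathcal{X}$. If every $\mathfrak{P}_c(\Omega)$-aggregator is trivial, then in particular $1$ is a trivial aggregator of this family, which is exactly what is claimed.

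For the ``if'' direction: let $(X^Q)_{Q\in \mathfrak{P}_c(\Omega)} \subseteq \mathcal{X}$ be $\mathfrak{P}_c(\Omega)$-coherent with aggregator $X \in \mathcal{X}$. Fix representatives $x^Q \in X^Q$ and $x \in X$ and set $A^Q := \{x^Q = x\} \in \mathcal{F}$. Then coherence $j_Q(X^Q) = j_Q(X)$ says precisely $Q(A^Q) = 1$ for every $Q \in \mathfrak{P}_c(\Omega)$, so the hypothesis applies to $(\mathbf{1}_{A^Q})_{Q\in \mathfrak{P}_c(\Omega)}$: there exists $Q$ with $\mathbf{1}_{A^Q} = 1$, i.e.\ $c((A^Q)^c) = c(\{x^Q \neq x\}) = 0$, i.e.\ $X^Q = X$. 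Hence $X$ is a trivial aggregator. Since the coherent family was arbitrary, every $\mathfrak{P}_c(\Omega)$-aggregator is trivial, and Lemma~\ref{lem:all:sets:Psensi:1} yields the claim.

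I do not anticipate a real obstacle here; the only point that needs care is the bookkeeping among classes in $L^0_c$, classes in $L^0_Q$, and pointwise representatives — encapsulated in the two translations stated at the outset — together with recognizing $1 = \mathbf{1}_\Omega$ as the natural aggregator of any indicator family satisfying $Q(A^Q) = 1$ for all $Q$.
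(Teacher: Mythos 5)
Your proof is correct and takes essentially the same route as the paper's: both directions hinge on the translation $A^Q:=\{x^Q=x\}$ between coherence ($Q(A^Q)=1$) and triviality ($\mathbf{1}_{A^Q}=1$, i.e.\ $X^Q=X$). The only cosmetic difference is that you factor the converse entirely through Lemma~\ref{lem:all:sets:Psensi:1}, whereas the paper verifies $\mathfrak{P}_c(\Omega)$-stability of an arbitrary $\mathcal{C}$ directly; the underlying argument is identical.
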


\begin{proof}
    First note that any family of indicators $(\mathbf{1}_{A^Q})_{Q\in \mathfrak{P}_c(\Omega)}$ such that $Q(A^Q)=1$ for all $Q\in \mathfrak{P}_c(\Omega)$ is $\mathfrak{P}_c(\Omega)$-coherent with $\mathfrak{P}_c(\Omega)$-aggregator $1$. Hence, if every set $\mathcal{C}\subseteq L^0_c$ is $\mathcal{P}$-sensitive, then this aggregator must be trivial by Lemma~\ref{lem:all:sets:Psensi:1}.
    
    Conversely, consider a non-empty set $\mathcal{C}\subseteq \mathcal{X}$ and any $\mathfrak{P}_c(\Omega)$-coherent family $(X^Q)_{Q\in \mathfrak{P}_c(\Omega)}\subseteq \mathcal{C}$ with $\mathfrak{P}_c(\Omega)$-aggregator $X\in \mathcal{X}$. Let $x \in X$ and $x^{Q} \in X^{Q}$, and set $A^Q := \{x^Q = x\}$, $Q\in \mathfrak{P}_c(\Omega)$. Then $Q(A^Q)=1$ for all $Q\in \mathfrak{P}_c(\Omega)$. As $1$ is a trivial $\mathfrak{P}_c(\Omega)$-aggregator of $(\mathbf{1}_{A^Q})_{Q\in \mathfrak{P}_c(\Omega)}$, there is $Q\in \mathfrak{P}_c(\Omega)$ such that $1=\mathbf{1}_{A^Q}$. This implies that for all $P\in \mathcal{P}$ we have $1=P(1=\mathbf{1}_{A^Q})=P(X^Q=X)$. Hence, $X^Q=X$, and thus $X\in \mathcal{C}$. Therefore, $\mathcal{C}$ is $\mathfrak{P}_c(\Omega)$-stable.
\end{proof}

\begin{proof}[Proof of Theorem~\ref{thm:all:Psensi}]
    Suppose that $\mathcal{P}\ll Q$. For all $P\in \mathcal{P}$, denote by $dP/dQ\in \mathcal{L}^1(\Omega,\mathcal{F}, Q)$ a version of the Radon-Nikodym density of $P$ with respect to $Q$. Let $z$ be a version of 
    \begin{equation*}
        \operatorname{esssup}\bigg\{\chi_{\{\frac{dP}{dQ}>0\}}\biggm\vert P\in \mathcal{P}\bigg\}\in L^0(\Omega,\mathcal{F},Q).
    \end{equation*}
    Then $A := \{z > 0\}\in \mathcal{F}$, and one verifies that the conditional probability measure $\widetilde Q:=Q(\, \cdot\mid A)$ satisfies $\widetilde Q\approx \mathcal{P}$. In particular, $\widetilde Q\in \mathfrak{P}_c(\Omega)$ and indeed $L^0_c=L^0(\Omega,\mathcal{F}, \widetilde Q)$. Any set $\mathcal{C}\subseteq \mathcal{X}\subseteq L^0_c$ is thus $\mathcal{P}$-sensitive with reduction set $\{\widetilde Q\}$.
    
    Conversely, assume that all sets $\mathcal{C}\subseteq \mathcal{X}$ are $\mathcal{P}$-sensitive. Then, by Lemma~\ref{lem:all:sets:Psensi:2}, there exists $R\in \mathfrak{P}_c(\Omega)$ such that $\mathbf{1}_A=1$ for all $A\in \mathcal{F}$ with $R(A)=1$. Indeed, otherwise for all $\widetilde{R} \in \mathfrak{P}_{c}(\Omega)$ there would exist $A \in \mathcal{F}$ with $\widetilde{R}(A) = 1$ such that $\mathbf{1}_{A} \neq 1$. Then $1$ is a non-trivial $\mathfrak{P}_{c}(\Omega)$-aggregator, contradicting Lemma~\ref{lem:all:sets:Psensi:2}. We claim that $\mathcal{P}\ll R$. To this end, suppose that $N\in \mathcal{F}$ satisfies $R(N)=0$. Then $R(\Omega\setminus N)=1$ and therefore $\mathbf{1}_{\Omega\setminus N}=1$, that is, $1=P(\mathbf{1}_{\Omega\setminus N}=1)=P(\Omega\setminus N)$ for all $P\in \mathcal{P}$. Consequently, $P(N)=0$ for all $P\in \mathcal{P}$.
\end{proof}

\begin{example}
    Let $(\Omega, \mathcal{F}) = ([0, 1], \mathcal{B}([0, 1]))$, where $\mathcal{B}([0, 1])$ denotes the Borel-$\sigma$-algebra over $[0, 1]$. Further, let $\mathcal{P} := \{\delta_{\omega} \mid \omega \in [0, 1]\}$ be the set of all Dirac measures. Note that in this case $\preccurlyeq$ coincides with the pointwise order and that $L^0_c=\mathcal{L}^0(\Omega,\mathcal{F})$ is simply the space of all random variables. Moreover, $ca_c=ca$ and, in particular, $\mathfrak{P}_c(\Omega)=\mathfrak{P}(\Omega)$. For any $Q\in \mathfrak{P}(\Omega)$ denote by $A(Q):=\{\omega \in \Omega\mid Q(\{\omega\})>0 \}$ the set of $Q$-atoms, and recall that the set $A(Q)$ is at most countable. Therefore, $\Omega\setminus A(Q)\neq \emptyset$. Since any probability measure $Q$ dominating $\mathcal{P}$ would have to satisfy $Q(\{\omega\})>0$ for all $\omega\in \Omega$, that is, $A(Q)=\Omega$, which is not possible, $\mathcal{P}$ is not dominated.
    
    Moreover, for instance, by removing an arbitrary $\omega \in \Omega\setminus A(Q)$ from $\Omega$, it follows that, for any $Q\in \mathfrak{P}(\Omega)$, we may choose $B^Q\in \mathcal{F}$ such that $Q(B^Q)=1$ but $B^Q\neq \Omega$, and thus $1\neq \mathbf{1}_{B^Q}$. The family $(\mathbf{1}_{B^Q})_{Q\in \mathfrak{P}(\Omega)}$ is $\mathfrak{P}(\Omega)$-coherent with non-trivial $\mathfrak{P}(\Omega)$-aggregator $1$ in any $\mathcal{X}\subseteq L^0_c$ such that $\{\mathbf{1}_A\mid A\in \mathcal{F}\}\subseteq \mathcal{X}$. The set
    \begin{equation*}
        \mathcal{C} := \{\mathbf{1}_{B^Q}\mid Q\in \mathfrak{P}(\Omega)\}
    \end{equation*}
    is not $\mathcal{P}$-sensitive in any $\mathcal{X}\subseteq L^0_c$ such that $\{\mathbf{1}_A\mid A\in \mathcal{F}\}\subseteq \mathcal{X}$, because it is not $\mathfrak{P}(\Omega)$-stable. This can also be directly verified without invoking aggregation and stability, since for all $Q\in \mathfrak{P}(\Omega)$, we have $j_Q(1)=j_Q(\mathbf{1}_{B^Q})\in j_Q(\mathcal{C})$. However, $1\notin \mathcal{C}$, since $1\neq \mathbf{1}_{B^Q}$ for all $Q\in \mathfrak{P}(\Omega)$. \hfill $\diamond$
\end{example}

\begin{remark}
    Whether a set is $\mathcal{P}$-sensitive not only depends on $(\Omega,\mathcal{F}, \mathcal{P})$ but also on the domain $\mathcal{X}$. Clearly, if $\mathcal{C}\subseteq \mathcal{X}\subseteq \widetilde{\mathcal{X}}$ is  $\mathcal{P}$-sensitive in $\widetilde{\mathcal{X}}$, then $\mathcal{C}$ is $\mathcal{P}$-sensitive in $\mathcal{X}$. Indeed, in $\mathcal{X}$ there are less $\mathfrak{P}_c(\Omega)$-coherent families or $\mathfrak{P}_c(\Omega)$-aggregators. \hfill $\diamond$
\end{remark}

\section{\mathinhead{\mathcal{P}}{mathcal{P}}-Sensitivity and Functional Localization}\label{sec:FL}

As before, let $\mathcal{X} \subseteq L^{0}_{c}$ be non-empty and let $f \colon \mathcal{X} \to [-\infty,\infty]$.

\begin{definition}
    Let $f^Q:\mathcal{X} \to [-\infty, \infty]$, $Q\in \mathcal{Q}$, be a $\mathcal{Q}$-consistent family of functions, where $\mathcal{Q} \subseteq \mathfrak{P}_{c}(\Omega)$ is non-empty. If  
    \begin{equation*}
        f(X) = \sup_{Q\in \mathcal{Q}} f^{Q}(X),\quad X \in \mathcal{X},
    \end{equation*}
    then $(f^{Q})_{Q \in \mathcal{Q}}$ is called a (functional) {\em $\mathcal{Q}$-localization} of $f$.
\end{definition}

Note that the aggregator of the functions $f^Q$ in the definition of a $\mathcal{Q}$-localization is the supremum, which may be seen as a worst-case approach when the $f^Q$ represent some type of local risk assessment under the model $Q \in \mathcal{Q}$.

By Lemma~\ref{lem:loc:rep}, if $f$ admits a $\mathcal{Q}$-localization, then $f$ is necessarily $\mathcal{P}$-sensitive. We will show that the converse is also true. To this end, we consider a particular family of local functions which will turn out to be a localization of $f$ whenever $f$ is $\mathcal{P}$-sensitive. For $Q \in \mathfrak{P}_{c}(\Omega)$, let $f^{Q}_{E} \colon \mathcal{X} \to \R$ be given by
\begin{align*}
    \begin{split}
        f^{Q}_{E}(X) &:= \inf \{r \in \R \mid j_{Q}(X) \in j_{Q}(E_{r})\} \quad (\inf \emptyset:=\infty),
    \end{split}
\end{align*}
where, as before, $E_{r} := \{X \in \mathcal{X} \mid f(X) \leq r\}$, $r\in \R$, denote the level sets of $f$. Moreover, for non-empty  $\mathcal{Q} \subseteq \mathfrak{P}_{c}(\Omega)$, we define 
\begin{equation*}
    f^{\mathcal Q}_{E}(X) := \sup_{Q \in \mathcal{Q}} f^{Q}_{E}(X), \quad X \in \mathcal{X}, 
\end{equation*}
and $f_E := f^{\mathfrak{P}_{c}(\Omega)}_{E}$. 

\begin{theorem}\label{thm:Psensi:localization} 
    There exists a $\mathcal{Q}$-localization of $f$ if and only if $f$ is $\mathcal{P}$-sensitive with reduction set $\mathcal{Q}$. In that case $f= f^{\mathcal{Q}}_{E}=f^{\mathcal{Q}'}_{E}$ for every $\mathcal{Q}'$ satisfying $\mathcal{Q}\subseteq \mathcal{Q}'\subseteq \mathfrak{P}_c(\Omega)$, i.e., $(f^Q_E)_{Q\in \mathcal{Q}'}$ is a $\mathcal{Q}'$-localization of $f$.
\end{theorem}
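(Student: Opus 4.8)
The plan is to prove the two implications separately, treating the forward one as essentially immediate and the backward one as the substantive part. First, if $(f^Q)_{Q\in\mathcal{Q}}$ is a $\mathcal{Q}$-localization of $f$, then $f=\sup_{Q\in\mathcal{Q}}f^Q$ with $(f^Q)_{Q\in\mathcal{Q}}$ being $\mathcal{Q}$-consistent, and Lemma~\ref{lem:loc:rep} directly gives that $f$ is $\mathcal{P}$-sensitive with reduction set $\mathcal{Q}$. For the converse, I would assume $f$ is $\mathcal{P}$-sensitive with reduction set $\mathcal{Q}$, fix any $\mathcal{Q}'$ with $\mathcal{Q}\subseteq\mathcal{Q}'\subseteq\mathfrak{P}_c(\Omega)$, and show that $(f^Q_E)_{Q\in\mathcal{Q}'}$ is a $\mathcal{Q}'$-localization of $f$. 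The $Q$-consistency of each $f^Q_E$ is trivial, since $j_Q(X)=j_Q(Y)$ makes the defining sets $\{r\in\R\mid j_Q(X)\in j_Q(E_r)\}$ and $\{r\in\R\mid j_Q(Y)\in j_Q(E_r)\}$ coincide; so the remaining task is to establish $f=f^{\mathcal{Q}'}_E$ pointwise.

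For the inequality $f^{\mathcal{Q}'}_E\le f$ it suffices to check $f^Q_E\le f$ on $\mathcal{X}$ for every $Q\in\mathfrak{P}_c(\Omega)$: when $f(X)\in\R$, we have $X\in E_r$ and hence $j_Q(X)\in j_Q(E_r)$ for every real $r\ge f(X)$, so $f^Q_E(X)\le f(X)$, and the cases $f(X)=\pm\infty$ are read off directly from the definition (with $\inf\emptyset:=\infty$). For the reverse inequality $f\le f^{\mathcal{Q}}_E$, I would fix $X\in\mathcal{X}$ and an arbitrary real $r>f^{\mathcal{Q}}_E(X)$. Then $f^Q_E(X)<r$ for every $Q\in\mathcal{Q}$, so there is $s<r$ with $j_Q(X)\in j_Q(E_s)$; since the level sets are nested, $E_s\subseteq E_r$, hence $j_Q(X)\in j_Q(E_r)$ for all $Q\in\mathcal{Q}$. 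Because $\mathcal{Q}$ is a reduction set for $f$, hence for $E_r$, this forces $X\in E_r$, i.e.\ $f(X)\le r$; taking the infimum over such $r$ yields $f(X)\le f^{\mathcal{Q}}_E(X)$ (the degenerate values $\pm\infty$ again being immediate).

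Chaining these with the trivial monotonicity $f^{\mathcal{Q}}_E\le f^{\mathcal{Q}'}_E$ in the reduction set gives $f\le f^{\mathcal{Q}}_E\le f^{\mathcal{Q}'}_E\le f$, so all four objects coincide and $(f^Q_E)_{Q\in\mathcal{Q}'}$ is a $\mathcal{Q}'$-localization of $f$. I do not expect a genuine obstacle here; the one point requiring care is the bookkeeping for the extended-real infimum defining $f^Q_E$ combined with the nestedness of the level sets, which is exactly what converts ``$f^Q_E(X)<r$ for all $Q\in\mathcal{Q}$'' into ``$j_Q(X)\in j_Q(E_r)$ for all $Q\in\mathcal{Q}$'' and thereby lets the reduction-set hypothesis on $\mathcal{Q}$ (via Lemma~\ref{lem:fct:pSensi:redSet}) do the work. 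Everything else is a direct appeal to Lemma~\ref{lem:loc:rep}.
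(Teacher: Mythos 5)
Your proof is correct and follows essentially the same route as the paper: the forward implication is exactly Lemma~\ref{lem:loc:rep}, and the converse is established via the chain $f \le f^{\mathcal{Q}}_E \le f^{\mathcal{Q}'}_E \le f$, with $f \le f^{\mathcal{Q}}_E$ obtained by applying the reduction-set property of $\mathcal{Q}$ to each level set $E_r$. The only cosmetic difference is that you re-derive the auxiliary inequalities $f^Q_E \le f$ and $f^{\mathcal{Q}}_E \le f^{\mathcal{Q}'}_E$ (and spell out the nestedness step turning $f^Q_E(X) < r$ into $j_Q(X) \in j_Q(E_r)$) rather than citing Lemma~\ref{lem:aux:1}.
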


In particular, $f=f_E$ if and only if $f$ is $\mathcal{P}$-sensitive, because $\mathfrak{P}_{c}(\Omega)$ is a reduction set for every  $\mathcal{P}$-sensitive function.
The proof of Theorem~\ref{thm:Psensi:localization} is postponed to after the following lemma, the proof of which is straightforward and left to the reader.

\begin{lemma} \label{lem:aux:1}
    Let $f:\mathcal{X}\to [-\infty,\infty]$ and denote by $E_r$, $r\in \R$, the level sets of $f$. Moreover, let $Q\in \mathfrak{P}_c(\Omega)$. 
    \begin{enumerate}
        \item[(i)] $f^Q_E$ is $Q$-consistent.
        
        \item[(ii)]  $f^{Q}_{E} \leq f$.

        \item[(iii)] $f^Q_E(X) = \inf\{f(Y) \mid Y \in \mathcal{X} \colon j_{Q}(X) = j_{Q}(Y)\}$, $X \in \mathcal{X}$.

        \item[(iv)] If $f^Q:\mathcal{X}\to [-\infty,\infty]$ is $Q$-consistent such that $f^{Q} \leq f$, then $f^{Q} \leq f^{Q}_{E}$.
        
        \item[(v)] Let $\mathcal{Q}\subseteq \mathcal{Q}'\subseteq \mathfrak{P}_c(\Omega)$, then  $f^{\mathcal{Q}}_{E}\leq f^{\mathcal{Q}'}_{E}$. In particular, $f^{\mathcal{Q}}_{E}\leq f_E\leq f$.
    \end{enumerate}
\end{lemma}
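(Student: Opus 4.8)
The plan is to prove the five assertions in the stated order, since (iii)--(v) rest on (i) and (ii). For (i), I would simply note that the set $\{r \in \R \mid j_{Q}(X) \in j_{Q}(E_{r})\}$ whose infimum defines $f^{Q}_{E}(X)$ depends on $X$ only through $j_{Q}(X)$; hence if $j_{Q}(X) = j_{Q}(Y)$ the two sets coincide and $f^{Q}_{E}(X) = f^{Q}_{E}(Y)$. For (ii), I would argue directly on level sets, distinguishing the value of $f(X)$: if $f(X) = \infty$ the claim $f^{Q}_{E}(X) \le f(X)$ is trivial; if $f(X) \in \R$ then $X \in E_{f(X)}$, so $j_{Q}(X) \in j_{Q}(E_{f(X)})$ and $f(X)$ lies in the infimand, giving $f^{Q}_{E}(X) \le f(X)$; and if $f(X) = -\infty$ then $X \in E_{r}$ for every $r \in \R$, so $f^{Q}_{E}(X) \le r$ for all real $r$ and thus $f^{Q}_{E}(X) = -\infty$. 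The convention $\inf \emptyset := \infty$ is exactly what makes the first case work.

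For (iii), set $I(X) := \inf\{f(Y) \mid Y \in \mathcal{X},\ j_{Q}(Y) = j_{Q}(X)\}$, a non-empty infimum because $Y = X$ is admissible. To see $I(X) \le f^{Q}_{E}(X)$: whenever $j_{Q}(X) \in j_{Q}(E_{r})$ there is $Y \in E_{r}$ with $j_{Q}(Y) = j_{Q}(X)$, and $f(Y) \le r$ forces $I(X) \le r$; infimizing over all such $r$ gives the inequality. For the reverse direction, for any admissible $Y$ we have $f^{Q}_{E}(X) = f^{Q}_{E}(Y) \le f(Y)$ by (i) and (ii), and infimizing over $Y$ yields $f^{Q}_{E}(X) \le I(X)$.

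For (iv), let $f^{Q}$ be $Q$-consistent with $f^{Q} \le f$. For every $Y \in \mathcal{X}$ with $j_{Q}(Y) = j_{Q}(X)$, $Q$-consistency gives $f^{Q}(X) = f^{Q}(Y) \le f(Y)$; taking the infimum over all such $Y$ and invoking the identity of (iii) gives $f^{Q}(X) \le f^{Q}_{E}(X)$. For (v), $f^{\mathcal{Q}}_{E} \le f^{\mathcal{Q}'}_{E}$ whenever $\mathcal{Q} \subseteq \mathcal{Q}'$ is just monotonicity of the supremum in its index set; the chain $f^{\mathcal{Q}}_{E} \le f_{E} \le f$ then follows by taking $\mathcal{Q}' = \mathfrak{P}_{c}(\Omega)$ (which is non-empty, as it contains $\mathcal{P}$) and bounding each $f^{Q}_{E}$ above by $f$ via (ii).

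There is no genuine obstacle here; the only points demanding a moment's care are the treatment of the extended-real values $\pm\infty$ in (ii) and the interplay between the convention $\inf \emptyset := \infty$ and part (iii) in the degenerate case where $\{Y \in \mathcal{X} \mid j_{Q}(Y) = j_{Q}(X)\}$ contributes only the value $\infty$. I would present (iii) as the central intermediate identity, since both (iv) and the role of $f^{Q}_{E}$ as the largest $Q$-consistent minorant of $f$ hinge on it.
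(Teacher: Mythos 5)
Your proof is correct, and since the paper explicitly leaves this lemma's proof to the reader as straightforward, your argument is exactly the intended routine verification: (i) and (ii) from the definition, (iii) as the key identity combining them, and (iv)--(v) as immediate consequences. All the edge cases involving $\pm\infty$ and the convention $\inf\emptyset=\infty$ are handled properly.
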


\begin{proof}[Proof of Theorem~\ref{thm:Psensi:localization}]
    If $f$ admits a $\mathcal{Q}$-localization, then $f$ is $\mathcal{P}$-sensitive with reduction set $\mathcal{Q}$ by Lemma~\ref{lem:loc:rep}.
    
    Conversely, let $f$ be $\mathcal{P}$-sensitive with reduction set $\mathcal{Q}$. We show that $f=f^{\mathcal{Q}}_{E}$. In view of Lemma~\ref{lem:aux:1}, this is equivalent to $f\leq f^{\mathcal{Q}}_{E}$ since always $f\geq f^{\mathcal{Q}}_{E}$. Consider $X\in \mathcal{X}$ such that $f^{\mathcal{Q}}_{E}(X)<\infty$ and let $r\in \R$ such that $f^{\mathcal{Q}}_{E}(X) < r$. Then, for all $Q \in \mathcal{Q}$, $f^{Q}_{E}(X) \leq f^{\mathcal{Q}}_{E}(X) < r$ and thus $j_{Q}(X) \in j_{Q}(E_{r})$. As $\mathcal{Q}$ is a reduction set for $E_{r}$, we obtain that $X \in E_{r}$ and thus, $f(X)\leq r$. It follows that 
    \begin{equation*}
        f^{\mathcal{Q}}_{E}(X) = \inf\{r \in \R \mid f^{\mathcal{Q}}_{E}(X) < r\} \geq \inf\{r \in \R \mid f(X)\leq r \} =f(X).
    \end{equation*}
  Finally, recall that if $\mathcal{Q}$ is a reduction set for $f$, then so is any $\mathcal{Q}'$ with $\mathcal{Q} \subseteq \mathcal{Q}' \subseteq \mathfrak{P}_c(\Omega)$ (see Lemma~\ref{lem:fct:pSensi:redSet}). 
\end{proof}

\begin{remark} 
    Let $\mathcal{Q} \subseteq \mathfrak{P}_{c}(\Omega)$ be non-empty. Theorem~\ref{thm:Psensi:localization} implies that a function $f \colon \mathcal{X} \to [-\infty, \infty]$ admits a representation
    \begin{equation} \label{eq:inf:loc}
        f(X) = \inf_{Q \in \mathcal{Q}} f^{Q}(X), \quad X \in \mathcal{X},
    \end{equation}
    where $(f^{Q})_{Q \in \mathcal{Q}}$ is a $\mathcal{Q}$-consistent family of functions, if and only if $f$ is upper $\mathcal{P}$-sensitive and $\mathcal{Q}$ is a joint reduction set for every upper level set of $f$.  Moreover, \eqref{eq:inf:loc} corresponds to a worst-case approach when the $f^Q$ represent a local utility assessment under the model $Q \in \mathcal{Q}$. \hfill $\diamond$
\end{remark}

\subsection{Properties of \mathinhead{\mathcal{P}}{mathcal{P}}-Sensitive Functions}\label{sec:properties}

In this section, we analyze the relationship between properties of $f$ and its localizations. We now assume that $\mathcal{X}$ is a linear space containing the constants.  We will consider the following properties known, e.g., from the study of risk measures:
\begin{enumerate}
    \item[(i)] Monotonicity: For all $X_{1}, X_{2} \in \mathcal{X}$ such that $X_{1} \preccurlyeq X_{2}$, we have $f(X_{1}) \leq f(X_{2})$.
    
    \item[(ii)] Cash-additivity: For all $X \in \mathcal{X}$ and $m \in \R$, we have $f(X + m) = f(X) + m$.

     \item[(iii)] Quasi-convexity: For all $X_{1}, X_{2} \in \mathcal{X}$ and $\lambda \in [0, 1]$, we have $f(\lambda X_{1} + (1 - \lambda) X_{2}) \leq \max \{f(X_{1}), f(X_{2})\}$.

    \item[(iv)] Convexity: For all $X_{1}, X_{2} \in \mathcal{X}$ and $\lambda \in [0, 1]$, we have $f(\lambda X_{1} + (1 - \lambda) X_{2}) \leq \lambda f(X_{1}) + (1~-~\lambda) f(X_{2})$.

    \item[(v)] Positive homogeneity: For all $X \in \mathcal{X}$ and $\lambda \geq 0$, we have $f(\lambda X) = \lambda f(X)$.

    \item[(vi)] Subadditivity: For all $X_{1}, X_{2} \in \mathcal{X}$, we have $f(X_{1} + X_{2}) \leq f(X_{1}) + f(X_{2})$.
\end{enumerate}

\begin{lemma}
    Consider a $\mathcal{Q}$-localization $(f^{Q})_{Q \in \mathcal{Q}}$ of $f$.
    \begin{enumerate}
        \item[(i)] $f$ is monotone whenever each $f^{Q}$, $Q\in \mathcal{Q}$, is monotone. 
        
        \item[(ii)] $f$ is cash-additive whenever each $f^{Q}$, $Q \in \mathcal{Q}$, is cash-additive.

        \item[(iii)] $f$ is (quasi-)convex whenever each $f^{Q}$, $Q \in \mathcal{Q}$, is (quasi-)convex.
        
        \item[(iv)] $f$ is positively homogeneous whenever each $f^{Q}$, $Q \in \mathcal{Q}$, is positively homogeneous.

        \item[(v)] $f$ is subadditive whenever each $f^{Q}$, $Q \in \mathcal{Q}$, is subadditive.
    \end{enumerate}
\end{lemma}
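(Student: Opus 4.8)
The plan is to exploit the single structural fact provided by the hypothesis: a $\mathcal{Q}$-localization presents $f$ as the \emph{pointwise supremum} $f(X)=\sup_{Q\in\mathcal{Q}}f^{Q}(X)$, $X\in\mathcal{X}$. Each of the six properties listed in (i)--(vi) is stable under taking pointwise suprema of families of $[-\infty,\infty]$-valued functions on $\mathcal{X}$, so the proof reduces to five short, essentially one-line verifications, one per item of the lemma. No localization-specific machinery (no $j_Q$, no reduction sets) is needed beyond the defining identity.

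For monotonicity (i): if $X_{1}\preccurlyeq X_{2}$, then $f^{Q}(X_{1})\leq f^{Q}(X_{2})$ for every $Q\in\mathcal{Q}$, and monotonicity of the supremum gives $f(X_{1})=\sup_{Q}f^{Q}(X_{1})\leq\sup_{Q}f^{Q}(X_{2})=f(X_{2})$. For cash-additivity (ii): for $X\in\mathcal{X}$ and $m\in\R$, $f(X+m)=\sup_{Q}f^{Q}(X+m)=\sup_{Q}\bigl(f^{Q}(X)+m\bigr)=\bigl(\sup_{Q}f^{Q}(X)\bigr)+m=f(X)+m$, using the elementary identity $\sup_{Q}(a_{Q}+m)=(\sup_{Q}a_{Q})+m$ in $[-\infty,\infty]$, valid since $m$ is finite. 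For positive homogeneity (iv): for $\lambda>0$, $f(\lambda X)=\sup_{Q}f^{Q}(\lambda X)=\sup_{Q}\lambda f^{Q}(X)=\lambda\sup_{Q}f^{Q}(X)=\lambda f(X)$ by $\sup_{Q}(\lambda a_{Q})=\lambda\sup_{Q}a_{Q}$ for $\lambda>0$, while for $\lambda=0$ positive homogeneity of each $f^{Q}$ forces $f^{Q}(0)=0$, so $f(0)=0=0\cdot f(X)$ under the usual convention $0\cdot(\pm\infty)=0$. For (quasi-)convexity (iii) and subadditivity (v) one chains the localization identity with the defining inequality of the $f^{Q}$ and then with the subadditivity (resp.\ max-compatibility) of $\sup$: e.g.\ $f(\lambda X_{1}+(1-\lambda)X_{2})=\sup_{Q}f^{Q}(\lambda X_{1}+(1-\lambda)X_{2})\leq\sup_{Q}\bigl(\lambda f^{Q}(X_{1})+(1-\lambda)f^{Q}(X_{2})\bigr)\leq\lambda\sup_{Q}f^{Q}(X_{1})+(1-\lambda)\sup_{Q}f^{Q}(X_{2})=\lambda f(X_{1})+(1-\lambda)f(X_{2})$, where $\lambda,1-\lambda\geq0$ lets the coefficients be pulled out of the sups and $\sup_{Q}(a_{Q}+b_{Q})\leq\sup_{Q}a_{Q}+\sup_{Q}b_{Q}$ is used; the quasi-convex case is identical with $\max\{\cdot,\cdot\}$ replacing the convex combination and $\sup_{Q}\max\{a_{Q},b_{Q}\}=\max\{\sup_{Q}a_{Q},\sup_{Q}b_{Q}\}$, and subadditivity is the coefficient-free special case $f(X_{1}+X_{2})=\sup_{Q}f^{Q}(X_{1}+X_{2})\leq\sup_{Q}\bigl(f^{Q}(X_{1})+f^{Q}(X_{2})\bigr)\leq f(X_{1})+f(X_{2})$.

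The only point demanding genuine care, which I would state explicitly, is the extended-real arithmetic behind the sup-identities for (iii) and (v): one must check $\sup_{Q}(a_{Q}+b_{Q})\leq\sup_{Q}a_{Q}+\sup_{Q}b_{Q}$ never produces an indeterminate $(+\infty)+(-\infty)$. This is handled by the standard convention, since the inequality is trivial whenever the right-hand side is $+\infty$, and in the remaining cases both sides are finite or both are $-\infty$. For cash-additivity finiteness of $m$ removes the issue, and for positive homogeneity only $\lambda>0$ (plus the trivial $\lambda=0$ case) occurs. I expect no real obstacle beyond this routine bookkeeping; the entire lemma is the ``a supremum of nice functions is nice'' principle applied six times.
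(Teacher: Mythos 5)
Your proof is correct and follows exactly the route the paper takes (whose entire proof is ``Recall that $f(X) = \sup_{Q \in \mathcal{Q}} f^{Q}(X)$. Then the assertions follow.''), namely that each listed property passes to pointwise suprema; you merely spell out the extended-real bookkeeping that the paper leaves implicit. (Minor slip: you say ``six'' verifications, but the lemma has five items.)
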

\begin{proof}
    Recall that $f(X) = \sup_{Q \in \mathcal{Q}} f^{Q}(X)$. Then the assertions follow. 
\end{proof}

\begin{lemma} \label{lem:f:fQE}
   Let $Q \in \mathfrak{P}_c(\Omega)$.
    \begin{enumerate}
        \item[(i)] If $f$ is monotone, then $f^{Q}_{E}$ is monotone. Moreover, $j_Q(X_1)\leq_Q j_Q(X_2)$ implies $f^{Q}_{E}(X_1)\leq f^{Q}_{E}(X_2)$.
        
        \item[(ii)] If $f$ is cash-additive, then  $f^{Q}_{E}$ is cash-additive.

        \item[(iii)] If $f$ is (quasi-)convex, then $f^{Q}_{E}$ is (quasi-)convex.
        
        \item[(iv)] If $f$ is positively homogeneous, then $f^{Q}_{E}$ satisfies $f^{Q}_{E}(\lambda X)=\lambda f^Q_E(X)$ for all $X\in \mathcal{X}$ and $\lambda>0$. Moreover, either $f^{Q}_{E}(0)=-\infty$ or $f^{Q}_{E}$ is positively homogeneous.

        \item[(v)] If $f$ is subadditive, then $f^{Q}_E$ is subadditive.
    \end{enumerate}
\end{lemma}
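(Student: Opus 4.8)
The unifying tool is the infimal representation from Lemma~\ref{lem:aux:1}(iii),
\[
  f^{Q}_{E}(X) = \inf\{f(Y) \mid Y \in \mathcal{X},\ j_{Q}(Y) = j_{Q}(X)\}, \qquad X \in \mathcal{X},
\]
together with the facts that $\mathcal{X}$ is a linear subspace containing the constants and that $j_{Q}$ is linear with $j_{Q}(m)=m$. The general mechanism is: to estimate $f^{Q}_{E}$ at a point, choose (nearly) optimal competitors $Y$ in the relevant infima, form a suitable linear combination of them that still lies in the $j_{Q}$-fibre over the point in question, apply the corresponding property of $f$ to that combination, and pass back to the infimum (letting the approximation error $\varepsilon\downarrow 0$ where needed). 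The values $\pm\infty$ are routine: if an infimum is $+\infty$ the asserted inequality is trivial, and the $-\infty$ case runs through the same $\varepsilon$-scheme with arbitrarily negative thresholds.

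For the ``easy'' parts (ii), (iii), (v) and the case $\lambda>0$ of (iv) this mechanism is essentially mechanical. For cash-additivity, $Y\mapsto Y+m$ is a bijection between the $j_{Q}$-fibres over $j_{Q}(X)$ and over $j_{Q}(X)+m=j_{Q}(X+m)$, and $f(Y+m)=f(Y)+m$, so taking infima gives $f^{Q}_{E}(X+m)=f^{Q}_{E}(X)+m$. For (quasi-)convexity and subadditivity, fix $\varepsilon>0$, pick $Y_{i}\in\mathcal{X}$ with $j_{Q}(Y_{i})=j_{Q}(X_{i})$ and $f(Y_{i})\le f^{Q}_{E}(X_{i})+\varepsilon$, note that $\lambda Y_{1}+(1-\lambda)Y_{2}$ (resp.\ $Y_{1}+Y_{2}$) lies in the $j_{Q}$-fibre over $\lambda X_{1}+(1-\lambda)X_{2}$ (resp.\ $X_{1}+X_{2}$) by linearity of $j_{Q}$, apply the corresponding property of $f$, and let $\varepsilon\downarrow 0$. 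For positive homogeneity with $\lambda>0$, $Y\mapsto\lambda Y$ is a bijection between the fibres over $j_{Q}(X)$ and over $\lambda j_{Q}(X)=j_{Q}(\lambda X)$, $f(\lambda Y)=\lambda f(Y)$, and $\lambda\cdot\inf=\inf(\lambda\cdot\,)$ since $\lambda>0$, whence $f^{Q}_{E}(\lambda X)=\lambda f^{Q}_{E}(X)$.

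Part (i) is where the main care is needed. For the first assertion, let $X_{1}\preccurlyeq X_{2}$ and let $Y_{2}\in\mathcal{X}$ satisfy $j_{Q}(Y_{2})=j_{Q}(X_{2})$; then $Y_{1}:=Y_{2}+(X_{1}-X_{2})\in\mathcal{X}$ satisfies $j_{Q}(Y_{1})=j_{Q}(X_{1})$ and $Y_{1}\preccurlyeq Y_{2}$, so monotonicity of $f$ gives $f(Y_{1})\le f(Y_{2})$, and infimizing over $Y_{2}$ yields $f^{Q}_{E}(X_{1})\le f^{Q}_{E}(X_{2})$. For the ``moreover'' claim one has only $j_{Q}(X_{1})\le_{Q}j_{Q}(X_{2})$; using $Q$-consistency of $f^{Q}_{E}$ (Lemma~\ref{lem:aux:1}(i)) it suffices to produce $\widetilde{X}_{1}\in\mathcal{X}$ with $j_{Q}(\widetilde{X}_{1})=j_{Q}(X_{1})$ and $\widetilde{X}_{1}\preccurlyeq X_{2}$ and then invoke the first assertion — the natural choice being $\widetilde{X}_{1}:=X_{1}\wedge X_{2}$, equivalently working with $Y_{1}:=Y_{2}-(X_{1}-X_{2})^{-}$ and checking, via $j_{Q}((X_{1}-X_{2})^{-})=j_{Q}(X_{2})-j_{Q}(X_{1})$ (because $j_{Q}$ is a lattice homomorphism and $j_{Q}(X_{1}-X_{2})\le_{Q}0$), that $j_{Q}(Y_{1})=j_{Q}(X_{1})$ and $Y_{1}\preccurlyeq Y_{2}$. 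I expect this to be the principal obstacle: the modification $X_{1}\wedge X_{2}$ (equivalently $(X_{1}-X_{2})^{-}$) must stay inside $\mathcal{X}$, which is automatic when $\mathcal{X}$ is a sublattice of $L^{0}_{c}$ (e.g.\ $L^{0}_{c}$ or $L^{\infty}_{c}$) but genuinely uses lattice structure, whereas every other part of the lemma needs only the linear structure of $\mathcal{X}$.

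Finally, the case $\lambda=0$ of (iv): applying the identity $f^{Q}_{E}(\lambda X)=\lambda f^{Q}_{E}(X)$ ($\lambda>0$) to $X=0$ gives $f^{Q}_{E}(0)=\lambda f^{Q}_{E}(0)$ for all $\lambda>0$, so $f^{Q}_{E}(0)\in\{-\infty,0,+\infty\}$, and moreover $f^{Q}_{E}(0)\le f(0)$. If $f^{Q}_{E}(0)$ is finite it must equal $0$, and then, with the usual convention $0\cdot(\pm\infty)=0$, $f^{Q}_{E}(0\cdot X)=0=0\cdot f^{Q}_{E}(X)$, i.e.\ $f^{Q}_{E}$ is positively homogeneous; the remaining, genuinely irreducible possibility is $f^{Q}_{E}(0)=-\infty$, which is the other alternative in the stated dichotomy (the value $+\infty$ being excluded from $f^{Q}_{E}(0)\le f(0)$ under the standing conventions on $f$, or else subsumed into the positively homogeneous case).
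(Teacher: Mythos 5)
Your proof is correct and follows essentially the same strategy as the paper: for each property, use the infimal characterization from Lemma~\ref{lem:aux:1}(iii) and linearity of $j_Q$, and for part~(i) modify the competitor on a $Q$-null set to restore the $\mathcal{P}$-q.s.\ order. Two minor remarks are worth making. First, for the ``moreover'' claim in (i) the paper constructs $\mathbf{1}_{A} X_{1} + \mathbf{1}_{A^{c}} \widetilde{X}_{2}$ with $A := \{g_1 \leq g_2\}$, which is exactly $X_{1} \wedge \widetilde{X}_{2}$; so your observation that this step genuinely uses lattice structure of $\mathcal{X}$ (whereas the standing assumption of Section~\ref{sec:properties} only requires a linear subspace containing the constants) applies equally to the paper's own proof, which implicitly assumes $\mathcal{X}$ is closed under such meets or under multiplication by indicators. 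Second, your derivation of the plain monotonicity in (i) via $Y_{1} := Y_{2} + (X_{1} - X_{2})$ uses only linearity and is cleaner than the paper's reduction of the first assertion to the second; the paper needs the lattice construction even for this weaker claim.
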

\begin{proof}
     (i) Let $X_{1}, X_{2} \in \mathcal{X}$ such that $X_{1} \preccurlyeq X_{2}$. Then $j_Q(X_{1}) \leq_Q j_Q(X_{2})$. Hence, it suffices to show the second assertion. To this end, suppose that $j_Q(X_{1}) \leq_Q j_Q(X_{2})$.  If $f^Q_E(X_{2}) = \infty$, there is nothing to show. Let $r \in \R$ such that $f^Q_E(X_{2}) < r$. There is $\widetilde X_{2} \in \mathcal{X}$ such that $f(\widetilde X_2) \leq r $ and $j_{Q}(\widetilde X_{2}) = j_Q(X_{2})$. Set $A := \{\omega \in \Omega \mid g_{1}(\omega) \leq g_{2}(\omega)\}$ where $g_{1} \in X_{1}$ and $g_{2} \in \widetilde X_{2}$. Then $Q(A)=1$ and 
    \begin{equation*}
         \mathbf{1}_{A} X_{1} + \mathbf{1}_{A^{c}} \widetilde X_{2} \preccurlyeq  \widetilde X_2.
    \end{equation*}
    Since $f$ is monotone, $f(\mathbf{1}_{A} X_{1} + \mathbf{1}_{A^{c}} \widetilde X_{2}) \leq f(\widetilde X_{2}) \leq r$. As $j_{Q}(X_{1}) = j_{Q}(\mathbf{1}_{A} X_{1} + \mathbf{1}_{A^{c}} \widetilde X_{2})$, we conclude by Lemma~\ref{lem:aux:1} that $f_E^Q(X_1)\leq f(\mathbf{1}_{A} X_{1} + \mathbf{1}_{A^{c}} \widetilde X_{2})\leq r$. Since $r > f^{Q}_{E}(X_{2})$ was arbitrary, it follows that $f^{Q}_{E}(X_{1}) \leq f^{Q}_{E}(X_{2})$.

    \smallskip\noindent
    (ii), (iii), and (v) follow from Lemma~\ref{lem:aux:1}~(iii) and linearity of $j_Q$.

    \smallskip\noindent
    (iv) In case $\lambda>0$ and $X\in \mathcal{X}$, Lemma~\ref{lem:aux:1}~(iii) yields the assertion. Since $f(0)=0$, we have that $f^Q_E(0)\leq 0$. Suppose that $f^Q_E(0)< 0$. Then there must be $X\neq 0$ and $r<0$ such that $f(X)\leq r$ and $j_Q(X)=0$. But then also $j_Q(\lambda X)=0$ for all $\lambda>0$. Thus, $f^Q_E(0)\leq f(\lambda X)=\lambda f(X)=- \infty$.
\end{proof}

\subsection{Robust Dual Representation and \mathinhead{\mathcal{P}}{mathcal{P}}-Sensitivity}

In this section, we will show that a function $f:\mathcal{X}\to [-\infty,\infty]$ admits a dual representation over $ca_c$ only if it is $\mathcal{P}$-sensitive. Moreover, in that case we obtain another functional localization of $f$ via a \textit{dual approach}. In contrast, the localization defined by the functions $f^Q_E$, which are based on the level sets, can be viewed as a \textit{primal approach} to represent $f$.

Throughout this section, we assume that $\mathcal{X}\subseteq L^0_c$ is a linear space containing the constants. Let 
\begin{equation*}
    ca_c(\mathcal{X}):= \bigg\{\mu \in ca_c \biggm\vert \int X d\mu \text{ is well-defined and finite for all } X\in \mathcal{X}\bigg\}.
\end{equation*}
Note that always $0\in ca_c(\mathcal{X})$. A function $f \colon \mathcal{X} \to [-\infty, \infty]$ is said to admit a \textit{dual representation} over $ca_c(\mathcal{X})$ if  
\begin{equation}\label{eq:dualf1}
    f(X) = \sup_{\mu \in ca_c(\mathcal{X})} \int X d\mu - f^{*}(\mu), \quad X\in \mathcal{X},
\end{equation}
where $f^{*} \colon ca_c(\mathcal{X}) \to [-\infty,\infty]$. Note that $f^\ast(\mu)=\infty$, means that $\mu$ is effectively not considered in \eqref{eq:dualf1}. Also, $f^\ast$ in \eqref{eq:dualf1} is not necessarily unique, meaning that $f$ may admit a representation of the form \eqref{eq:dualf1} for different  $f^\ast$. In order to deal with the latter issue, we consider, as usual, the minimal $f^\ast$ for which $f$ admits a representation \eqref{eq:dualf1}. This so-called \textit{dual function} $\overline{f}:ca_c(\mathcal{X})\to [-\infty,\infty]$ is given by \begin{equation*}
    \overline f(\mu):=\sup_{X\in \mathcal{X}} \int X d\mu - f(X), \quad \mu \in ca_{c}(\mathcal{X}).
\end{equation*}
Note that $\overline f$ is indeed defined for every function $f:\mathcal{X}\to [-\infty,\infty]$, including those that do not admit a dual representation. If $f$ takes the value $-\infty$, then $\overline{f}\equiv \infty$. The \textit{dual regularization} $f_D$ of $f$ is given by
\begin{equation*}
    f_D(X) := \sup_{\mu\in ca_c(\mathcal{X})} \int X d\mu -\overline f(\mu), \quad X\in \mathcal{X}.
\end{equation*}
Clearly, $f_D$ is convex and $f_D\leq f$. It is well-known that a function $f$ admits a dual representation of the form \eqref{eq:dualf1} if and only if  $f=f_D$. Indeed, suppose that $f$ admits a dual representation \eqref{eq:dualf1}, then by definition of $\overline f$, we have $f^\ast \geq \overline f$. Thus,
\begin{equation*}
    f(X)=\sup_{\mu \in ca_c(\mathcal{X})}  \int X d\mu - f^{*}(\mu)\leq \sup_{\mu \in ca_c(\mathcal{X}) } \int X d\mu - \overline f(\mu)=f_D(X)\leq f(X).
\end{equation*}

\begin{remark}
    Recall that the Fenchel-Moreau Theorem (see, e.g., \cite[Part One, Proposition~3.1]{ET1999}) provides necessary and sufficient conditions on $f$ guaranteeing that $f$ admits a dual representation. Indeed, suppose $\mathcal{X}$ carries a locally convex topology $\tau$ such that the dual space $(\mathcal{X},\tau)^\ast$ may be identified with a subset of $ca_c(\mathcal{X})$ via the usual representation of continuous linear functionals as integrals over elements of $ca_c(\mathcal{X})$. Then, according to the Fenchel-Moreau Theorem, $f=f_D$ is equivalent to $f$ being convex, lower semicontinuous with respect to $\tau$, and satisfying that $f$ takes the value $-\infty$ if and only if $f\equiv -\infty$. \hfill $\diamond$
\end{remark}

For $Q \in \mathfrak{P}_{c}(\Omega)$, the \textit{$Q$-dual regularization} of $f$ is given by
\begin{equation*}
    f^{Q}_{D}(X) := \sup_{\mu \in ca_{Q}(\mathcal{X})} \int X d\mu - \overline f^Q(\mu), \quad X \in \mathcal{X},
\end{equation*}
where $ca_Q(\mathcal{X}) :=ca_Q\cap ca_c(\mathcal{X})$ and
\begin{equation*}
    \overline f^Q(\mu):=\sup_{X\in \mathcal{X}}\int Xd\mu - f^Q_E(X), \quad \mu\in ca_{Q}(\mathcal{X}).
\end{equation*}
$f^Q_D$ corresponds to a $Q$-local dual view on $f$. For non-empty $\mathcal{Q}\subseteq \mathfrak{P}_{c}(\Omega)$, we set $f^{\mathcal{Q}}_D := \sup_{Q\in \mathcal{Q}}f^Q_D$. Clearly, $f_D = f^{\mathfrak{P}_{c}(\Omega)}_D$.

\begin{lemma} \label{lem:fD:fE}
    Let $Q\in \mathfrak{P}_c(\Omega)$.
    \begin{enumerate}
        \item[(i)] $f^{Q}_{D}$ is $Q$-consistent.
        
        \item[(ii)] $f^Q_D\leq f^Q_E$.
        
        \item[(iii)] Consider any $Q$-consistent function $f^Q \colon \mathcal{X} \to [-\infty,\infty]$ which satisfies $f^Q \leq f$ and admits a dual representation over $ca_Q(\mathcal{X})$, that is,
        \begin{equation} \label{eq:dual:fQ}
            f^{Q}(X) = \sup_{\mu \in ca_Q(\mathcal{X})} \int X d\mu - (f^Q)^{*}(\mu), \quad X \in \mathcal{X},
        \end{equation}
        where $(f^Q)^{*} \colon ca_Q(\mathcal{X})\to [-\infty,\infty]$. Then $\overline f^Q \leq (f^Q)^{*}$  and $f^Q\leq f^{Q}_{D}$.
        
        \item[(iv)] $\overline f^Q(\mu)=\overline f(\mu)$ for all $\mu\in ca_Q(\mathcal{X})$ and hence,
        \begin{equation*}
            f^{Q}_{D}(X) = \sup_{\mu \in ca_Q(\mathcal{X})} \int X d\mu - \overline{f}(\mu), \quad X \in \mathcal{X}.
        \end{equation*}
        
        \item[(v)] $f^{Q}_{D} \leq f_D \leq f$.    
            
        \item[(vi)] $f^{Q}_{D} = f^{Q}_{E}$ if and only if $f^{Q}_{E}$ admits a dual representation over $ca_Q(\mathcal{X})$.
    \end{enumerate}
\end{lemma}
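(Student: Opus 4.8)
The plan is to treat all six items as routine convex-duality bookkeeping, organized around one observation: by the very definitions, $\overline f^Q$ is the convex conjugate of $f^Q_E$ relative to the pairing with $ca_Q(\mathcal{X})$, and $f^Q_D$ is the corresponding biconjugate. I would prove the items in the stated order, letting (iv) carry the weight and feed into (v) and (vi).

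For (i): every $\mu\in ca_Q(\mathcal{X})$ has $|\mu|\ll Q$, so $j_Q(X)=j_Q(Y)$ forces $\int X\,d\mu=\int Y\,d\mu$; hence each term $\int X\,d\mu-\overline f^Q(\mu)$ in the defining supremum is unchanged when $X$ is replaced by $Y$, so $f^Q_D(X)=f^Q_D(Y)$. For (ii): the inequality $\overline f^Q(\mu)\geq\int X\,d\mu-f^Q_E(X)$ is immediate from the definition of $\overline f^Q$; rearranging and taking the supremum over $\mu\in ca_Q(\mathcal{X})$ yields $f^Q_D\leq f^Q_E$, the $\pm\infty$ cases being absorbed by the standing conventions. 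For (iii): since $f^Q$ is $Q$-consistent with $f^Q\leq f$, Lemma~\ref{lem:aux:1}(iv) gives $f^Q\leq f^Q_E$, hence $\overline f^Q(\mu)=\sup_X\bigl(\int X\,d\mu-f^Q_E(X)\bigr)\leq\sup_X\bigl(\int X\,d\mu-f^Q(X)\bigr)$; and since $f^Q$ satisfies \eqref{eq:dual:fQ}, the reasoning used in the text to show $f^\ast\geq\overline f$ gives $(f^Q)^\ast(\mu)\geq\sup_X\bigl(\int X\,d\mu-f^Q(X)\bigr)$. Chaining, $\overline f^Q\leq(f^Q)^\ast$, and thus $f^Q(X)=\sup_\mu\bigl(\int X\,d\mu-(f^Q)^\ast(\mu)\bigr)\leq\sup_\mu\bigl(\int X\,d\mu-\overline f^Q(\mu)\bigr)=f^Q_D(X)$.

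The main step is (iv), the identity $\overline f^Q=\overline f$ on $ca_Q(\mathcal{X})$, to be proved by two inequalities. ``$\overline f^Q\geq\overline f$'' is immediate from $f^Q_E\leq f$ (Lemma~\ref{lem:aux:1}(ii)). For ``$\overline f^Q\leq\overline f$'' I would use the primal formula $f^Q_E(X)=\inf\{f(Y)\mid Y\in\mathcal{X},\ j_Q(Y)=j_Q(X)\}$ of Lemma~\ref{lem:aux:1}(iii): as $\int X\,d\mu$ is finite for $\mu\in ca_Q(\mathcal{X})\subseteq ca_c(\mathcal{X})$, we get $\int X\,d\mu-f^Q_E(X)=\sup\{\int X\,d\mu-f(Y)\mid Y\in\mathcal{X},\ j_Q(Y)=j_Q(X)\}$, and since $|\mu|\ll Q$ the integral $\int X\,d\mu$ equals $\int Y\,d\mu$ throughout that set, so this is at most $\sup_{Y\in\mathcal{X}}\bigl(\int Y\,d\mu-f(Y)\bigr)=\overline f(\mu)$; the supremum over $X$ closes it. The second display in (iv) is then just the definition of $f^Q_D$ with $\overline f^Q$ rewritten as $\overline f$. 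Granting (iv), (v) follows at once: $f^Q_D(X)=\sup_{\mu\in ca_Q(\mathcal{X})}\bigl(\int X\,d\mu-\overline f(\mu)\bigr)\leq\sup_{\mu\in ca_c(\mathcal{X})}\bigl(\int X\,d\mu-\overline f(\mu)\bigr)=f_D(X)\leq f(X)$, using $ca_Q(\mathcal{X})\subseteq ca_c(\mathcal{X})$ and the already-noted $f_D\leq f$. For (vi), I would note that $f^Q_D$ is exactly the dual regularization of $f^Q_E$ formed over $ca_Q(\mathcal{X})$, its conjugate being $\overline f^Q$ by definition; hence the well-known equivalence recalled in the text---a function equals its dual regularization iff it admits a dual representation---applied to $f^Q_E$ over $ca_Q(\mathcal{X})$ says precisely that $f^Q_E$ admits a dual representation over $ca_Q(\mathcal{X})$ iff $f^Q_E=f^Q_D$.

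The only point I expect to need care is (iv): one must check that $\int X\,d\mu-\inf\{f(Y)\mid\cdots\}=\sup\{\int X\,d\mu-f(Y)\mid\cdots\}$ is legitimate (it is, because $\int X\,d\mu\in\R$ for $\mu\in ca_c(\mathcal{X})$ and $X\in\mathcal{X}$) and that replacing $\int X\,d\mu$ by $\int Y\,d\mu$ along $j_Q(X)=j_Q(Y)$ is exactly where $|\mu|\ll Q$ enters. The degenerate cases---$f$ or $f^Q_E$ attaining $-\infty$, or being identically $\infty$---are handled by the $[-\infty,\infty]$-arithmetic conventions and deserve only a one-line remark.
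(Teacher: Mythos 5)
Your proposal is correct, and items (i), (ii), (iii), (v), and (vi) follow the paper's proof essentially verbatim (the paper simply declares (i) and (ii) obvious and derives (vi) from (ii) and (iii), which is the same equivalence you invoke). The one place you genuinely diverge is the inequality $\overline f^Q \leq \overline f$ on $ca_Q(\mathcal{X})$ in (iv): the paper obtains it indirectly by applying (iii) to the auxiliary function $g(Y) := \sup_{\mu \in ca_Q(\mathcal{X})} \int Y \, d\mu - \overline f(\mu)$, which is $Q$-consistent, dominated by $f$, and dual-representable by construction, whereas you compute it directly from the primal formula $f^{Q}_{E}(X) = \inf\{f(Y) \mid j_{Q}(Y) = j_{Q}(X)\}$ of Lemma~\ref{lem:aux:1}(iii), using that $\int X \, d\mu = \int Y \, d\mu$ whenever $j_Q(X) = j_Q(Y)$ and $\lvert\mu\rvert \ll Q$, and that $\int X \, d\mu$ is finite so the $\inf$ passes through the subtraction as a $\sup$. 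Both arguments are valid; yours is more self-contained and makes transparent exactly where $\lvert\mu\rvert \ll Q$ enters, while the paper's reuses (iii) and so avoids any new computation. No gaps.
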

\begin{proof}
    (i) and (ii) are obvious.
    
    \smallskip \noindent
    (iii) By Lemma~\ref{lem:aux:1}, $f^Q\leq f^Q_E$. Hence, for each $\mu \in ca_Q(\mathcal{X})$, we obtain
    \begin{align*}
        \overline f^Q(\mu) &= \sup_{X \in \mathcal{X}} \int X d\mu - f^Q_E(X) \quad \leq \quad \sup_{X \in \mathcal{X}} \int X d\mu - f^Q(X) \\
        &= \sup_{X \in \mathcal{X}} \int X d\mu - \sup_{\nu \in ca_Q(\mathcal{X})} \bigg(\int X d\nu - (f^Q)^{*}(\nu)\bigg) \quad \leq \quad (f^Q)^{*}(\mu).
    \end{align*}
    Therefore,
    \begin{equation*}
        \int X d\mu - (f^Q)^\ast(\mu)\leq \int X d\mu - \overline f^Q(\mu)\leq f_D^Q(X).
    \end{equation*}
    Taking the supremum over all $\mu \in ca_Q(\mathcal{X})$ on the left side proves the assertion.
    
    \smallskip\noindent
    (iv) Let $g(Y):= \sup_{\mu \in ca_Q(\mathcal{X})} \int Y d\mu - \overline f(\mu)$, $Y\in \mathcal{X}$. By definition, $g \leq f_D\leq f$. Therefore, $\overline f^Q(\mu) \leq \overline f(\mu)$ for all $\mu\in ca_Q(\mathcal{X})$ by (iii). Moreover, for $\mu\in ca_Q(\mathcal{X})$, we have
    \begin{equation*}
        \overline f^Q(\mu)=\sup_{X\in \mathcal{X}}\int X d\mu - f^Q_E(X)\geq \sup_{X\in \mathcal{X}}\int Xd\mu - f(X)=\overline f(\mu)
    \end{equation*}
    since $f_E^Q \leq f$ by Lemma~\ref{lem:aux:1}.
    
    \smallskip\noindent
    (v) follows from (iv).
    
    \smallskip\noindent
    (vi) follows from (ii) and (iii).
\end{proof}

Provided that $\mathcal{X}$ admits a locally convex topology and the corresponding dual space is consistent with $ca_c$, Lemma~\ref{lem:f:fQE}~(iv) implies that $f^Q_D$ is the convex, lower semicontinuous regularization of $f$ (or $f_E^Q$) under $Q$ (see \cite[Part One, Definition~3.2]{ET1999}).

\begin{proposition} \label{prop:fQD}
    Suppose that  $f \colon \mathcal{X} \to [-\infty, \infty]$ admits a dual representation over $ca_c(\mathcal{X})$. Let $\mathcal{Q} \subseteq \mathfrak{P}_{c}(\Omega)$ be non-empty with
    \begin{equation*}
        \{\mu\in ca_c(\mathcal{X})\mid \overline f(\mu)\in \R\} \subseteq \bigcup_{Q \in \mathcal{Q}} ca_{Q}(\mathcal{X}).
    \end{equation*}
    Then $\mathcal{Q}$ is a reduction set for $f$ and $f=f^{\mathcal{Q}}_E=f^{\mathcal{Q}}_D$.
\end{proposition}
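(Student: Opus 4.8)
The plan is to show that the dual localization $(f^{Q}_{D})_{Q\in\mathcal{Q}}$ is in fact a $\mathcal{Q}$-localization of $f$, and then to squeeze the primal localization $f^{\mathcal{Q}}_{E}$ between $f^{\mathcal{Q}}_{D}$ and $f$. Since $f$ admits a dual representation over $ca_{c}(\mathcal{X})$, we have $f=f_{D}$, i.e.\ $f(X)=\sup_{\mu\in ca_{c}(\mathcal{X})}\bigl(\int X\,d\mu-\overline{f}(\mu)\bigr)$. By Lemma~\ref{lem:fD:fE}(iv), $\overline{f}^{Q}$ and $\overline{f}$ agree on $ca_{Q}(\mathcal{X})$, so $f^{Q}_{D}(X)=\sup_{\mu\in ca_{Q}(\mathcal{X})}\bigl(\int X\,d\mu-\overline{f}(\mu)\bigr)$ for each $Q\in\mathcal{Q}$, and therefore
\[
    f^{\mathcal{Q}}_{D}(X)=\sup_{Q\in\mathcal{Q}}\ \sup_{\mu\in ca_{Q}(\mathcal{X})}\Bigl(\int X\,d\mu-\overline{f}(\mu)\Bigr)=\sup_{\mu\in\bigcup_{Q\in\mathcal{Q}}ca_{Q}(\mathcal{X})}\Bigl(\int X\,d\mu-\overline{f}(\mu)\Bigr).
\]

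Next I would prove $f^{\mathcal{Q}}_{D}=f_{D}$, which equals $f$ by assumption. Since $\bigcup_{Q\in\mathcal{Q}}ca_{Q}(\mathcal{X})\subseteq ca_{c}(\mathcal{X})$, clearly $f^{\mathcal{Q}}_{D}\leq f_{D}$. For the reverse inequality, fix $X\in\mathcal{X}$ and $\mu\in ca_{c}(\mathcal{X})$ and bound $\int X\,d\mu-\overline{f}(\mu)$ by $f^{\mathcal{Q}}_{D}(X)$: if $\overline{f}(\mu)\in\R$, the standing hypothesis gives $\mu\in\bigcup_{Q\in\mathcal{Q}}ca_{Q}(\mathcal{X})$, so this term already occurs in the supremum above; if $\overline{f}(\mu)=\infty$, the term equals $-\infty$ and is harmless; and $\overline{f}(\mu)=-\infty$ can only happen when $f\equiv\infty$ (since $\overline{f}(\mu)=\sup_{X}\bigl(\int X\,d\mu-f(X)\bigr)$ and $\int X\,d\mu$ is finite), in which case $\overline{f}\equiv-\infty$ and $0\in ca_{Q}(\mathcal{X})$ for every $Q$ already forces $f^{\mathcal{Q}}_{D}(X)=\infty$. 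Taking the supremum over $\mu$ yields $f_{D}\leq f^{\mathcal{Q}}_{D}$, hence $f^{\mathcal{Q}}_{D}=f_{D}=f$.

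It then remains to combine this with the primal side and to read off $\mathcal{P}$-sensitivity. By Lemma~\ref{lem:fD:fE}(ii), $f^{Q}_{D}\leq f^{Q}_{E}$ for every $Q$, so $f^{\mathcal{Q}}_{D}\leq f^{\mathcal{Q}}_{E}$, while $f^{\mathcal{Q}}_{E}\leq f$ by Lemma~\ref{lem:aux:1}(v); together with $f=f^{\mathcal{Q}}_{D}$ this gives $f=f^{\mathcal{Q}}_{E}=f^{\mathcal{Q}}_{D}$. Finally, $(f^{Q}_{D})_{Q\in\mathcal{Q}}$ is $\mathcal{Q}$-consistent by Lemma~\ref{lem:fD:fE}(i) and $f=\sup_{Q\in\mathcal{Q}}f^{Q}_{D}$, so Lemma~\ref{lem:loc:rep} (equivalently Theorem~\ref{thm:Psensi:localization}) shows that $f$ is $\mathcal{P}$-sensitive with reduction set $\mathcal{Q}$; that is, $\mathcal{Q}$ is a reduction set for $f$.

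The only genuine obstacle is the bookkeeping in the middle step: the hypothesis only controls those $\mu$ for which $\overline{f}(\mu)$ is real, so one has to check that the values $\overline{f}(\mu)\in\{-\infty,\infty\}$ (equivalently, the degenerate situations $f\equiv-\infty$ or $f\equiv\infty$) contribute nothing to the dual supremum that is not already captured by the membership $0\in\bigcup_{Q\in\mathcal{Q}}ca_{Q}(\mathcal{X})$. Everything else is a direct application of the lemmas of this section.
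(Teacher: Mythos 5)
Your argument is correct and follows essentially the same route as the paper: use Lemma~\ref{lem:fD:fE}~(iv) to rewrite $f^{\mathcal{Q}}_D$ as a supremum over $\bigcup_{Q\in\mathcal{Q}}ca_Q(\mathcal{X})$, invoke the hypothesis that the finite part of $\operatorname{dom}\overline f$ lies in that union (handling the degenerate values $\pm\infty$ of $\overline f$ separately, as the paper also does), conclude $f=f_D=f^{\mathcal{Q}}_D$, and then sandwich $f^{\mathcal{Q}}_E$ via Lemmas~\ref{lem:aux:1} and~\ref{lem:fD:fE} and obtain the reduction set from Lemma~\ref{lem:loc:rep}. No gaps.
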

\begin{proof}
     By assumption, $f=f_D$. Let $\mathcal{M}:=\{\mu\in ca_c(\mathcal{X})\mid \overline f(\mu)<\infty\}$. Suppose that $\mathcal{M}=\emptyset$. Then $f\equiv -\infty$, and thus $f^Q_D\equiv -\infty$ for all $Q\in \mathfrak{P}_c(\Omega)$. From now on assume that $\mathcal{M}\neq\emptyset$. If there exists $\mu\in ca_c(\mathcal{X})$ such that $\overline f(\mu)=-\infty$, then $f\equiv \infty$. In that case $\overline f\equiv -\infty$, and thus $f^Q_D\equiv \infty$ for all $Q\in \mathfrak{P}_c(\Omega)$ by Lemma~\ref{lem:fD:fE}.  Hence, assume that $\overline f(\mu)>-\infty$ for all $\mu\in ca_c(\mathcal{X})$. 
     Recalling that $f^{Q}_{D} \leq f$ and Lemma~\ref{lem:fD:fE}~(iv), for all $X \in \mathcal{X}$,
    \begin{align*}
        f(X) &= \sup_{\mu \in \mathcal{M}} \int X d\mu - \overline f(\mu)\quad  = \quad \sup_{\mu \in \mathcal{M} \cap \bigcup_{Q \in \mathcal{Q}} ca_{Q}} \int X d\mu - \overline f(\mu) \\
        & =   \sup_{Q \in \mathcal{Q}} \sup_{\mu \in \mathcal{M} \cap ca_{Q}} \int X d\mu - \overline{f}(\mu) \quad \leq \quad   \sup_{Q \in \mathcal{Q}} \sup_{\mu \in ca_{Q}(\mathcal{X})} \int X d\mu - \overline f(\mu) \\ & = \sup_{Q \in \mathcal{Q}} f^{Q}_{D}(X) \quad \leq \quad  f(X).
    \end{align*}
    Thus, $f=f^{\mathcal{Q}}_D$ and $\mathcal{Q}$ is a reduction set for $f$ according to Lemma~\ref{lem:loc:rep}. Finally, recall that always $f\geq f^{\mathcal{Q}}_E\geq f^{\mathcal{Q}}_D$.
\end{proof}   

\begin{theorem} \label{thm:dual:f}    
    Let $f \colon \mathcal{X} \to [-\infty, \infty]$. The following are equivalent: 
    \begin{itemize}
        \item[(i)] $f$ admits a dual representation over $ca_c(\mathcal{X})$.
        
        \item[(ii)] $f=f_D$.
        
        \item[(iii)] There is a non-empty set $\mathcal{Q} \subseteq \mathfrak{P}_c(\Omega)$ such that $f=f^{\mathcal{Q}}_D$.
        
        \item[(iv)] $f$ is $\mathcal{P}$-sensitive and, for a reduction set $\mathcal{Q} \subseteq \mathfrak{P}_c(\Omega)$ of $f$, there exists a $\mathcal{Q}$-localization $(f^{Q})_{Q \in \mathcal{Q}}$ of $f$ such that each $f^Q$ admits a representation \eqref{eq:dual:fQ}.
    \end{itemize}
    In particular, let $\mathcal{Q}\subseteq \mathfrak{P}_c(\Omega)$ be non-empty and suppose that, for all $Q\in \mathcal{Q}$, $j_Q(\mathcal{X})$ admits a topology $\tau^Q$ such that $(j_{Q}(\mathcal{X}), \tau^Q)$ is a locally convex space with dual space $\mathcal{Y}^Q\subseteq ca_Q$. If there is a $\mathcal{Q}$-localization $(f^{Q})_{Q \in \mathcal{Q}}$ of $f$ such that each $f^Q\circ j_Q^{-1}$ (recall Remark~\ref{rem:local}) is convex, lower semicontinuous (with respect to $\tau^Q$), and takes the value $-\infty$ only if $f^Q\equiv -\infty$, then $f$ admits a dual representation over $ca_c(\mathcal{X})$.
\end{theorem}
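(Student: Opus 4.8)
The plan is to prove the four equivalences by the cycle (ii)$\Rightarrow$(iii)$\Rightarrow$(iv)$\Rightarrow$(ii), using that (i)$\Leftrightarrow$(ii) is already settled by the discussion of the dual regularization preceding the theorem; the ``in particular'' claim is then obtained by running the Fenchel--Moreau theorem separately under each $Q$ and transporting the resulting dual functionals back into $ca_c(\mathcal X)$. For (ii)$\Rightarrow$(iii) I would apply Proposition~\ref{prop:fQD} with $\mathcal Q=\mathfrak P_c(\Omega)$. Its hypothesis holds because in fact $\bigcup_{Q\in\mathfrak P_c(\Omega)}ca_Q(\mathcal X)=ca_c(\mathcal X)$: one has $0\in ca_Q(\mathcal X)$ for every $Q$, and for $0\neq\mu\in ca_c(\mathcal X)$ the normalised total variation $Q_\mu:=|\mu|(\Omega)^{-1}|\mu|$ is a probability measure with $Q_\mu\ll\mathcal P$ (since $|\mu|\ll\mathcal P$), hence $Q_\mu\in\mathfrak P_c(\Omega)$ and $\mu\in ca_{Q_\mu}\cap ca_c(\mathcal X)=ca_{Q_\mu}(\mathcal X)$. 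Thus Proposition~\ref{prop:fQD} yields $f=f^{\mathfrak P_c(\Omega)}_D$, which is statement (iii).

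For (iii)$\Rightarrow$(iv): if $f=f^{\mathcal Q}_D=\sup_{Q\in\mathcal Q}f^Q_D$, then $(f^Q_D)_{Q\in\mathcal Q}$ is a $\mathcal Q$-consistent family by Lemma~\ref{lem:fD:fE}~(i), hence a $\mathcal Q$-localization of $f$; Lemma~\ref{lem:loc:rep} then shows $f$ is $\mathcal P$-sensitive with reduction set $\mathcal Q$, and by its very definition $f^Q_D(X)=\sup_{\mu\in ca_Q(\mathcal X)}\int X\,d\mu-\overline f^Q(\mu)$ is of the form \eqref{eq:dual:fQ}. For (iv)$\Rightarrow$(ii): take the reduction set $\mathcal Q$ and the localization $(f^Q)_{Q\in\mathcal Q}$ from (iv); each $f^Q$ satisfies $f^Q\leq f=\sup_{Q'\in\mathcal Q}f^{Q'}$, is $Q$-consistent, and admits \eqref{eq:dual:fQ}, so Lemma~\ref{lem:fD:fE}~(iii) gives $f^Q\leq f^Q_D$, and together with Lemma~\ref{lem:fD:fE}~(v) (that is, $f^Q_D\leq f_D\leq f$) one gets $f=\sup_Q f^Q\leq\sup_Q f^Q_D\leq f_D\leq f$, so $f=f_D$, i.e.\ (ii). Together with (i)$\Leftrightarrow$(ii) this closes the cycle.

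For the ``in particular'' statement, fix $Q\in\mathcal Q$ and set $g^Q:=f^Q\circ j_Q^{-1}$ on $(j_Q(\mathcal X),\tau^Q)$, which is well defined since $f^Q$ is $Q$-consistent (being a member of a $\mathcal Q$-localization). By assumption $g^Q$ is convex, $\tau^Q$-lower semicontinuous, and takes the value $-\infty$ only if $g^Q\equiv-\infty$; as the dual of $(j_Q(\mathcal X),\tau^Q)$ is identified with $\mathcal Y^Q\subseteq ca_Q$, the Fenchel--Moreau theorem (in the form recalled in the remark above) gives
\begin{equation*}
    g^Q(Y)=\sup_{\mu\in\mathcal Y^Q}\Big(\int Y\,d\mu-(g^Q)^*(\mu)\Big),\qquad Y\in j_Q(\mathcal X),\qquad\text{with }(g^Q)^*(\mu):=\sup_{Y\in j_Q(\mathcal X)}\Big(\int Y\,d\mu-g^Q(Y)\Big).
\end{equation*}
For $\mu\in\mathcal Y^Q$ and $X\in\mathcal X$, $\int j_Q(X)\,d\mu$ is a finite real number by $\tau^Q$-continuity, and since $|\mu|\ll Q\ll\mathcal P$ it equals $\int X\,d\mu$ for any representative; hence $\mathcal Y^Q\subseteq ca_Q\cap ca_c(\mathcal X)=ca_Q(\mathcal X)$. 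Extending $(g^Q)^*$ by $+\infty$ on $ca_Q(\mathcal X)\setminus\mathcal Y^Q$ and calling the result $(f^Q)^*$, one obtains $f^Q(X)=g^Q(j_Q(X))=\sup_{\mu\in ca_Q(\mathcal X)}(\int X\,d\mu-(f^Q)^*(\mu))$ for all $X\in\mathcal X$, i.e.\ each $f^Q$ admits \eqref{eq:dual:fQ}. Since $(f^Q)_{Q\in\mathcal Q}$ is a $\mathcal Q$-localization of $f$, Lemma~\ref{lem:loc:rep} shows $f$ is $\mathcal P$-sensitive with reduction set $\mathcal Q$, so (iv) holds and the equivalence gives that $f$ admits a dual representation over $ca_c(\mathcal X)$.

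The main obstacle is the bookkeeping around the duality pairing: in (ii)$\Rightarrow$(iii) one must verify that every element of $ca_c(\mathcal X)$ is absolutely continuous with respect to some $Q\in\mathfrak P_c(\Omega)$ (handled by normalising total variations and transitivity of $\ll$), and in the ``in particular'' part one must check that the local dual functionals in $\mathcal Y^Q$ genuinely lie in $ca_Q(\mathcal X)$ and that the continuous pairing on $j_Q(\mathcal X)$ coincides with integration against the corresponding measure, so that a Fenchel--Moreau representation on $j_Q(\mathcal X)$ upgrades to a representation of the form \eqref{eq:dual:fQ}. These points are routine but must be argued rather than assumed; everything else is a direct consequence of Proposition~\ref{prop:fQD} and Lemmas~\ref{lem:loc:rep} and~\ref{lem:fD:fE}.
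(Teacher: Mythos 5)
Your proposal is correct and follows essentially the same route as the paper: (i)$\Leftrightarrow$(ii) from the preceding discussion, (ii)$\Rightarrow$(iii) via Proposition~\ref{prop:fQD} together with the normalized-total-variation observation that each nonzero $\mu\in ca_c(\mathcal X)$ lies in $ca_{Q_\mu}(\mathcal X)$ for $Q_\mu=|\mu|/|\mu|(\Omega)\in\mathfrak P_c(\Omega)$, (iii)$\Rightarrow$(iv) via Theorem~\ref{thm:Psensi:localization} and Lemma~\ref{lem:loc:rep}, (iv)$\Rightarrow$(ii) via the chain $f=\sup_Q f^Q\leq\sup_Q f^Q_D\leq f_D\leq f$ from Lemma~\ref{lem:fD:fE}, and the ``in particular'' part by Fenchel--Moreau under each $Q$ with the check $\mathcal Y^Q\subseteq ca_Q(\mathcal X)$. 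Your spelled-out verifications match what the paper leaves implicit, and your normalization $|\mu|(\Omega)^{-1}|\mu|$ is the intended (and correct) one.
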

\begin{proof}
    (i) $\Leftrightarrow$ (ii): Shown above.

    \smallskip\noindent
    (i) $\Rightarrow$ (iii): Suppose that the dual representation of $f$ is given as in \eqref{eq:dualf1}. Consider the set
    \begin{equation*}
        \mathcal{Q} := \bigg\{\frac{|\mu|}{|\mu(\Omega)|} \biggm\vert  \mu \in ca_c(\mathcal{X})\setminus\{0\}\bigg\}\cup\{P\}\subseteq \mathfrak{P}_c(\Omega),
    \end{equation*}
    where $P\in \mathfrak{P}_c(\Omega)$ is arbitrary. For each $\mu\in ca_c(\mathcal{X})$  there is $Q\in \mathcal{Q}$ such that $\mu\ll Q$. Now apply Proposition~\ref{prop:fQD}. 
    
    \smallskip\noindent
    (iii) $\Rightarrow$ (iv): This follows from Theorem~\ref{thm:Psensi:localization}.  
    
    \smallskip \noindent
    (iv) $\Rightarrow$ (ii): By Lemma~\ref{lem:fD:fE}, we have $f^Q\leq f_D^Q$ for all $Q\in \mathcal{Q}$ and therefore,
    \begin{equation*}
        f=\sup_{Q\in \mathcal{Q}}f^Q \leq \sup_{Q\in \mathcal{Q}}f^Q_D \leq f_D \leq f.
    \end{equation*}
    
    Finally, if $(j_Q(\mathcal{X}), \tau^Q)$ is a locally convex space with dual space $\mathcal{Y}^Q\subseteq ca_Q$, then $\mathcal{Y}^Q \subseteq ca_Q(\mathcal{X})$, and the Fenchel-Moreau theorem (see, e.g., \cite[Part One, Proposition~3.1]{ET1999}) implies that, under the stated conditions, $f^{Q}\circ j_Q^{-1}$ admits a dual representation (on $(j_Q(\mathcal{X}), \tau^Q)$), which implies that $f^{Q}$ admits a dual representation \eqref{eq:dual:fQ}. Hence, we have (iv), and therefore (i).
\end{proof}

If one of the conditions of Theorem~\ref{thm:dual:f} is satisfied, then $f=f_D=f_E$ (see Theorem~\ref{thm:Psensi:localization}). 
In Section~\ref{sec:rm}, we show that $f_D^Q<f^Q_E$ is possible even if $f=f_D$, and we discuss sufficient conditions for $f_E^Q=f^Q_D$ tailored to robust monetary risk measures. 

\section{Applications} \label{sec:appl}

\subsection{Robust Optimization}\label{sec:opt:prob}

Let $\mathcal{X}\subseteq L^0_c$ be non-empty. Further, let $f:\mathcal{X}\to [-\infty,\infty]$, and consider the minimization (maximization) problem 
\begin{equation}\label{eq:opt:prob} 
    f(X)\to \min\, (\max) \quad \text{subject to } X \in \mathcal{C},
\end{equation}
where $\mathcal{C}\subseteq\mathcal{X}$ is a non-empty set.

\begin{proposition}\label{prop:opt:1}
    Suppose that $f$ and $\mathcal{C}$ are $\mathcal{P}$-sensitive, and that there exists a joint reduction set $\mathcal{Q}$ for $f$ and $\mathcal{C}$. Let $(f^Q)_{Q\in \mathcal{Q}}$ be a $\mathcal{Q}$-localization of $f$ (see Theorem~\ref{thm:Psensi:localization}). Moreover, suppose that, for each $Q\in \mathcal{Q}$, $X^\ast_Q\in \mathcal{X}$ is a solution to the optimization problem 
    \begin{equation}\label{eq:opt:prob:Q} 
        f^Q(X)\to \min\,  (\max) \quad \mbox{subject to}\; X\in \mathcal{C}.  
    \end{equation}
    Then any $\mathcal{Q}$-aggregator $X^\ast$ of the family $(X^\ast_Q)_{Q\in \mathcal{Q}}$ (if it exists) is a solution to \eqref{eq:opt:prob}.
\end{proposition}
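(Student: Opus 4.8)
The plan is to check directly the two properties that make $X^\ast$ a solution of \eqref{eq:opt:prob}: feasibility, $X^\ast\in\mathcal{C}$, and optimality, $f(X^\ast)\le f(Y)$ for all $Y\in\mathcal{C}$ in the minimization case (resp.\ $f(X^\ast)\ge f(Y)$ in the maximization case). Both follow by combining the two ingredients supplied by the hypotheses: $\mathcal{Q}$ is a reduction set for $\mathcal{C}$, and $(f^{Q})_{Q\in\mathcal{Q}}$ is a $\mathcal{Q}$-localization of $f$, so in particular each $f^{Q}$ is $Q$-consistent and $f=\sup_{Q\in\mathcal{Q}}f^{Q}$.

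For feasibility, recall that being a $\mathcal{Q}$-aggregator means $X^\ast\in\mathcal{X}$ and $j_Q(X^\ast)=j_Q(X^\ast_Q)$ for every $Q\in\mathcal{Q}$. Since each $X^\ast_Q$ solves \eqref{eq:opt:prob:Q}, it is in particular feasible, i.e.\ $X^\ast_Q\in\mathcal{C}$; hence $j_Q(X^\ast)=j_Q(X^\ast_Q)\in j_Q(\mathcal{C})$ for all $Q\in\mathcal{Q}$. Applying the reduction-set property \eqref{eq:reduction} of $\mathcal{Q}$ for $\mathcal{C}$ then yields $X^\ast\in\mathcal{C}$. For optimality, fix $Q\in\mathcal{Q}$. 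Because $f^{Q}$ is $Q$-consistent and $j_Q(X^\ast)=j_Q(X^\ast_Q)$, we have $f^{Q}(X^\ast)=f^{Q}(X^\ast_Q)$, and since $X^\ast_Q$ is optimal for \eqref{eq:opt:prob:Q} over $\mathcal{C}$, $f^{Q}(X^\ast_Q)\le f^{Q}(Y)$ (resp.\ $\ge$) for every $Y\in\mathcal{C}$. Thus $f^{Q}(X^\ast)\le f^{Q}(Y)$ (resp.\ $\ge$) for all $Y\in\mathcal{C}$ and all $Q\in\mathcal{Q}$. Taking suprema over $Q\in\mathcal{Q}$ and using $f=\sup_{Q\in\mathcal{Q}}f^{Q}$ gives, in the minimization case, $f(X^\ast)=\sup_{Q\in\mathcal{Q}}f^{Q}(X^\ast)\le\sup_{Q\in\mathcal{Q}}f^{Q}(Y)=f(Y)$ for all $Y\in\mathcal{C}$; in the maximization case one obtains symmetrically $f(Y)=\sup_{Q\in\mathcal{Q}}f^{Q}(Y)\le\sup_{Q\in\mathcal{Q}}f^{Q}(X^\ast)=f(X^\ast)$ for all $Y\in\mathcal{C}$. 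Since $X^\ast\in\mathcal{C}$ as well, $X^\ast$ solves \eqref{eq:opt:prob}.

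I do not expect a genuine obstacle here: the statement is a recombination of two facts already available, namely that a reduction set detects membership in a $\mathcal{P}$-sensitive set, and that $Q$-consistency lets one read the $Q$-local value of $f^{Q}$ off the $Q$-image $j_Q(X^\ast)$. The only points needing a little care are bookkeeping ones: that the argument is uniform in the min/max formulation --- feasibility is symmetric, while optimality transfers $Q$ by $Q$ and then survives passing to the supremum precisely because the localization aggregates via $\sup$ --- and that the existence of a $\mathcal{Q}$-aggregator of $(X^\ast_Q)_{Q\in\mathcal{Q}}$ is an assumption (the family need not be $\mathcal{Q}$-coherent in general), with uniqueness of solutions neither assumed nor asserted, so the conclusion is stated for any such aggregator.
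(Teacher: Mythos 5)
Your proof is correct and follows essentially the same route as the paper's: feasibility of $X^\ast$ via the reduction-set property of $\mathcal{Q}$ for $\mathcal{C}$ (the paper phrases this as $\mathcal{Q}$-stability, which is equivalent by Lemma~\ref{lem:Qstable}), and optimality via $Q$-consistency combined with $f=\sup_{Q\in\mathcal{Q}}f^{Q}$. The only cosmetic difference is that you argue optimality directly by passing the pointwise inequalities through the supremum, whereas the paper argues by contradiction; the underlying ingredients are identical.
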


Recalling Remark~\ref{rem:local}, note that solving \eqref{eq:opt:prob:Q} is indeed equivalent to solving 
\begin{equation}\label{eq:opt:prob:Q2} 
f^Q\circ j_Q^{-1}(Y)\to \min\,  (\max) \quad \mbox{subject to}\; Y\in j_Q(\mathcal{C})  
\end{equation}
in the sense that if $X^\ast_Q$ solves \eqref{eq:opt:prob:Q}, then $j_Q(X^\ast_Q)$ solves \eqref{eq:opt:prob:Q2}, and for any solution $Y^\ast$ of \eqref{eq:opt:prob:Q2}, any element in $j_Q^{-1}(Y)\cap \mathcal{C}$ is a solution to \eqref{eq:opt:prob:Q}.  \eqref{eq:opt:prob:Q2} is an optimization problem on a subset of $L^0_Q$ under the dominating probability measure $Q$. Here, a classical machinery for solving such problems is available, in particular if each $f^Q$ is convex and admits a dual representation \eqref{eq:dual:fQ}.

\begin{proof}[Proof of Proposition~\ref{prop:opt:1}]
    Since $\mathcal{C}$ is $\mathcal{Q}$-stable by Lemma~\ref{lem:Qstable}, we conclude that $X^\ast\in \mathcal{C}$. Suppose that $Y\in \mathcal{C}$ satisfies $f(Y) < f(X^\ast)$ ($f(Y) > f(X^\ast)$). Then, as $f=\sup_{Q\in \mathcal{Q}}f^Q$, there is $Q\in \mathcal{Q}$ such that
    \begin{equation*}
        f^Q(Y)< f^Q(X^\ast)=f^Q(X^\ast_Q) \quad \big(f^Q(Y)> f^Q(X^\ast)=f^Q(X^\ast_Q)\big)
    \end{equation*}
    where the last equality holds due to the $Q$-consistency of $f^Q$. This contradicts the assumed optimality of $X^\ast_Q$.
\end{proof}

\begin{corollary}
    Suppose that $f$ is upper $\mathcal{P}$-sensitive and $\mathcal{C}$ is $\mathcal{P}$-sensitive, and that there exists a joint reduction set $\mathcal{Q}$ for $\mathcal{C}$ and all upper level sets of $f$. Let $(f^Q)_{Q\in \mathcal{Q}}$ be a $\mathcal{Q}$-consistent family of functions such that \eqref{eq:inf:loc} holds. Moreover, suppose that for each $Q\in \mathcal{Q}$, $X^\ast_Q\in \mathcal{X}$ is a solution to \eqref{eq:opt:prob:Q}.
    Then any $\mathcal{Q}$-aggregator $X^\ast$ of the family $(X^\ast_Q)_{Q\in \mathcal{Q}}$ (if it exists) is a solution to \eqref{eq:opt:prob}.
\end{corollary}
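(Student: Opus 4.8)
The plan is to obtain this Corollary from Proposition~\ref{prop:opt:1} by passing to $-f$. Put $g := -f$ and $g^Q := -f^Q$ for $Q \in \mathcal{Q}$. The first step is to check that the triple $(g, \mathcal{C}, (g^Q)_{Q\in\mathcal{Q}})$ satisfies the hypotheses of Proposition~\ref{prop:opt:1}. Since $f$ is upper $\mathcal{P}$-sensitive, $g$ is (lower) $\mathcal{P}$-sensitive, and its lower level sets $\{X \in \mathcal{X} \mid g(X) \le r\}$ coincide with the upper level sets $\{X \in \mathcal{X} \mid f(X) \ge -r\}$ of $f$; hence the set $\mathcal{Q}$, assumed to be a joint reduction set for $\mathcal{C}$ and all upper level sets of $f$, is a joint reduction set for $g$ and $\mathcal{C}$. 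Moreover, \eqref{eq:inf:loc} gives $g(X) = -\inf_{Q\in\mathcal{Q}} f^Q(X) = \sup_{Q\in\mathcal{Q}} g^Q(X)$, and $(g^Q)_{Q\in\mathcal{Q}}$ is $\mathcal{Q}$-consistent because $(f^Q)_{Q\in\mathcal{Q}}$ is; thus $(g^Q)_{Q\in\mathcal{Q}}$ is a $\mathcal{Q}$-localization of $g$ in the sense of Theorem~\ref{thm:Psensi:localization}.

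Next I would line up the optimization problems: minimizing (maximizing) $f$ over $\mathcal{C}$ is the same as maximizing (minimizing) $g$ over $\mathcal{C}$, and $X^\ast_Q$ solves the minimization (maximization) version of \eqref{eq:opt:prob:Q} for $f^Q$ exactly when it solves the maximization (minimization) version for $g^Q$. Applying Proposition~\ref{prop:opt:1} to $g$, $\mathcal{C}$, the localization $(g^Q)_{Q\in\mathcal{Q}}$, and the local optimizers $X^\ast_Q$, any $\mathcal{Q}$-aggregator $X^\ast$ of $(X^\ast_Q)_{Q\in\mathcal{Q}}$ solves the maximization (minimization) problem for $g$, i.e.\ \eqref{eq:opt:prob} for $f$, which is the claim.

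For completeness I would also record the direct argument encoded by this reduction. By Lemma~\ref{lem:Qstable}, $\mathcal{C}$ is $\mathcal{Q}$-stable, so $X^\ast \in \mathcal{C}$. Suppose, in the minimization case, that some $Y \in \mathcal{C}$ satisfied $f(Y) < f(X^\ast)$. Using $f = \inf_{Q} f^Q$ from \eqref{eq:inf:loc}, choose $Q_0 \in \mathcal{Q}$ with $f^{Q_0}(Y) < f(X^\ast) \le f^{Q_0}(X^\ast)$; since $j_{Q_0}(X^\ast) = j_{Q_0}(X^\ast_{Q_0})$ and $f^{Q_0}$ is $Q_0$-consistent, $f^{Q_0}(X^\ast) = f^{Q_0}(X^\ast_{Q_0})$, so $Y \in \mathcal{C}$ with $f^{Q_0}(Y) < f^{Q_0}(X^\ast_{Q_0})$ contradicts the optimality of $X^\ast_{Q_0}$. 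The maximization case is symmetric, the only change being that one now selects $Q_0$ realizing the infimum at $X^\ast$ rather than at $Y$.

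There is no genuinely hard step here; the one point needing care is the direction in which $Q_0$ is selected in the inf-representation — it is reversed relative to the sup-representation used in Proposition~\ref{prop:opt:1} — together with checking, as in the first paragraph, that negation preserves all the structural hypotheses (sensitivity of the relevant level sets, the reduction set, and $\mathcal{Q}$-consistency), so that Proposition~\ref{prop:opt:1} applies without modification.
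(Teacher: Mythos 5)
Your proposal is correct and follows exactly the paper's route: the paper's entire proof is ``Apply Proposition~\ref{prop:opt:1} to $-f$,'' and your first two paragraphs carry out precisely that reduction (with the sign-flip bookkeeping on level sets, reduction sets, consistency, and the localization made explicit). The additional direct argument in your third paragraph is also sound but is just the unwound version of the same reduction.
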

\begin{proof}
    Apply Proposition~\ref{prop:opt:1} to $-f$.
\end{proof}

In view of Proposition~\ref{prop:opt:1}, in Lemma~\ref{lem:aggre:always} we give a condition which guarantees that there always exists a $\mathcal{Q}$-aggregator $X^\ast$ of the family $(X^\ast_Q)_{Q\in \mathcal{Q}}$ if $(X^\ast_Q)_{Q\in \mathcal{Q}}$ is sufficiently bounded. To this end, we will need to distinguish between so-called supported and unsupported probability measures: 

\begin{definition} \label{defi:supp}
    A probability measure $Q\in \mathfrak{P}_c(\Omega)$ is called \textit{supported} if there is an event $S(Q) \in \mathcal{F}$ such that
    \begin{enumerate}[(i)]
        \item $Q(S(Q)^{c}) = 0$,
        
        \item whenever $N \in \mathcal{F}$ satisfies $Q(N) = 0$, then $N \cap S(Q)$ is $\mathcal{P}$-polar.
    \end{enumerate}
    The set $S(Q)$ is called the \textit{(order) support} of $Q$.
\end{definition}

Supported measures play a key role in handling robustness. Standard robust models, such as volatility uncertainty models, are based on supported probability measures, see \cite{LMS2022} for a detailed review. Note that if two sets $S, S' \in \mathcal{F}$ satisfy conditions (i) and (ii) in Definition \ref{defi:supp}, then $\chi_{S} = \chi_{S'}$ $\mathcal{P}$-q.s. (${\bf 1}_{S}={\bf 1}_{S'}$), i.e., the symmetric difference $S \bigtriangleup S'$ is $\mathcal{P}$-polar. The order support $S(Q)$ is thus usually not unique as an event, but only unique up to $\mathcal{P}$-polar events. In the following, $S(Q)$ therefore denotes an arbitrary version of the order support.

We recall that $\mathcal{X}$ is said to be \textit{Dedekind complete} if every bounded set $\mathcal{D}\subset \mathcal{X}$ admits a least upper bound in $\mathcal{X}$. For a detailed discussion of Dedekind completeness of robust model spaces, we again refer to \cite{LMS2022}. Versions of Lemma~\ref{lem:aggre:always} and Corollary~\ref{cor:aggre:always:1} are found in the literature (see, e.g., \cite{L1978}). We state their short proofs for the sake of completeness.

\begin{lemma}\label{lem:aggre:always}
    Suppose that $\mathcal{X}$ is Dedekind complete and let $\mathcal{Q}\subset \mathfrak{P}_c(\Omega)$  be non-empty. Moreover, suppose that each $Q\in\mathcal{Q}$ is supported. Consider any family $(X^Q)_{Q\in \mathcal{Q}}\subseteq \mathcal{X}$ which is pairwise $\mathcal{Q}$-coherent, that is, for all $Q,Q'\in \mathcal{Q}$, we have
    \begin{equation*}
        X^Q\mathbf{1}_{S(Q)\cap S(Q')}=X^{Q'}\mathbf{1}_{S(Q)\cap S(Q')},
    \end{equation*}
    and bounded in the sense that there exists some $Y\in \mathcal{X}$ such that $|X^Q|\mathbf{1}_{S(Q)}\preccurlyeq Y$. Then  $(X^Q)_{Q\in \mathcal{Q}}$ is $\mathcal{Q}$-coherent (and thus admits a $\mathcal{Q}$-aggregator).
\end{lemma}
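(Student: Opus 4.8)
The goal is to construct a single $X \in \mathcal{X}$ with $j_Q(X) = j_Q(X^Q)$ for every $Q \in \mathcal{Q}$, using Dedekind completeness of $\mathcal{X}$ to form the aggregator as a supremum of suitably truncated pieces. First I would reduce the problem to the case of nonnegative families: write $X^Q = (X^Q)^+ - (X^Q)^-$ and note that the positive and negative parts again form pairwise $\mathcal{Q}$-coherent families (since multiplication by $\mathbf{1}_{S(Q)\cap S(Q')}$ commutes with taking positive/negative parts) that are bounded by $Y^+$ and $Y^-$ respectively; if each admits an aggregator, their difference aggregates the original family. So assume $X^Q \succcurlyeq 0$ and $X^Q \mathbf{1}_{S(Q)} \preccurlyeq Y$ for all $Q$.

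**The candidate aggregator.** Consider the set $\mathcal{D} := \{X^Q \mathbf{1}_{S(Q)} \mid Q \in \mathcal{Q}\} \subseteq \mathcal{X}$. By boundedness $0 \preccurlyeq Z \preccurlyeq Y$ for every $Z \in \mathcal{D}$, so $\mathcal{D}$ is order-bounded, and Dedekind completeness yields $X := \sup \mathcal{D} \in \mathcal{X}$. The claim is that $X$ is a $\mathcal{Q}$-aggregator, i.e. $X \mathbf{1}_{S(Q)} = X^Q \mathbf{1}_{S(Q)}$ for every $Q \in \mathcal{Q}$ (which gives $j_Q(X) = j_Q(X^Q)$ because $Q(S(Q)^c) = 0$). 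One inequality is immediate: $X^Q \mathbf{1}_{S(Q)} \in \mathcal{D}$, so $X^Q \mathbf{1}_{S(Q)} \preccurlyeq X$, hence $X^Q \mathbf{1}_{S(Q)} = (X^Q \mathbf{1}_{S(Q)})\mathbf{1}_{S(Q)} \preccurlyeq X \mathbf{1}_{S(Q)}$. The reverse inequality is the crux: I must show $X \mathbf{1}_{S(Q)} \preccurlyeq X^Q \mathbf{1}_{S(Q)}$, equivalently that $X^Q \mathbf{1}_{S(Q)}$ is an upper bound for $\{Z \mathbf{1}_{S(Q)} \mid Z \in \mathcal{D}\}$. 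For $Z = X^{Q'}\mathbf{1}_{S(Q')}$ we have $Z\mathbf{1}_{S(Q)} = X^{Q'}\mathbf{1}_{S(Q)\cap S(Q')} = X^{Q}\mathbf{1}_{S(Q)\cap S(Q')} \preccurlyeq X^Q \mathbf{1}_{S(Q)}$, using pairwise coherence and $X^Q \succcurlyeq 0$. This shows $X^Q\mathbf{1}_{S(Q)}$ dominates every $Z\mathbf{1}_{S(Q)}$; but one has to pass from "$X^Q\mathbf{1}_{S(Q)}$ dominates $Z\mathbf{1}_{S(Q)}$ for all $Z$" to "$X^Q\mathbf{1}_{S(Q)}$ dominates $(\sup\mathcal{D})\mathbf{1}_{S(Q)}$", i.e. that multiplication by the band projection $\mathbf{1}_{S(Q)}$ is order-continuous / commutes with the supremum in $\mathcal{X}$.

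**Main obstacle.** The genuinely delicate point is justifying $(\sup_{Z\in\mathcal{D}} Z)\mathbf{1}_{S(Q)} = \sup_{Z\in\mathcal{D}}(Z\mathbf{1}_{S(Q)})$ inside the (possibly non-standard) Dedekind complete space $\mathcal{X}$. Since $\mathbf{1}_{S(Q)}$ is an idempotent in $L^0_c$ with $0 \preccurlyeq \mathbf{1}_{S(Q)} \preccurlyeq 1$, the map $Z \mapsto Z\mathbf{1}_{S(Q)}$ is a positive linear (band-projection-type) operator; I would argue it preserves the relevant suprema by the standard band-projection argument: if $W \in \mathcal{X}$ satisfies $W \succcurlyeq Z\mathbf{1}_{S(Q)}$ for all $Z\in\mathcal{D}$, then $W + X\mathbf{1}_{S(Q)^c} \succcurlyeq Z\mathbf{1}_{S(Q)} + Z\mathbf{1}_{S(Q)^c} = Z$ for all $Z$ (here I need $W \succcurlyeq 0$, which holds as $\mathcal{D}\subseteq\mathcal{X}_+$ is nonempty, and I need $X\mathbf{1}_{S(Q)^c}\in\mathcal{X}$, which follows if $\mathcal{X}$ is a sublattice closed under the band projections — a point to check, but it follows from $\mathcal X$ being an order-closed ideal-type subspace as in the paper's running assumptions, or one simply works with $X\mathbf 1_{S(Q)^c} = X - X\mathbf 1_{S(Q)}$), hence $W + X\mathbf{1}_{S(Q)^c}\succcurlyeq X = \sup\mathcal{D}$, and multiplying by $\mathbf{1}_{S(Q)}$ gives $W\mathbf{1}_{S(Q)} \succcurlyeq X\mathbf{1}_{S(Q)}$; since also $W\mathbf 1_{S(Q)}\preccurlyeq W$, taking $W = X^Q\mathbf 1_{S(Q)}$ (for which $W\mathbf 1_{S(Q)} = W$) yields $X\mathbf{1}_{S(Q)}\preccurlyeq X^Q\mathbf{1}_{S(Q)}$, as desired. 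Combining the two inequalities gives $X\mathbf{1}_{S(Q)} = X^Q\mathbf{1}_{S(Q)}$ for all $Q\in\mathcal{Q}$, so $X$ is the sought $\mathcal{Q}$-aggregator, and undoing the positive/negative part reduction completes the proof.
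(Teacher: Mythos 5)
Your proof is correct and essentially mirrors the paper's: both construct the candidate aggregator as a supremum of ``truncated'' pieces (guaranteed by Dedekind completeness) and then exploit the least-upper-bound property to force agreement with $X^Q$ on $S(Q)$, with the modified element $X^Q\mathbf{1}_{S(Q)}+X\mathbf{1}_{S(Q)^c}$ playing the same role as the $Z$ in the paper's contradiction argument. The only differences are cosmetic --- you reduce to the nonnegative case rather than offsetting the off-support part by $-Y\mathbf{1}_{S(Q)^c}$ as the paper does, and you phrase the reverse inequality directly instead of by contradiction; note also that since $Y\succcurlyeq 0$ the bounds for the positive and negative parts are both $Y$, not $Y^+$ and $Y^-$, and that the closure of $\mathcal X$ under multiplication by indicators, which you flag, is an assumption the paper's proof also uses implicitly.
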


\begin{proof}
    As the family $Y^Q:=X^Q\mathbf{1}_{S(Q)}-Y\mathbf{1}_{S(Q)^c}$, $Q\in\mathcal{Q}$, is bounded by $Y$, there exists a least upper bound $X$ of $(Y^Q)_{Q\in\mathcal{Q}}$. We will show that $j_Q(X)=j_Q(X^Q)$ for all $Q\in\mathcal{Q}$. To this end, let $Q'\in\mathcal{Q}$. By $X$ being an upper bound, we know that $j_{Q'}(X^{Q'})=j_{Q'}(Y^{Q'})\leq_{Q'} j_{Q'}(X)$. Suppose that $Q'(X^{Q'}< X)>0$. Let $Z := X^{Q'}\mathbf{1}_{S(Q')} + X\mathbf{1}_{S(Q')^c}$. Then $Z\preccurlyeq X$ and $Z\neq X$. Clearly, by definition of $X$, we have that
    \begin{equation*}
        \mathbf{1}_{S(Q')^{c}} Y^{Q} \preccurlyeq \mathbf{1}_{S(Q')^{c}} X = \mathbf{1}_{S(Q')^{c}} Z
    \end{equation*}
    for all $Q \in \mathcal{Q}$. To see that $Y^{Q} \mathbf{1}_{S(Q')} \preccurlyeq Z \mathbf{1}_{S(Q')}$ holds for all $Q \in \mathcal{Q}$, we first note that $-Y \preccurlyeq Z$ and thus
    \begin{equation*}
        Y^{Q} \mathbf{1}_{S(Q') \cap S(Q)^{c}} = -Y \mathbf{1}_{S(Q') \cap S(Q)^{c}} \preccurlyeq Z \mathbf{1}_{S(Q') \cap S(Q)^{c}}.
    \end{equation*}
    On $S(Q')\cap S(Q)$, by pairwise $\mathcal{Q}$-coherence of $(X^{Q})_{Q \in \mathcal{Q}}$, we have
    \begin{equation*}
        Y^Q \mathbf{1}_{S(Q') \cap S(Q)} = X^Q \mathbf{1}_{S(Q') \cap S(Q)} = X^{Q'}\mathbf{1}_{S(Q') \cap S(Q)} = Z \mathbf{1}_{S(Q') \cap S(Q)}.
    \end{equation*}
    Overall, it then follows that for all $Q \in \mathcal{Q}$,
    \begin{align*}
        Y^{Q} &= \mathbf{1}_{S(Q') \cap S(Q)} Y^{Q} + \mathbf{1}_{S(Q') \cap S(Q)^{c}} Y^{Q} + \mathbf{1}_{S(Q')^{c}} Y^{Q} \\
        &\preccurlyeq \mathbf{1}_{S(Q') \cap S(Q)} Z + \mathbf{1}_{S(Q') \cap S(Q)^{c}} Z + \mathbf{1}_{S(Q')^{c}} Z = Z
    \end{align*}
    Hence, $Z$ is an upper bound of $(Y^{Q})_{Q \in \mathcal{Q}}$ with $Z \preccurlyeq X$ and $Z\neq X$, contradicting the fact that $X$ is the least upper bound. Therefore, we must have $Q'(X^{Q'}< X)=0$ which shows that indeed $j_{Q'}(X)=j_{Q'}(X^{Q'})$.
\end{proof}

Note that, when $\mathcal{Q}$ is uncountable (which is the interesting case), the assertion of Lemma~\ref{lem:aggre:always}, that pairwise $\mathcal{Q}$-coherence implies $\mathcal{Q}$-coherence, is not as obvious as it may seem on a first glance. 

\begin{definition}
    Supported probability measures $Q, Q' \in \mathfrak{P}_c(\Omega)$ are called \textit{disjoint} if $S(Q)\cap S(Q')$ is a $\mathcal{P}$-polar set (for any choice of $S(Q)$ and $S(Q')$) or, equivalently, $\mathbf{1}_{S(Q)} \wedge \mathbf{1}_{S(Q')} = 0$.
\end{definition}

In fact, many robust models assume disjoint supported reference measures,  see \cite{LMS2022}.

\begin{corollary}\label{cor:aggre:always:1}
    Suppose that $\mathcal{X}$ is Dedekind complete and let $\mathcal{Q}\subset \mathfrak{P}_c(\Omega)$  be non-empty. Moreover, suppose that each $Q\in\mathcal{Q}$ is supported and the elements of $\mathcal{Q}$ are mutually disjoint. Consider any family $(X^Q)_{Q\in \mathcal{Q}}\subseteq \mathcal{X}$  which is bounded in the sense that there is $Y\in \mathcal{X}$ with $|X^Q|\mathbf{1}_{S(Q)}\preccurlyeq Y$. Then  $(X^Q)_{Q\in \mathcal{Q}}$ is $\mathcal{Q}$-coherent (and thus admits a $\mathcal{Q}$-aggregator).
\end{corollary}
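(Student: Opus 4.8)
The proof is a direct consequence of Lemma~\ref{lem:aggre:always}, so the plan is simply to reduce the hypotheses of the corollary to those of that lemma. The boundedness condition here---existence of $Y \in \mathcal{X}$ with $|X^Q|\mathbf{1}_{S(Q)} \preccurlyeq Y$---is verbatim the one in Lemma~\ref{lem:aggre:always}, and Dedekind completeness of $\mathcal{X}$ together with supportedness of each $Q \in \mathcal{Q}$ are also assumed there. Hence the only point to check is that the family $(X^Q)_{Q \in \mathcal{Q}}$ is pairwise $\mathcal{Q}$-coherent, i.e.\ that $X^Q\mathbf{1}_{S(Q)\cap S(Q')} = X^{Q'}\mathbf{1}_{S(Q)\cap S(Q')}$ for all $Q, Q' \in \mathcal{Q}$.

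For $Q = Q'$ this identity is trivial. For $Q \neq Q'$, mutual disjointness of the elements of $\mathcal{Q}$ means precisely that $S(Q) \cap S(Q')$ is $\mathcal{P}$-polar, equivalently that $\mathbf{1}_{S(Q)\cap S(Q')} = \mathbf{1}_{S(Q)} \wedge \mathbf{1}_{S(Q')} = 0$ in $L^0_c$. Multiplying $X^Q$ and $X^{Q'}$ by this vanishing indicator gives $X^Q\mathbf{1}_{S(Q)\cap S(Q')} = 0 = X^{Q'}\mathbf{1}_{S(Q)\cap S(Q')}$, so the pairwise compatibility requirement holds automatically.

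With pairwise $\mathcal{Q}$-coherence and boundedness in hand, Lemma~\ref{lem:aggre:always} applies and yields that $(X^Q)_{Q\in\mathcal{Q}}$ is $\mathcal{Q}$-coherent, hence admits a $\mathcal{Q}$-aggregator in $\mathcal{X}$. There is essentially no obstacle in this argument: the substantive content---in particular the nontrivial passage from pairwise to full (possibly uncountable) $\mathcal{Q}$-coherence via the least-upper-bound construction---was already carried out in the proof of Lemma~\ref{lem:aggre:always}, and the disjointness hypothesis here serves only to make the pairwise compatibility condition vacuously true.
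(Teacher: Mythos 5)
Your proof is correct and follows exactly the route of the paper's own argument: reduce to Lemma~\ref{lem:aggre:always} by observing that mutual disjointness makes the pairwise $\mathcal{Q}$-coherence condition hold automatically ($\mathbf{1}_{S(Q)\cap S(Q')}=0$ for $Q\neq Q'$, and trivially for $Q=Q'$), with the other hypotheses already matching verbatim. If anything you are slightly more careful than the paper, which glosses over the $Q=Q'$ case when asserting $\mathbf{1}_{S(Q)\cap S(Q')}=0$ ``for all $Q,Q'$.''
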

\begin{proof}
    Since $\mathbf{1}_{S(Q)\cap S(Q')}=0$ for all $Q,Q'\in \mathcal{Q}$, every family $(X^Q)_{Q\in \mathcal{Q}}\subseteq \mathcal{X}$ is pairwise $\mathcal{Q}$-coherent. By Lemma~\ref{lem:aggre:always}, the result follows.
\end{proof}

\begin{example}[Bliss point consumption]
    Suppose that $\mathcal{X}$ is Dedekind complete and each $P \in \mathcal{P}$ is supported. Let $A,B \in \mathcal{X}$ such that $A\preccurlyeq B$ and set $\mathcal{C} := \{X \in \mathcal{X} \mid A \preccurlyeq X \preccurlyeq B\}$. We interpret each $X \in \mathcal{C}$ as an admissible consumption plan and $A,B \in \mathcal{X}$ as  global lower and upper constraints. Note that $\mathcal{C}$ is $\mathcal{P}$-sensitive with reduction set $\mathcal{P}$. Indeed, let $X\in \mathcal{X}$ satisfy $j_P(X)\in j_P(\mathcal{C})$, that is $A\leq_P X\leq_P B$, for all $P\in \mathcal{P}$, then, by definition of the $\mathcal{P}$-q.s.\ order, $A\preccurlyeq X \preccurlyeq B$, so $X\in \mathcal{C}$.
    
    Locally, that is, under each reference measure $P \in \mathcal{P}$,  $Y^{P} \in \mathcal{X}$ denotes a target consumption level. We suppose that the family $(Y^{P})_{P \in \mathcal{P}}$ is pairwise $\mathcal{P}$-coherent. The task is to find a consumption plan $X^\ast\in \mathcal{C}$ that minimizes 
    \begin{equation}\label{eq:opt:ex}
        f(X) := \sup_{P \in \mathcal{P}} E_{P}\big[(Y^{P} - X)^{2}\big]\quad \mbox{subject to } \; X \in \mathcal{C}.
    \end{equation}
    For each $P \in \mathcal{P}$, define $f^{P} \colon \mathcal{X} \to [-\infty, \infty]$ by
    \begin{equation*}
        f^{P}(X) := E_{P}\big[(Y^{P} - X)^{2}\big].
    \end{equation*}
    Clearly, $f$ is $\mathcal{P}$-sensitive with reduction set $\mathcal{P}$ (see Lemma~\ref{lem:loc:rep}) and $(f^{P})_{P \in \mathcal{P}}$ is a $\mathcal{P}$-localization of $f$. Further, we obtain the local optimizers
    \begin{equation*}
        X_{P}^{*} := A \mathbf{1}_{\{Y^{P} \preccurlyeq A\}} + Y^{P} \mathbf{1}_{\{A \preccurlyeq Y^{P} \preccurlyeq B, A\neq Y^P\}} + B \mathbf{1}_{\{B \preccurlyeq Y^{P}, Y^P\neq B\}}  \in \mathcal{C}.
    \end{equation*}
    Fix $P, P' \in \mathcal{P}$. Then, since $(Y^{P})_{P \in \mathcal{P}}$ is pairwise $\mathcal{P}$-coherent, we have
    \begin{align*}
        X_{P}^{*} \mathbf{1}_{S(P) \cap S(P')} &= \big(A \mathbf{1}_{\{Y^{P} \preccurlyeq A\}} + Y^{P} \mathbf{1}_{\{A \preccurlyeq Y^{P} \preccurlyeq B, A\neq Y^P\}} + B \mathbf{1}_{\{B \preccurlyeq Y^{P}, Y^P\neq B\}}\big) \mathbf{1}_{S(P) \cap S(P')} \\
        &= \big(A \mathbf{1}_{\{Y^{P'} \preccurlyeq A\}} + Y^{P'} \mathbf{1}_{\{A \preccurlyeq Y^{P'} \preccurlyeq B, A\neq Y^{P'}\}} + B \mathbf{1}_{\{B \preccurlyeq Y^{P'}, Y^{P'}\neq B\}}\big) \mathbf{1}_{S(P) \cap S(P')} \\
        &= X_{P'}^{*} \mathbf{1}_{S(P) \cap S(P')}.
    \end{align*}
    Hence, $(X_{P}^{*})_{P \in \mathcal{P}}$ is pairwise $\mathcal{P}$-coherent and $\lvert X_{P}^{*} \rvert \mathbf{1}_{S(P)} \preccurlyeq \lvert X_{P}^{*} \rvert \preccurlyeq \lvert A \rvert \vee \lvert B \rvert$ for all $P \in \mathcal{P}$. By Lemma~\ref{lem:aggre:always}, there exists a $\mathcal{P}$-aggregator $X^{*}$ of the family $(X_{P}^{*})_{P \in \mathcal{P}}$. 
     Consequently, $X^{*}$ solves \eqref{eq:opt:ex} according to Proposition~\ref{prop:opt:1}. \hfill $\diamond$
\end{example}

\subsection{Monetary Risk Measures and Localization Bubbles} \label{sec:rm}

In this section, $\mathcal{X}=L^\infty_c$ and thus $ca_c(\mathcal{X})=ca_c$.

\begin{definition}
    A mapping $\rho \colon L^\infty_c \to \R$ is called a \textit{monetary risk measure} if it is monotone and cash-additive (see Section~\ref{sec:properties}). $\mathcal{A}_{\rho} := \{X \in L^\infty_c \mid \rho(X) \leq 0\}$ denotes the \textit{acceptance set} of $\rho$. If $\rho$ is, in addition, convex, $\rho$ is called a \textit{convex risk measure}. A convex risk measure which is also positively homogeneous is called a \textit{coherent risk measure}.
\end{definition}

$\rho(X)$ is a capital requirement which may also be seen as the indifference price of the loss $X$. In fact, charging $\rho(X)$ reduces the loss to $X-\rho(X)$, which is acceptable since $\rho(X-\rho(X)) = 0$ by cash-additivity. Let $Q \in \mathfrak{P}_c(\Omega)$. We may interpret $\rho^Q_E$ as the minimal price from a $Q$-perspective charged by the market with aggregate risk measure $\rho$ for taking a loss $X$. Indeed, from a $Q$-perspective, all losses $Y$ such that $j_Q(Y)=j_Q(X)$ are the same. Thus, when we hand the loss $X$ to the market, we buy a contract $Y$ such that $j_Q(Y)=j_Q(X)$---which therefore secures $X$ from the $Q$ perspective---at minimal price. In other words, the price is $\inf\{\rho(Y) \mid j_Q(Y)=j_Q(X)\}$ (or arbitrarily close when this infimum is not attained), which is exactly $\rho^Q_E(X)$ according to Lemma~\ref{lem:aux:1}. By Lemma~\ref{lem:f:fQE} and Lemma~\ref{lem:rhoE:rm} below, $\rho^Q_E$ is a monetary risk measure provided that $\rho^Q_E(0)\in \R$. Otherwise, we are in a degenerate situation. 

\begin{definition}
    Let $\rho \colon L^{\infty}_{c} \to \R$ be a monetary risk measure and $Q \in \mathfrak{P}_{c}(\Omega)$. We call $\rho^{Q}_{E}$ \textit{relevant} if $\rho^Q_E(0)\in \R$.
\end{definition}

\begin{lemma}\label{lem:rhoE:rm}
    Let $\rho \colon L^{\infty}_{c} \to \R$ be a monetary risk measure and $Q \in \mathfrak{P}_{c}(\Omega)$. Then the following are equivalent:
    \begin{enumerate}
        \item[(i)] $\rho^{Q}_{E}$ is a monetary risk measure.
        
        \item[(ii)] $\rho^{Q}_{E}$ is relevant.
        
        \item[(iii)] $\sup \{m \in \R \mid m \in j_{Q}(\mathcal{A}_\rho)\} < \infty$.
    \end{enumerate}
\end{lemma}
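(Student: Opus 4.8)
The plan is to run the cycle through the cheap implications and then establish one clean identity at the constant level. First, (i) $\Rightarrow$ (ii) is immediate: a monetary risk measure is by definition $\R$-valued, so in particular $\rho^Q_E(0)\in\R$. Next I would record the standing facts that will be used repeatedly: by Lemma~\ref{lem:aux:1}(ii), $\rho^Q_E\le\rho$, so $\rho^Q_E(X)\le\rho(X)<\infty$ for every $X$; and by Lemma~\ref{lem:f:fQE}(i),(ii), $\rho^Q_E$ is monotone and cash-additive (it inherits these from $\rho$). The only thing that can go wrong with $\rho^Q_E$ being a monetary risk measure is the value $-\infty$.

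For (ii) $\Rightarrow$ (i): given $X\in L^\infty_c$, pick $m>0$ with $-m\preccurlyeq X$; monotonicity and cash-additivity of $\rho^Q_E$ give $\rho^Q_E(X)\ge\rho^Q_E(-m)=\rho^Q_E(0)-m$, which is finite once $\rho^Q_E(0)\in\R$. Combined with $\rho^Q_E(X)\le\rho(X)<\infty$, this shows $\rho^Q_E\colon L^\infty_c\to\R$, and together with the monotonicity and cash-additivity already noted, $\rho^Q_E$ is a monetary risk measure.

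For (ii) $\Leftrightarrow$ (iii), the heart of the matter is the identity
\[
\rho^Q_E(0) \;=\; -\sup\{m\in\R \mid m\in j_Q(\mathcal{A}_\rho)\}.
\]
By Lemma~\ref{lem:aux:1}(iii), $\rho^Q_E(0)=\inf\{\rho(Y)\mid Y\in L^\infty_c,\ j_Q(Y)=0\}$. Set $M:=\{m\in\R\mid m\in j_Q(\mathcal{A}_\rho)\}$. For ``$\le$'': if $m\in M$, choose $Z\in\mathcal{A}_\rho$ with $j_Q(Z)=m$; then $Y:=Z-m$ has $j_Q(Y)=0$ and $\rho(Y)=\rho(Z)-m\le -m$, so $\rho^Q_E(0)\le -m$, and taking the supremum over $m\in M$ gives $\rho^Q_E(0)\le-\sup M$. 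For ``$\ge$'': if $j_Q(Y)=0$, the shifted variable $Z:=Y-\rho(Y)$ satisfies $\rho(Z)=0$, hence $Z\in\mathcal{A}_\rho$, and $j_Q(Z)=-\rho(Y)$, so $-\rho(Y)\in M$ and $\rho(Y)\ge-\sup M$; taking the infimum over such $Y$ yields $\rho^Q_E(0)\ge-\sup M$. Since the constant $-\rho(0)$ lies in $\mathcal{A}_\rho$ and $j_Q(-\rho(0))=-\rho(0)$, we have $M\neq\emptyset$, hence $\sup M>-\infty$; therefore $\rho^Q_E(0)\in\R$ if and only if $\sup M<\infty$, which is precisely (ii) $\Leftrightarrow$ (iii).

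I do not anticipate a real obstacle here; the argument is a direct exploitation of the level-set formula in Lemma~\ref{lem:aux:1}(iii) and cash-additivity, which together let one slide freely between $0$, added real constants, and the acceptance set. The only point needing a touch of care is bookkeeping with $j_Q$: using that it is linear and fixes constants ($j_Q(m)=m$), so that shifts by reals commute with $j_Q$, which is exactly what makes both inequalities in the displayed identity go through.
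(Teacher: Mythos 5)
Your proof is correct and follows essentially the same route as the paper: (i)$\Rightarrow$(ii) is trivial, (ii)$\Rightarrow$(i) uses monotonicity and cash-additivity inherited from Lemma~\ref{lem:f:fQE} plus the bound $\rho^Q_E\le\rho$ to show real-valuedness, and (ii)$\Leftrightarrow$(iii) rests on the identity $\rho^Q_E(0)=-\sup\{m\in\R\mid m\in j_Q(\mathcal{A}_\rho)\}$. The only cosmetic difference is that you derive this identity from the reformulation $\rho^Q_E(0)=\inf\{\rho(Y)\mid j_Q(Y)=0\}$ of Lemma~\ref{lem:aux:1}(iii) via two inequalities, whereas the paper reads it off directly from the definition $\rho^Q_E(0)=\inf\{r\mid 0\in j_Q(\mathcal{A}_\rho+r)\}$ using $E_r=\mathcal{A}_\rho+r$ and linearity of $j_Q$.
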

\begin{proof}
    (i) obviously implies (ii). In order to prove that (ii) implies (i), recall that according to Lemma~\ref{lem:f:fQE}, $\rho^{Q}_{E}$ is monotone and cash-additive, and thus a monetary risk measure on $L^\infty_c$, provided that $\rho_E^Q$ is real-valued. If $\rho^{Q}_{E}$ is relevant, then by cash-additivity, $\rho_E^Q(m)=\rho^Q_E(0)+m\in \R$ for all $m\in \R$. Moreover, since
    \begin{equation*}
        \rho_E^Q(-\|X\|_{c,\infty})\leq \rho_E^Q(X)\leq \rho_E^Q(\|X\|_{c,\infty})
    \end{equation*}
    for all $X\in L^\infty_c$ by monotonicity, it follows that $\rho_E^Q$ is indeed real-valued.

    \smallskip\noindent
    (ii) $\Leftrightarrow$ (iii) follows from  $E_r =\{X\in L^\infty_c\mid \rho(X)\leq r \}= \mathcal{A}_\rho+r$ and \begin{align*}
        \rho^Q_E(0) &= \inf \{m \in \R \mid 0 \in j_{Q}(\mathcal{A}_\rho + m)\} \\
        &= \inf \{m \in \R \mid -m \in j_{Q}(\mathcal{A}_\rho)\} \\
        &= -\sup \{m \in \R \mid m \in j_{Q}(\mathcal{A}_\rho)\},
    \end{align*}
    and $\rho_E^Q(0)\leq \rho(0)$ (see Lemma~\ref{lem:aux:1}).
\end{proof}

Note that if $\sup \{m \in \R \mid m \in j_{Q}(\mathcal{A}_\rho)\} = \infty$, that is, $\rho^Q_E(0) = -\infty$, then, by cash-additivity and monotonicity, it follows that $j_{Q}(\mathcal{A}_\rho) = L^{\infty}_{Q}$ and $\rho^{Q}_{E}\equiv -\infty$.

\begin{remark}\label{rem:rmonLQ}
    Lemma~\ref{lem:f:fQE} not only shows that $\rho^Q_E$ is a monetary risk measure on $L^\infty_c$ provided it is relevant, but also, thanks to the monotonicity established in Lemma~\ref{lem:f:fQE}~(i), that $\rho^Q_E\circ j_Q^{-1}$ (recall Remark~\ref{rem:local}) is indeed a monetary risk measure on $L^\infty_Q$ which is convex or even coherent whenever $\rho$ is. \hfill $\diamond$
\end{remark}

\begin{lemma} \label{lem:rhoD}
    Let $\rho \colon L^{\infty}_{c} \to \R$ be a monetary risk measure. Then $\{\mu \in ca_c\mid \overline{\rho}(\mu)<\infty\} \subseteq \mathfrak{P}_{c}(\Omega)$ and therefore,
    \begin{equation*}
        \rho_{D}(X) = \sup_{Q \in \mathfrak{P}_{c}(\Omega)} E_{Q}[X] - \overline{\rho}(Q), \quad X \in L^{\infty}_{c},
    \end{equation*}
    where
    \begin{equation*}
        \overline{\rho}(Q)=\sup_{X\in \mathcal{A}_\rho} E_Q[X], \quad Q\in \mathfrak{P}_c(\Omega).
    \end{equation*}
    Consequently,
    \begin{equation*}
        \rho^{Q}_{D}(Y) = \sup_{R \ll Q} E_{R}[Y] - \overline{\rho}(R), \quad Y\in L^\infty_Q.
    \end{equation*}
\end{lemma}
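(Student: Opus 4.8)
The plan is to localize the effective domain of the dual function $\overline{\rho}$ inside $\mathfrak{P}_c(\Omega)$, and then read off all three displayed identities as mere rewritings of the definitions of $\rho_D$, $\overline{\rho}$ and $\rho^{Q}_{D}$.

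\emph{Step 1 (domain of $\overline{\rho}$).} Fix $\mu \in ca_c$ with $\overline{\rho}(\mu) < \infty$. I first check $\mu \in ca_{c+}$. If not, there is $A \in \mathcal{F}$ with $\mu(A) < 0$; then $|\mu|(A) > 0$, and since $|\mu| \ll \mathcal{P}$ this forces $c(A) > 0$, so $\mathbf{1}_A \neq 0$ and $X_\lambda := -\lambda\,\mathbf{1}_A \in L^\infty_c$ with $X_\lambda \preccurlyeq 0$ for every $\lambda > 0$. Monotonicity gives $\rho(X_\lambda) \leq \rho(0)$, whence $\overline{\rho}(\mu) \geq \sup_{\lambda > 0}\big(\int X_\lambda\,d\mu - \rho(0)\big) = \sup_{\lambda > 0}\big(-\lambda\mu(A) - \rho(0)\big) = \infty$, a contradiction. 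Next, cash-additivity yields $\int(X+m)\,d\mu - \rho(X+m) = \int X\,d\mu - \rho(X) + m(\mu(\Omega)-1)$ for all $X \in L^\infty_c$ and $m \in \R$, so $\overline{\rho}(\mu) < \infty$ forces $\mu(\Omega) = 1$. Hence $\mu \in \mathfrak{P}_c(\Omega)$, which is the first claim.

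\emph{Step 2 (formulas for $\rho_D$ and $\overline{\rho}(Q)$).} Since $\mathfrak{P}_c(\Omega) \neq \emptyset$ (it contains every $P \in \mathcal{P}$, as $P \ll \mathcal{P}$) and, by Step 1, $\int X\,d\mu - \overline{\rho}(\mu) = -\infty$ for every $\mu \in ca_c \setminus \mathfrak{P}_c(\Omega)$, the definition of $\rho_D$ reduces to $\rho_D(X) = \sup_{Q \in \mathfrak{P}_c(\Omega)} E_Q[X] - \overline{\rho}(Q)$. For the penalty term, fix $Q \in \mathfrak{P}_c(\Omega)$ and use the usual translation argument: every $X \in \mathcal{A}_\rho$ has $\rho(X) \leq 0$, so $E_Q[X] - \rho(X) \geq E_Q[X]$, giving ``$\geq$'' in $\overline{\rho}(Q) = \sup_{X \in \mathcal{A}_\rho} E_Q[X]$; conversely, for any $X \in L^\infty_c$ the class $Y := X - \rho(X) \in L^\infty_c$ lies in $\mathcal{A}_\rho$ by cash-additivity, and $E_Q[X] - \rho(X) = E_Q[Y] \leq \sup_{Z \in \mathcal{A}_\rho} E_Q[Z]$, giving ``$\leq$''.

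\emph{Step 3 (formula for $\rho^{Q}_{D}$).} Here $ca_Q(\mathcal{X}) = ca_Q$, because any $\mu \in ca_Q$ satisfies $|\mu| \ll Q \ll \mathcal{P}$, hence $\mu \in ca_c = ca_c(\mathcal{X})$, and it integrates every class in $L^\infty_c$ finitely. By Lemma~\ref{lem:fD:fE}(iv), $\overline{\rho}^Q = \overline{\rho}$ on $ca_Q$, so $\rho^{Q}_{D}(X) = \sup_{\mu \in ca_Q} \int X\,d\mu - \overline{\rho}(\mu)$. A probability measure $R$ belongs to $ca_Q$ iff $R \ll Q$, and any such $R$ then lies in $\mathfrak{P}_c(\Omega)$; together with Step 1 (which forces every $\mu \in ca_Q$ of finite penalty to be a probability measure, and contributes $-\infty$ for the rest) and the fact that $Q$ itself is admissible, this collapses the supremum to $\sup_{R \ll Q} E_R[Y] - \overline{\rho}(R)$, understood on $L^\infty_Q$ via the $Q$-consistency of $\rho^{Q}_{D}$ (Lemma~\ref{lem:fD:fE}(i)).

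\emph{Main obstacle.} No step is deep. The two points that require care are: (a) ensuring the indicators $\mathbf{1}_A$ used in Step 1 to exclude a negative part of $\mu$ are genuinely non-zero in $L^0_c$ --- this is precisely where $|\mu| \ll \mathcal{P}$ enters; and (b) the bookkeeping that replaces a supremum over $ca_c$ (resp.\ $ca_Q$) by one over $\mathfrak{P}_c(\Omega)$ (resp.\ over $\{R : R \ll Q\}$), which is legitimate only because the discarded measures all contribute $-\infty$ while at least one admissible measure remains.
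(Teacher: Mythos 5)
Your proposal is correct and follows exactly the route the paper intends: the paper's proof consists of citing the standard argument from F\"ollmer--Schied (Theorem~4.16), which is precisely your Step~1 (monotonicity forces $\mu\geq 0$ via $-\lambda\mathbf{1}_A$, cash-additivity forces $\mu(\Omega)=1$), with the remaining displayed formulas being the routine bookkeeping you carry out in Steps~2 and~3. No gaps; you have simply written out in full what the paper leaves to the cited reference.
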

\begin{proof}
    $\{\mu \in ca_c\mid \overline{\rho}(\mu)<\infty\} \subseteq \mathfrak{P}_{c}(\Omega)$ follows from a straightforward adaptation of (parts of) the proof of \cite[Theorem 4.16]{FS2016}.
\end{proof}

\begin{lemma} \label{lem:rhoQE:rep}
    Let $\rho \colon L^{\infty}_{c} \to \R$ be a monetary risk measure and let $Q \in \mathfrak{P}_c(\Omega)$ be supported (recall Definition~\ref{defi:supp}). Then for any $X \in L^{\infty}_c$,
    \begin{equation*}
        \rho^{Q}_{E}(X) = \lim_{m \to \infty} \rho\big(X \mathbf{1}_{S(Q)} - m \mathbf{1}_{S(Q)^{c}}\big).
    \end{equation*}
    If $\rho=\rho_D$, then $\rho^{Q}_{E}$ is relevant only if $\inf_{\overline{\rho}(R)<\infty}R(S(Q)^c)=0$. Moreover, if $\rho$ is coherent and $\rho=\rho_D$, then $\rho_E^Q$ is relevant if and only if $\inf_{\overline{\rho}(R)<\infty}R(S(Q)^c)=0$.
\end{lemma}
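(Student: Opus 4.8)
The plan is to establish the limit representation first and then read off the two relevance criteria from it using the dual representation of Lemma~\ref{lem:rhoD}.

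\emph{The limit formula.} By Lemma~\ref{lem:aux:1}(iii), $\rho^{Q}_{E}(X) = \inf\{\rho(Y) \mid Y \in L^{\infty}_{c},\ j_{Q}(Y) = j_{Q}(X)\}$. The decisive observation, and the place where supportedness of $Q$ is used, is that $j_{Q}(Y) = j_{Q}(X)$ forces $Y\mathbf{1}_{S(Q)} = X\mathbf{1}_{S(Q)}$ in $L^{0}_{c}$: choosing representatives $y \in Y$, $x \in X$ and letting $N := \{y \ne x\}$, we have $Q(N) = 0$, so by Definition~\ref{defi:supp}(ii) the set $N \cap S(Q)$ is $\mathcal{P}$-polar, and this is exactly the event on which $y\chi_{S(Q)}$ and $x\chi_{S(Q)}$ differ. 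In the other direction, for each $m > 0$ the element $Y_{m} := X\mathbf{1}_{S(Q)} - m\mathbf{1}_{S(Q)^{c}}$ lies in $L^{\infty}_{c}$ and satisfies $j_{Q}(Y_{m}) = j_{Q}(X)$ (because $Q(S(Q)^{c}) = 0$), so $\rho^{Q}_{E}(X) \le \rho(Y_{m})$ for every $m$. Since $m \mapsto Y_{m}$ is $\preccurlyeq$-decreasing and $\rho$ is monotone, $\rho(Y_{m})$ is non-increasing in $m$, hence $\lim_{m \to \infty}\rho(Y_{m})$ exists in $[-\infty,\infty)$ and $\rho^{Q}_{E}(X) \le \lim_{m \to \infty}\rho(Y_{m})$. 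For the converse inequality, take any admissible $Y$ with $\lVert Y \rVert_{c,\infty} \le M$; from $-M \preccurlyeq Y$ we get $Y_{M} = X\mathbf{1}_{S(Q)} - M\mathbf{1}_{S(Q)^{c}} \preccurlyeq X\mathbf{1}_{S(Q)} + Y\mathbf{1}_{S(Q)^{c}} = Y$, so monotonicity gives $\lim_{m \to \infty}\rho(Y_{m}) \le \rho(Y_{M}) \le \rho(Y)$; taking the infimum over all such $Y$ yields $\lim_{m \to \infty}\rho(Y_{m}) \le \rho^{Q}_{E}(X)$. I expect this step --- pinpointing the support property as the mechanism that upgrades $Q$-almost-sure agreement to $\mathcal{P}$-quasi-sure agreement on $S(Q)$ --- to be the only genuinely delicate part; the rest is bookkeeping.

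\emph{Necessity of $\inf_{\overline{\rho}(R) < \infty} R(S(Q)^{c}) = 0$ when $\rho = \rho_{D}$.} Write $\mathcal{R} := \{R \in \mathfrak{P}_{c}(\Omega) \mid \overline{\rho}(R) < \infty\}$, which is non-empty because $\rho$ is real-valued. Applying the limit formula with $X = 0$ together with Lemma~\ref{lem:rhoD} gives $\rho^{Q}_{E}(0) = \lim_{m \to \infty}\sup_{R \in \mathcal{R}}\big(-m R(S(Q)^{c}) - \overline{\rho}(R)\big)$. Since the constant $-\rho(0)$ lies in $\mathcal{A}_{\rho}$ by cash-additivity, $\overline{\rho}(R) \ge E_{R}[-\rho(0)] = -\rho(0)$, i.e.\ $-\overline{\rho}(R) \le \rho(0)$, for all $R \in \mathcal{R}$. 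Hence, if $\varepsilon := \inf_{R \in \mathcal{R}} R(S(Q)^{c}) > 0$, then $\rho(-m\mathbf{1}_{S(Q)^{c}}) \le -m\varepsilon + \rho(0) \to -\infty$, so $\rho^{Q}_{E}(0) = -\infty$ and $\rho^{Q}_{E}$ is not relevant. This is the contrapositive of the assertion.

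\emph{The coherent case.} Necessity is the previous paragraph. For sufficiency, coherence makes $\mathcal{A}_{\rho}$ a convex cone containing $0$, so $\overline{\rho}$ takes only the values $0$ and $\infty$ and $\mathcal{R} = \{R \mid \overline{\rho}(R) = 0\}$; compare \cite{FS2016}. Consequently $\rho(-m\mathbf{1}_{S(Q)^{c}}) = \sup_{R \in \mathcal{R}}\big(-m R(S(Q)^{c})\big) = -m\inf_{R \in \mathcal{R}} R(S(Q)^{c})$, which equals $0$ whenever $\inf_{R \in \mathcal{R}} R(S(Q)^{c}) = 0$; the limit formula then gives $\rho^{Q}_{E}(0) = 0 \in \R$, i.e.\ $\rho^{Q}_{E}$ is relevant.
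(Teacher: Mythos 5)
Your proof is correct and follows essentially the same approach as the paper: the same limit formula $\rho^Q_E(X)=\lim_m\rho(X\mathbf{1}_{S(Q)}-m\mathbf{1}_{S(Q)^c})$ established via the observation $X_m\preccurlyeq Y$ for $m\geq\|Y\|_{c,\infty}$, the same penalty lower bound $\overline{\rho}(R)\geq-\rho(0)$, and the same sharpening to equality via $\overline{\rho}(R)\in\{0,\infty\}$ in the coherent case. The only presentational difference is that you spell out explicitly why supportedness upgrades $Q(X\neq Y)=0$ to $X\mathbf{1}_{S(Q)}=Y\mathbf{1}_{S(Q)}$ in $L^0_c$, which the paper leaves implicit in the claim ``$X_m\preccurlyeq Y$''.
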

\begin{proof}
     Let  $X_{m} := X \mathbf{1}_{S(Q)} - m \mathbf{1}_{S(Q)^{c}}$, $m \in \N$. Then $j_Q(X)=j_Q(X_{m})$. For any $Y\in  L^{\infty}_{c}$ such that $j_Q(Y)=j_Q(X)$, choosing $m \geq \lVert Y \rVert_{c,\infty}$, we obtain $X_{m} \preccurlyeq Y$. Therefore, by monotonicity, it follows that
    \begin{equation*}
        \rho_E^Q(X)=\inf\{\rho(Y)\mid j_Q(Y)=j_Q(X)\}= \inf_{m\geq 0 } \rho(X_m)=\lim_{m\to \infty} \rho(X_{m}).
    \end{equation*}
    In particular, assuming that $\rho=\rho_D$ and recalling Lemma~\ref{lem:rhoD}, we have
    \begin{align*}
        \rho^{Q}_{E}(0) &= \lim_{m \to \infty} \rho\big(-m \mathbf{1}_{S(Q)^{c}}\big) \\
        &= \lim_{m \to \infty} \sup_{\overline{\rho}(R) < \infty} - m R\big(S(Q)^c\big) - \overline{\rho}(R) \\
        &\leq \rho(0) + \lim_{m \to \infty} \sup_{\overline{\rho}(R) < \infty}- m R\big(S(Q)^{c}\big) \\
        &= \rho(0) + \lim_{m \to \infty} - \Big(m \inf_{\overline{\rho}(R) < \infty} R\big(S(Q)^{c}\big)\Big),
    \end{align*}
    where we used the estimate $\overline{\rho}(R) \geq E_{R}[0] - \rho(0) = -\rho(0)$. Therefore, $\rho^{Q}_{E}$ is relevant only if $\inf_{\overline{\rho}(R) < \infty}R(S(Q)^{c}) = 0$. In case $\rho$ is coherent, the inequality above is an equality which yields the claimed sufficiency.
\end{proof}

\begin{lemma} \label{lem:rel:meas}
    Let $\rho \colon L^{\infty}_{c} \to \R$ be a monetary risk measure, and let $$\mathcal{Q}_{rel}^\rho:=\{Q\in \mathfrak{P}_c(\Omega)\mid \rho^{Q}_{E}\, \mbox{is relevant}\}.$$ If $\mathcal{A}_\rho$ is $\mathcal{P}$-sensitive, then $\mathcal{Q}_{rel}^\rho$ is a reduction set for $\mathcal{A}_\rho$, and $\rho=\rho^{\mathcal{Q}_{rel}^\rho}_E$. If, in addition, $\rho=\rho_D$, then $\rho=\rho^{\mathcal{Q}_{rel}^\rho}_E= \rho^{\mathcal{Q}_{rel}^\rho}_D$.
\end{lemma}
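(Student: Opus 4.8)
The plan is to identify $\mathcal{Q}^{\rho}_{rel}$ as a reduction set for the acceptance set $\mathcal{A}_\rho$, promote this to a reduction set for the function $\rho$ via cash-additivity, invoke Theorem~\ref{thm:Psensi:localization} for the identity $\rho=\rho^{\mathcal{Q}^{\rho}_{rel}}_E$, and finally obtain the dual claim by a squeeze argument once we observe that every $R$ with $\overline\rho(R)<\infty$ is automatically in $\mathcal{Q}^{\rho}_{rel}$.

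First I would check $\mathcal{Q}^{\rho}_{rel}\neq\emptyset$ and, simultaneously, the reduction-set property. Recall from the discussion following Lemma~\ref{lem:rhoE:rm} that whenever $\rho^{Q}_{E}$ is not relevant, i.e.\ $\rho^{Q}_{E}(0)=-\infty$, then $j_{Q}(\mathcal{A}_\rho)=L^{\infty}_{Q}$. Hence, if $\mathcal{Q}^{\rho}_{rel}$ were empty, then $j_{Q}(X)\in j_{Q}(\mathcal{A}_\rho)$ for every $X\in L^{\infty}_{c}$ and every $Q\in\mathfrak{P}_{c}(\Omega)$; since $\mathcal{A}_\rho$ is $\mathcal{P}$-sensitive it admits the reduction set $\mathfrak{P}_{c}(\Omega)$, forcing $\mathcal{A}_\rho=L^{\infty}_{c}$, which contradicts cash-additivity (e.g.\ $\rho(n)=\rho(0)+n\to\infty$). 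For the reduction-set property, let $X\in L^{\infty}_{c}$ satisfy $j_{Q}(X)\in j_{Q}(\mathcal{A}_\rho)$ for all $Q\in\mathcal{Q}^{\rho}_{rel}$; for $Q\in\mathfrak{P}_{c}(\Omega)\setminus\mathcal{Q}^{\rho}_{rel}$ we have $j_{Q}(\mathcal{A}_\rho)=L^{\infty}_{Q}\ni j_{Q}(X)$ anyway, so $j_{Q}(X)\in j_{Q}(\mathcal{A}_\rho)$ for all $Q\in\mathfrak{P}_{c}(\Omega)$, and $\mathcal{P}$-sensitivity of $\mathcal{A}_\rho$ gives $X\in\mathcal{A}_\rho$.

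Next I would pass from $\mathcal{A}_\rho$ to all level sets. By cash-additivity $E_{r}=\{X\mid\rho(X)\le r\}=\mathcal{A}_\rho+r$, and since $j_{Q}$ is linear with $j_{Q}(m)=m$, any reduction set for a set $\mathcal{C}$ is also a reduction set for $\mathcal{C}+r$ (as $j_{Q}(\mathcal{C}+r)=j_{Q}(\mathcal{C})+r$ and $j_{Q}(X+r)=j_{Q}(X)+r$). Thus $\mathcal{Q}^{\rho}_{rel}$ is a joint reduction set for every $E_{r}$, i.e.\ a reduction set for $\rho$, and Theorem~\ref{thm:Psensi:localization} yields $\rho=\rho^{\mathcal{Q}^{\rho}_{rel}}_E$.

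For the last statement, assume in addition $\rho=\rho_{D}$. The crucial observation is that $\{R\in\mathfrak{P}_{c}(\Omega)\mid\overline\rho(R)<\infty\}\subseteq\mathcal{Q}^{\rho}_{rel}$: if $m\in\R$ satisfies $m\in j_{R}(\mathcal{A}_\rho)$, choose $Z\in\mathcal{A}_\rho$ with $j_{R}(Z)=m$; then, by the formula $\overline\rho(R)=\sup_{Y\in\mathcal{A}_\rho}E_{R}[Y]$ from Lemma~\ref{lem:rhoD}, $\overline\rho(R)\ge E_{R}[Z]=m$, so $\sup\{m\in\R\mid m\in j_{R}(\mathcal{A}_\rho)\}\le\overline\rho(R)<\infty$, i.e.\ $\rho^{R}_{E}$ is relevant by Lemma~\ref{lem:rhoE:rm}. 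Now, using Lemma~\ref{lem:rhoD}, $\rho(X)=\rho_{D}(X)=\sup_{R:\,\overline\rho(R)<\infty}\big(E_{R}[X]-\overline\rho(R)\big)$; for each such $R$ we have $R\in\mathcal{Q}^{\rho}_{rel}$ and $\rho^{R}_{D}(X)\ge E_{R}[X]-\overline\rho(R)$ (take $R'=R$ in $\rho^{R}_{D}(X)=\sup_{R'\ll R}E_{R'}[X]-\overline\rho(R')$), so $\rho\le\sup_{R\in\mathcal{Q}^{\rho}_{rel}}\rho^{R}_{D}=\rho^{\mathcal{Q}^{\rho}_{rel}}_D$. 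Combining with $\rho^{\mathcal{Q}^{\rho}_{rel}}_D\le\rho^{\mathcal{Q}^{\rho}_{rel}}_E\le\rho$ (Lemmas~\ref{lem:fD:fE}(ii) and~\ref{lem:aux:1}(v)) and the already established $\rho=\rho^{\mathcal{Q}^{\rho}_{rel}}_E$, all three coincide. The only mildly technical input is the identity $j_{Q}(\mathcal{A}_\rho)=L^{\infty}_{Q}$ in the non-relevant case, which is already recorded after Lemma~\ref{lem:rhoE:rm}; the rest is translation-invariance of reduction sets together with the elementary bound $\overline\rho(R)\ge m$ for $m\in j_{R}(\mathcal{A}_\rho)$, so I do not expect a genuine obstacle.
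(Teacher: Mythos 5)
Your argument is correct and follows essentially the same route as the paper. The non-emptiness and reduction-set claims for $\mathcal{Q}^\rho_{rel}$ are established exactly as in the paper's proof (via the observation that non-relevant $Q$ impose no constraint), and the passage from $\mathcal{A}_\rho$ to $\rho$ is the content of Remark~\ref{rem:rhoP}. The only cosmetic difference is in the dual part: the paper shows $\overline\rho(Q)\in\R\Rightarrow Q\in\mathcal{Q}^\rho_{rel}$ via the chain $\rho^Q_E(0)\ge\rho^Q_D(0)\ge-\overline\rho(Q)$ and then invokes Proposition~\ref{prop:fQD}, whereas you prove the same inclusion directly from Lemma~\ref{lem:rhoE:rm}(iii) together with the bound $\sup\{m\mid m\in j_R(\mathcal{A}_\rho)\}\le\overline\rho(R)$, and then finish with a hand-made squeeze using Lemmas~\ref{lem:rhoD}, \ref{lem:fD:fE}(ii), and \ref{lem:aux:1}(v) rather than citing Proposition~\ref{prop:fQD}. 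Both variants rest on the same two facts (Lemma~\ref{lem:rhoD} and the inclusion of the effective dual domain into $\mathcal{Q}^\rho_{rel}$), so this is the same proof in substance.
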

\begin{proof}
    Let $\mathcal{A}_\rho$ be $\mathcal{P}$-sensitive. Suppose that for all $Q \in \mathfrak{P}_{c}(\Omega)$, the risk measures $\rho^{Q}_{E}$ are not relevant, and thus $j_{Q}(\mathcal{A}_\rho)=L^\infty_Q$ for all $Q\in \mathfrak{P}_c(\Omega)$. Let $m \in \R$. Then, trivially, $m \in j_{Q}(\mathcal{A}_\rho)$ for all $Q \in \mathfrak{P}_c(\Omega)$. Consequently, $\mathcal{P}$-sensitivity implies that $m \in \mathcal{A}_\rho$. Thus, $-\rho(0)=\sup\{m \in \R \mid m \in \mathcal{A}_\rho\} = \infty$, which is a contradiction since $\rho(0) \in \R$. Hence, we must have that $\mathcal{Q}_{rel}^\rho\neq \emptyset$. Moreover, by definition of $\mathcal{P}$-sensitivity, we have
    \begin{equation*}
        X\in \mathcal{A}_\rho \quad \Leftrightarrow \quad \forall Q\in \mathfrak{P}_c(\Omega) \colon \, j_Q(X) \in j_Q(\mathcal{A}_\rho).
    \end{equation*}
    Those $Q$ such that $j_{Q}(\mathcal{A}_\rho) = L^\infty_Q$ do not pose any constraint on the right-hand side. Therefore, if $j_Q(X) \in j_Q(\mathcal{A}_\rho)$ for all $Q \in \mathcal{Q}_{rel}^\rho$, then indeed $j_Q(X) \in j_Q(\mathcal{A}_\rho)$ for all $Q \in \mathfrak{P}_c(\Omega)$, and thus $X \in \mathcal{A}_\rho$. Hence, $\mathcal{Q}_{rel}^\rho$ is a reduction set for $\mathcal{A}_\rho$. Theorem~\ref{thm:Psensi:localization} and Remark~\ref{rem:rhoP} below imply that $\rho = \rho^{\mathcal{Q}_{rel}^\rho}_E$.

    Suppose that $\rho=\rho_D$. Let $Q\in \mathfrak{P}_c(\Omega)$ such that $\overline \rho(Q)\in \R$. Then  $\rho^Q_E(0) \geq \rho^Q_D(0)\geq -\overline \rho(Q)$ (see Lemma~\ref{lem:rhoD}). Since also $\rho^Q_E(0)\leq \rho(0)$, we obtain $\rho^Q_E(0)\in \R$, and therefore $Q\in \mathcal{Q}_{rel}^\rho$. Thus, $\rho = \rho^{\mathcal{Q}_{rel}^\rho}_E = \rho^{\mathcal{Q}_{rel}^\rho}_D$ follows from Proposition~\ref{prop:fQD}.
\end{proof}

\begin{remark}\label{rem:rhoP}
    The level sets $E_r$, $r \in \R$, of any monetary risk measure $\rho$ satisfy $E_r = \mathcal{A}_\rho+r$. Therefore, $\rho$ is $\mathcal{P}$-sensitive if and only if $\mathcal{A}_\rho$ is $\mathcal{P}$-sensitive, and any reduction set for $\mathcal{A}_\rho$ is a reduction set for $\rho$, and vice versa. \hfill $\diamond$
\end{remark}

Regarding an interpretation of $\rho_D^Q$, note that measures $R \in \mathfrak{P}_{c}(\Omega)$ such that $\overline{\rho}(R) < \infty$ correspond to acceptability constraints, since $X\in \mathcal{A}_\rho$ if and only if $E_R[X]\leq \overline{\rho}(R)$ for all such $R$. They may also be interpreted as penalized pricing functionals. Measures $R$ such that also $R \ll Q$ correspond to constraints which are understood from the $Q$-perspective. Hence, the convex risk measure $\rho_D^Q(X)$ is the capital requirement or indifference price of the loss $X$ based on acceptability constraints from the $Q$-perspective.

Assume that the monetary risk measure $\rho$ satisfies $\rho=\rho_D$, i.e., it displays a reasonable degree of regularity from a dual perspective, a property that is commonly assumed. Models $Q \not\in \mathcal{Q}_{rel}^\rho$ are irrelevant, whereas $Q \in \mathcal{Q}_{rel}^\rho$ are the models that in aggregate yield $\rho$ both from a primal and a dual perspective, since $\rho = \rho^{\mathcal{Q}_{rel}^\rho}_E = \rho^{\mathcal{Q}_{rel}^\rho}_D$ according to Lemma~\ref{lem:rel:meas}. In our discussion we will focus on $Q\in  \mathcal{Q}_{rel}^\rho$, even if the results derived below do not require this condition. Recall that always $\rho_D^Q\leq \rho_E^Q$. Examples~\ref{ex:rhoD:rhoE} and~\ref{ex:rhoD:rhoE:2} show that $\rho_D^Q(X)< \rho_E^Q(X)$ is possible. In that case, $\rho_E^Q(X)-\rho_D^Q(X)$ may be seen as a market inconsistency which we call a \textit{localization bubble}, where the actual price under $Q$ exceeds what the risk assessment based on acceptability constraints under $Q$ would imply. In Example~\ref{ex:rhoD:rhoE}, the size of this bubble is infinite, whereas in Example~\ref{ex:rhoD:rhoE:2}, it is finite. In the following, we will show how such a bubble is related to purely finitely additive set functions, so-called \textit{charges}, appearing in the dual representation of $\rho^Q_E$. Note that there is a line of literature that identifies bubbles as a consequence of pricing under charges (see, e.g., \cite{B1972}, \cite{G1989}, \cite{GL1992}, \cite{JPS2010}, and references therein).

If $\rho$ is convex and $Q\in \mathcal{Q}_{rel}^\rho$, so that $\rho^{Q}_{E}$ is relevant, then $\rho^{Q}_{E}\circ j_Q^{-1}$ is a convex risk measure on $L^\infty_Q$ (recall Remark~\ref{rem:rmonLQ}). Hence, it is well-known that we obtain a dual representation of $\rho_E^Q$ over the finitely additive measures
\begin{equation*}
    ba_Q^1 := \{\mu \in ba_Q \mid \mu(\Omega) = 1  \text{ and } \mu(A) \geq 0 \text{ for all } A \in \mathcal{F}\},
\end{equation*}
where $ba$ is the real vector space of all finitely additive finite variation set functions $\mu \colon \mathcal{F} \rightarrow \mathbb{R}$, and
\begin{equation*}
    ba_{Q} := \{\mu \in ba \mid |\mu| \ll Q\}.
\end{equation*}
The total variation of finitely additive measures is defined in the same way as for measures in \eqref{eq:tv}.
In fact (see, e.g., \cite[Theorem 4.16]{FS2016}), 
\begin{equation}\label{eq:rho:Q:E:dual}
    \rho^{Q}_{E}(X) = \max_{\mu \in ba_Q^1} \int X d\mu  - \rho^\ast_Q(\mu), \ \ X \in L^{\infty}_{c},
\end{equation}
where the dual function $\rho^\ast_Q$ is given by
\begin{equation*}
   \rho^\ast_Q(\mu) := \sup\bigg\{\int Y d\mu\biggm\vert Y \in L^{\infty}_c \colon \rho_E^Q (Y)\leq 0\bigg\}, \quad \mu \in ba_{Q}^{1}.
\end{equation*}
Note that in \eqref{eq:rho:Q:E:dual} we used the $Q$-consistency of $\rho^Q_E$ to rewrite the dual representation of $\rho^Q_E\circ j_Q^{-1}$ on $L^\infty_Q$ as a representation of $\rho^Q_E$ on $L^\infty_c$. Consider any $R\in \mathfrak{P}_Q(\Omega)$. Then,
\begin{equation*}
    \overline{\rho}(R) = \sup_{Y \in \mathcal{A}_{\rho}} E_{R}[Y]= \sup_{Y \in L^{\infty}_{c} \colon \rho^{Q}_{E}(Y) \leq 0} E_{R}[Y] = \rho^{\ast}_{Q}(R),
\end{equation*}
where for the second equality, we used that $Y \in \mathcal{A}_{\rho}$ implies $\rho^{Q}_{E}(Y) \leq 0$, and, conversely,
\begin{equation*}
    \rho^{Q}_{E}(Y)=\inf\{\rho(Z)\mid j_Q(Z)=j_Q(Y)\} \leq 0
\end{equation*}
implies that there is a non-increasing sequence $(m_{n})_{n \in \N} \subseteq \R_{+}$ such that $\lim_{n \to \infty} m_{n} \leq 0$, and a sequence $(Y_n)_{n \in \N} \subseteq L^\infty_c$ with $j_Q(Y_n) = j_{Q}(Y)$ and $Y_n - m_{n} \in \mathcal{A}_{\rho}$ for all $n \in \N$. Note that, as $R\ll Q$, $E_R[Y_n]=E_R[Y]$. 
Hence, if $\rho^{Q}_{E}(Y) > \rho^{Q}_{D}(Y)$, then there must exist $\nu \in ba_{Q}^{1} \setminus \mathfrak{P}_Q(\Omega)$ such that
\begin{equation} \label{eq:ba}
    \rho^{Q}_{E}(Y) = \int Y d\nu - \rho^{*}_{Q}(\nu) > \rho^Q_{D}(Y).
\end{equation}
This establishes the aforementioned relation of localization bubbles to charges.  

Before stating Examples~\ref{ex:rhoD:rhoE} and \ref{ex:rhoD:rhoE:2}, we give a number of sufficient conditions ensuring that localization bubbles do not appear. An obvious sufficient condition is that $Q$ be finitely atomic, that is, there are pairwise disjoint events $A_1,\ldots, A_n\in \mathcal{F}$ such that $Q(A_i)>0$ for all $i=1,\ldots, n$, $\bigcup_{i=1}^nA_i=\Omega$, and, for any $B\in \mathcal{F}$ and any $i\in \{1,\ldots, n\}$, $B\subseteq A_i$ implies $Q(B)=0$ or $Q(B)=Q(A_i)$. In that case $ba^1_Q=\mathfrak{P}_Q(\Omega)$. Hence, \eqref{eq:ba} cannot hold.

\begin{lemma}
    Let $\rho:L^\infty_c\to \R$ be a convex risk measure and suppose that $Q\in \mathfrak{P}_c(\Omega)$ is finitely atomic. Then $\rho^Q_E=\rho^Q_D$.
\end{lemma}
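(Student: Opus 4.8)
The plan is to split into two cases according to whether $\rho^Q_E$ is relevant in the sense of Lemma~\ref{lem:rhoE:rm}; the non-relevant case is degenerate, and in the relevant case the finite atomicity of $Q$ is exactly what makes the argument go through.

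First, if $\rho^Q_E$ is not relevant, then by the observation recorded right after Lemma~\ref{lem:rhoE:rm} we have $\rho^Q_E\equiv-\infty$. Since $\rho^Q_D\leq\rho^Q_E$ by Lemma~\ref{lem:fD:fE}~(ii), it follows that $\rho^Q_D\equiv-\infty$ as well, so equality holds trivially. (This case genuinely occurs, e.g. when $\mathcal{A}_\rho$ contains arbitrarily large $Q$-null perturbations, so it must be handled.)

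Now suppose $\rho^Q_E$ is relevant. By Remark~\ref{rem:rmonLQ}, $\rho^Q_E\circ j_Q^{-1}$ is a convex risk measure on $L^\infty_Q$, hence admits the dual representation \eqref{eq:rho:Q:E:dual} over $ba_Q^1$ with penalty $\rho^\ast_Q$. I would then invoke the claim stated just before the lemma: when $Q$ is finitely atomic one has $ba_Q^1=\mathfrak{P}_Q(\Omega)$, because a normalized non-negative $\mu\in ba$ with $|\mu|\ll Q$ is, on each of the finitely many $Q$-atoms, a constant multiple of $Q$ restricted there, so $\mu=h\cdot Q$ for a density $h$ and in particular $\mu$ is countably additive. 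Since $\mathcal{X}=L^\infty_c$, every probability measure dominated by $Q$ integrates every $X\in\mathcal{X}$ and lies in $ca_c$ (because $R\ll Q\ll\mathcal{P}$), so $\mathfrak{P}_Q(\Omega)\subseteq ca_Q\cap ca_c(\mathcal{X})=ca_Q(\mathcal{X})$. Therefore \eqref{eq:rho:Q:E:dual} is already a dual representation of $\rho^Q_E$ over $ca_Q(\mathcal{X})$ of the form \eqref{eq:dual:fQ} (extend the penalty by $+\infty$ on $ca_Q(\mathcal{X})\setminus\mathfrak{P}_Q(\Omega)$), and Lemma~\ref{lem:fD:fE}~(vi) delivers $\rho^Q_E=\rho^Q_D$.

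The only point requiring a little care is the elementary identification $ba_Q^1=\mathfrak{P}_Q(\Omega)$ for finitely atomic $Q$, which is the single place where the hypothesis enters. If one prefers to bypass Lemma~\ref{lem:fD:fE}~(vi), one can instead argue by contradiction as in the text: assuming $\rho^Q_E(Y)>\rho^Q_D(Y)$ for some $Y\in L^\infty_c$ produces, in the relevant case, an element $\nu\in ba_Q^1\setminus\mathfrak{P}_Q(\Omega)$ satisfying \eqref{eq:ba}, which is impossible once $ba_Q^1=\mathfrak{P}_Q(\Omega)$; together with $\rho^Q_D\leq\rho^Q_E$ this again yields equality.
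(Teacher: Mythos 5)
Your proof is correct and follows essentially the same route as the paper: the key observation is that finite atomicity of $Q$ forces $ba_Q^1=\mathfrak{P}_Q(\Omega)$, so the dual representation \eqref{eq:rho:Q:E:dual} of $\rho^Q_E$ over $ba_Q^1$ is already a representation over $ca_Q(\mathcal{X})$, ruling out \eqref{eq:ba}. Your explicit handling of the non-relevant case (where both functionals degenerate to $-\infty$) and the short verification that $ba_Q^1=\mathfrak{P}_Q(\Omega)$ are both correct and make explicit what the paper leaves to the reader.
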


The next sufficient condition for $\rho^Q_E=\rho^Q_D$ is a version of the Lebesgue property discussed, e.g., in \cite[Corollary 4.35]{FS2016}.

\begin{proposition}
    Let $\rho \colon L^{\infty}_{c} \to \R$ be a coherent risk measure. Suppose that $\rho(\mathbf{1}_{A_n}) \downarrow 0$ whenever the sequence of events $(A_n)_{n \in \N}\subseteq \mathcal{F}$ satisfies $A_n\downarrow \emptyset$. Then $\rho^Q_E=\rho^Q_D$ for all $Q\in \mathfrak{P}_c(\Omega)$.
\end{proposition}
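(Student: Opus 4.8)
The plan is to fix $Q \in \mathfrak{P}_c(\Omega)$ and to show that $\rho^Q_E$ admits a dual representation over $ca_Q(\mathcal{X}) = ca_Q$, which by Lemma~\ref{lem:fD:fE}~(vi) gives $\rho^Q_E = \rho^Q_D$. First I would dispose of the degenerate case: if $\rho^Q_E$ is not relevant, then $\rho^Q_E \equiv -\infty$ by the observation following Lemma~\ref{lem:rhoE:rm}, and since $\rho^Q_D \le \rho^Q_E$ by Lemma~\ref{lem:fD:fE}~(ii), also $\rho^Q_D \equiv -\infty = \rho^Q_E$, so there is nothing to prove. Assume from now on that $\rho^Q_E$ is relevant. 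Then, by Remark~\ref{rem:rmonLQ}, $\rho^Q_E \circ j_Q^{-1}$ is a coherent risk measure on $L^\infty_Q$; in particular $\rho^Q_E(0) = 0$ (positive homogeneity of $\rho^Q_E$, cf.\ Lemma~\ref{lem:f:fQE}~(iv)) and the dual representation~\eqref{eq:rho:Q:E:dual} is available. Moreover, since the acceptance set $\{Y \in L^\infty_c \mid \rho^Q_E(Y) \le 0\}$ is a cone by positive homogeneity, its support function $\rho^\ast_Q$ takes only the values $0$ and $\infty$, so with $\mathcal{D} := \{\mu \in ba^1_Q \mid \rho^\ast_Q(\mu) = 0\}$ one has $\rho^Q_E(X) = \sup_{\mu \in \mathcal{D}} \int X \, d\mu$ for all $X \in L^\infty_c$.

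The core step is to prove $\mathcal{D} \subseteq \mathfrak{P}_Q(\Omega)$, i.e.\ that every $\mu \in \mathcal{D}$ is countably additive. I would fix such a $\mu$ (a non-negative finitely additive set function with $\mu(\Omega) = 1$ and $|\mu| = \mu \ll Q$) and a decreasing sequence $(A_n)_{n \in \N} \subseteq \mathcal{F}$ with $\bigcap_n A_n = \emptyset$, and show $\mu(A_n) \to 0$. From $0 \preccurlyeq \mathbf{1}_{A_n}$, monotonicity of $\rho^Q_E$ (Lemma~\ref{lem:f:fQE}~(i)), the estimate $\rho^Q_E \le \rho$ (Lemma~\ref{lem:aux:1}~(ii)) and the hypothesis $\rho(\mathbf{1}_{A_n}) \downarrow 0$, one obtains $0 = \rho^Q_E(0) \le \rho^Q_E(\mathbf{1}_{A_n}) \le \rho(\mathbf{1}_{A_n}) \to 0$, hence $\rho^Q_E(\mathbf{1}_{A_n}) \to 0$. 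By cash-additivity of $\rho^Q_E$ (Lemma~\ref{lem:f:fQE}~(ii)), $\mathbf{1}_{A_n} - \rho^Q_E(\mathbf{1}_{A_n})$ is acceptable for $\rho^Q_E$, so $\mu(A_n) - \rho^Q_E(\mathbf{1}_{A_n}) = \int (\mathbf{1}_{A_n} - \rho^Q_E(\mathbf{1}_{A_n})) \, d\mu \le \rho^\ast_Q(\mu) = 0$ (using $\mu(\Omega) = 1$), whence $0 \le \mu(A_n) \le \rho^Q_E(\mathbf{1}_{A_n}) \to 0$. A non-negative finitely additive set function on a $\sigma$-algebra that is continuous at $\emptyset$ from above in this sense is countably additive (for pairwise disjoint $(B_n)$ apply the above to $C_N := \bigcup_{n > N} B_n \downarrow \emptyset$), so $\mu$ is a countably additive probability measure with $\mu \ll Q$, i.e.\ $\mu \in \mathfrak{P}_Q(\Omega)$.

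To finish, I would note that $\rho^Q_E(X) = \sup_{\mu \in \mathcal{D}} \int X \, d\mu$ with $\mathcal{D} \subseteq \mathfrak{P}_Q(\Omega) \subseteq ca_Q = ca_Q(\mathcal{X})$; assigning penalty $0$ to the elements of $\mathcal{D}$ and $\infty$ to the remaining elements of $ca_Q(\mathcal{X})$ exhibits a dual representation of $\rho^Q_E$ over $ca_Q(\mathcal{X})$, hence $\rho^Q_E = \rho^Q_D$ by Lemma~\ref{lem:fD:fE}~(vi); since $Q \in \mathfrak{P}_c(\Omega)$ was arbitrary, this proves the claim. The step I expect to be the main obstacle is the transfer of the continuity-from-above condition from $\rho$ to its localization $\rho^Q_E$: this rests on the sandwich $0 \le \rho^Q_E(\mathbf{1}_{A_n}) \le \rho(\mathbf{1}_{A_n})$ and therefore on relevance together with coherence to secure $\rho^Q_E(0) = 0$. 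The remaining ingredients — passing to the dominated coherent risk measure $\rho^Q_E \circ j_Q^{-1}$ on $L^\infty_Q$ and the classical fact that continuity from above upgrades a representing charge to a countably additive measure — are standard; compare \cite[Theorem~4.16 and Corollary~4.35]{FS2016}.
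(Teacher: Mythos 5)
Your proposal is correct and follows essentially the same route as the paper: after dispatching the non-relevant case identically, both arguments exploit the dichotomy $\rho^\ast_Q(\mu)\in\{0,\infty\}$ from positive homogeneity and the sandwich $\mu(A_n)\leq \rho^Q_E(\mathbf{1}_{A_n})\leq \rho(\mathbf{1}_{A_n})$ to conclude that every representing charge with zero penalty must be countably additive. The paper phrases this as a contradiction for a fixed pure charge $\nu$ (ruling out \eqref{eq:ba}), while you state the contrapositive directly and close via Lemma~\ref{lem:fD:fE}~(vi); the content is the same.
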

\begin{proof}
    If $Q\not \in \mathcal{Q}_{rel}^\rho$, then $\rho^Q_E\equiv-\infty$ and thus also $\rho^Q_D\equiv -\infty$ (recall $\rho^Q_D\leq \rho^Q_E$). Let $Q\in \mathcal{Q}_{rel}^\rho$. Let $\nu\in ba^1_Q\setminus \mathfrak{P}_c(\Omega)$. Then there exists $(A_n)_{n\in \N}\subseteq \mathcal{F}$ such that $A_n\downarrow \emptyset$ and $\varepsilon>0$ such that $\nu(A_n)>\varepsilon$ for all $n\in \N$ (see \cite[Lemma 10.9]{AB2006}). Note that by positive homogeneity, $\rho^\ast_Q(\nu)\in \{0,\infty\}$ (see \cite[Corollary 4.19]{FS2016}).
    If $\rho^\ast_Q(\nu)=0$, then $$\rho(\mathbf{1}_{A_n})\geq \rho^Q_E(\mathbf{1}_{A_n})\geq \nu(A_n)-\rho^\ast_Q(\nu)\geq \varepsilon, \quad n\in \N, $$ while $\rho(\mathbf{1}_{A_n})\downarrow 0$, which is absurd. Hence, $\rho^\ast_Q(\nu)=\infty$ for all $\nu\in ba^1_Q\setminus \mathfrak{P}_c(\Omega)$. Consequently, \eqref{eq:ba} is not possible, and therefore $\rho^Q_E=\rho^Q_D$.
\end{proof}

\begin{proposition}\label{prop:f_D:equal:f_E}
    Let $\rho: L^\infty_c\to \R$ be a convex risk measure and let $Q\in \mathfrak{P}_{c}(\Omega)$. Then $\rho_D^Q=\rho_E^Q$ whenever $j_Q(\mathcal{A}_\rho)$ is $\sigma(L^\infty_Q,ca_Q)$-closed. In particular, if $\rho=\rho_D$, $Q\in \mathcal{Q}_{rel}^\rho$ is supported, and  $X\in \mathcal{A}_\rho$ implies $X\mathbf{1}_{S(Q)}\in \mathcal{A}_\rho$, then $j_Q(\mathcal{A}_\rho)$ is $\sigma(L^\infty_Q,ca_Q)$-closed. 
\end{proposition}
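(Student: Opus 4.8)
The plan is to treat the two assertions separately, the first via Fenchel--Moreau duality on $L^\infty_Q$. There I may assume $Q\in\mathcal{Q}_{rel}^\rho$, since otherwise $\rho^Q_E\equiv-\infty$ and then $\rho^Q_D\equiv-\infty=\rho^Q_E$ because $\rho^Q_D\le\rho^Q_E$ always. For $Q\in\mathcal{Q}_{rel}^\rho$ the map $\rho^Q_E\circ j_Q^{-1}$ is a convex risk measure on $L^\infty_Q$ (Remark~\ref{rem:rmonLQ}), and the key step is to identify its acceptance set with $j_Q(\mathcal{A}_\rho)$: the inclusion $j_Q(\mathcal{A}_\rho)\subseteq\{Y\in L^\infty_Q\mid\rho^Q_E\circ j_Q^{-1}(Y)\le 0\}$ follows from $\rho^Q_E\le\rho$ (Lemma~\ref{lem:aux:1}), while for the converse I would reuse the approximation argument preceding \eqref{eq:ba}: $\rho^Q_E(X)\le 0$ produces $m_n\downarrow 0$ in $\R_+$ and $Y_n\in L^\infty_c$ with $j_Q(Y_n)=j_Q(X)$ and $Y_n-m_n\in\mathcal{A}_\rho$, so $j_Q(X)-m_n=j_Q(Y_n-m_n)\in j_Q(\mathcal{A}_\rho)$ converges to $j_Q(X)$ in $\|\cdot\|_{Q,\infty}$, hence in $\sigma(L^\infty_Q,ca_Q)$, and the assumed closedness of $j_Q(\mathcal{A}_\rho)$ forces $j_Q(X)\in j_Q(\mathcal{A}_\rho)$. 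By cash-additivity every lower level set of $\rho^Q_E\circ j_Q^{-1}$ is the translate $j_Q(\mathcal{A}_\rho)+r$ and hence $\sigma(L^\infty_Q,ca_Q)$-closed, so $\rho^Q_E\circ j_Q^{-1}$ is convex and $\sigma(L^\infty_Q,ca_Q)$-lower semicontinuous. Since $\langle L^\infty_Q,ca_Q\rangle$ is a dual pair (the usual $L^\infty$--$L^1$ pairing under $Q$), the Fenchel--Moreau theorem \cite[Part One, Proposition~3.1]{ET1999} shows that $\rho^Q_E\circ j_Q^{-1}$, and therefore $\rho^Q_E$ (by $Q$-consistency, as in the discussion around \eqref{eq:rho:Q:E:dual}), admits a dual representation over $ca_Q(\mathcal{X})=ca_Q$; Lemma~\ref{lem:fD:fE}~(vi) then yields $\rho^Q_D=\rho^Q_E$.

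For the second assertion, assume $\rho=\rho_D$, $Q\in\mathcal{Q}_{rel}^\rho$ supported (recall Definition~\ref{defi:supp}), and $\mathcal{A}_\rho\mathbf{1}_{S(Q)}\subseteq\mathcal{A}_\rho$. I would establish the identity
\[
    j_Q(\mathcal{A}_\rho)=\bigcap_{R\ll Q,\ \overline{\rho}(R)<\infty}\{Y\in L^\infty_Q\mid E_R[Y]\le\overline{\rho}(R)\},
\]
whose right-hand side is $\sigma(L^\infty_Q,ca_Q)$-closed, being an intersection of half-spaces cut out by the $\sigma(L^\infty_Q,ca_Q)$-continuous functionals $Y\mapsto E_R[Y]$ with $R\ll Q$. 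Because $\rho=\rho_D$, Lemma~\ref{lem:rhoD} gives the membership test $X\in\mathcal{A}_\rho\iff E_R[X]\le\overline{\rho}(R)$ for all $R\in\mathfrak{P}_c(\Omega)$ with $\overline{\rho}(R)<\infty$; together with $E_R[X]=E_R[j_Q(X)]$ whenever $R\ll Q$, this gives the inclusion of $j_Q(\mathcal{A}_\rho)$ in the right-hand side at once.

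For the reverse inclusion, given $Y$ in the right-hand set I would pick a bounded measurable representative $y$ with $|y|\le\|Y\|_{Q,\infty}$ everywhere and put $X:=[y\,\chi_{S(Q)}]_c\in L^\infty_c$, so that $j_Q(X)=Y$ and $X=X\mathbf{1}_{S(Q)}$, and then verify $E_R[X]\le\overline{\rho}(R)$ for each $R\in\mathfrak{P}_c(\Omega)$ with $\overline{\rho}(R)<\infty$. Write $a:=R(S(Q))$. If $a=0$, then $E_R[X]=0$, and since $Z\mathbf{1}_{S(Q)}\in\mathcal{A}_\rho$ with $E_R[Z\mathbf{1}_{S(Q)}]=0$ for every $Z\in\mathcal{A}_\rho$, one gets $\overline{\rho}(R)\ge 0=E_R[X]$. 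If $a>0$, set $R_Q:=\tfrac{1}{a}R(\,\cdot\cap S(Q))\in\mathfrak{P}_c(\Omega)$; since $Q$ is supported, $Q(N)=0$ forces $N\cap S(Q)$ to be $\mathcal{P}$-polar, hence $R$-null, so $R_Q\ll Q$. For $Z\in\mathcal{A}_\rho$, using $Z\mathbf{1}_{S(Q)}\in\mathcal{A}_\rho$ one has $a\,E_{R_Q}[Z]=E_R[Z\mathbf{1}_{S(Q)}]\le\overline{\rho}(R)$, whence $\overline{\rho}(R_Q)\le\overline{\rho}(R)/a<\infty$; then membership of $Y$ in the right-hand set gives $E_{R_Q}[Y]\le\overline{\rho}(R_Q)$, and thus $E_R[X]=a\,E_{R_Q}[Y]\le a\,\overline{\rho}(R_Q)\le\overline{\rho}(R)$. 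Therefore $X\in\mathcal{A}_\rho$ and $Y=j_Q(X)\in j_Q(\mathcal{A}_\rho)$, which proves the identity, hence the closedness, and by the first assertion also $\rho^Q_D=\rho^Q_E$. I expect the main obstacle to be precisely this last verification for reference measures $R$ that are \emph{not} absolutely continuous with respect to $Q$: descending to $R_Q\ll Q$ relies on $Q$ being supported, and bounding $\overline{\rho}(R_Q)$ by $\overline{\rho}(R)/R(S(Q))$ relies on the stability $\mathcal{A}_\rho\mathbf{1}_{S(Q)}\subseteq\mathcal{A}_\rho$, so the half-space description of $j_Q(\mathcal{A}_\rho)$ would break down without either hypothesis.
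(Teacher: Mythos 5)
Your first part tracks the paper's argument closely: both identify the acceptance set of $\rho^Q_E\circ j_Q^{-1}$ with $j_Q(\mathcal{A}_\rho)$ (the nontrivial inclusion needing the assumed closedness, exactly as you observe) and then invoke weak-$*$ lower semicontinuity plus Fenchel--Moreau / the FS2016 representation theorem to get $\rho^Q_D=\rho^Q_E$.

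For the second assertion you depart from the paper's proof. The paper argues directly by nets: given $(X^Q_\alpha)\subseteq j_Q(\mathcal{A}_\rho)$ converging weakly to $X^Q$, it lifts to $X_\alpha\mathbf{1}_{S(Q)}\in\mathcal{A}_\rho$, shows $\sigma(L^\infty_c,ca_c)$-convergence of $X_\alpha\mathbf{1}_{S(Q)}\to X\mathbf{1}_{S(Q)}$ by restricting each $\mu\in ca_c$ to $\mu_Q:=\mu(\,\cdot\cap S(Q))\in ca_Q$, and concludes from $\sigma(L^\infty_c,ca_c)$-closedness of $\mathcal{A}_\rho$. You instead produce an \emph{explicit} closed description, namely
\[
    j_Q(\mathcal{A}_\rho)=\bigcap_{R\ll Q,\ \overline{\rho}(R)<\infty}\bigl\{Y\in L^\infty_Q\mid E_R[Y]\le\overline{\rho}(R)\bigr\},
\]
whose reverse inclusion is handled by lifting $Y$ to $X=X\mathbf{1}_{S(Q)}$, splitting any $R$ with $\overline{\rho}(R)<\infty$ according to whether $a:=R(S(Q))$ vanishes, and, when $a>0$, passing to $R_Q:=a^{-1}R(\,\cdot\cap S(Q))\ll Q$ with the key estimate $\overline{\rho}(R_Q)\le\overline{\rho}(R)/a$ from the $\mathbf{1}_{S(Q)}$-stability of $\mathcal{A}_\rho$. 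Both arguments use exactly the two hypotheses (supportedness of $Q$ and $\mathcal{A}_\rho\mathbf{1}_{S(Q)}\subseteq\mathcal{A}_\rho$) at precisely the analogous points; your route is more constructive and yields a half-space characterization of $j_Q(\mathcal{A}_\rho)$ that may be independently useful, while the paper's topological lifting is shorter and avoids the dual detour. Both are correct.

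One minor point worth making explicit if you write this up: in the case $a=0$ you need $\mathcal{A}_\rho\neq\emptyset$ (so that $\overline{\rho}(R)\ge 0$); this holds automatically since $\rho$ is real-valued, e.g.\ $-\rho(0)\in\mathcal{A}_\rho$.
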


\begin{proof} We may assume that $Q\in \mathcal{Q}_{rel}^\rho$ since otherwise $\rho_E^Q=\rho_D^Q\equiv -\infty$. 
    If $j_Q(\mathcal{A}_\rho)$ is $\sigma(L^\infty_Q,ca_Q)$-closed, one verifies that $\rho^Q_E\circ j^{-1}_Q$ is $\sigma(L^\infty_Q,ca_Q)$-lower semicontinuous. Therefore, according to \cite[Theorem 4.33]{FS2016}, $\rho_E^Q$ admits a dual representation over $ca_Q$ and $\rho_D^Q=\rho_E^Q$ by Lemma~\ref{lem:fD:fE}.
    
    Now suppose that $\rho=\rho_D$, $Q\in \mathcal{Q}_{rel}^\rho$ is supported, and for all $X\in \mathcal{A}_\rho$ we have $X\mathbf{1}_{S(Q)}\in \mathcal{A}_\rho$. Since $\rho$ admits a dual representation over $ca_c$, $\mathcal{A}_\rho$ is $\sigma(L^\infty_c,ca_c)$-closed. We show that $j_{Q}(\mathcal{A}_\rho)$ is $\sigma(L^\infty_Q,ca_Q)$-closed. To this end, let $(X^{Q}_{\alpha})_{\alpha\in I}\subseteq j_{Q}(\mathcal{A_\rho})$ converge to $X^{Q}\in L^\infty_Q$ with respect to $\sigma(L^\infty_Q,ca_Q)$. Take $X_{\alpha} \in \mathcal{A}_\rho$ and $X \in L^{\infty}_{c}$ such that $j_{Q}(X_{\alpha}) = X^{Q}_{\alpha}$, $\alpha\in I$, and $j_{Q}(X) = X^{Q}$. Then $X_{\alpha} \mathbf{1}_{S(Q)} \in \mathcal{A}_\rho$, $j_{Q}(X_{\alpha} \mathbf{1}_{S(Q)}) = X^{Q}_{\alpha}$, and $j_{Q}(X \mathbf{1}_{S(Q)}) = X^{Q}$. For any $\mu \in ca_{c}$, let $\mu_Q\in ca_Q$ be given by $\mu(A)=\mu(S(Q)\cap A)$, $A\in \mathcal{F}$. Then, as $X^{Q}_{\alpha}$ converges to $X^Q$ in $\sigma(L^\infty_Q,ca_Q)$,
    \begin{equation*}
        \int X_{\alpha} \mathbf{1}_{S(Q)} d\mu  = \int X^{Q}_{\alpha} d\mu_Q  \longrightarrow  \int X^{Q} d\mu_Q  =  \int X \mathbf{1}_{S(Q)} d\mu.
    \end{equation*}
    Since $\mu\in ca_c$ was arbitrary, we find that $X_{\alpha} \mathbf{1}_{S(Q)}$, $\alpha\in I$, converges to $X \mathbf{1}_{S(Q)}$ with respect to $\sigma(L^\infty_c, ca_c)$. Hence, as $\mathcal{A}_\rho$ is $\sigma(L^\infty_c,ca_c)$-closed, we have $X \mathbf{1}_{S(Q)} \in \mathcal{A}_\rho$ and thus $X^{Q} \in j_{Q}(\mathcal{A})$. Consequently, $j_{Q}(A_\rho)$ is $\sigma(L^\infty_Q,ca_Q)$-closed. 
\end{proof}

\begin{corollary}
    Let $\rho \colon L^{\infty}_{c} \to \R$ be a surplus-invariant convex risk measure, that is, $\rho(X) = \rho(X^+ )$, and suppose that $\rho=\rho_D$. If $Q \in \mathfrak{P}_{c}(\Omega)$ is supported, then  $\rho^Q_E=\rho^Q_D$.
\end{corollary}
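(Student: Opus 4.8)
The plan is to deduce the corollary from Proposition~\ref{prop:f_D:equal:f_E}. I would first dispose of the degenerate case: if $Q \notin \mathcal{Q}_{rel}^\rho$, then $\rho^Q_E \equiv -\infty$, and since $\rho^Q_D \leq \rho^Q_E$ always holds, also $\rho^Q_D \equiv -\infty$, so the asserted equality is trivial. Hence we may assume $Q \in \mathcal{Q}_{rel}^\rho$. Since $\rho = \rho_D$ and $Q$ is supported by hypothesis, the only remaining hypothesis of the ``in particular'' part of Proposition~\ref{prop:f_D:equal:f_E} that needs checking is that $\mathcal{A}_\rho$ is stable under multiplication by $\mathbf{1}_{S(Q)}$, that is, $X \in \mathcal{A}_\rho$ implies $X\mathbf{1}_{S(Q)} \in \mathcal{A}_\rho$. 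Granting this, Proposition~\ref{prop:f_D:equal:f_E} yields that $j_Q(\mathcal{A}_\rho)$ is $\sigma(L^\infty_Q, ca_Q)$-closed, and its first assertion then gives $\rho^Q_D = \rho^Q_E$.

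To verify the stability, I would take $X \in \mathcal{A}_\rho$, so $\rho(X) \leq 0$. Because $0 \preccurlyeq \mathbf{1}_{S(Q)}$, one has $(X\mathbf{1}_{S(Q)})^+ = X^+\mathbf{1}_{S(Q)}$ and $X^+\mathbf{1}_{S(Q)} \preccurlyeq X^+$. Surplus-invariance of $\rho$ followed by monotonicity then gives
\[
    \rho(X\mathbf{1}_{S(Q)}) = \rho\big((X\mathbf{1}_{S(Q)})^+\big) = \rho\big(X^+\mathbf{1}_{S(Q)}\big) \leq \rho(X^+) = \rho(X) \leq 0,
\]
i.e.\ $X\mathbf{1}_{S(Q)} \in \mathcal{A}_\rho$, as needed.

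I do not expect any genuine obstacle here: the argument is a short reduction to Proposition~\ref{prop:f_D:equal:f_E}, and the only points requiring a little care are the trivial case $Q \notin \mathcal{Q}_{rel}^\rho$ and the elementary fact that taking positive parts commutes with multiplication by the nonnegative indicator $\mathbf{1}_{S(Q)}$, which is what lets surplus-invariance and monotonicity combine to push $X\mathbf{1}_{S(Q)}$ back into $\mathcal{A}_\rho$.
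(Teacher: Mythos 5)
Your proposal is correct and follows essentially the same route as the paper: dispose of the case $Q \notin \mathcal{Q}_{rel}^\rho$, verify $X \in \mathcal{A}_\rho \Rightarrow X\mathbf{1}_{S(Q)} \in \mathcal{A}_\rho$ via the chain $\rho(X\mathbf{1}_{S(Q)}) = \rho(X^+\mathbf{1}_{S(Q)}) \leq \rho(X^+) = \rho(X)$, and then invoke Proposition~\ref{prop:f_D:equal:f_E}. No issues.
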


We refer to \cite{GM2020} for a comprehensive discussion of surplus invariant risk measures. 

\begin{proof}
    As before we may assume that $Q\in \mathcal{Q}^\rho_{rel}$, because otherwise $\rho^Q_E=\rho^Q_D\equiv -\infty$. Note that $X\in \mathcal{A}_{\rho}$ if and only if $X^+\in \mathcal{A}_{\rho}$. As $X^+\mathbf{1}_{S(Q)}\preccurlyeq X^+$, it follows that
    \begin{equation*}
        \rho(X\mathbf{1}_{S(Q)})=\rho(X^+\mathbf{1}_{S(Q)})\leq \rho(X^+)=\rho(X),
    \end{equation*}
    and therefore $X\mathbf{1}_{S(Q)}\in \mathcal{A}_{\rho}$ whenever $X\in \mathcal{A}_{\rho}$. Now apply Proposition~\ref{prop:f_D:equal:f_E}.
\end{proof}

\begin{lemma}\label{lem:finite}
    Let $\rho \colon L^{\infty}_{c} \to \R$ be a coherent risk measure which admits a representation over finitely many $\mathfrak{P}_c(\Omega)$-constraints, that is,
    \begin{equation*}
        \rho(X) = \max_{R \in \mathcal{Q}} E_R[X], \quad X \in L^\infty_c,
    \end{equation*}
    where $\mathcal{Q}\subseteq \mathfrak{P}_c(\Omega)$ is finite.  Then $\rho^Q_E=\rho^Q_D$ for all supported $Q\in \mathfrak{P}_c(\Omega)$.
\end{lemma}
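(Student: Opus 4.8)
The plan is to fix a supported $Q\in\mathfrak{P}_c(\Omega)$, write $S:=S(Q)$ for a version of its order support, and prove $\rho^Q_E\le\rho^Q_D$; since $\rho^Q_D\le\rho^Q_E$ always holds by Lemma~\ref{lem:fD:fE}(ii), this already gives $\rho^Q_E=\rho^Q_D$.

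First I would make $\rho^Q_E$ explicit. By Lemma~\ref{lem:rhoQE:rep}, for every $X\in L^\infty_c$ we have $\rho^Q_E(X)=\lim_{m\to\infty}\rho\big(X\mathbf{1}_S-m\mathbf{1}_{S^c}\big)$. Substituting $\rho(\cdot)=\max_{R\in\mathcal{Q}}E_R[\cdot]$ and splitting $\mathcal{Q}$ into $\mathcal{Q}_0:=\{R\in\mathcal{Q}\mid R(S)=1\}$ and $\mathcal{Q}\setminus\mathcal{Q}_0$, one sees that for $R\in\mathcal{Q}_0$ the term $E_R[X\mathbf{1}_S-m\mathbf{1}_{S^c}]=E_R[X]$ does not depend on $m$, whereas for $R\in\mathcal{Q}\setminus\mathcal{Q}_0$ one has $R(S^c)>0$, hence (using that $\mathcal{Q}$ is finite) $\delta:=\min_{R\in\mathcal{Q}\setminus\mathcal{Q}_0}R(S^c)>0$ and $E_R[X\mathbf{1}_S-m\mathbf{1}_{S^c}]\le\lVert X\rVert_{c,\infty}-m\delta\to-\infty$. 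Therefore $\rho^Q_E(X)=\max_{R\in\mathcal{Q}_0}E_R[X]$, with the convention $\max\emptyset:=-\infty$; the empty case is exactly $Q\notin\mathcal{Q}^\rho_{rel}$, where $\rho^Q_E\equiv-\infty$ and hence $\rho^Q_D\equiv-\infty$, so there is nothing to prove.

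Next I would check that every $R\in\mathcal{Q}_0$ is admissible in the dual description of $\rho^Q_D$. For the absolute continuity $R\ll Q$: if $N\in\mathcal{F}$ satisfies $Q(N)=0$, then $N\cap S$ is $\mathcal{P}$-polar by Definition~\ref{defi:supp}(ii), so $R(N\cap S)=0$ since $R\in\mathfrak{P}_c(\Omega)$, while $R(N\cap S^c)\le R(S^c)=0$; hence $R(N)=0$ and $R\in\mathfrak{P}_Q(\Omega)\subseteq ca_Q=ca_Q(\mathcal{X})$ (recall $\mathcal{X}=L^\infty_c$, so $ca_c(\mathcal{X})=ca_c$). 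For the penalty term: $\overline{\rho}(R)\ge E_R[0]=0$ trivially, and for any $Y\in\mathcal{A}_\rho$ we have $E_R[Y]\le\max_{R'\in\mathcal{Q}}E_{R'}[Y]=\rho(Y)\le0$, so $\overline{\rho}(R)=\sup_{Y\in\mathcal{A}_\rho}E_R[Y]\le0$, i.e.\ $\overline{\rho}(R)=0$.

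Finally I would combine the two steps. By Lemma~\ref{lem:rhoD}, read on $L^\infty_c$ via the $Q$-consistency of $\rho^Q_D$, we have $\rho^Q_D(X)=\sup_{R\ll Q}\big(E_R[X]-\overline{\rho}(R)\big)$, and each $R\in\mathcal{Q}_0$ contributes the value $E_R[X]-\overline{\rho}(R)=E_R[X]$; hence $\rho^Q_D(X)\ge\max_{R\in\mathcal{Q}_0}E_R[X]=\rho^Q_E(X)$ for all $X\in L^\infty_c$, which together with $\rho^Q_D\le\rho^Q_E$ yields the claim. The only point needing a little care is the bookkeeping in the second step — unwinding the definition of $\mathfrak{P}_c(\Omega)$ so that $\mathcal{P}$-polarity of $N\cap S$ forces $R(N\cap S)=0$, and confirming that $R$ truly lies in the index set over which $\rho^Q_D$ is taken as a supremum; beyond that there is no genuine analytic obstacle, the core being the elementary finite-maximum computation of the limit in the first step.
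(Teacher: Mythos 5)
Your proof is correct and follows essentially the same route as the paper: both rest on the limit formula of Lemma~\ref{lem:rhoQE:rep} and the finiteness of $\mathcal{Q}$ to compute $\rho^Q_E(X)=\max_{R\in\mathcal{Q},\,R\ll Q}E_R[X]$, and then identify this with $\rho^Q_D(X)$ via the dual formula of Lemma~\ref{lem:rhoD}. The only (harmless) organizational difference is that you obtain the inequality $\rho^Q_D\geq\max_{R\in\mathcal{Q}_0}E_R[\,\cdot\,]$ directly from $\overline{\rho}(R)=0$ for $R\in\mathcal{Q}_0$ and then sandwich with $\rho^Q_D\leq\rho^Q_E$, whereas the paper invokes the characterization $\{\overline{\rho}<\infty\}=\operatorname{co}(\mathcal{Q})$.
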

\begin{proof}
    Let $Q\in \mathcal{Q}_{rel}^\rho$ be supported. One verifies that $\overline{\rho}(R)<\infty$ if and only if $R$ lies in the convex hull of $\mathcal{Q}$. According to Lemma~\ref{lem:rhoQE:rep},
    \begin{equation*}
        \inf_{R\in  \operatorname{co}\mathcal{Q}} R(S(Q)^c)  = \min_{R\in  \mathcal{Q}}R(S(Q)^c) = 0.
    \end{equation*}
    Hence, there is $R'\in \mathcal{Q}$ such that $R'\ll Q$. Suppose there is  $R\in \mathcal{Q}$  such that $R(S(Q)^c)>0$. For any $X\in L^\infty_c$ and $X_m:=X\mathbf{1}_{S(Q)}-m\mathbf{1}_{S(Q)^c}$, $m\in \N$, we have
    \begin{equation*}
     E_{R}[X_m] =  E_{R}[X\mathbf{1}_{S(Q)}] -m R(S(Q)^c) \leq E_{R'}[X]
    \end{equation*}
    whenever
    \begin{equation*}
        m\geq \frac{E_{R}[X\mathbf{1}_{S(Q)}] - E_{R'}[X] }{R(S(Q)^c)}.
    \end{equation*}
    Consequently, as $\mathcal{Q}$ is finite and as $E_R[X_m]=E_R[X]$ for all $R\ll Q$, for $m$ large enough we obtain
    \begin{equation*}
        \rho(X_m)= \max_{R\in \mathcal{Q}} E_R[X_m] =  \max_{R\in \mathcal{Q}, R\ll Q} E_R[X],
    \end{equation*}
    and thus, by another application of Lemma~\ref{lem:rhoQE:rep},
    \begin{equation*}
        \rho^Q_E(X) = \lim_{m\to \infty} \rho(X_m) =  
         \max_{R\in \mathcal{Q}, R\ll Q} E_R[X] = \rho^Q_D(X).
    \end{equation*}
\end{proof}

Examples~\ref{ex:rhoD:rhoE} and~\ref{ex:rhoD:rhoE:2} show that $\rho_E^Q(Y)-\rho_D^Q(Y) > 0$ may happen even in a dominated framework.

\begin{example}\label{ex:rhoD:rhoE}
    Let $(\Omega,\mathcal{F}) = ((0,1), \mathcal{B}(0,1))$ and $\mathcal{P}=\{\lambda\}$, where $\lambda$ denotes the Lebesgue measure on $(\Omega,\mathcal{F})$. Set $A_n:=(0, \frac{1}{n+1})$ and $B:=[\frac{1}{2}, 1)$, and define probability measures $P_{n}$ on $(\Omega, \mathcal{F})$ by 
    \begin{equation*}
        P_{n}(\, \cdot \,) := \frac{n-1}{n} \lambda(\, \cdot \mid A_{n}) + \frac{1}{n} \lambda(\, \cdot \mid B), \quad n \in \N.
    \end{equation*}
    Consider the coherent risk measure
    \begin{equation*}
        \rho(X) := \sup_{n \in \N} E_{P_{n}}[X], \quad X \in L^{\infty}_{\lambda}.
    \end{equation*}
    Note that Lemma~\ref{lem:finite} does not apply, since $\rho$ is given by infinitely many constraints. As $\rho$ admits a dual representation over $ca_{\lambda}$ by definition, it follows from Theorem~\ref{thm:dual:f} that
    \begin{equation*}
        \rho = \rho_{D} = \rho_{E}.
    \end{equation*}
     However, as we will see in the following, there exists $Q \in \mathfrak{P}_{\lambda}(\Omega)$ such that $\rho^{Q}_{D} \neq \rho^{Q}_{E}$.
    
    Set $Y := [y]_{\lambda}$, where
    \begin{equation*}
        y(\omega) :=
        \begin{cases}
            -1, &\text{if } \omega \in B, \\
            2 \omega, &\text{otherwise.}
        \end{cases}
    \end{equation*}
    Then $Y \in L^{\infty}_{\lambda}$ and 
    \begin{equation*}
        E_{P_{n}}[Y] =  \bigg(\frac{(n-1)(n+1)}{n} \int_{0}^{\frac{1}{n+1}} 2 \omega \, d \omega - \frac{1}{n}\bigg) =  -\frac{2}{(n+1)n} \leq 0.
    \end{equation*}
    Hence, $\rho(Y) = 0$ and $Y \in \mathcal{A}_{\rho}$. Consider any probability measure $R \in \mathfrak{P}_\lambda(\Omega)$ such that $R(B) = 0$. As $R(Y > 0) =  1$, it follows that $E_{R}[Y] > 0$. Consequently, recalling that $tY\in \mathcal{A}_{\rho}$ for all $t>0$ by positive homogeneity, we have
    \begin{equation*}
        \overline{\rho}(R) = \sup_{X \in \mathcal{A}_{\rho}} E_{R}[X] \geq \sup_{t > 0} E_{R}[tY] = \infty.
    \end{equation*}
    Consider the supported probability measure $Q=\lambda(\, \cdot\mid (0,\frac{1}{2}))$. Indeed $S(Q)=(0,\frac{1}{2})=B^c$. Any $R \ll Q$ has to satisfy $R(B) = R(S(Q)^{c}) = 0$ and therefore $\overline{\rho}(R) = \infty$. It follows that $\rho^{Q}_{D} \equiv -\infty$.
    
    As regards $\rho^{Q}_{E}$, we have 
    \begin{equation*}
     \inf_{\overline{\rho}(R) < \infty} R\big(S(Q)^{c}\big) = \inf_{\overline{\rho}(R) < \infty} R(B) \leq \inf_{n \in \N} P_{n}(B) = \inf_{n\in \N} \frac{1}{n} = 0.
    \end{equation*}
    Hence, by Lemma~\ref{lem:rhoQE:rep} $\rho^{Q}_{E}$ is a coherent risk measure. Consequently,  $\rho^{Q}_{E}(X) > \rho^{Q}_{D}(X)$ for all $X\in L^\infty_c$ and the size of the localization bubble is infinite. \hfill $\diamond$ 
\end{example}

\begin{example}\label{ex:rhoD:rhoE:2}
    Recall Example~\ref{ex:rhoD:rhoE}. This time define
    \begin{equation*}
        \kappa(X):= \rho(X) \vee E_Q[X], \quad X\in L^\infty_\lambda,
    \end{equation*}
    which is another coherent risk measure. Suppose that $R\ll Q$ but $R\neq Q$. Pick a version $\varphi$ of the density $\frac{dR}{dQ}$ such that $\varphi\chi_{[\frac12,1)}=0$. As $R\neq Q$, we must have $Q(\varphi>1)>0$ and $Q(\varphi<1)>0$. Moreover, $A_n\downarrow \emptyset$ and thus 
    \begin{equation*}
        \lim_{n\to \infty}Q(\{\varphi>1\}\cap A_n)=0 \quad \text{and} \quad \lim_{n\to \infty} Q(\{\varphi<1\} \cap A_n)=0.
    \end{equation*}
    Choose $m$ large enough such that
    \begin{equation*}
        Q(\{\varphi>1\} \cap A_m) < Q(\{\varphi>1\}) \quad \text{and} \quad Q(\{\varphi < 1\} \cap A_m) < Q(\{\varphi < 1\}).
    \end{equation*}
    Then there is
    \begin{equation*}
        D\subseteq \{\varphi > 1\} \setminus A_m  \quad \text{and} \quad D' \subseteq (\{\varphi < 1\} \setminus A_m)\cap (0, \tfrac12)
    \end{equation*}
    such that $Q(D) = Q(D') > 0$. Note that $\lambda(D\mid A_n)=\lambda(D'\mid A_n)=0$ for all $n\geq m$. Consider $Z := [z]_\lambda$, where
    \begin{equation*}
        z := \chi_{D} - \chi_{D'} - m \chi_B.
    \end{equation*}
    Then, $E_Q[Z] = 0$ and $E_{P_n}[Z] = -\frac{m}{n} \leq 0$ for $n \geq m$. For $n < m$, we have
    \begin{equation*}
        E_{P_n}[Z] \leq \frac{(n-1)(n+1)}{n} \int_0^{\frac{1}{n+1}} 1 \, d\omega -\frac{m}{n} = \frac{n-1-m}{n} \leq 0.
    \end{equation*}
    Therefore, $Z \in \mathcal{A}_{\kappa}$. However,
    \begin{equation*}
        E_R[Z] = E_Q[\varphi Z] > Q(D) -Q(D') = 0
    \end{equation*}
    which implies that $\overline{\kappa}(R)\geq \sup_{t>0}E_R[tZ] = \infty$. Clearly, $\overline{\kappa}(Q)=0$, and therefore $\kappa^Q_D(\,\cdot\,) = E_Q[\,\cdot\,]$. 
    
    Regarding $\kappa^Q_E$, as in Example~\ref{ex:rhoD:rhoE}, we conclude that $\inf_{\overline{\kappa}(R) < \infty} R\big(S(Q)^{c}\big) = 0$, which implies that $\kappa^Q_E$ is a coherent risk measure (see Lemma~\ref{lem:rhoQE:rep}). Hence, both $\kappa^Q_D$ and $\kappa^Q_E$ are coherent risk measures. Consider $W := [-y]_\lambda$, where $y$ was given in Example~\ref{ex:rhoD:rhoE}. Then $\kappa^Q_D(W) = E_Q[W] < 0$. For any $m \in \N$,
    \begin{equation*}
        E_{P_{n}}[W \mathbf{1}_{(0,\frac12)} - m \mathbf{1}_{B}] = -\frac{(n-1)}{n(n+1)}  - \frac{m}{n} \leq 0,
    \end{equation*}
    and thus, as $E_Q[W \mathbf{1}_{(0,\frac12)} - m \mathbf{1}_{B}] = E_Q[W] < 0$,
    \begin{equation*}
        \kappa(W \mathbf{1}_{(0,\frac12)} - m \mathbf{1}_{B}) = \sup_{n \in \N} \bigg(-\frac{(n-1)}{n(n+1)}  - \frac{m}{n}\bigg) = 0.
    \end{equation*}
    By applying Lemma~\ref{lem:rhoQE:rep}, we therefore have that
    \begin{equation*}
        \kappa^Q_E(W) = \lim_{m \to \infty} \kappa(W \mathbf{1}_{(0,\frac12)} - m \mathbf{1}_{B}) = 0.
    \end{equation*}
    Hence, there is a finite localization bubble $\kappa^Q_E(W) - \kappa^Q_D(W) > 0$. \hfill $\diamond$
\end{example}

\subsection{Arbitrage and Superhedging in One Period} \label{sec:FTAP}

Again, we let $\mathcal{X} = L^{\infty}_{c}$ with $ca_{c}(\mathcal{X}) = ca_{c}$. A \textit{one-period market model} of dimension $d \in \N$ is given by a (discounted) \textit{stock price process} $S=(S_0, S_1) \in \mathcal{S}$, where $S_{0} \in \R^{d}$ and $S_{1}$ is a bounded $d$-dimensional random vector. All coordinates $S^i_t$, $i = 1, \ldots, d$, of $S_t$, $t = 0, 1$, are assumed to be non-negative and $(S^i_0,S^i_1)$ will be called the $i$-th asset of $S$, $i=1,\ldots, d$.  The space of all such market models is denoted by 
\begin{equation*}
    \mathcal{S} := \{S=(S_{0}, S_{1}) \mid S \text{ is a one-period market model of dimension } d, \ d \in \N\}.
\end{equation*}
Let $d(S)$ denote the dimension of $S\in \mathcal{S}$, and we write $S \lhd S'$  to indicate that $S\in \mathcal{S}$ is a \textit{submarket} of $S'\in \mathcal{S}$, that is, for all $i\in \{1,\ldots, d(S)\}$ there is $j\in \{1, \ldots, d(S')\}$ such that $(S_0^i,S_1^i)=(S_0^j,S_1^j)$. Moreover, for $S=(S_0,S_1)\in \mathcal{S}$, let
\begin{equation*}
    \Delta S:=(\Delta S^1,\ldots \Delta S^{d(S)}) := S_{1} - S_{0} = (S^1_{1} - S^1_{0},\ldots,  S^{d(S)}_{1} - S^{d(S)}_{0})
\end{equation*}
and recall that a probability measure $Q\in \mathfrak{P}(\Omega)$ is called a \textit{martingale measure} for the market model $S$ if, for each $i=1,\ldots, d(S)$, $S^i_1$ is $Q$-integrable and
\begin{equation*}
    E_{Q}[\Delta S] := (E_{Q}[\Delta S^1],\ldots, E_{Q}[\Delta S^{d(S)}]) = 0.
\end{equation*}
The space of \textit{(investment) strategies} for a given $d$-dimensional market $S \in \mathcal{S}$ is given by $\mathcal{H}(S):=\R^d$.

In the following, for $Q \in \mathfrak{P}_c(\Omega)$, we write $Q \lll \mathcal{P}$ if there exists $P \in \mathcal{P}$ such that $Q \ll P$. We will consider the following set-valued maps:
\begin{itemize}
    \item $\mathfrak{M} \colon \mathcal{S} \twoheadrightarrow \mathcal{P}_{c}(\Omega)$ defined by
    \begin{equation*}
        \mathfrak{M}(S) := \{Q \in \mathfrak{P}_{c}(\Omega) \mid Q\, \mbox{is a  martingale measure for}\, S\}, 
    \end{equation*}

    \item $\mathfrak{NA}\colon \mathcal{S} \twoheadrightarrow \mathfrak{P}_{c}(\Omega)$ defined by 
    \begin{equation*}
        \mathfrak{NA}(S) := \{R \in \mathfrak{P}_{c}(\Omega) \mid \exists Q \in \mathfrak{M}(S) \colon  R \approx Q\},
    \end{equation*}
    
    \item $\mathfrak{M}_{\ll} \colon \mathcal{S} \twoheadrightarrow \mathfrak{P}_{c}(\Omega)$ defined by
    \begin{equation*}
        \mathfrak{M}_{\ll}(S) := \{Q \in \mathfrak{P}_{c}(\Omega) \mid Q \in \mathfrak{M}(S) \text{ and } Q \lll \mathcal{P}\},
    \end{equation*}
    
    \item $\mathfrak{M}_{\approx} \colon \mathcal{S} \twoheadrightarrow \mathfrak{P}_{c}(\Omega)$ defined by
    \begin{equation*}
        \mathfrak{M}_{\approx}(S) := \{Q \in \mathfrak{P}_{c}(\Omega) \mid Q \in \mathfrak{M}(S) \text{ and } \exists P \in \mathcal{P} \colon Q \approx P\},
    \end{equation*}

    \item $\mathfrak{M}^{Q}_{\approx} \colon \mathcal{S} \twoheadrightarrow \mathfrak{P}_{c}(\Omega)$, where $Q \in \mathfrak{P}_{c}(\Omega)$, defined by
    \begin{equation*}
        \mathfrak{M}^{Q}_{\approx}(S) := \{R \in \mathfrak{P}_{c}(\Omega) \mid R \in \mathfrak{M}(S) \text{ and } R \approx Q \}.
    \end{equation*} 
\end{itemize}
Clearly, $\mathfrak{M}_{\approx}(S) \subseteq \mathfrak{M}_{\ll}(S) \subseteq \mathfrak{M}(S) \subseteq \mathfrak{NA}(S)$ for any $S \in \mathcal{S}$. Following the standard approach to no-arbitrage in robust models (see, e.g., \cite[Definition 1.1]{BN2015}), we say that the \textit{no-arbitrage condition} NA($\mathcal{P}, S$) holds for the market $S \in \mathcal{S}$ if, for all $H \in \mathcal{H}(S)$,
\begin{equation*}
    H \Delta S \geq 0 \ \mathcal{P}\text{-q.s.} \quad \text{implies} \quad H \Delta S = 0 \ \mathcal{P}\text{-q.s.}
\end{equation*}
Note that $H \Delta S$ is the usual ($\omega$-wise) Eulidean scalar product of the vectors $H$ and $\Delta S$.
Similarly, for any $Q\in \mathfrak{P}_c(\Omega)$, $S \in \mathcal{S}$ satisfies NA($Q,S$) if, for all $H \in \mathcal{H}(S)$,
\begin{equation*}
    H \Delta S \geq 0 \ Q\text{-a.s.} \quad \text{implies} \quad H \Delta S = 0 \ Q\text{-a.s.} 
\end{equation*}
For any given $S\in \mathcal{S}$, we define the \textit{$\mathcal{P}$-superhedging functional} $\pi(\, \cdot \mid S) \colon L^\infty_c \to \R\cup\{-\infty\}$ by
\begin{equation}\label{eq:SHFc}
    \pi(X \mid S) := \inf \{r \in \R \mid \exists H \in \mathcal{H}(S) \colon X \preccurlyeq r + [H \Delta S]_c \}.
\end{equation}
Considering the strategy $H = 0$, we see that $\pi(\, \cdot \mid S)$ indeed takes values in $\R \cup \{-\infty\}$ since $\pi(X \mid S) \leq \|X\|_{c,\infty}$ for all $X\in L^\infty_c$. Finally, for $Q \in \mathfrak{P}_{c}(\Omega)$, 
\begin{equation*}
    \pi^{Q}(X \mid S) := \inf \{r \in \R \mid \exists H \in \mathcal{H}(S) \colon X \leq_Q r + [H \Delta S]_c \}, \quad X \in L^\infty_{c}, 
\end{equation*}
is the \textit{$Q$-superhedging functional} for a given $S \in \mathcal{S}$. Note that both $\pi(\, \cdot \mid S)$ and $\pi^{Q}(\, \cdot \mid S)$ are coherent risk measures on $L^\infty_c$, provided they are $\R$-valued, which is equivalent to $\pi(0 \mid S)=0$ and $\pi^{Q}(0\mid S)=0$, respectively (by the same reasoning as in the proof of Lemma~\ref{lem:rhoE:rm}).

\begin{lemma} \label{lemm:dual:rep:pi}
    Let $S\in \mathcal{S}$. Then
    \begin{equation*}
        \{R\in \mathfrak{P}_c(\Omega) \mid \overline{\pi}(R\mid S)<\infty\} = \{R\in \mathfrak{P}_c(\Omega)\mid \overline{\pi}(R\mid S)=0\} = \mathfrak{M}(S).
    \end{equation*}
\end{lemma}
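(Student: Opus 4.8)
The plan is to prove the two claimed equalities simultaneously by establishing the chain of inclusions
\begin{equation*}
    \{R\in \mathfrak{P}_c(\Omega) \mid \overline{\pi}(R\mid S)<\infty\} \subseteq \mathfrak{M}(S) \subseteq \{R\in \mathfrak{P}_c(\Omega)\mid \overline{\pi}(R\mid S)=0\} \subseteq \{R\in \mathfrak{P}_c(\Omega) \mid \overline{\pi}(R\mid S)<\infty\},
\end{equation*}
the last inclusion being trivial. Throughout I would work directly from the definition $\overline{\pi}(R\mid S)=\sup_{X\in L^\infty_c}\big(E_R[X]-\pi(X\mid S)\big)$ rather than through the acceptance-set formula behind Lemma~\ref{lem:rhoD}; this way the degenerate case $\pi(\,\cdot\mid S)\equiv-\infty$ (which forces $\overline{\pi}(\,\cdot\mid S)\equiv\infty$ and, as the chain then shows, $\mathfrak{M}(S)=\emptyset$) is handled uniformly along with the case in which $\pi(\,\cdot\mid S)$ is a genuine coherent risk measure.

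For the first inclusion, fix $R\in\mathfrak{P}_c(\Omega)$ with $\overline{\pi}(R\mid S)<\infty$ and $i\in\{1,\dots,d(S)\}$. For every $t\in\R$ the strategy $H:=te_i\in\mathcal{H}(S)=\R^{d(S)}$ shows $[t\Delta S^i]_c=[H\Delta S]_c\preccurlyeq 0+[H\Delta S]_c$, hence $\pi([t\Delta S^i]_c\mid S)\le 0$. Since $\Delta S^i$ is bounded it is $R$-integrable, and plugging $X=[t\Delta S^i]_c$ into the definition of $\overline{\pi}$ gives $\overline{\pi}(R\mid S)\ge t\,E_R[\Delta S^i]-\pi([t\Delta S^i]_c\mid S)\ge t\,E_R[\Delta S^i]$. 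Letting $t\to+\infty$ and $t\to-\infty$ while using finiteness of $\overline{\pi}(R\mid S)$ forces $E_R[\Delta S^i]=0$; since $i$ was arbitrary, $E_R[\Delta S]=0$ and thus $R\in\mathfrak{M}(S)$.

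For the second inclusion, take $R\in\mathfrak{M}(S)$. First I would note $\pi(0\mid S)=0$: the strategy $H=0$ gives $\pi(0\mid S)\le 0$, and if some $r<0$ and $H$ satisfied $0\preccurlyeq r+[H\Delta S]_c$, then $0=E_R[H\Delta S]\ge -r>0$, a contradiction; consequently, by monotonicity and cash-additivity of $\pi(\,\cdot\mid S)$, one has $-\lVert X\rVert_{c,\infty}\le\pi(X\mid S)\le\lVert X\rVert_{c,\infty}$, so $\pi(\,\cdot\mid S)$ is real-valued. Then, for any $X\in L^\infty_c$ and any $r>\pi(X\mid S)$ there is $H$ with $X\preccurlyeq r+[H\Delta S]_c$, whence $E_R[X]\le r+E_R[H\Delta S]=r$; letting $r\downarrow\pi(X\mid S)$ yields $E_R[X]-\pi(X\mid S)\le 0$. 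Taking the supremum over $X\in L^\infty_c$ gives $\overline{\pi}(R\mid S)\le 0$, while $X=0$ gives $\overline{\pi}(R\mid S)\ge 0$; hence $\overline{\pi}(R\mid S)=0$, which closes the chain.

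This lemma is essentially an exercise in unwinding definitions, so I do not expect a serious obstacle; the only mild subtlety is to organise the argument so that it does not presuppose $\pi(\,\cdot\mid S)$ to be finite-valued, which is why I favour the three-inclusion bookkeeping above over first passing through the dual representation of coherent risk measures.
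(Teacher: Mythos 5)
Your proof is correct. It differs from the paper's in its bookkeeping: the paper first reduces to the case $\pi(0\mid S)=0$, obtains the first equality by citing the general fact that the penalty function of a coherent risk measure only takes the values $0$ and $\infty$ (\cite[Corollary~4.19]{FS2016}), and then proves $\mathfrak{M}(S)=\{R\mid \overline{\pi}(R\mid S)=0\}$ via Lemma~\ref{lem:rhoD} and the observation that $\pi(\pm[\Delta S^i]_c\mid S)\le 0$. You instead establish the single chain $\{\overline{\pi}<\infty\}\subseteq\mathfrak{M}(S)\subseteq\{\overline{\pi}=0\}\subseteq\{\overline{\pi}<\infty\}$, proving the first inclusion directly by the scaling argument $t\mapsto tE_R[\Delta S^i]$ (which is, in effect, the proof of the cited corollary specialized to this situation) and the second by the superhedging inequality $E_R[X]\le\pi(X\mid S)$ for martingale measures $R$, exactly as the paper does. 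What your organisation buys is self-containedness --- no appeal to \cite[Corollary~4.19]{FS2016} or to Lemma~\ref{lem:rhoD} --- and a cleaner treatment of the degenerate case $\pi(\,\cdot\mid S)\equiv-\infty$, which the paper dispatches somewhat tersely (it does not spell out that $\mathfrak{M}(S)=\emptyset$ there, whereas your second inclusion delivers this automatically, since the existence of a martingale measure forces $\pi(0\mid S)=0$). The paper's version is shorter by citation. Both arguments are sound.
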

\begin{proof}
    We may assume that $\pi(0 \mid S)=0$, so that $\pi(\, \cdot \mid S)$ is a coherent risk measure on $L^\infty_c$, because otherwise, $\pi(\, \cdot \mid S) \equiv -\infty$ and thus $\overline{\pi}(\, \cdot\mid S) \equiv \infty$. The first equality is a well-known consequence of coherence of $\pi(\, \cdot\mid S)$ (see \cite[Corollary~4.19]{FS2016}). Let $R \in \mathfrak{M}(S)$. Then, by \eqref{eq:SHFc}, $E_R[X]\leq \pi(X\mid S)$ for all $X\in L^\infty_c$. Lemma~\ref{lem:rhoD} implies $\mathfrak{M}(S)\subseteq \{R\in \mathfrak{P}_c(\Omega)\mid \overline{\pi}(R\mid S)=0\}$. 
    
    Conversely, let $R\in \mathfrak{P}_c(\Omega)$ satisfy $\overline{\pi}(R\mid S)=0$, that is, $E_R[X]\leq 0$ for all $X\in L^\infty_c$ such that $\pi(X \mid S)\leq 0$ (see Lemma~\ref{lem:rhoD}). Since both $\pi([\Delta S^i]_c \mid S)\leq 0$ and $\pi(-[\Delta S^i]_c \mid S)\leq 0$ for all $i=1, \ldots, d(S)$, we conclude that $R\in \mathfrak{M}(S)$.
\end{proof}

\begin{proposition} \label{prop:piQ:piQE:pi:ineq}
    Let $S\in \mathcal{S}$ and $Q \in \mathfrak{P}_{c}(\Omega)$. Then
    \begin{equation*}
        \pi^Q_D(\, \cdot \mid S)\leq \pi^Q(\, \cdot \mid S)\leq \pi^{Q}_{E}(\, \cdot \mid S) \leq \pi(\, \cdot \mid S).
    \end{equation*}
    If $\operatorname{NA}(Q,S)$ holds, then $\pi^Q_D(\, \cdot \mid S)= \pi^Q(\, \cdot \mid S)$. If $Q$ is supported, we have $ \pi^Q(\, \cdot \mid S)= \pi^{Q}_{E}(\, \cdot \mid S)$. 
\end{proposition}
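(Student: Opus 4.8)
The plan is to first dispose of a degenerate case, then establish the displayed chain of inequalities, and finally the two equality statements. If $\pi(0\mid S)<0$, then $\pi(\cdot\mid S)\equiv-\infty$; since $X\preccurlyeq Y$ implies $X\leq_Q Y$, this forces $\pi^Q(0\mid S)\leq\pi(0\mid S)=-\infty$, hence $\pi^Q(\cdot\mid S)\equiv-\infty$ by monotonicity and cash-additivity, and likewise $\pi^Q_E(\cdot\mid S)\leq\pi(\cdot\mid S)\equiv-\infty$ and $\pi^Q_D(\cdot\mid S)\equiv-\infty$, so all claims hold trivially. I therefore assume $\pi(0\mid S)=0$, i.e.\ $\pi(\cdot\mid S)$ is a coherent (in particular monetary) risk measure on $L^\infty_c$. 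For the chain: $\pi^Q_E(\cdot\mid S)\leq\pi(\cdot\mid S)$ is Lemma~\ref{lem:aux:1}(ii); since the defining condition of $\pi^Q(X\mid S)$ depends on $X$ only through $j_Q(X)$, the functional $\pi^Q(\cdot\mid S)$ is $Q$-consistent, and the implication $X\preccurlyeq Y\Rightarrow X\leq_Q Y$ gives $\pi^Q(\cdot\mid S)\leq\pi(\cdot\mid S)$, so $\pi^Q(\cdot\mid S)\leq\pi^Q_E(\cdot\mid S)$ by Lemma~\ref{lem:aux:1}(iv). For the first inequality I would combine Lemma~\ref{lem:fD:fE}(iv) with Lemmas~\ref{lem:rhoD} and~\ref{lemm:dual:rep:pi} (noting $ca_Q(\mathcal X)=ca_Q$ here) to obtain $\pi^Q_D(X\mid S)=\sup\{E_R[X]\mid R\in\mathfrak{M}(S),\ R\ll Q\}$ with $\sup\emptyset:=-\infty$; then for any such $R$ and any $r,H$ with $X\leq_Q r+[H\Delta S]_c$, absolute continuity gives $X\leq r+H\Delta S$ $R$-a.s., so $E_R[X]\leq r+H\cdot E_R[\Delta S]=r$, and taking the supremum over $R$ and the infimum over admissible $r$ yields $\pi^Q_D(\cdot\mid S)\leq\pi^Q(\cdot\mid S)$.

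Under $\operatorname{NA}(Q,S)$ it remains to prove $\pi^Q(\cdot\mid S)\leq\pi^Q_D(\cdot\mid S)$. Here I would invoke the classical one-period fundamental theorem of asset pricing and superhedging duality in the dominated model $(\Omega,\mathcal F,Q)$: $\operatorname{NA}(Q,S)$ yields $\mathfrak{M}^Q_{\approx}(S)\neq\emptyset$ and, for the bounded claim $j_Q(X)$, the identity $\pi^Q(X\mid S)=\sup\{E_R[X]\mid R\in\mathfrak{M}^Q_{\approx}(S)\}$. Every $R\in\mathfrak{M}^Q_{\approx}(S)$ satisfies $R\ll Q$ and $R\in\mathfrak{M}(S)$, hence appears in the supremum defining $\pi^Q_D(X\mid S)$ computed above; therefore $\pi^Q(X\mid S)\leq\pi^Q_D(X\mid S)$, and equality follows from the chain.

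Finally, if $Q$ is supported, then in view of the chain I only need $\pi^Q_E(\cdot\mid S)\leq\pi^Q(\cdot\mid S)$. By Lemma~\ref{lem:rhoQE:rep} applied to $\rho=\pi(\cdot\mid S)$ one has $\pi^Q_E(X\mid S)=\lim_{m\to\infty}\pi\bigl(X\mathbf 1_{S(Q)}-m\mathbf 1_{S(Q)^c}\mid S\bigr)$. Fix $r$ and $H$ with $X\leq_Q r+[H\Delta S]_c$ and pick representatives; the event $N:=\{X>r+H\Delta S\}$ is $Q$-null, so by Definition~\ref{defi:supp}(ii) the event $N\cap S(Q)$ is $\mathcal P$-polar, whence $X\mathbf 1_{S(Q)}\preccurlyeq(r+[H\Delta S]_c)\mathbf 1_{S(Q)}$. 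Moreover $\Delta S$ is bounded, so $H\Delta S\geq-\lVert[H\Delta S]_c\rVert_{c,\infty}$ $\mathcal P$-q.s., and hence for $m$ large enough (depending on $H$ and $r$) also $-m\leq r+H\Delta S$ $\mathcal P$-q.s.\ on $S(Q)^c$. Combining the two estimates gives $X\mathbf 1_{S(Q)}-m\mathbf 1_{S(Q)^c}\preccurlyeq r+[H\Delta S]_c$, so $\pi\bigl(X\mathbf 1_{S(Q)}-m\mathbf 1_{S(Q)^c}\mid S\bigr)\leq r$ for all large $m$; letting $m\to\infty$ yields $\pi^Q_E(X\mid S)\leq r$, and taking the infimum over admissible $r$ gives $\pi^Q_E(X\mid S)\leq\pi^Q(X\mid S)$.

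I expect the main obstacle to be the equality under $\operatorname{NA}(Q,S)$: it hinges on importing the classical dominated-case superhedging duality theorem and then matching its dual set, the martingale measures equivalent to $Q$, with the set $\{R\in\mathfrak{M}(S):R\ll Q\}$ produced by the dual-regularization machinery of Section~\ref{sec:rm}. The remaining parts are essentially bookkeeping with the lemmas of Sections~\ref{sec:FL} and~\ref{sec:rm} together with the order-support patching argument.
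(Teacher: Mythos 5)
Your proof is correct and follows essentially the same route as the paper's: the middle inequalities via Lemma~\ref{lem:aux:1}, the inequality $\pi^Q_D\leq\pi^Q$ via Lemma~\ref{lemm:dual:rep:pi}, the $\operatorname{NA}(Q,S)$ case via the dominated superhedging duality, and the supported case via the same order-support patching of $X$ on $S(Q)^c$. The only cosmetic difference is that in the supported case the paper directly constructs $Y:=X\mathbf{1}_{S(Q)}+(r+[H\Delta S]_c)\mathbf{1}_{S(Q)^c}$ with $j_Q(Y)=j_Q(X)$ and $Y\preccurlyeq r+[H\Delta S]_c$, whereas you reach the same conclusion through the limit formula of Lemma~\ref{lem:rhoQE:rep}; both rest on the identical use of Definition~\ref{defi:supp}(ii).
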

\begin{proof}
    If $X \preccurlyeq r + [H \Delta S]_c$, then $X \leq_Q r + [H \Delta S]_c$. Therefore, $\pi^Q(X \mid S) \leq \pi(X \mid S)$. Thus, as $\pi^{Q}(\, \cdot \mid S)$ is $Q$-consistent, $\pi^Q(\, \cdot \mid S) \leq \pi^{Q}_{E}(\, \cdot \mid S) \leq \pi(\, \cdot \mid S)$ follows from Lemma~\ref{lem:aux:1}. Recall Lemma~\ref{lemm:dual:rep:pi}. Let $R \ll Q$ such that $\overline{\pi}(R \mid S) = 0$. Then, for each $X \in L^\infty_c$, $H \in \mathcal{H}(S)$, and $r \in \R$ such that $X \leq_Q r + [H \Delta S]_c$, we have $E_R[X] \leq r$, because $R \in \mathfrak{M}(S)$. Therefore, $\pi^Q(\, \cdot \mid S) \geq \pi^Q_D(\, \cdot \mid S)$.
    
    For any $Q \in \mathfrak{P}_c(\Omega)$ such that $\operatorname{NA}(Q, S)$ holds, the superhedging duality for dominated models (see \cite[Theorem~1.32]{FS2016}) and Lemma~\ref{lem:fD:fE} imply that $\pi^Q(\, \cdot \mid S) \leq \pi^Q_D(\, \cdot \mid S)$, because $\pi^Q(\, \cdot \mid S)$ admits a dual representation over $ca_Q$.  
    
    Suppose that $Q$ is supported and let $X \leq_Q r + [H \Delta S]_c$. Define
    \begin{equation*}
        Y := X \mathbf{1}_{S(Q)} + (r + [H \Delta S]_c) \mathbf{1}_{S(Q)^c}.
    \end{equation*}
    Then $j_Q(Y) = j_Q(X)$ and $Y \preccurlyeq r + [H \Delta S]_c$. Therefore, $\pi_E^Q(X\mid S) \leq \pi(Y\mid S) \leq r$, and hence $\pi_E^Q(X\mid S)\leq \pi^Q(X\mid S)$.
\end{proof}

In view of the discussion in Section~\ref{sec:rm}, a natural question is whether the superhedging functionals admit localization bubbles. The answer is no, at least if $Q$ is supported and $\operatorname{NA}(Q,S)$ is satisfied, because then Proposition~\ref{prop:piQ:piQE:pi:ineq} shows that $\pi^Q_D(\, \cdot \mid S)= \pi^Q(\, \cdot \mid S)= \pi^{Q}_{E}(\, \cdot\mid S)$.

The following lemma is a key observation for the results that follow.

\begin{lemma}\label{lem:locally:consistent}
    For any market model $S \in \mathcal{S}$ such that $\operatorname{NA}(\mathcal{P},S)$ is satisfied, there exists $P \in \operatorname{co}(\mathcal{P})$, where $\operatorname{co}(\mathcal{P})$ is the convex hull of $\mathcal{P}$, such that $\operatorname{NA}(P,S)$ holds.
\end{lemma}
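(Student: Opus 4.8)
The plan is to first factor out the investment directions along which $\Delta S$ vanishes $\mathcal{P}$-quasi surely, and then to build the desired $P$ as an explicit \emph{finite} convex combination of elements of $\mathcal{P}$ by a compactness argument on a sphere. First I would set $\mathcal{H}_{0} := \{H \in \R^{d} \mid H \Delta S = 0 \ \mathcal{P}\text{-q.s.}\}$, a linear subspace of $\mathcal{H}(S) = \R^{d}$, and let $V := \mathcal{H}_{0}^{\perp}$, so that $\R^{d} = \mathcal{H}_{0} \oplus V$ and, whenever $H = h_{0} + v$ with $h_{0} \in \mathcal{H}_{0}$ and $v \in V$, one has $H \Delta S = v \Delta S$ $\mathcal{P}$-q.s. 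Hence $\operatorname{NA}(\mathcal{P},S)$ is equivalent to its restriction to strategies in $V$, and since $V \cap \mathcal{H}_{0} = \{0\}$, every $v \in V \setminus \{0\}$ satisfies $c(\{v \Delta S \neq 0\}) > 0$; invoking $\operatorname{NA}(\mathcal{P},S)$ this improves to $c(\{v \Delta S < 0\}) > 0$ for all such $v$ (otherwise $v\Delta S \geq 0$ $\mathcal{P}$-q.s.\ would force $v\Delta S = 0$ $\mathcal{P}$-q.s.).

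Next I would fix a representative of $\Delta S$ and note the elementary fact that, for any $Q \in \mathfrak{P}_{c}(\Omega)$, the map $v \mapsto Q(v\Delta S < 0)$ is lower semicontinuous on $\R^{d}$: if $v_{n} \to v$ then $\chi_{\{v\Delta S < 0\}} \leq \liminf_{n} \chi_{\{v_{n}\Delta S < 0\}}$ pointwise (because $v\Delta S(\omega) < 0$ forces $v_{n}\Delta S(\omega) < 0$ for large $n$), so Fatou's lemma under $Q$ gives the claim. Now let $\mathbb{S} := \{v \in V \mid |v| = 1\}$, which is compact. For every $v \in \mathbb{S}$ the first step lets me pick $P_{v} \in \mathcal{P}$ with $P_{v}(v\Delta S < 0) > 0$, and lower semicontinuity then provides an open neighborhood $U_{v} \subseteq \mathbb{S}$ of $v$ on which $w \mapsto P_{v}(w\Delta S < 0)$ remains strictly positive. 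Extracting a finite subcover $U_{v_{1}}, \dots, U_{v_{n}}$ of $\mathbb{S}$ and setting
\[
    P := \frac{1}{n}\sum_{i=1}^{n} P_{v_{i}} \in \operatorname{co}(\mathcal{P}) \subseteq \mathfrak{P}_{c}(\Omega),
\]
I get $P(w\Delta S < 0) \geq \tfrac{1}{n} P_{v_{i}}(w\Delta S < 0) > 0$ for every $w \in \mathbb{S}$ (with $i$ chosen so that $w \in U_{v_{i}}$), hence, by scaling, $P(v\Delta S < 0) > 0$ for every $v \in V \setminus \{0\}$.

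Finally I would verify $\operatorname{NA}(P,S)$. Since $P$ is a convex combination of elements of $\mathcal{P}$ it is absolutely continuous with respect to $\mathcal{P}$, so $\mathcal{H}_{0} \subseteq \{H \mid H\Delta S = 0 \ P\text{-a.s.}\}$. If $H = h_{0} + v$ (with $h_{0} \in \mathcal{H}_{0}$, $v \in V$) satisfies $H\Delta S \geq 0$ $P$-a.s., then $v\Delta S = H\Delta S \geq 0$ $P$-a.s., and the previous step forces $v = 0$, whence $H\Delta S = 0$ $P$-a.s. Thus $\operatorname{NA}(P,S)$ holds.

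The one genuinely non-routine point is the passage from the pointwise information ``for each direction $v$ some $P_{v} \in \mathcal{P}$ witnesses a strict downside of $v\Delta S$'' to a single finite convex combination that serves all directions at once; this is precisely what lower semicontinuity together with compactness of $\mathbb{S}$ delivers. I expect the subtle preliminary to be the reduction in the first paragraph: without discarding the directions in $\mathcal{H}_{0}$ first --- along which no such $P_{v}$ can exist --- the covering argument would collapse near those directions.
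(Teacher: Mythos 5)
Your proof is correct and follows essentially the same route as the paper's: decompose $\R^{d}$ into the $\mathcal{P}$-q.s.\ null directions and their orthogonal complement, pick for each unit direction $v$ a witness $P_{v}\in\mathcal{P}$ with $P_{v}(v\Delta S<0)>0$, stabilize this on a neighborhood, extract a finite subcover of the compact sphere, and average; the only (harmless) difference is that you obtain the neighborhoods from lower semicontinuity via Fatou's lemma, whereas the paper uses the boundedness of $S_{1}$ to get the quantitative estimate $P_{H}(H\Delta S<-\delta)>0$ on a ball of radius $\delta/M$. The one point you should add is the degenerate case $V=\{0\}$, i.e.\ $\mathbb{S}=\emptyset$, where your convex combination over an empty subcover is undefined; there every $H\Delta S$ vanishes $\mathcal{P}$-q.s.\ and any $P\in\mathcal{P}$ works, as the paper notes explicitly.
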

For the sake of completeness, we provide the proof below, even though the result follows from, e.g., \cite[Lemma 2.7]{BZ2015}:
\begin{proof}
    Fix $S \in \mathcal{S}$ with dimension $d := d(S)$ such that NA($\mathcal{P}$) holds. Let
    \begin{equation*}
        N(\mathcal{P}) := \{H \in \R^{d} \mid [H \Delta S]_c = 0 \}\quad \mbox{and} \quad N(\mathcal{P})^{\perp} = \{H \in \R^{d} \mid \forall \widetilde{H} \in N(\mathcal{P}) \colon \widetilde{H} \cdot H = 0\},
    \end{equation*}
    where, for the sake of better readability, the Euclidean scalar product between the vectors $\widetilde{H}$  and $H$ is denoted by $\widetilde{H} \cdot H$. 
     
    First suppose that $N(\mathcal{P})^\perp=\{0\}$. Then, for all $H\in \R^d$, $H\Delta S=0$ $\mathcal{P}$-q.s.\ and thus $H\Delta S=0$ $P$-a.s.\ for any $P\in \mathfrak{P}_c(\Omega)$. In particular, NA($P,S$) holds for any $P\in \mathcal{P}$. From now on assume that $N(\mathcal{P})^\perp\neq \{0\}$. Then
    \begin{equation*}
        \mathbb{H} := \{H \in N(\mathcal{P})^{\perp} \mid \lVert H \rVert = 1\}\neq \emptyset,
    \end{equation*}
    where $\lVert \, \cdot \, \rVert$ denotes the Euclidean norm. For any $H \in \mathbb{H}$, by NA($\mathcal{P},S$), there exists $P_{H} \in \mathcal{P}$ such that $P_{H}(H \Delta S < 0) > 0$. Moreover, there exists $\varepsilon_{H} > 0$ such that for each $H' \in B(H, \varepsilon_{H}):=\{\widetilde{H}\mid \|\widetilde{H}-H\|< \varepsilon_{H}\}$, we have
    \begin{equation*}
        P_{H}(H' \Delta S < 0) > 0.
    \end{equation*}
    Indeed, there exists $\delta > 0$ such that $P_{H}(H \Delta S < -\delta) > 0$, and, by boundedness of $S_1$, there is $M > 0$ such that $\lVert \Delta S \rVert < M$. Let  $\varepsilon_{H} := \delta / M$. For any $H' \in B(H, \varepsilon_{H})$,
    \begin{equation*}
        H' \Delta S - H \Delta S \leq \lvert H' \Delta S - H \Delta S \rvert \leq \lVert H' - H \rVert \cdot \lVert \Delta S \rVert \leq \varepsilon_{H} \cdot M = \delta.
    \end{equation*}
    Thus, $P_{H}(H' \Delta S < 0)\geq P_{H}(H \Delta S < -\delta)$. Note that $\mathbb{H} \subseteq \bigcup_{H \in \mathbb{H}} B(H, \varepsilon_{H})$. As $\mathbb{H}$ is compact, there exists a finite subcover of $\mathbb{H}$ given by $H_1, \ldots, H_n\in \mathbb{H}$, i.e., $\mathbb{H} \subseteq \bigcup_{i = 1}^{n} B(H_{i}, \varepsilon_{H_{i}})$. Let $P := \frac1n\sum_{i = 1}^{n}  P_{H_{i}}$. Then $P \in \operatorname{co}(\mathcal{P})$ and $P(H \Delta S < 0) > 0$ for all $H \in \mathbb{H}$. We now verify that  NA($P, S$) holds. To this end, let $H \in \R^d$ and decompose $H = H^0 + H^\perp$, where $H^0 \in N(\mathcal{P})$ and $H^\perp \in N(\mathcal{P})^\perp$. Since $H^0 \Delta S = 0$ $\mathcal{P}$-q.s., it follows that $H^0\Delta S=0$ $P$-a.s. If $H^\perp=0$, then $H\Delta S=H^0\Delta S=0$ $P$-a.s. Otherwise, if $H^\perp\neq 0$, then $H^\perp/\|H^\perp\|\in \mathbb{H}$, and therefore
    \begin{equation*}
        P(H\Delta S < 0) = P(H^\perp \Delta S < 0)=P\bigg(\frac{H^\perp}{\|H^{\perp}\|}\Delta S < 0\bigg) > 0.
    \end{equation*}
    Thus, NA($P,S$) is satisfied. 
\end{proof}

As we will see below, the following class of set-valued maps $\mathfrak{Q} \colon \mathcal{S} \twoheadrightarrow \mathfrak{P}_{c}(\Omega)$ is closely related to the $\mathcal{P}$-sensitivity of the superhedging functional. 

\begin{definition}
    A set-valued map $\mathfrak{Q} \colon \mathcal{S} \twoheadrightarrow \mathfrak{P}_{c}(\Omega)$ is called 
    \begin{enumerate}
        \item[(i)] \textit{$\lhd$-monotone} if $\mathfrak{Q}(S') \subseteq \mathfrak{Q}(S)$ for any $S, S' \in \mathcal{S}$ such that $S \lhd S'$,
        
        \item[(ii)] \textit{weakly $\operatorname{NA}$-preserving} if for any $S \in \mathcal{S}$ that satisfies $\operatorname{NA}(\mathcal{P},S)$, there exists $Q \in \mathfrak{Q}(S)$ such that $\operatorname{NA}(Q,S)$ holds,

        \item[(iii)] \textit{strongly $\operatorname{NA}$-preserving} if for any $S \in \mathcal{S}$ that satisfies $\operatorname{NA}(\mathcal{P},S)$, $\mathfrak{Q}(S) \neq \emptyset$ and $\operatorname{NA}(Q,S)$ holds for all $Q \in \mathfrak{Q}(S)$.
    \end{enumerate}
\end{definition}

\begin{lemma} \label{lem:NAP:NAQ3}
    The maps $\mathfrak{Q}(S) \equiv \mathcal{P}$ for $S \in \mathcal{S}$, $\mathfrak{M}$, $\mathfrak{NA}$, $\mathfrak{M}_{\ll}$, and $\mathfrak{M}_{\approx}$ are all $\lhd$-monotone. Moreover, $\mathfrak{M}$ and $\mathfrak{NA}$ are strongly $\operatorname{NA}$-preserving. If $\mathcal{P}$ is convex, then $\mathfrak{Q}(S) \equiv \mathcal{P}$, $S \in \mathcal{S}$, is weakly $\operatorname{NA}$-preserving, while $\mathfrak{M}_{\ll}$ and $\mathfrak{M}_{\approx}$ are strongly $\operatorname{NA}$-preserving.
\end{lemma}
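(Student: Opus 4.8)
The plan is to split the statement into its three parts and reduce each to two ingredients already available: the fact that the martingale property is inherited along submarkets, and the combination of Lemma~\ref{lem:locally:consistent} with the classical one-period Fundamental Theorem of Asset Pricing.

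For $\lhd$-monotonicity, I would first record the elementary observation that if $S \lhd S'$ then each asset of $S$ is also an asset of $S'$, so every coordinate of $E_Q[\Delta S]$ appears among the coordinates of $E_Q[\Delta S']$; since $S_1$ is bounded, integrability is automatic, and a martingale measure for $S'$ is a martingale measure for $S$, i.e.\ $\mathfrak{M}(S') \subseteq \mathfrak{M}(S)$. The constant map is trivially $\lhd$-monotone. Each of $\mathfrak{NA}$, $\mathfrak{M}_{\ll}$, $\mathfrak{M}_{\approx}$ adds to the requirement ``$Q \in \mathfrak{M}(\,\cdot\,)$'' a condition that does not refer to the market at all ($R \approx Q$ for some $Q$ in the image, $Q \lll \mathcal{P}$, resp.\ $Q \approx P$ for some $P \in \mathcal{P}$), so $\lhd$-monotonicity for these three follows immediately from $\mathfrak{M}(S') \subseteq \mathfrak{M}(S)$.

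For the $\operatorname{NA}$-preservation assertions I would isolate once and for all the fact that every $Q \in \mathfrak{M}(S)$ satisfies $\operatorname{NA}(Q,S)$: if $H \in \mathcal{H}(S)$ with $H\Delta S \ge 0$ $Q$-a.s., then, $\Delta S$ being bounded, $H \Delta S$ is $Q$-integrable with $E_Q[H\Delta S] = H \cdot E_Q[\Delta S] = 0$, so a non-negative integrand with vanishing integral must be $0$ $Q$-a.s. Since $\operatorname{NA}(\,\cdot\,,S)$ depends only on null sets, $\operatorname{NA}(R,S) \Leftrightarrow \operatorname{NA}(Q,S)$ whenever $R \approx Q$, which covers $\mathfrak{NA}(S)$; and $\mathfrak{M}_{\ll}(S), \mathfrak{M}_{\approx}(S) \subseteq \mathfrak{M}(S)$ covers those two maps. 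It then only remains to prove non-emptiness of the relevant image under $\operatorname{NA}(\mathcal{P},S)$. Here I would invoke Lemma~\ref{lem:locally:consistent} to get $P \in \operatorname{co}(\mathcal{P})$ with $\operatorname{NA}(P,S)$, note that $\operatorname{co}(\mathcal{P}) \subseteq \mathfrak{P}_c(\Omega)$ (a convex combination of members of $\mathcal{P}$ is a probability measure dominated by $\mathcal{P}$), and then apply the classical one-period FTAP (e.g., \cite[Theorem~1.6]{FS2016}) to obtain a martingale measure $Q \approx P$, which lies in $\mathfrak{P}_c(\Omega)$ because $Q \ll P$. This gives $Q \in \mathfrak{M}(S)$, hence $\mathfrak{M}(S) \ne \emptyset$, and, as $Q \approx Q$, also $Q \in \mathfrak{NA}(S)$; combined with the fact above this proves that $\mathfrak{M}$ and $\mathfrak{NA}$ are strongly $\operatorname{NA}$-preserving.

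Finally, when $\mathcal{P}$ is convex one has $\operatorname{co}(\mathcal{P}) = \mathcal{P}$, so the measure $P$ produced by Lemma~\ref{lem:locally:consistent} already lies in $\mathcal{P}$; this is exactly weak $\operatorname{NA}$-preservation of the constant map $\mathfrak{Q}(S) \equiv \mathcal{P}$, and it also upgrades the martingale measure $Q$ above to $Q \ll P \in \mathcal{P}$ and $Q \approx P \in \mathcal{P}$, i.e.\ $Q \in \mathfrak{M}_{\ll}(S) \cap \mathfrak{M}_{\approx}(S)$, so these images are non-empty and strong $\operatorname{NA}$-preservation of $\mathfrak{M}_{\ll}$ and $\mathfrak{M}_{\approx}$ follows. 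I do not anticipate a real obstacle: the whole argument rests on Lemma~\ref{lem:locally:consistent} and the classical FTAP, and the only point demanding care is the bookkeeping of $\mathfrak{P}_c(\Omega)$-membership (for both $\operatorname{co}(\mathcal{P})$ and the constructed martingale measure), together with the fact that convexity of $\mathcal{P}$ enters in precisely one place---to keep the measure $P$ inside $\mathcal{P}$ rather than merely in its convex hull.
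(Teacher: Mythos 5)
Your proposal is correct and follows essentially the same route as the paper's proof: $\lhd$-monotonicity from $\mathfrak{M}(S')\subseteq\mathfrak{M}(S)$ for $S\lhd S'$, and the $\operatorname{NA}$-preservation claims via Lemma~\ref{lem:locally:consistent} combined with the classical one-period Fundamental Theorem of Asset Pricing, with convexity of $\mathcal{P}$ entering only to keep the measure from Lemma~\ref{lem:locally:consistent} inside $\mathcal{P}$ rather than $\operatorname{co}(\mathcal{P})$. You merely spell out details the paper leaves implicit (e.g., that every martingale measure satisfies $\operatorname{NA}$ and that $\operatorname{co}(\mathcal{P})\subseteq\mathfrak{P}_c(\Omega)$), which is fine.
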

\begin{proof}
    $\lhd$-monotonicity follows directly from the respective definitions of the maps. 
    Let $S\in \mathcal{S}$ satisfy $\operatorname{NA}(\mathcal{P},S)$. Then, by Lemma~\ref{lem:locally:consistent}, there exists $P \in \operatorname{co}(\mathcal{P})$ such that NA($P,S$). Thus, by the Fundamental Theorem of Asset Pricing (see, e.g., \cite[Theorem~1.7]{FS2016}), there is a martingale measure $Q\approx P$. In particular, $\mathfrak{M}(S) \neq \emptyset$ and  NA($R,S$) is satisfied for any $R \in \mathfrak{NA}(S)$. Recalling that $\mathfrak{M}(S) \subseteq \mathfrak{NA}(S)$, we conclude that $\mathfrak{M}$ and $\mathfrak{NA}$ are strongly $\operatorname{NA}$-preserving. If $\mathcal{P} = \operatorname{co}(\mathcal{P})$, then the previous arguments also prove the remaining assertions.
\end{proof}

\begin{theorem} \label{thm:hedge:Q}
    Let $\mathfrak{Q} \colon \mathcal{S} \twoheadrightarrow \mathfrak{P}_{c}(\Omega)$ be $\lhd$-monotone and weakly $\operatorname{NA}$-preserving. Suppose that $S \in \mathcal{S}$ satisfies $\operatorname{NA}(\mathcal{P},S)$. Then $\pi(\, \cdot \mid S)$ is $\mathcal{P}$-sensitive with reduction set $\mathfrak{Q}(S)$ and $(\pi^{Q}(\, \cdot \mid S))_{Q \in \mathfrak{Q}(S)}$ is a $\mathfrak{Q}(S)$-localization of  $\pi(\, \cdot \mid S)$. In particular, 
    \begin{equation}\label{eq:pi:piQ}
        \pi(X \mid S) = \sup_{Q \in \mathfrak{Q}(S)} \pi^{Q}_{E}(X \mid S) = \sup_{Q \in \mathfrak{Q}(S)} \pi^{Q}(X \mid S) =: \pi^{\mathfrak{Q}(S)}(X \mid S).
    \end{equation}
\end{theorem}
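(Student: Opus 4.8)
The plan is to prove the $\mathcal{P}$-sensitivity claim and the two equalities in \eqref{eq:pi:piQ} simultaneously by reducing everything to one core assertion: for every $r\in\R$ the lower level set $E_r:=\{X\in L^\infty_c\mid \pi(X\mid S)\le r\}$ is $\mathcal{P}$-sensitive with reduction set $\mathfrak{Q}(S)$. Observe first that $\mathfrak{Q}(S)\neq\emptyset$, since $\operatorname{NA}(\mathcal{P},S)$ holds and $\mathfrak{Q}$ is weakly $\operatorname{NA}$-preserving, so there is $Q_0\in\mathfrak{Q}(S)$ with $\operatorname{NA}(Q_0,S)$. To prove the core assertion, fix $r$ and $X$ with $j_Q(X)\in j_Q(E_r)$ for all $Q\in\mathfrak{Q}(S)$. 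I would first translate this into $\pi^Q(X\mid S)\le r$ for all $Q\in\mathfrak{Q}(S)$: picking $Y\in E_r$ with $j_Q(Y)=j_Q(X)$ and, for $\varepsilon>0$, a strategy $H$ with $Y\preccurlyeq r+\varepsilon+[H\Delta S]_c$ yields $X\leq_Q r+\varepsilon+[H\Delta S]_c$, hence $\pi^Q(X\mid S)\le r+\varepsilon$, and letting $\varepsilon\downarrow0$ gives the claim. It therefore remains to show: if $\pi^Q(X\mid S)\le r$ for all $Q\in\mathfrak{Q}(S)$, then $\pi(X\mid S)\le r$.

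This I would argue by contradiction. Suppose $\pi(X\mid S)>r$. Since $\operatorname{NA}(\mathcal{P},S)$ rules out $\pi(0\mid S)<0$ (which would be a strong arbitrage), $\pi(\cdot\mid S)$ is a real-valued coherent risk measure, and $\pi(\cdot\mid S)$ as well as all $\pi^Q(\cdot\mid S)$ are cash-additive; adding a large enough constant to $X$ and to $r$, I may assume $0\preccurlyeq X$ with an everywhere non-negative representative $X'$ and $r>0$, all relevant (in)equalities being preserved. Choose $r'\in(r,\pi(X\mid S))$ (so $r'>0$) and form the augmented one-period model $S':=(S,(r',X'))$ of dimension $d(S)+1$, for which $S\lhd S'$ holds.

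Now split into two cases. \textbf{Case (A):} there is no $H\in\mathcal{H}(S)$ with $X\succcurlyeq r'-[H\Delta S]_c$. Then $\operatorname{NA}(\mathcal{P},S')$ holds: an arbitrage $(H,h)\in\mathcal{H}(S')$ with $h=0$ would contradict $\operatorname{NA}(\mathcal{P},S)$; with $h<0$ it would give $X\preccurlyeq r'+[\tfrac{1}{-h}H\Delta S]_c$, i.e. $\pi(X\mid S)\le r'<\pi(X\mid S)$; and with $h>0$ it would, after dividing by $h$, produce an $H$ with $X\succcurlyeq r'-[H\Delta S]_c$, contradicting the Case (A) hypothesis. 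By $\lhd$-monotonicity $\mathfrak{Q}(S')\subseteq\mathfrak{Q}(S)$, and by weak $\operatorname{NA}$-preservation applied to $S'$ there is $Q\in\mathfrak{Q}(S')\subseteq\mathfrak{Q}(S)$ with $\operatorname{NA}(Q,S')$. If $\pi^Q(X\mid S)<r'$, choose $\rho\in(\pi^Q(X\mid S),r')$ and $H$ with $X\leq_Q\rho+[H\Delta S]_c$; then the $S'$-strategy $(H,-1)$ has terminal value $[H\Delta S]_c-(X'-r')\geq_Q r'-\rho>0$ $Q$-a.s., contradicting $\operatorname{NA}(Q,S')$. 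Hence $\pi^Q(X\mid S)\ge r'>r$, contradicting $\pi^Q(X\mid S)\le r$. \textbf{Case (B):} there is $H_*\in\mathcal{H}(S)$ with $X\succcurlyeq r'-[H_*\Delta S]_c$. Put $Y:=[X'+H_*\Delta S-r']_c\succcurlyeq0$, so $X=r'-[H_*\Delta S]_c+Y$. By cash-additivity of $\pi^{Q_0}(\cdot\mid S)$ and the fact that adding $-[H_*\Delta S]_c$ to a claim leaves its $Q_0$-superhedging price unchanged (reindex the strategies), $\pi^{Q_0}(X\mid S)=r'+\pi^{Q_0}(Y\mid S)$, where $Q_0\in\mathfrak{Q}(S)$ satisfies $\operatorname{NA}(Q_0,S)$. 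Then $\pi^{Q_0}(0\mid S)=0$ and $\pi^{Q_0}(\cdot\mid S)$ is monotone, whence $0\le\pi^{Q_0}(Y\mid S)=\pi^{Q_0}(X\mid S)-r'\le r-r'<0$, a contradiction. Thus both cases are impossible, so $X\in E_r$, proving the core assertion.

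Finally I would assemble the statement: $\mathfrak{Q}(S)$ being a (non-empty) joint reduction set for all $E_r$ means $\pi(\cdot\mid S)$ is $\mathcal{P}$-sensitive with reduction set $\mathfrak{Q}(S)$, so Theorem~\ref{thm:Psensi:localization} gives $\pi(\cdot\mid S)=\sup_{Q\in\mathfrak{Q}(S)}\pi^Q_E(\cdot\mid S)$ and that $(\pi^Q_E(\cdot\mid S))_{Q\in\mathfrak{Q}(S)}$ is a $\mathfrak{Q}(S)$-localization; combining this with $\pi^Q(\cdot\mid S)\le\pi^Q_E(\cdot\mid S)\le\pi(\cdot\mid S)$ from Proposition~\ref{prop:piQ:piQE:pi:ineq} yields $\sup_{Q\in\mathfrak{Q}(S)}\pi^Q(\cdot\mid S)=\sup_{Q\in\mathfrak{Q}(S)}\pi^Q_E(\cdot\mid S)=\pi(\cdot\mid S)$, i.e. \eqref{eq:pi:piQ}, and since each $\pi^Q(\cdot\mid S)$ is $Q$-consistent, $(\pi^Q(\cdot\mid S))_{Q\in\mathfrak{Q}(S)}$ is itself a $\mathfrak{Q}(S)$-localization of $\pi(\cdot\mid S)$. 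The main obstacle is Case (B): the augmented market $S'$ may genuinely violate $\operatorname{NA}(\mathcal{P},S')$ through a long position in the new asset, so one cannot simply invoke the weak $\operatorname{NA}$-preservation of $\mathfrak{Q}$ at $S'$; the resolution is that such a position forces $X$ to split off a non-negative claim $Y$ whose $Q_0$-superhedging price would be strictly negative — which is why $r'$ must be taken strictly between $r$ and $\pi(X\mid S)$, not equal to either.
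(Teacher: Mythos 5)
Your proof is correct, and although it shares the paper's central device---augmenting the market with the claim $X$ priced at an intermediate level $r'$ and exploiting $\lhd$-monotonicity together with weak $\operatorname{NA}$-preservation---the technical core is genuinely different. The paper, after verifying $\operatorname{NA}(\mathcal{P},S_X)$ for $x$ strictly between the subhedging price $\widetilde{\pi}(X\mid S)$ and $\pi(X\mid S)$, extracts a martingale measure $R\approx Q$ for the extended market via the classical Fundamental Theorem of Asset Pricing and then invokes the dominated superhedging duality to obtain $\pi^{Q}(X\mid S)\geq E_R[X]=x$; the degenerate case $\widetilde{\pi}(X\mid S)=\pi(X\mid S)$ is handled separately with approximate sub- and superhedges. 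You replace both external inputs by elementary arbitrage arguments: in your Case (A) a $Q$-superhedge of $X$ at a price below $r'$ is converted directly into a $Q$-arbitrage $(H,-1)$ in the augmented market, and your Case (B)---precisely the situation the paper sidesteps by choosing $x$ strictly above $\widetilde{\pi}(X\mid S)$---is settled by the cash-additivity/monotonicity computation $\pi^{Q_0}(X\mid S)=r'+\pi^{Q_0}(Y\mid S)\geq r'$ under $\operatorname{NA}(Q_0,S)$. What this buys is a more self-contained argument (no appeal to the dominated FTAP or superhedging duality at this stage), and you are also more careful than the paper in keeping the new asset's coordinates non-negative so that $S'\in\mathcal{S}$; what the paper's route buys is that the martingale measures it produces are reused immediately afterwards for the dual representation in Corollary~\ref{cor:hedge:dual:Q}. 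One small repair to your wrap-up: the equality $\sup_{Q\in\mathfrak{Q}(S)}\pi^{Q}(\cdot\mid S)=\pi(\cdot\mid S)$ does not follow from Theorem~\ref{thm:Psensi:localization} combined with $\pi^{Q}\leq\pi^{Q}_{E}\leq\pi$ alone (that only yields ``$\leq$''); the reverse inequality is exactly your intermediate step that $\pi^{Q}(X\mid S)\leq r$ for all $Q\in\mathfrak{Q}(S)$ forces $\pi(X\mid S)\leq r$, applied with $r\downarrow\sup_{Q}\pi^{Q}(X\mid S)$, so that step---which you have fully proved---should be cited there instead of Proposition~\ref{prop:piQ:piQE:pi:ineq}.
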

\begin{proof}  
    Suppose that $S \in \mathcal{S}$ satisfies NA($\mathcal{P},S$). By Proposition~\ref{prop:piQ:piQE:pi:ineq}, we have
    \begin{equation}\label{eq:whatever}
        \pi(\, \cdot \mid S) \geq \pi^Q_E(\, \cdot \mid S) \geq \pi^{Q}(\, \cdot \mid S)
    \end{equation}
    for all $Q\in \mathfrak{Q}(S)$. Let $X \in L^{\infty}_{c}$. If $\pi(X\mid S) = -\infty$, then \eqref{eq:pi:piQ} follows. Assume that $\pi(X) > -\infty$ and let
    \begin{equation*}
        \widetilde{\pi}(Y \mid S) := \sup \{x \in \R \mid \exists H \in \mathcal{H}(S) \colon Y \succcurlyeq x + [H \Delta S]_c \}, \quad Y\in L^\infty_c,
    \end{equation*}
    be the subhedging price for the market model $S$. One verifies that NA($\mathcal{P},S$) implies $\widetilde{\pi}(\, \cdot \mid S) \leq \pi(\, \cdot \mid S)$. If $\widetilde{\pi}(X \mid S) < \pi(X \mid S)$, let $x \in (\widetilde{\pi}(X \mid S), \pi(X \mid S))$ and consider the extended market $S_{X} = ((S_0, x), (S_1, X))$. Let $H \in \R^d$, $h \in \R$ satisfy
    \begin{equation*}
        [H \Delta S]_{c} + h (X - x) \succcurlyeq 0.
    \end{equation*}
    If $h = 0$, NA($\mathcal{P},S$) implies that $[H \Delta S]_c = 0$. If $h \neq 0$ then either
    \begin{equation*}
       - \bigg[\frac{H}{h} \Delta S\bigg]_c + x \succcurlyeq X \quad \mbox{or} \quad 
        -\bigg[\frac{H}{h} \Delta S\bigg]_c + x \preccurlyeq X.
    \end{equation*}
    Hence, either $x\geq \pi(X \mid S)$ or $x\leq \widetilde{\pi}(X \mid S)$, which is absurd. Thus, NA($\mathcal{P},S_X$) is satisfied. As $\mathfrak{Q}$ is weakly $\operatorname{NA}$-preserving, there exists $Q \in \mathfrak{Q}(S_{X})$ such that NA($Q,S_X$) holds.  Hence, by the Fundamental Theorem of Asset Pricing for dominated models (see \cite[Theorem~1.7]{FS2016}), there is a martingale measure $R \in \mathfrak{P}_c(\Omega)$ for $S$ which is equivalent to $Q$ and satisfies $E_{R}[X] = x$. The superhedging duality for dominated models (see \cite[Theorem~1.32]{FS2016}) implies $\pi^{Q}(X \mid S) \geq E_{R}[X] = x$. By $\lhd$-monotonicity, we have $Q \in \mathfrak{Q}(S)$. Therefore, as $x \in (\widetilde{\pi}(X \mid S), \pi(X \mid S))$ was arbitrary, it follows that
    \begin{equation*}
        \pi^{\mathfrak{Q}(S)}(X \mid S) = \sup_{Q \in \mathfrak{Q}(S)} \pi^{Q}(X \mid S) \geq \pi(X \mid S),
    \end{equation*}
    which, in conjunction with \eqref{eq:whatever}, implies \eqref{eq:pi:piQ}. 

    Finally, suppose that $\widetilde{\pi}(X \mid S) = \pi(X \mid S)$. For every $\varepsilon > 0$ there are $H, \widetilde{H} \in \R^d$ such that
    \begin{equation}\label{eq:help:pi:tildepi}
        \pi(X \mid S)+ \varepsilon + [H \Delta S]_c \succcurlyeq X \succcurlyeq \widetilde{\pi}(X \mid S) - \varepsilon + [\widetilde{H} \Delta S]_c = \pi(X \mid S) - \varepsilon + [\widetilde{H} \Delta S]_c.
    \end{equation}
    As $\mathfrak{Q}$ is weakly $\operatorname{NA}$-preserving, there exists $Q \in \mathfrak{Q}(S)$ such that NA($Q,S$) holds. Consider any martingale measure $R \in \mathfrak{P}_c(\Omega)$ which is equivalent to $Q$. Taking expectations under $R$ in \eqref{eq:help:pi:tildepi} yields
    \begin{equation*}
        \pi(X \mid S) + \varepsilon \geq E_R[X] \geq \pi(X \mid S) - \varepsilon.
    \end{equation*}
    Since the latter is true for any $\varepsilon>0$, we conclude that $E_R[X] = \pi(X \mid S)$. Again, the superhedging duality in dominated models implies $\pi^{Q}(X \mid S) \geq E_R[X] = \pi(X \mid S)$, and thus
    \begin{equation*}
        \pi^{\mathfrak{Q}(S)}(X \mid S) = \sup_{Q \in \mathfrak{Q}(S)} \pi^{Q}(X \mid S) \geq \pi(X \mid S),
    \end{equation*}
    which, in conjunction with \eqref{eq:whatever}, implies \eqref{eq:pi:piQ}. 
\end{proof}

\begin{corollary}
    Suppose that $S\in \mathcal{S}$ satisfies $\operatorname{NA}(\mathcal{P},S)$. Then $\pi(\, \cdot\mid S)=\pi_E(\, \cdot\mid S)=\pi_D(\, \cdot\mid S)$. 
\end{corollary}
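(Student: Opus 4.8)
The plan is to specialize Theorem~\ref{thm:hedge:Q} to the martingale-measure map $\mathfrak{M}$. First I would invoke Lemma~\ref{lem:NAP:NAQ3}, which states that $\mathfrak{M}$ is $\lhd$-monotone and strongly---hence weakly---$\operatorname{NA}$-preserving, so that Theorem~\ref{thm:hedge:Q} applies with $\mathfrak{Q}=\mathfrak{M}$ to the market $S$ satisfying $\operatorname{NA}(\mathcal{P},S)$. This immediately yields that $\pi(\,\cdot\mid S)$ is $\mathcal{P}$-sensitive with reduction set $\mathfrak{M}(S)$ and that
\[
    \pi(X \mid S) = \sup_{Q \in \mathfrak{M}(S)} \pi^{Q}_{E}(X \mid S) = \sup_{Q \in \mathfrak{M}(S)} \pi^{Q}(X \mid S), \qquad X \in L^{\infty}_{c}.
\]

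Next I would pass from these $\mathfrak{M}(S)$-localizations to the full regularizations $\pi_{E}(\,\cdot\mid S) = \pi^{\mathfrak{P}_{c}(\Omega)}_{E}(\,\cdot\mid S)$ and $\pi_{D}(\,\cdot\mid S) = \pi^{\mathfrak{P}_{c}(\Omega)}_{D}(\,\cdot\mid S)$. For $\pi_{E}$ this is essentially immediate: since $\pi(\,\cdot\mid S)$ is $\mathcal{P}$-sensitive, Theorem~\ref{thm:Psensi:localization} gives $\pi(\,\cdot\mid S) = \pi_{E}(\,\cdot\mid S)$ directly (alternatively, combine $\mathfrak{M}(S) \subseteq \mathfrak{P}_{c}(\Omega)$ with $\pi^{\mathfrak{M}(S)}_{E} \le \pi_{E} \le \pi$ from Lemma~\ref{lem:aux:1}(v)). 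For $\pi_{D}$, the key observation is that every $Q \in \mathfrak{M}(S)$ is a martingale measure for $S$ and hence satisfies $\operatorname{NA}(Q,S)$ (equivalently, $\mathfrak{M}$ is strongly $\operatorname{NA}$-preserving by Lemma~\ref{lem:NAP:NAQ3}); then Proposition~\ref{prop:piQ:piQE:pi:ineq} gives $\pi^{Q}_{D}(\,\cdot\mid S) = \pi^{Q}(\,\cdot\mid S)$ for each such $Q$. Substituting into the display above,
\[
    \pi(X \mid S) = \sup_{Q \in \mathfrak{M}(S)} \pi^{Q}_{D}(X \mid S) = \pi^{\mathfrak{M}(S)}_{D}(X \mid S),
\]
and since $\pi^{\mathfrak{M}(S)}_{D} \le \pi_{D} \le \pi$ by Lemma~\ref{lem:fD:fE}(v), we conclude $\pi(\,\cdot\mid S) = \pi_{D}(\,\cdot\mid S)$ as well. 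One could equally cite the implication (iii)$\Rightarrow$(ii) of Theorem~\ref{thm:dual:f} with $\mathcal{Q} = \mathfrak{M}(S)$.

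I do not expect a genuine obstacle here; the substantive work has already been carried out in Theorem~\ref{thm:hedge:Q}, Proposition~\ref{prop:piQ:piQE:pi:ineq}, and Lemma~\ref{lem:NAP:NAQ3}. The only point requiring a moment's care is nondegeneracy: one should note that $\operatorname{NA}(\mathcal{P},S)$ forces $\mathfrak{M}(S) \neq \emptyset$ (via Lemma~\ref{lem:NAP:NAQ3}, using Lemma~\ref{lem:locally:consistent} and the dominated Fundamental Theorem of Asset Pricing), so that the suprema above are taken over a non-empty set and $\pi(0\mid S)=0$, i.e.\ $\pi(\,\cdot\mid S)$ is a genuine coherent risk measure rather than the degenerate $-\infty$ functional. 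Once this is in place, the chain of (in)equalities closes without friction.
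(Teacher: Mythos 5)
Your proposal is correct and follows essentially the same route as the paper: both specialize Theorem~\ref{thm:hedge:Q} to a martingale-measure map (you use $\mathfrak{M}$, the paper uses $\mathfrak{NA}$), and both then pass to $\pi_D$ via the dominated superhedging duality channeled through Theorem~\ref{thm:dual:f} — you via Proposition~\ref{prop:piQ:piQE:pi:ineq} and the implication (iii)$\Rightarrow$(ii), the paper via the dual representability of each $\pi^Q(\,\cdot\mid S)$ and the implication (iv)$\Rightarrow$(i). The differences are cosmetic, and your nondegeneracy remark, while not strictly needed (Theorem~\ref{thm:hedge:Q} already handles the $-\infty$ case), is harmless.
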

\begin{proof}
    Letting $\mathfrak{Q}=\mathfrak{NA}$ and recalling Lemma~\ref{lem:NAP:NAQ3}, we have $\pi(\, \cdot \mid S)=\pi^{\mathfrak{NA}(S)}(\, \cdot \mid S)$ by Theorem~\ref{thm:hedge:Q}. The superhedging duality for dominated models (see \cite[Theorem~1.32]{FS2016}) implies that $\pi(\, \cdot \mid S)$ admits a dual representation over $ca_c$, since each $\pi^Q(\, \cdot \mid S)$ does so for every $Q \in \mathfrak{NA}(S)$. Hence, $\pi(\, \cdot\mid S)=\pi_E(\, \cdot\mid S)=\pi_D(\, \cdot\mid S)$ according to Theorem~\ref{thm:dual:f}. 
\end{proof} 

\begin{corollary}\label{cor:hedge:dual:Q}
    Let $\mathfrak{Q} \colon \mathcal{S} \twoheadrightarrow \mathfrak{P}_{c}(\Omega)$ be $\lhd$-monotone and strongly $\operatorname{NA}$-preserving. Then, for every $S \in \mathcal{S}$ such that $\operatorname{NA}(\mathcal{P}, S)$ is satisfied, we have
    \begin{equation*}
        \pi(X \mid S) = \sup_{Q \in \mathfrak{Q}(S)} \sup_{R \in \mathfrak{M}^{Q}_{\approx}(S)} E_{R}[X].
    \end{equation*}
\end{corollary}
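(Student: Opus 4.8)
The plan is to reduce the claimed formula to two already-available ingredients: the robust aggregation result of Theorem~\ref{thm:hedge:Q}, and the classical Fundamental Theorem of Asset Pricing together with the superhedging duality in dominated models.

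First I would observe that a strongly $\operatorname{NA}$-preserving map is in particular weakly $\operatorname{NA}$-preserving: if $\operatorname{NA}(\mathcal{P},S)$ holds, then $\mathfrak{Q}(S)\neq\emptyset$ and $\operatorname{NA}(Q,S)$ holds for \emph{every} $Q\in\mathfrak{Q}(S)$, so in particular for at least one. Hence Theorem~\ref{thm:hedge:Q} applies to the given $S$ and yields
\begin{equation*}
    \pi(X\mid S)=\sup_{Q\in\mathfrak{Q}(S)}\pi^{Q}(X\mid S),\qquad X\in L^\infty_c,
\end{equation*}
with $\mathfrak{Q}(S)\neq\emptyset$. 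It therefore suffices to show, for each fixed $Q\in\mathfrak{Q}(S)$, that $\pi^{Q}(X\mid S)=\sup_{R\in\mathfrak{M}^{Q}_{\approx}(S)}E_{R}[X]$.

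Next I would fix $Q\in\mathfrak{Q}(S)$ and use strong $\operatorname{NA}$-preservation to conclude $\operatorname{NA}(Q,S)$. As in the proof of Lemma~\ref{lem:rhoE:rm}, $\operatorname{NA}(Q,S)$ forces $\pi^{Q}(0\mid S)=0$, so $\pi^{Q}(\,\cdot\mid S)$ is a coherent, and in particular $\R$-valued, risk measure on $L^\infty_c$. Being $Q$-consistent, it may be identified (via Remark~\ref{rem:local}) with the superhedging functional of the claim $j_Q(X)$ in the dominated one-period market $(\Omega,\mathcal{F},Q)$ with price process $S$ and strategy set $\mathcal{H}(S)=\R^{d}$. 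Since $S_1$ is bounded, every $R\in\mathfrak{P}_c(\Omega)$ integrates $\Delta S$, and since $Q\in\mathfrak{P}_c(\Omega)$, any $R\approx Q$ again lies in $\mathfrak{P}_c(\Omega)$; hence $\mathfrak{M}^{Q}_{\approx}(S)$ is exactly the set of martingale measures for $S$ equivalent to $Q$. The Fundamental Theorem of Asset Pricing for dominated models (\cite[Theorem~1.7]{FS2016}) then gives $\mathfrak{M}^{Q}_{\approx}(S)\neq\emptyset$, and the superhedging duality for dominated models (\cite[Theorem~1.32]{FS2016}) gives precisely $\pi^{Q}(X\mid S)=\sup_{R\in\mathfrak{M}^{Q}_{\approx}(S)}E_{R}[X]$. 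Substituting into the previous display completes the argument.

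There is no deep obstacle here: all the substance was already packed into Theorem~\ref{thm:hedge:Q} (the localization of $\pi(\,\cdot\mid S)$ over $\mathfrak{Q}(S)$) and into the cited dominated-market theorems. The only points requiring care are bookkeeping: (i) checking strong $\Rightarrow$ weak $\operatorname{NA}$-preservation so that Theorem~\ref{thm:hedge:Q} is applicable; (ii) verifying that $\pi^{Q}(\,\cdot\mid S)$ is real-valued, so the dominated superhedging duality is genuinely in force; and (iii) matching $\mathfrak{M}^{Q}_{\approx}(S)$ with the set of $Q$-equivalent martingale measures used by the classical duality. If one prefers to avoid invoking $Q$-consistency explicitly, one can instead obtain the inequality ``$\ge$'' from Theorem~\ref{thm:hedge:Q} and Proposition~\ref{prop:piQ:piQE:pi:ineq} (noting $E_R[X]\le\pi^{Q}(X\mid S)\le\pi(X\mid S)$ for each $R\in\mathfrak{M}^{Q}_{\approx}(S)$ directly from the definition of $\pi^{Q}$) and the inequality ``$\le$'' from $\pi(X\mid S)=\sup_{Q}\pi^{Q}(X\mid S)$ combined with the strong duality $\pi^{Q}(X\mid S)\le\sup_{R\in\mathfrak{M}^{Q}_{\approx}(S)}E_R[X]$ under $\operatorname{NA}(Q,S)$.
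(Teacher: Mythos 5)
Your proof is correct and follows exactly the route of the paper, whose entire proof reads ``Theorem~\ref{thm:hedge:Q} and the superhedging duality for dominated models''; you have merely filled in the bookkeeping (strong $\Rightarrow$ weak $\operatorname{NA}$-preservation, $\operatorname{NA}(Q,S)$ for each $Q \in \mathfrak{Q}(S)$ so that the dominated duality applies, and the identification of $\mathfrak{M}^{Q}_{\approx}(S)$ with the $Q$-equivalent martingale measures), all of which checks out.
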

\begin{proof}
Theorem~\ref{thm:hedge:Q} and the superhedging duality for dominated models.
\end{proof}

Lemma~\ref{lem:C:clsd} and Corollary~\ref{cor:opt:strat} below can also be found in \cite[Theorems 2.2 and 2.3]{BN2015}. For the sake of completeness, we provide the proofs for the one-period case with deterministic trading strategies based on the proof of \cite[Theorem~1.32]{FS2016}.

\begin{lemma} \label{lem:C:clsd}
    Suppose that $S\in \mathcal{S}$ satisfies $\operatorname{NA}(\mathcal{P},S)$. Then
    \begin{equation*}
        \mathcal{C}(S) := \{X \in L^\infty_c \mid \exists H \in \R^d \colon X \preccurlyeq [H \Delta S]_{c}\}.
    \end{equation*}
    is closed under $\mathcal{P}$-q.s.\ convergence.
\end{lemma}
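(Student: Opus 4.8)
The plan is to adapt the classical closedness argument for the superhedging cone (as in \cite[Theorem~1.32]{FS2016}) to the $\mathcal{P}$-quasi-sure setting. The only genuinely new point is careful bookkeeping of $\mathcal{P}$-polar exceptional sets when passing countably many $\mathcal{P}$-q.s.\ inequalities to the limit; this is harmless because $c$ is countably subadditive, $c(\bigcup_n A_n) \leq \sum_n c(A_n)$, so a countable union of $\mathcal{P}$-polar events is $\mathcal{P}$-polar. Also note that $\Delta S = S_1 - S_0$ is a genuine bounded random vector, so $H \Delta S$ is a bona fide random variable for each $H \in \R^d$, which legitimizes the pointwise manipulations below.

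First I would remove the redundancy in the strategies. Set $N := \{H \in \R^{d} \mid [H \Delta S]_{c} = 0\}$, a linear subspace of $\R^{d}$, and decompose $\R^{d} = N \oplus N^{\perp}$. Since $[H \Delta S]_{c}$ depends only on the $N^{\perp}$-component of $H$, every $X \in \mathcal{C}(S)$ is dominated by $[H \Delta S]_{c}$ for some $H \in N^{\perp}$. Moreover, $\operatorname{NA}(\mathcal{P}, S)$ says precisely that $H \Delta S \geq 0$ $\mathcal{P}$-q.s.\ forces $H \Delta S = 0$ $\mathcal{P}$-q.s., i.e.\ $H \in N$; hence, for $H \in N^{\perp}$, $H \Delta S \geq 0$ $\mathcal{P}$-q.s.\ implies $H \in N \cap N^{\perp} = \{0\}$.

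Next, take $X_{n} \in \mathcal{C}(S)$ with $X_{n} \to X$ $\mathcal{P}$-q.s.\ for some $X \in L^\infty_c$; fix representatives $x_{n} \in X_{n}$, $x \in X$ and strategies $H_{n} \in N^{\perp}$ with $x_{n} \leq H_{n} \Delta S$ $\mathcal{P}$-q.s. I claim $(H_{n})_{n}$ is bounded in $\R^{d}$. If not, pass to a subsequence with $\lVert H_{n} \rVert \to \infty$ and, using compactness of the unit sphere of $N^{\perp}$, a further subsequence with $H_{n}/\lVert H_{n} \rVert \to G \in N^{\perp}$, $\lVert G \rVert = 1$. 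Dividing by $\lVert H_{n} \rVert$: outside a single $\mathcal{P}$-polar set $\Omega_{0}$ --- the countable union of the exceptional sets for each inequality together with the set where $x_{n} \not\to x$ --- one has, for every $\omega \notin \Omega_{0}$ and every $n$, $x_{n}(\omega)/\lVert H_{n} \rVert \leq (H_{n}/\lVert H_{n} \rVert) \Delta S(\omega)$, with $x_{n}(\omega) \to x(\omega) \in \R$ and $(H_{n}/\lVert H_{n} \rVert) \Delta S(\omega) \to G \Delta S(\omega)$ by continuity of $H \mapsto H \Delta S(\omega)$ for the fixed vector $\Delta S(\omega)$. Letting $n \to \infty$ gives $G \Delta S \geq 0$ $\mathcal{P}$-q.s., hence $G = 0$ by the previous paragraph, contradicting $\lVert G \rVert = 1$.

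Finally, with $(H_{n})_{n}$ bounded, pass to a convergent subsequence $H_{n} \to H \in N^{\perp}$, and let $n \to \infty$ in $x_{n} \leq H_{n} \Delta S$ off one $\mathcal{P}$-polar set (again a countable union), obtaining $x \leq H \Delta S$ $\mathcal{P}$-q.s., i.e.\ $X \preccurlyeq [H \Delta S]_{c}$ and $X \in \mathcal{C}(S)$. The only real obstacle is the one flagged at the outset: turning the countably many $\mathcal{P}$-q.s.\ statements into one pointwise statement valid off a single $\mathcal{P}$-polar set, for which countable subadditivity of $c$ is exactly what is needed; the rest is the standard finite-dimensional compactness argument, with the role of "a.s." uniformly replaced by "$\mathcal{P}$-q.s."
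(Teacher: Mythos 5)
Your proof is correct and follows essentially the same strategy as the paper's: reduce to non-redundant strategies, extract a unit-norm limit $G$ from a putative unbounded subsequence of the $H_n$ via compactness, and derive a contradiction with $\operatorname{NA}(\mathcal{P},S)$. The only cosmetic differences are that you handle redundancy via the orthogonal decomposition $\R^d = N \oplus N^\perp$ rather than by iteratively deleting redundant assets, and you normalize by $\lVert H_n\rVert$ rather than $1+\lVert H_n\rVert$; both are equivalent to what the paper does.
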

\begin{proof}
    $X \in \mathcal{C}(S)$ if and only if $X = [H \Delta S]_{c} - Y$ for some $H \in \R^{d}$ and $Y \in L^{0}_{c+}$. Let $(X_{n})_{n \in \N} \subseteq \mathcal{C}(S)$ be a sequence which converges $\mathcal{P}$-q.s.\ to a random variable $X \in L^{\infty}_{c}$. Then, for any $n \in \N$, $X_{n} = [H_{n} \Delta S]_{c} - Y_{n}$, where $H_n \in \R^{d}$ and $Y_n \in L^{0}_{c+}$, and we will show that there exists $H \in \R^{d}$ and $Y \in L^{0}_{c+}$ such that $X = [H \Delta S]_{c} - Y$. To this end, we may assume that the market $S$ is non-redundant in the sense that $[H \Delta S]_{c}=0$ implies $H=0$. Otherwise, there is a component $i\in \{1,\ldots, d\}$ and constants $a^j \in \R$, $j \in \{1, \ldots, d\} \setminus \{i\}$, such that $S^i_1-S_0^i=\sum_{j\neq i} a^j(S^j_1-S_0^j)$. Thus, letting $\widetilde S$ be the market model obtained by removing the $i$-th asset from $S$, it follows that $\operatorname{NA}(\mathcal{P},\widetilde S)$ is satisfied and
    \begin{equation*}
        \exists H \in \R^d \colon X \preccurlyeq [H \Delta S]_{c} \quad \Leftrightarrow \quad \exists H \in \R^{d-1} \colon X \preccurlyeq [H \Delta \widetilde S]_{c},
    \end{equation*} so that $\mathcal{C}(S)=\mathcal{C}(\widetilde S)$.
    Hence, from now on, consider a non-redundant market $S$. Suppose that $\liminf_{n \to \infty} \lVert H_{n} \rVert < \infty$.
    Then there is a subsequence $(H_{n_{k}})_{k\in \N}$ of $(H_n)_{n\in \N}$ which converges to a  vector $H\in \R^d$. In that case,
    \begin{equation*}
        Y_{n_{k}} = Y_{n_{k}} + [H_{n_{k}} \Delta S]_{c} - [H_{n_{k}} \Delta S]_{c} =  [H_{n_{k}} \Delta S]_{c} - X_{n_{k}} \in L^{0}_{c+}
    \end{equation*}
    converges $\mathcal{P}$-q.s.\ to $Y:=[H \Delta S]_{c} - X \in L^{0}_{c+}$ and $X = [H \Delta S]_{c} - Y$,
    so $X\in \mathcal{C}(S)$.
    Now suppose that $\liminf_{n \to \infty} \lVert H_{n} \rVert = \infty$. Let
    \begin{equation*}
        G_{n} := \frac{H_{n}}{1 + \lVert H_{n} \rVert}, \quad  n \in \N.
    \end{equation*}
    As $\lVert G_{n} \rVert \leq 1$, $n\in \N$, there is a subsequence $(G_{n_{k}})_{k\in \N}$ which converges to a vector $G \in \R^{d}$, and $\lVert G \rVert = 1$. Moreover, we have $X_{n_{k}} / (1 + \lVert H_{n_{k}} \rVert) \to 0$ and hence $\mathcal{P}$-q.s.
    \begin{equation*}
        \lim_{k \to \infty} \frac{Y_{n_{k}}}{1 + \lVert H_{n_{k}} \rVert} =  \lim_{k \to \infty} \frac{[H_{n_{k}} \Delta S]_{c}}{1 + \lVert H_{n_{k}} \rVert}- \lim_{k \to \infty} \frac{X_{n_{k}}}{1 + \lVert H_{n_{k}} \rVert} = [G \Delta S]_{c} + 0 = [G \Delta S]_{c}.
    \end{equation*}
    As $Y_{n_{k}} / (1 + \lVert H_{n_{k}} \rVert) \in L^{0}_{c+}$ for all $k \in \N$, it follows that $[G \Delta S]_{c} \in L^{0}_{c+}$. However, by NA($\mathcal{P}$) we must have $[G \Delta S]_{c} = 0$ and thus $G=0$, which contradicts $\lVert G \rVert = 1$. Therefore, $\liminf_{n \to \infty} \lVert H_{n} \rVert = \infty$ is not possible.
\end{proof}

Lemma~\ref{lem:C:clsd} implies the existence of optimal superhedging strategies.

\begin{corollary} \label{cor:opt:strat}
    Suppose that $S\in \mathcal{S}$ satisfies $\operatorname{NA}(\mathcal{P},S)$, and let $X \in L^{\infty}_{c}$. Then, $\pi(X \mid S) > - \infty$ and there exists $H \in \R^{d}$ such that $X \preccurlyeq \pi(X \mid S) + [H \Delta S]_{c}$.
\end{corollary}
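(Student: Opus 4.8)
The plan is to deduce both assertions from the closedness of $\mathcal{C}(S)$ established in Lemma~\ref{lem:C:clsd}, after a short preliminary argument securing finiteness of the superhedging price.

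First I would show $\pi(X \mid S) > -\infty$ by contradiction. The set $\{r \in \R \mid \exists H \in \R^d \colon X \preccurlyeq r + [H\Delta S]_c\}$ is clearly upward closed. If its infimum were $-\infty$, then taking $r := -\lVert X \rVert_{c,\infty} - 1$ would provide some $H \in \R^d$ with $X \preccurlyeq r + [H\Delta S]_c$, hence $[H\Delta S]_c \succcurlyeq X - r \succcurlyeq -\lVert X \rVert_{c,\infty} - r = 1$, so in particular $H\Delta S \geq 0$ $\mathcal{P}$-q.s. By $\operatorname{NA}(\mathcal{P},S)$ this forces $H\Delta S = 0$ $\mathcal{P}$-q.s., which together with $H\Delta S \geq 1$ $\mathcal{P}$-q.s. is impossible: for any $P \in \mathcal{P}$ (and $\mathcal{P} \neq \emptyset$) it would give $P(H\Delta S = 0) = P(H\Delta S \geq 1) = 1$ although these two events are disjoint. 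Hence $\pi := \pi(X \mid S) \in \R$.

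Next, for the attainment I would exploit the definition of the infimum to choose a sequence $r_n \downarrow \pi$ and strategies $H_n \in \R^d$ with $X \preccurlyeq r_n + [H_n \Delta S]_c$, i.e., $X - r_n \in \mathcal{C}(S)$ for every $n$. Since $r_n \to \pi$ in $\R$, the sequence $X - r_n$ converges to $X - \pi$ $\mathcal{P}$-q.s. (for any fixed representative the convergence is even pointwise on all of $\Omega$). By Lemma~\ref{lem:C:clsd}, $\mathcal{C}(S)$ is closed under $\mathcal{P}$-q.s. convergence, so $X - \pi \in \mathcal{C}(S)$. By definition of $\mathcal{C}(S)$ there is then $H \in \R^d$ with $X - \pi \preccurlyeq [H\Delta S]_c$, that is, $X \preccurlyeq \pi(X\mid S) + [H\Delta S]_c$, which is the claim.

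The substantive work is already carried out in Lemma~\ref{lem:C:clsd}, so I do not expect a real obstacle here; the only point requiring (minor) care is the finiteness statement, where one has to use $\operatorname{NA}(\mathcal{P},S)$ together with $\mathcal{P} \neq \emptyset$ to exclude $\pi(X\mid S) = -\infty$ before the closedness argument can be applied.
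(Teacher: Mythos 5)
Your proof is correct and uses the same key ingredient, Lemma~\ref{lem:C:clsd}, for the attainment of the superhedging price; that part of your argument (passing to a minimizing sequence $r_n \downarrow \pi$ and invoking closedness of $\mathcal{C}(S)$ under $\mathcal{P}$-q.s.\ convergence) is essentially identical to the paper's. The one place you deviate is the finiteness claim: the paper again routes through Lemma~\ref{lem:C:clsd} (showing $(X+n)\wedge 1 \in \mathcal{C}(S)$ and letting $n\to\infty$ to get $1\in\mathcal{C}(S)$), whereas you observe directly that a single sufficiently negative $r$ already produces $H$ with $[H\Delta S]_c \succcurlyeq 1$, which contradicts $\operatorname{NA}(\mathcal{P},S)$ outright. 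Your variant is a touch more elementary since it needs no limit argument for the finiteness step, but both hinge on the same mechanism of NA; the substance of the corollary is the same in either version.
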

\begin{proof}
     Suppose that $\pi(X \mid S) = -\infty$. Then, for all $n \in \N$, there exists $H_{n} \in \R^{d}$ such that $X \preccurlyeq -n + [H_{n} \Delta S]_{c}$ and thus,
    \begin{equation*}
        (X + n) \wedge 1 \preccurlyeq X + n \preccurlyeq [H_{n} \Delta S]_{c}.
    \end{equation*}
    That is, $X_{n} := (X + n) \wedge 1 \in \mathcal{C}(S)$ for all $n \in \N$. Letting $n\to \infty$, Proposition~\ref{lem:C:clsd} implies that $1  \in \mathcal{C}(S)$, which clearly contradicts NA($\mathcal{P},S$).
    If $\pi(X \mid S)$ is finite, then $X_{n} := X - \pi(X \mid S) - 1/n \in \mathcal{C}(S)$ for all $n \in \N$, and thus $X - \pi(X \mid S)\in \mathcal{C}(S)$ by Proposition~\ref{lem:C:clsd}, which implies the existence of $H \in \R^{d}$ such that $X \preccurlyeq \pi(X \mid S) + [H \Delta S]_{c}$.
\end{proof}

\begin{corollary} \label{cor:hedge:dual:Q2}
    Let $\mathfrak{Q} \colon \mathcal{S} \twoheadrightarrow \mathfrak{P}_{c}(\Omega)$ be $\lhd$-monotone and strongly $\operatorname{NA}$-preserving. Moreover, assume that $\bigcup_{Q \in \mathfrak{Q}(S)} \mathfrak{M}^{Q}_{\approx}(S) \subseteq \mathfrak{Q}(S)$, i.e., $Q \in \mathfrak{Q}(S)$, $R \in \mathfrak{M}(S)$, and $R \approx Q$ imply that $R \in \mathfrak{Q}(S)$.
    Then, for every $S$ such that $\operatorname{NA}(\mathcal{P},S)$ is satisfied, $\mathcal{P}\approx \mathfrak{Q}(S)$ and
    \begin{equation*}
        \pi(X \mid S) = \sup_{Q \in \mathfrak{Q}(S)} E_{Q}[X].
    \end{equation*}  
\end{corollary}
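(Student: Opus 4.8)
The plan is to read the dual formula off Corollary~\ref{cor:hedge:dual:Q} by \emph{collapsing} its double supremum, and then to deduce $\mathcal{P}\approx\mathfrak{Q}(S)$ by evaluating that formula on indicator functions. Fix $S\in\mathcal{S}$ with $\operatorname{NA}(\mathcal{P},S)$. By Corollary~\ref{cor:hedge:dual:Q},
\begin{equation*}
    \pi(X\mid S)=\sup_{Q\in\mathfrak{Q}(S)}\ \sup_{R\in\mathfrak{M}^{Q}_{\approx}(S)}E_{R}[X],\qquad X\in L^{\infty}_{c}.
\end{equation*}
The hypothesis $\bigcup_{Q\in\mathfrak{Q}(S)}\mathfrak{M}^{Q}_{\approx}(S)\subseteq\mathfrak{Q}(S)$ says exactly that every measure $R$ occurring in an inner supremum already lies in $\mathfrak{Q}(S)$; hence the right-hand side is bounded above by $\sup_{Q\in\mathfrak{Q}(S)}E_{Q}[X]$, and so $\pi(X\mid S)\le\sup_{Q\in\mathfrak{Q}(S)}E_{Q}[X]$.

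For the reverse inequality I would use that the elements of $\mathfrak{Q}(S)$ are martingale measures for $S$ --- which is immediate from the definitions of $\mathfrak{M}$, $\mathfrak{M}_{\ll}$ and $\mathfrak{M}_{\approx}$ --- together with the fact that, since $Q\ll\mathcal{P}$ for every $Q\in\mathfrak{Q}(S)\subseteq\mathfrak{P}_{c}(\Omega)$, any inequality that holds $\mathcal{P}$-q.s.\ also holds $Q$-a.s. Then, whenever $X\preccurlyeq r+[H\Delta S]_{c}$ with $r\in\R$ and $H\in\mathcal{H}(S)$, taking $Q$-expectations gives $E_{Q}[X]\le r+E_{Q}[H\Delta S]=r$; passing to the infimum over admissible $r$ yields $E_{Q}[X]\le\pi(X\mid S)$ for all $Q\in\mathfrak{Q}(S)$, hence $\sup_{Q\in\mathfrak{Q}(S)}E_{Q}[X]\le\pi(X\mid S)$ and equality. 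I expect this to be the only place where the internal structure of $\mathfrak{Q}$ genuinely matters: the identity $E_{Q}[H\Delta S]=0$ needs $Q$ to be a martingale measure, while everything else is bookkeeping of suprema.

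It remains to verify $\mathcal{P}\approx\mathfrak{Q}(S)$. The relation $\mathfrak{Q}(S)\ll\mathcal{P}$ is automatic, since $\mathfrak{Q}(S)\subseteq\mathfrak{P}_{c}(\Omega)$ forces each of its elements to be individually $\mathcal{P}$-absolutely continuous, so that $c(N)=0$ implies $Q(N)=0$ for all $Q\in\mathfrak{Q}(S)$. For $\mathcal{P}\ll\mathfrak{Q}(S)$, let $N\in\mathcal{F}$ with $c(N)>0$; I first claim $\pi(\mathbf{1}_{N}\mid S)>0$. By Corollary~\ref{cor:opt:strat} (applicable because $\operatorname{NA}(\mathcal{P},S)$ holds), $\pi(\mathbf{1}_{N}\mid S)>-\infty$ and there is $H\in\mathcal{H}(S)$ with $\mathbf{1}_{N}\preccurlyeq\pi(\mathbf{1}_{N}\mid S)+[H\Delta S]_{c}$. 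If $\pi(\mathbf{1}_{N}\mid S)\le 0$, this gives $[H\Delta S]_{c}\succcurlyeq\mathbf{1}_{N}\succcurlyeq 0$, so $[H\Delta S]_{c}=0$ $\mathcal{P}$-q.s.\ by $\operatorname{NA}(\mathcal{P},S)$, whence $\mathbf{1}_{N}\preccurlyeq 0$ and $c(N)=0$, a contradiction. Applying the dual formula proved above to $X=\mathbf{1}_{N}$ then gives $\sup_{Q\in\mathfrak{Q}(S)}Q(N)=\pi(\mathbf{1}_{N}\mid S)>0$, so $N$ is not $\mathfrak{Q}(S)$-polar. This proves $\mathcal{P}\ll\mathfrak{Q}(S)$, and therefore $\mathcal{P}\approx\mathfrak{Q}(S)$.
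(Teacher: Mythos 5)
Your architecture is the same as the paper's: collapse the double supremum of Corollary~\ref{cor:hedge:dual:Q} via the union hypothesis, then obtain $\mathcal{P}\approx\mathfrak{Q}(S)$ by evaluating the resulting formula on indicators, using Corollary~\ref{cor:opt:strat} and $\operatorname{NA}(\mathcal{P},S)$ to see that $c(N)>0$ forces $\pi(\mathbf{1}_N\mid S)>0$. The inequality $\pi(X\mid S)\le\sup_{Q\in\mathfrak{Q}(S)}E_Q[X]$ and the whole equivalence argument are correct.

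The gap is in the reverse inequality. You justify $E_Q[X]\le\pi(X\mid S)$ for $Q\in\mathfrak{Q}(S)$ by declaring that the elements of $\mathfrak{Q}(S)$ are martingale measures, ``immediate from the definitions of $\mathfrak{M}$, $\mathfrak{M}_{\ll}$ and $\mathfrak{M}_{\approx}$''. But the corollary is stated for an abstract $\lhd$-monotone, strongly $\operatorname{NA}$-preserving $\mathfrak{Q}$; the inclusion $\mathfrak{Q}(S)\subseteq\mathfrak{M}(S)$ is not among its hypotheses, and the paper's very next application takes $\mathfrak{Q}=\mathfrak{NA}$, whose values are measures merely \emph{equivalent} to martingale measures. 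For such $Q$ the identity $E_Q[H\Delta S]=0$ fails, and by Lemma~\ref{lemm:dual:rep:pi} the bound $E_Q[\,\cdot\,]\le\pi(\,\cdot\mid S)$ on $L^{\infty}_{c}$ holds exactly when $Q\in\mathfrak{M}(S)$ (test $X=\pm[\Delta S^i]_c$). A two-point example ($\Omega=\{1,2\}$, $\mathcal{P}$ the uniform measure, $S_0=1$, $S_1=2\chi_{\{1\}}$) gives $\pi(\Delta S\mid S)=0$ while $\sup_{R\in\mathfrak{NA}(S)}E_R[\Delta S]=1$, even though $\mathfrak{NA}$ satisfies every listed hypothesis; so the reverse inequality genuinely needs $\mathfrak{Q}(S)\subseteq\mathfrak{M}(S)$ and cannot be repaired by a different argument. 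Your proof therefore establishes the corollary only under that extra inclusion. The paper's own one-line derivation of the second equality passes over the same point, so what you have really done is isolate a hypothesis that the statement requires but does not list.
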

\begin{proof}
    Corollary~\ref{cor:hedge:dual:Q} and the additional requirement imply
    \begin{equation*}
        \pi(X \mid S) = \sup_{Q \in \mathfrak{Q}(S)} \sup_{R \in \mathfrak{M}^{Q}_{\approx}(S)} E_{R}[X] = \sup_{R \in \mathfrak{Q}(S)} E_{R}[X].
    \end{equation*}
    Let $A\in \mathcal{F}$. If $c(A)>0$, then Corollary~\ref{cor:opt:strat} and NA($\mathcal{P},S$) imply $\pi(\mathbf{1}_{A} \mid S)>0$, so there must be $Q\in \mathfrak{Q}(S)$ such that $Q(A)=E_Q[\mathbf{1}_{A}] > 0$. Conversely, if $Q(A)>0$, then $\pi(\mathbf{1}_{A} \mid S)>0$. Therefore, $\mathbf{1}_{A}\neq 0$, that is $c(A)>0$, because otherwise $\mathbf{1}_{A}\preccurlyeq 0 + [0\Delta S]_c$ implies that $\pi(\mathbf{1}_{A} \mid S)\leq 0$. 
\end{proof}

According to Lemma~\ref{lem:NAP:NAQ3}, $\mathfrak{M}$, $\mathfrak{NA}$, and, if $\mathcal{P}$ is convex, also $\mathfrak{M}_{\ll}$ and $\mathfrak{M}_{\approx}$ satisfy the requirements of Corollary~\ref{cor:hedge:dual:Q2}. We thus obtain the following version of the robust Superhedging Theorem.

\begin{corollary}
    Consider a market model $S \in \mathcal{S}$ for which $\operatorname{NA}(\mathcal{P}, S)$ holds. Then
    \begin{equation*}
        \pi(X\mid S) = \sup_{Q \in \mathfrak{NA}(S)} E_{Q}[X]= \sup_{Q \in \mathfrak{M}(S)} E_{Q}[X].
    \end{equation*}
    If $\mathcal{P}$ is convex, then also
    \begin{equation}\label{eq:SH}
        \pi(X\mid S) = \sup_{Q \in \mathfrak{M}_{\ll}(S)} E_{Q}[X] = \sup_{Q \in \mathfrak{M}_{\approx}(S)} E_{Q}[X].
    \end{equation}   
\end{corollary}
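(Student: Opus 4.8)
The plan is to obtain all four equalities as special cases of Corollary~\ref{cor:hedge:dual:Q2}, which states that if $\mathfrak{Q}$ is $\lhd$-monotone, strongly $\operatorname{NA}$-preserving, and satisfies the closure property $\bigcup_{Q \in \mathfrak{Q}(S)} \mathfrak{M}^{Q}_{\approx}(S) \subseteq \mathfrak{Q}(S)$, then $\pi(X \mid S) = \sup_{Q \in \mathfrak{Q}(S)} E_{Q}[X]$ whenever $\operatorname{NA}(\mathcal{P},S)$ holds. So the task reduces to checking the hypotheses of that corollary for each of the maps $\mathfrak{Q} \in \{\mathfrak{M}, \mathfrak{NA}, \mathfrak{M}_{\ll}, \mathfrak{M}_{\approx}\}$.

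First, $\lhd$-monotonicity of all four maps, strong $\operatorname{NA}$-preservation of $\mathfrak{M}$ and $\mathfrak{NA}$ (with no assumption on $\mathcal{P}$), and strong $\operatorname{NA}$-preservation of $\mathfrak{M}_{\ll}$ and $\mathfrak{M}_{\approx}$ under convexity of $\mathcal{P}$, are precisely the content of Lemma~\ref{lem:NAP:NAQ3}, so nothing new is needed there. Second, I would verify the closure property directly from the definitions, using transitivity of $\ll$ and $\approx$: fix $Q \in \mathfrak{Q}(S)$ and $R \in \mathfrak{M}^{Q}_{\approx}(S)$, so $R \in \mathfrak{M}(S)$ and $R \approx Q$. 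For $\mathfrak{Q} = \mathfrak{M}$ the membership $R \in \mathfrak{M}(S)$ is immediate; for $\mathfrak{Q} = \mathfrak{NA}$, $R$ is a martingale measure equivalent to itself, hence $R \in \mathfrak{NA}(S)$; for $\mathfrak{Q} = \mathfrak{M}_{\ll}$, choosing $P \in \mathcal{P}$ with $Q \ll P$ gives $R \ll Q \ll P$, so $R \lll \mathcal{P}$ and $R \in \mathfrak{M}_{\ll}(S)$; for $\mathfrak{Q} = \mathfrak{M}_{\approx}$, choosing $P \in \mathcal{P}$ with $Q \approx P$ gives $R \approx Q \approx P$, so $R \in \mathfrak{M}_{\approx}(S)$. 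In every case the closure property holds, and the hypotheses of Corollary~\ref{cor:hedge:dual:Q2} are met.

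Applying Corollary~\ref{cor:hedge:dual:Q2} to $\mathfrak{NA}$ and to $\mathfrak{M}$ then gives $\pi(X \mid S) = \sup_{Q \in \mathfrak{NA}(S)} E_{Q}[X] = \sup_{Q \in \mathfrak{M}(S)} E_{Q}[X]$, the first display; applying it to $\mathfrak{M}_{\ll}$ and to $\mathfrak{M}_{\approx}$, which requires convexity of $\mathcal{P}$ in order to invoke Lemma~\ref{lem:NAP:NAQ3}, gives \eqref{eq:SH}. I do not expect any genuine obstacle in this argument: all the analytic work is already packaged into Corollary~\ref{cor:hedge:dual:Q2} and Lemma~\ref{lem:NAP:NAQ3}, and the only point requiring a moment's attention is the closure property, which is pure bookkeeping with absolute continuity and equivalence of measures.
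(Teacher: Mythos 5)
Your proposal is correct and follows the same approach as the paper: both invoke Corollary~\ref{cor:hedge:dual:Q2} and use Lemma~\ref{lem:NAP:NAQ3} to check $\lhd$-monotonicity and strong $\operatorname{NA}$-preservation for the four maps (convexity of $\mathcal{P}$ only needed for $\mathfrak{M}_{\ll}$ and $\mathfrak{M}_{\approx}$). The one thing you do that the paper leaves implicit is spell out the closure condition $\bigcup_{Q \in \mathfrak{Q}(S)} \mathfrak{M}^{Q}_{\approx}(S) \subseteq \mathfrak{Q}(S)$ via transitivity of $\ll$ and $\approx$; that is a genuine hypothesis of Corollary~\ref{cor:hedge:dual:Q2} not covered by Lemma~\ref{lem:NAP:NAQ3}, so making it explicit is an improvement rather than a deviation.
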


Note that the first equality in \eqref{eq:SH} corresponds to the robust Superhedging Theorem given in \cite[Theorem 3.4]{BN2015}. 

\begin{theorem} \label{thm:FTAP}
    Suppose that each $P\in \mathcal{P}$ is supported. Moreover, suppose that $\mathfrak{Q} \colon \mathcal{S} \twoheadrightarrow \mathfrak{P}_{c}(\Omega)$ is $\lhd$-monotone, strongly $\operatorname{NA}$-preserving, and satisfies the following conditions:
    \begin{itemize}
        \item $\mathfrak{Q}(S)$ is countably convex for all $S\in \mathcal{S}$, in the sense that if $(\alpha_{n})_{n \in \N}$ is a sequence of non-negative real numbers summing up to $1$ and $Q_{n} \in \mathfrak{Q}(S)$ for all $n \in \N$, then $\sum_{n \in \N} \alpha_{n} Q_{n} \in \mathfrak{Q}(S)$.

        \item $\bigcup_{Q \in \mathfrak{Q}(S)} \mathfrak{M}^{Q}_{\approx}(S) \subseteq \mathfrak{Q}(S) \subseteq \mathfrak{M}(S)$.
    \end{itemize}
    Then, for any $S \in \mathcal{S}$, the following are equivalent:
    \begin{enumerate}
        \item[(i)] $\operatorname{NA}(\mathcal{P},S)$ holds.

        \item[(ii)] For all $P \in \mathcal{P}$ there exists $Q \in \mathfrak{Q}(S)$ such that $P \ll Q$.
    \end{enumerate}
\end{theorem}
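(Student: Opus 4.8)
The plan is to prove the two implications separately, beginning with the easy one. \emph{(ii) $\Rightarrow$ (i):} assume that for each $P \in \mathcal{P}$ there is $Q \in \mathfrak{Q}(S)$ with $P \ll Q$, and let $H \in \mathcal{H}(S)$ satisfy $H \Delta S \geq 0$ $\mathcal{P}$-q.s. Fix $P \in \mathcal{P}$ and a corresponding $Q \in \mathfrak{Q}(S)$. Since $\mathfrak{Q}(S) \subseteq \mathfrak{M}(S) \subseteq \mathfrak{P}_{c}(\Omega)$, we have $Q \ll \mathcal{P}$, so every $\mathcal{P}$-q.s.\ statement holds $Q$-a.s.; in particular $H \Delta S \geq 0$ $Q$-a.s. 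As $Q \in \mathfrak{M}(S)$, $E_{Q}[H \Delta S] = 0$, hence $H \Delta S = 0$ $Q$-a.s., and $P \ll Q$ then gives $H \Delta S = 0$ $P$-a.s. Since $P$ was arbitrary, $H \Delta S = 0$ $\mathcal{P}$-q.s., i.e.\ $\operatorname{NA}(\mathcal{P},S)$ holds.

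For \emph{(i) $\Rightarrow$ (ii)}, assume $\operatorname{NA}(\mathcal{P},S)$. Because $\mathfrak{Q}$ is $\lhd$-monotone, strongly $\operatorname{NA}$-preserving and $\bigcup_{Q \in \mathfrak{Q}(S)} \mathfrak{M}^{Q}_{\approx}(S) \subseteq \mathfrak{Q}(S) \subseteq \mathfrak{M}(S)$, Corollary~\ref{cor:hedge:dual:Q2} applies and yields $\mathcal{P} \approx \mathfrak{Q}(S)$; only the weak consequence $\mathcal{P} \ll \mathfrak{Q}(S)$ is needed, i.e.\ that every event $A$ with $c(A) > 0$ has $Q(A) > 0$ for some $Q \in \mathfrak{Q}(S)$. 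Now fix $P \in \mathcal{P}$ with (order) support $\Sigma := S(P)$. The decisive step, and the only place where supportedness is used, is the following claim: for every $A \in \mathcal{F}$ with $A \subseteq \Sigma$ and $P(A) > 0$ there is $Q \in \mathfrak{Q}(S)$ whose $P$-absolutely continuous part $Q^{a}$ in the Lebesgue decomposition $Q = Q^{a} + Q^{s}$ with respect to $P$ satisfies $Q^{a}(A) > 0$. Indeed, pick $Q \in \mathfrak{Q}(S)$ with $Q(A) > 0$ (possible since $c(A) \geq P(A) > 0$), let $M$ be a $P$-null set carrying $Q^{s}$, and note that $A \cap M \subseteq \Sigma$ is $P$-null, hence $\mathcal{P}$-polar, hence $Q$-null as $Q \ll \mathcal{P}$; therefore $Q^{s}(A) \leq Q(A \cap M) = 0$ and $Q^{a}(A) = Q(A) > 0$.

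The proof is then finished by a Halmos--Savage type exhaustion carried out inside $\mathfrak{Q}(S)$, which is where countable convexity enters. For $\nu \in \mathfrak{Q}(S)$ write $g_{\nu} := d\nu^{a}/dP$ and $B_{\nu} := \{g_{\nu} > 0\} \cap \Sigma$, and set $s := \sup_{\nu \in \mathfrak{Q}(S)} P(B_{\nu}) \leq P(\Sigma) = 1$. Pick $\nu_{k} \in \mathfrak{Q}(S)$ with $P(B_{\nu_{k}}) \to s$ and put $Q^{*} := \sum_{k \geq 1} 2^{-k} \nu_{k}$, which lies in $\mathfrak{Q}(S)$ by countable convexity. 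Since $Q^{*} \geq 2^{-k} \nu_{k}$ for every $k$ and the Lebesgue decomposition is monotone, $(Q^{*})^{a} \geq 2^{-k} \nu_{k}^{a}$, so $B_{Q^{*}} \supseteq B_{\nu_{k}}$ up to $P$-null sets and $P(B_{Q^{*}}) = s$. If $s < 1$, then $A := \Sigma \setminus B_{Q^{*}}$ has $P(A) > 0$, and the claim furnishes $Q \in \mathfrak{Q}(S)$ with $P(B_{Q} \cap A) > 0$; but then $\nu' := \tfrac12 Q^{*} + \tfrac12 Q \in \mathfrak{Q}(S)$ satisfies $P(B_{\nu'}) \geq P(B_{Q^{*}}) + P(B_{Q} \cap A) > s$, because $B_{Q} \cap A$ and $B_{Q^{*}}$ are disjoint, contradicting the definition of $s$. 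Hence $s = 1$, so $g_{Q^{*}} > 0$ $P$-a.s., which is precisely $P \ll Q^{*}$; since $Q^{*} \in \mathfrak{Q}(S)$, this proves (ii).

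The main obstacle is the implication (i) $\Rightarrow$ (ii), specifically the passage from a domination statement for the whole family $\mathfrak{Q}(S)$ to domination by a single element: the robust superhedging duality only gives that each $P$-positive event is charged by \emph{some} member of $\mathfrak{Q}(S)$, and turning this into one $Q^{*} \in \mathfrak{Q}(S)$ with $P \ll Q^{*}$ genuinely requires both the supportedness of $P$ (to rule out that the charging mass sits on the $P$-singular part, which is the content of the claim above) and the countable convexity of $\mathfrak{Q}(S)$ (so that the exhaustion argument stays inside $\mathfrak{Q}(S)$). The remaining ingredients — the easy (ii) $\Rightarrow$ (i) direction and the invocation of Corollary~\ref{cor:hedge:dual:Q2} — are routine.
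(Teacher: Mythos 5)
Your proof is correct and follows essentially the same route as the paper: the (ii)\,$\Rightarrow$\,(i) direction is the identical martingale-measure argument, and for (i)\,$\Rightarrow$\,(ii) the paper likewise invokes Corollary~\ref{cor:hedge:dual:Q2}, takes the Lebesgue decomposition of each $Q\in\mathfrak{Q}(S)$ with respect to $P$, uses supportedness of $P$ to show the singular part cannot charge $S(P)$, and runs the same countable-convexity exhaustion (phrased there as maximizing $\sup_{Q}P(dQ_P/dP>0)$ rather than as an explicit Halmos--Savage claim). No gaps.
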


\begin{proof}
    (i) $\Rightarrow$ (ii): Suppose that NA($\mathcal{P},S$) holds for $S$ and let $P\in \mathcal{P}$. 
    For all $Q \in \mathfrak{Q}(S)$, consider the Lebesgue decomposition $Q=Q_P+Q_{\perp}$ such that $Q_P\ll P$. Let 
    \begin{equation*}
        u := \sup_{Q \in \mathfrak{Q}(S)} P\bigg(\frac{dQ_P}{dP}>0\bigg).
    \end{equation*}
    Choose $Q^{1}, Q^{2}, \dots \in \mathfrak{Q}(S)$ such that $P(\frac{dQ^n_P}{dP}>0) \to u$ as $n \to \infty$ and set
    \begin{equation*}
        \widetilde{Q} := \sum_{i \in \N} \alpha_{i} Q^{i}
    \end{equation*}
    where $\alpha_i>0$ for all $i\in \N$, and $\sum_{i \in \N} \alpha_{i} = 1$. Then, $\widetilde Q\in \mathfrak{Q}(S)$ by countable convexity, and $\widetilde Q_P= \sum_{i \in \N} \alpha_{i} Q^{i}_P$ and $\widetilde Q_\perp= \sum_{i \in \N} \alpha_{i} Q^{i}_\perp$.  In particular, $\frac{d\widetilde Q_P}{dP}=\sum_{i \in \N} \alpha_{i} \frac{dQ^i_P}{dP}$ and thus
    \begin{equation*}
        P\bigg(\frac{d\widetilde Q_P}{dP}>0\bigg)=P\bigg(\bigcup_{n\in \N}\bigg\{\frac{d Q^n_P}{dP}>0\bigg\}\bigg) \geq \lim_{n\to \infty} P\bigg(\frac{d Q^n_P}{dP}>0\bigg) = u.
    \end{equation*}
    Hence, $P(\frac{d\widetilde Q_P}{dP}>0)=u$. Suppose that $ u < 1$. Then $A:=\{\frac{d\widetilde Q_P}{dP}=0\}\cap S(P)$ satisfies $P(A) > 0$. According to Corollary~\ref{cor:hedge:dual:Q2}, there exists  $\widehat{Q} \in \mathfrak{Q}(S)$ such that $\widehat{Q}(A) > 0$. Note that $\widehat{Q}_P(A)>0$, because one verifies that $\widehat{Q}_\perp(S(P))=0$. Therefore,
    \begin{equation*}
        P\bigg(\bigg\{\frac{d\widehat{Q}_P}{dP} > 0\bigg\}\cap A\bigg) > 0.
    \end{equation*}
    Let $R:=\frac{1}{2}(\widetilde Q + \widehat Q)\in \mathfrak{Q}(S)$. Then
    \begin{equation*}
        P\bigg(\frac{dR_P}{dP}>0\bigg)= P\bigg(\frac12 \bigg(\frac{d\widehat{Q}_P}{dP}+\frac{d\widetilde{Q}_P}{dP}\bigg) > 0\bigg)\geq P\bigg(\bigg\{\frac{d\widehat{Q}_P}{dP} > 0\bigg\}\cap A\bigg)+ P\bigg(\frac{d\widetilde{Q}_P}{dP}>0\bigg) > u,
    \end{equation*}
    which is a contradiction. Hence, $P(\frac{d\widetilde Q_P}{dP} > 0) = 1$ and $P\ll \widetilde Q$.
    
    \smallskip\noindent
    (ii) $\Rightarrow$ (i): Let $H \in \R^{d}$ such that $H \Delta S \geq 0$ $\mathcal{P}$-q.s. Suppose there exists $P \in \mathcal{P}$ such that $P(H \Delta S > 0) > 0$. By assumption, there exists $Q \in \mathfrak{Q}(S)$ such that $P \ll Q$. Consequently, $Q(H \Delta S > 0) > 0$, while  $H \Delta S \geq 0$ $\mathcal{P}$-q.s.\ implies $H \Delta S \geq 0$ $Q$-a.s. But $Q\in \mathfrak{M}(S)$ and thus $E_{Q}[H \Delta S] = 0$, which is absurd. Therefore, $P(H \Delta S > 0) = 0$ for all $P \in \mathcal{P}$, which implies  $[H \Delta S]_c =0$.  
\end{proof}

Note that $\mathfrak{M}$ satisfies the requirements of Theorem~\ref{thm:FTAP}, and, if $\mathcal{P}$ is itself countably convex, so do $\mathfrak{M}_{\ll}$ and $\mathfrak{M}_{\approx}$. Consequently, we obtain Corollaries~\ref{cor:FTAP:M}, \ref{cor:FTAP:Mll}, and \ref{cor:FTAP:Meq}.

\begin{corollary} \label{cor:FTAP:M}
    Suppose that each $P\in \mathcal{P}$ is supported. For any $S \in \mathcal{S}$, the following are equivalent:
    \begin{enumerate}
        \item[(i)] $\operatorname{NA}(\mathcal{P},S)$ holds.

        \item[(ii)] For all $P \in \mathcal{P}$ there exists $Q \in \mathfrak{M}(S)$ such that $P \ll Q$.
    \end{enumerate}
\end{corollary}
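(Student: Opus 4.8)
The plan is to obtain Corollary~\ref{cor:FTAP:M} as the special case of Theorem~\ref{thm:FTAP} in which the set-valued map $\mathfrak{Q}$ is taken to be $\mathfrak{M}$. With this choice, condition~(ii) of Theorem~\ref{thm:FTAP} reads ``for all $P\in\mathcal{P}$ there exists $Q\in\mathfrak{M}(S)$ with $P\ll Q$'', which is precisely condition~(ii) of the corollary, and the equivalence with $\operatorname{NA}(\mathcal{P},S)$ then transfers verbatim. So the entire proof reduces to verifying that $\mathfrak{M}$ satisfies all the structural hypotheses imposed on $\mathfrak{Q}$ in Theorem~\ref{thm:FTAP}.

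First I would invoke Lemma~\ref{lem:NAP:NAQ3}, which already tells us that $\mathfrak{M}$ is $\lhd$-monotone and strongly $\operatorname{NA}$-preserving. It then remains to check the two bullet-point conditions in the statement of Theorem~\ref{thm:FTAP}. The inclusion $\bigcup_{Q\in\mathfrak{M}(S)}\mathfrak{M}^{Q}_{\approx}(S)\subseteq\mathfrak{M}(S)\subseteq\mathfrak{M}(S)$ is immediate: by definition every $R\in\mathfrak{M}^{Q}_{\approx}(S)$ is a martingale measure for $S$, hence lies in $\mathfrak{M}(S)$, and the second inclusion is trivially an equality.

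The only condition requiring an argument is countable convexity of $\mathfrak{M}(S)$. Given $Q_{n}\in\mathfrak{M}(S)$ and non-negative weights $\alpha_{n}$ with $\sum_{n\in\N}\alpha_{n}=1$, set $Q:=\sum_{n\in\N}\alpha_{n}Q_{n}$. Since each $Q_{n}$ assigns measure zero to every $\mathcal{P}$-polar event, so does $Q$, whence $Q\in\mathfrak{P}_{c}(\Omega)$. Because $S_{1}$ is bounded, each component $S_{1}^{i}$ is $Q$-integrable, and dominated convergence justifies interchanging the countable sum with the integral, so that $E_{Q}[\Delta S^{i}]=\sum_{n\in\N}\alpha_{n}E_{Q_{n}}[\Delta S^{i}]=0$ for every $i=1,\dots,d(S)$. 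Hence $Q\in\mathfrak{M}(S)$, establishing countable convexity.

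With all hypotheses of Theorem~\ref{thm:FTAP} verified for $\mathfrak{Q}=\mathfrak{M}$, the equivalence of (i) and (ii) follows directly. I do not anticipate any genuine obstacle here; the argument is a bookkeeping check of hypotheses, and the only computation, namely the countable convexity of $\mathfrak{M}(S)$, is handled entirely by the boundedness of $S_{1}$ together with the fact that $\mathfrak{P}_{c}(\Omega)$ is stable under countable convex combinations.
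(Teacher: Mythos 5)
Your proposal is correct and is essentially the paper's own argument: the paper likewise obtains Corollary~\ref{cor:FTAP:M} by noting that $\mathfrak{M}$ satisfies the requirements of Theorem~\ref{thm:FTAP} (with $\lhd$-monotonicity and strong $\operatorname{NA}$-preservation supplied by Lemma~\ref{lem:NAP:NAQ3}). You merely spell out the routine verification of countable convexity and the inclusion condition, which the paper leaves implicit.
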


The following corollary is a version of \cite[Theorem 3.1]{BN2015}.

\begin{corollary} \label{cor:FTAP:Mll}
    Suppose that each $P\in \mathcal{P}$ is supported and that $\mathcal{P}$ is countably convex. For any $S \in \mathcal{S}$, the following are equivalent:
    \begin{enumerate}
        \item[(i)] $\operatorname{NA}(\mathcal{P},S)$ holds.

        \item[(ii)] For all $P \in \mathcal{P}$ there exists $Q \in \mathfrak{M}_{\ll}(S)$ such that $P \ll Q$.
    \end{enumerate}
\end{corollary}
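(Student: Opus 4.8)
The plan is to deduce Corollary~\ref{cor:FTAP:Mll} as a direct special case of Theorem~\ref{thm:FTAP}. The only thing that needs to be checked is that, under the standing hypothesis that $\mathcal{P}$ is countably convex, the set-valued map $\mathfrak{M}_{\ll}$ satisfies all the assumptions imposed on $\mathfrak{Q}$ in Theorem~\ref{thm:FTAP}. Once this is verified, the equivalence of $\operatorname{NA}(\mathcal{P},S)$ and ``for all $P \in \mathcal{P}$ there exists $Q \in \mathfrak{M}_{\ll}(S)$ with $P \ll Q$'' follows immediately, noting that condition (ii) of Theorem~\ref{thm:FTAP} with $\mathfrak{Q} = \mathfrak{M}_{\ll}$ is verbatim condition (ii) of the corollary.

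There are four properties to check. First, $\lhd$-monotonicity and strong $\operatorname{NA}$-preservation of $\mathfrak{M}_{\ll}$ are already recorded in Lemma~\ref{lem:NAP:NAQ3} (the latter precisely under the hypothesis that $\mathcal{P}$ is convex, which is implied by countable convexity). Second, the inclusion $\bigcup_{Q \in \mathfrak{M}_{\ll}(S)} \mathfrak{M}^{Q}_{\approx}(S) \subseteq \mathfrak{M}_{\ll}(S) \subseteq \mathfrak{M}(S)$: the right inclusion is immediate from the definition of $\mathfrak{M}_{\ll}$; for the left one, if $Q \in \mathfrak{M}_{\ll}(S)$ and $R \in \mathfrak{M}(S)$ with $R \approx Q$, then $R$ is a martingale measure and, since $Q \lll \mathcal{P}$, i.e., $Q \ll P$ for some $P \in \mathcal{P}$, equivalence $R \approx Q$ gives $R \ll Q \ll P$, hence $R \lll \mathcal{P}$ and $R \in \mathfrak{M}_{\ll}(S)$.

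The one genuinely new point is countable convexity of $\mathfrak{M}_{\ll}(S)$. Let $(\alpha_n)_{n \in \N}$ be non-negative reals with $\sum_n \alpha_n = 1$ and $Q_n \in \mathfrak{M}_{\ll}(S)$, and put $\widetilde{Q} := \sum_n \alpha_n Q_n$. That $\widetilde Q$ is a martingale measure is standard: countable additivity/monotone convergence lets one interchange the sum with the expectation of the bounded random variables $\Delta S^i$, so $E_{\widetilde Q}[\Delta S^i] = \sum_n \alpha_n E_{Q_n}[\Delta S^i] = 0$; and $\widetilde Q \in \mathfrak{P}_c(\Omega)$ because each $Q_n \ll \mathcal{P}$ and a $\mathcal{P}$-polar set is then $Q_n$-null for every $n$, hence $\widetilde Q$-null. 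For $\widetilde Q \lll \mathcal{P}$, pick for each $n$ some $P_n \in \mathcal{P}$ with $Q_n \ll P_n$; by countable convexity of $\mathcal{P}$ the measure $P := \sum_n \alpha_n P_n$ (after discarding indices with $\alpha_n = 0$; if all but finitely many vanish, renormalize or simply use any convex combination — countable convexity covers this) lies in $\mathcal{P}$, and if $N \in \mathcal{F}$ satisfies $P(N) = 0$ then $\alpha_n P_n(N) = 0$ for all $n$, whence $Q_n(N) = 0$ for every $n$ with $\alpha_n > 0$, and therefore $\widetilde Q(N) = 0$. Thus $\widetilde Q \ll P \in \mathcal{P}$, i.e., $\widetilde Q \in \mathfrak{M}_{\ll}(S)$. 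The main (and only real) obstacle is this last verification, and specifically making sure the same $P$-null set argument is applied uniformly across the countably many $Q_n$; everything else is bookkeeping with the definitions already set up in the excerpt. Having checked all four conditions, Theorem~\ref{thm:FTAP} applies with $\mathfrak{Q} = \mathfrak{M}_{\ll}$ and yields the claimed equivalence.
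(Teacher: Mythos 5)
Your proposal is correct and follows exactly the route the paper takes, namely invoking Theorem~\ref{thm:FTAP} with $\mathfrak{Q}=\mathfrak{M}_{\ll}$ after checking its hypotheses; the paper leaves those checks (countable convexity of $\mathfrak{M}_{\ll}(S)$ and the inclusion $\bigcup_{Q \in \mathfrak{M}_{\ll}(S)} \mathfrak{M}^{Q}_{\approx}(S) \subseteq \mathfrak{M}_{\ll}(S) \subseteq \mathfrak{M}(S)$) unspoken, whereas you spell them out, which is welcome. One small remark: the parenthetical worry about discarding zero coefficients is unnecessary, since the definition of countable convexity already allows $\alpha_n = 0$, so $\sum_n \alpha_n P_n \in \mathcal{P}$ directly.
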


We further obtain the following stronger version of the Fundamental Theorem of Asset Pricing, where we only consider equivalent martingale measures. A result in that vain can also be found in \cite[Corollary~3.10]{BC2020}.

\begin{corollary} \label{cor:FTAP:Meq}
  Suppose that each $P\in \mathcal{P}$ is supported and that $\mathcal{P}$ is countably convex. For any $S \in \mathcal{S}$, the following are equivalent:
    \begin{enumerate}
        \item[(i)] $\operatorname{NA}(\mathcal{P},S)$ holds.

        \item[(ii)] For all $P \in \mathcal{P}$ there exists $Q \in \mathfrak{M}_{\approx}(S)$ such that $P \ll Q$.
    \end{enumerate}
\end{corollary}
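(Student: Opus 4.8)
The plan is to obtain the corollary as a direct consequence of Theorem~\ref{thm:FTAP} with the choice $\mathfrak{Q} = \mathfrak{M}_{\approx}$, so that the whole task reduces to checking that $\mathfrak{M}_{\approx}$ satisfies the hypotheses of that theorem. First, since $\mathcal{P}$ is countably convex it is in particular convex (pad a two-point combination with zero weights), so Lemma~\ref{lem:NAP:NAQ3} already yields that $\mathfrak{M}_{\approx}$ is $\lhd$-monotone and strongly $\operatorname{NA}$-preserving. It then remains to verify the two bullet conditions appearing in Theorem~\ref{thm:FTAP}.

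The inclusion chain $\bigcup_{Q\in\mathfrak{M}_{\approx}(S)}\mathfrak{M}^{Q}_{\approx}(S)\subseteq\mathfrak{M}_{\approx}(S)\subseteq\mathfrak{M}(S)$ is routine: the second inclusion is immediate from the definition of $\mathfrak{M}_{\approx}$, and for the first, if $Q\in\mathfrak{M}_{\approx}(S)$ with $Q\approx P$ for some $P\in\mathcal{P}$ and $R\in\mathfrak{M}^{Q}_{\approx}(S)$, then $R\in\mathfrak{M}(S)$ and $R\approx Q\approx P$, whence $R\in\mathfrak{M}_{\approx}(S)$ by transitivity of $\approx$.

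The substantive point, which I expect to be the main obstacle, is the countable convexity of $\mathfrak{M}_{\approx}(S)$. Given non-negative weights $(\alpha_n)_{n\in\N}$ with $\sum_n\alpha_n=1$ and $Q_n\in\mathfrak{M}_{\approx}(S)$, I would pick $P_n\in\mathcal{P}$ with $Q_n\approx P_n$ and set $\widetilde Q:=\sum_n\alpha_nQ_n$ and $\widetilde P:=\sum_n\alpha_nP_n$, then argue in three steps. (a) $\widetilde Q$ is a martingale measure for $S$: each $S^i_1$ is bounded, hence $\widetilde Q$-integrable, and since $\Delta S^i$ is bounded one may interchange the countable sum with the integral to get $E_{\widetilde Q}[\Delta S^i]=\sum_n\alpha_nE_{Q_n}[\Delta S^i]=0$. (b) $\widetilde Q\approx\widetilde P$: after discarding the indices with $\alpha_n=0$, a set $N\in\mathcal{F}$ is $\widetilde Q$-null if and only if $Q_n(N)=0$ for all remaining $n$, which by $Q_n\approx P_n$ holds if and only if $P_n(N)=0$ for all remaining $n$, i.e., if and only if $\widetilde P(N)=0$; in particular $\widetilde Q\in\mathfrak{P}_c(\Omega)$ since $\widetilde P\ll\mathcal{P}$. (c) $\widetilde P\in\mathcal{P}$ by countable convexity of $\mathcal{P}$. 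Combining (a)--(c), $\widetilde Q\in\mathfrak{M}(S)$ and $\widetilde Q\approx\widetilde P\in\mathcal{P}$, so $\widetilde Q\in\mathfrak{M}_{\approx}(S)$.

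With all hypotheses of Theorem~\ref{thm:FTAP} verified for $\mathfrak{Q}=\mathfrak{M}_{\approx}$, the claimed equivalence of $\operatorname{NA}(\mathcal{P},S)$ and condition (ii) follows at once. The only genuinely delicate ingredient is step (b): equivalence must be preserved under a countable --- not merely finite --- convex combination, which works because a countable union of $P_n$-null sets is still $\widetilde P$-null; everything else is bookkeeping against the definitions and the boundedness of $\Delta S$.
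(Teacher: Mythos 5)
Your proposal is correct and follows exactly the route the paper intends: the paper disposes of this corollary with the single remark that $\mathfrak{M}_{\approx}$ satisfies the hypotheses of Theorem~\ref{thm:FTAP} when $\mathcal{P}$ is countably convex, and your verification (convexity of $\mathcal{P}$ for Lemma~\ref{lem:NAP:NAQ3}, the inclusion chain, and the countable convexity of $\mathfrak{M}_{\approx}(S)$ via $\widetilde Q\approx\widetilde P\in\mathcal{P}$) is precisely the bookkeeping the paper leaves implicit.
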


Example~\ref{exmp:supp:nec} shows that the assumption that each $P \in \mathcal{P}$ is supported in Theorem~\ref{thm:FTAP} cannot be dropped.

\begin{example} \label{exmp:supp:nec}
    Consider the unit interval $\Omega = [0, 1]$ equipped with the Borel-$\sigma$-algebra $\mathcal{F} = \mathcal{B}(\Omega)$. Let $\mathcal{P} = \{\delta_{\omega}\mid \omega \in \Omega\} \cup \{\lambda\}$, where $\lambda$ is the Lebesgue measure on $(\Omega, \mathcal{F})$. One verifies that $\lambda$ is not supported. Define $\mathfrak{Q} \colon \mathcal{S} \twoheadrightarrow \mathfrak{P}_{c}(\Omega)$ by
    \begin{equation*}
        \mathfrak{Q}(S) := \bigg\{\sum_{i \in \N} \alpha_{i} \delta_{\omega_{i}} \biggm\vert \forall i \in \N \colon \omega_{i} \in \Omega, \ \alpha_{i} \geq 0 \ \wedge \ \sum_{i \in \N} \alpha_{i} = 1 \quad \text{and} \quad \sum_{i \in \N} \alpha_{i} \Delta S(\omega_{i}) = 0\bigg\}.
    \end{equation*}
    Note that $\mathfrak{Q}(S)\subseteq \mathfrak{M}(S)$. Clearly, $\mathfrak{Q}$ is $\lhd$-monotone. Next, we check that  $\mathfrak{Q}$ is also strongly $\operatorname{NA}$-preserving. Let $S \in \mathcal{S}$ such that NA($\mathcal{P},S$) holds. In this case also NA($\mathcal{P}',S$) is satisfied, where $\mathcal{P}' = \{\delta_{\omega}\mid \omega \in \Omega\}$. By Lemma~\ref{lem:locally:consistent}, there exists $P \in \operatorname{co}(\mathcal{P}')$ such that NA($P,S$) is satisfied. Since $P \in \operatorname{co}(\mathcal{P}')$, there are $n\in \N$, $\omega_{i} \in \Omega$, $\alpha_{i} > 0$, $i=1,\ldots, n$, such that $\sum_{i = 1}^{n} \alpha_{i} = 1$ and $P = \sum_{i = 1}^{n} \alpha_{i} \delta_{\omega_{i}}$. NA($P,S$) implies that there exists $Q \in \mathfrak{M}^{P}_{\approx}(S)$. In particular, $Q = \sum_{i = 1}^{n} \beta_{i} \delta_{\omega_{i}}$ for some $\beta_{i} > 0 $, $i=1,\ldots, n$, with $\sum_{i = 1}^{n} \beta_{i} = 1$.
    Clearly, NA($Q,S$) holds and $Q \in \mathfrak{Q}(S)$. Moreover, as $\mathfrak{Q}(S)\subseteq \mathfrak{M}(S)$, $\mathfrak{Q}(S)$ is strongly $\operatorname{NA}$-preserving. 
    Furthermore, $\mathfrak{Q}(S)$ is countably convex and  $\bigcup_{Q \in \mathfrak{Q}(S)} \mathfrak{M}^{Q}_{\approx}(S) \subseteq \mathfrak{Q}(S) \subseteq \mathfrak{M}(S)$. However, there is clearly no $Q \in \mathfrak{Q}(S)$ such that $\lambda \ll Q$, and Theorem~\ref{thm:FTAP} does not hold. \hfill $\diamond$
\end{example}

\printbibliography

@Book{AB2006,
  author    = {Aliprantis, Charalambos D. and Border, Kim C.},
  publisher = {Springer},
  title     = {Infinite Dimensional Analysis: A Hitchhiker's Guide},
  year      = {2006},
  edition   = {3},
  doi       = {10.1007/3-540-29587-9},
}

@Article{BKN2020,
  author    = {Bartl, Daniel and Kupper, Michael and Neufeld, Ariel},
  journal   = {Finance and Stochastics},
  title     = {Pathwise superhedging on prediction sets},
  year      = {2020},
  number    = {1},
  pages     = {215-248},
  volume    = {24},
  doi       = {10.1007/s00780-019-00412-4},
  publisher = {Springer Science and Business Media LLC},
}

@Article{BKN2021,
  author    = {Bartl, Daniel and Kupper, Michael and Neufeld, Ariel},
  journal   = {Finance and Stochastics},
  title     = {Duality theory for robust utility maximisation},
  year      = {2021},
  month     = {6},
  number    = {3},
  pages     = {469-503},
  volume    = {25},
  date      = {2021-06-14},
  day       = {14},
  doi       = {10.1007/s00780-021-00455-6},
  publisher = {Springer Science and Business Media LLC},
}

@Article{BZ2015,
  author    = {Bayraktar, Erhan and Zhou, Zhou},
  journal   = {Mathematical Finance},
  title     = {On Arbitrage and Duality under Model Uncertainty and Portfolio Constraints},
  year      = {2015},
  month     = {9},
  number    = {4},
  pages     = {988-1012},
  volume    = {27},
  date      = {2015-09-29},
  day       = {29},
  doi       = {10.1111/mafi.12104},
  publisher = {Wiley},
}

@Article{BD2018,
  author    = {Beissner, Patrick and Denis, Laurent},
  journal   = {SIAM Journal on Financial Mathematics},
  title     = {Duality and General Equilibrium Theory Under Knightian Uncertainty},
  year      = {2018},
  month     = {1},
  number    = {1},
  pages     = {381-400},
  volume    = {9},
  date      = {2018-01},
  doi       = {10.1137/17m1120877},
  publisher = {Society for Industrial & Applied Mathematics (SIAM)},
}

@Article{BK2012,
  author    = {Bion-Nadal, Jocelyne and Kervarec, Magali},
  journal   = {The Annals of Applied Probability},
  title     = {Risk measuring under model uncertainty},
  year      = {2012},
  month     = {2},
  number    = {1},
  pages     = {213-238},
  volume    = {22},
  date      = {2012-02-01},
  day       = {1},
  doi       = {10.1214/11-aap766},
  publisher = {Institute of Mathematical Statistics},
}

@Article{BC2020,
  author    = {Blanchard, Romain and Carassus, Laurence},
  journal   = {Stochastic Processes and their Applications},
  title     = {No-arbitrage with multiple-priors in discrete time},
  year      = {2020},
  month     = {11},
  number    = {11},
  pages     = {6657-6688},
  volume    = {130},
  date      = {2020-11},
  doi       = {10.1016/j.spa.2020.06.006},
  publisher = {Elsevier BV},
}

@Article{BN2015,
  author    = {Bouchard, Bruno and Nutz, Marcel},
  journal   = {The Annals of Applied Probability},
  title     = {Arbitrage and duality in nondominated discrete-time models},
  year      = {2015},
  month     = {4},
  number    = {2},
  volume    = {25},
  date      = {2015-04-01},
  day       = {1},
  doi       = {10.1214/14-aap1011},
  publisher = {Institute of Mathematical Statistics},
}

@Article{BFH2019,
  author    = {Burzoni, Matteo and Frittelli, Marco and Hou, Zhaoxu and Maggis, Marco and Obłój, Jan},
  journal   = {Mathematics of Operations Research},
  title     = {Pointwise Arbitrage Pricing Theory in Discrete Time},
  year      = {2019},
  month     = {8},
  number    = {3},
  pages     = {1034-1057},
  volume    = {44},
  date      = {2019-08},
  doi       = {10.1287/moor.2018.0956},
  publisher = {Institute for Operations Research and the Management Sciences (INFORMS)},
}

@Article{BM2020,
  author    = {Burzoni, Matteo and Maggis, Marco},
  journal   = {Mathematics and Financial Economics},
  title     = {Arbitrage-free modeling under Knightian uncertainty},
  year      = {2020},
  month     = {6},
  number    = {4},
  pages     = {635-659},
  volume    = {14},
  date      = {2020-06-09},
  day       = {9},
  doi       = {10.1007/s11579-020-00267-w},
  publisher = {Springer Science and Business Media LLC},
}

@Article{CFR2022,
  author    = {Chau, Huy and Fukasawa, Masaaki and Rásonyi, Miklós},
  journal   = {Mathematical Finance},
  title     = {Super‐replication with transaction costs under model uncertainty for continuous processes},
  year      = {2022},
  month     = {6},
  number    = {4},
  pages     = {1066-1085},
  volume    = {32},
  date      = {2022-06-23},
  day       = {23},
  doi       = {10.1111/mafi.12355},
  publisher = {Wiley},
}

@Article{C2012,
  author    = {Cohen, Samuel},
  journal   = {Electronic Journal of Probability},
  title     = {Quasi-sure analysis, aggregation and dual representations of sublinear expectations in general spaces},
  year      = {2012},
  month     = {1},
  pages     = {1-15},
  volume    = {17},
  date      = {2012-01-01},
  day       = {1},
  doi       = {10.1214/ejp.v17-2224},
  publisher = {Institute of Mathematical Statistics},
}

@Article{DHP2011,
  author    = {Denis, Laurent and Hu, Mingshang and Peng, Shige},
  journal   = {Potential Analysis},
  title     = {Function Spaces and Capacity Related to a Sublinear Expectation: {Application} to {G}-{B}rownian Motion Paths},
  year      = {2011},
  number    = {2},
  pages     = {139-161},
  volume    = {34},
  doi       = {10.1007/s11118-010-9185-x},
  publisher = {Springer Science and Business Media LLC},
}

@Book{ET1999,
  author    = {Ekeland, Ivar and Témam, Roger},
  publisher = {Society for Industrial and Applied Mathematics},
  title     = {Convex Analysis and Variational Problems},
  year      = {1999},
  month     = {1},
  date      = {1999-01},
  doi       = {10.1137/1.9781611971088},
}

@Book{FS2016,
  author    = {Föllmer, Hans and Schied, Alexander},
  publisher = {De Gruyter},
  title     = {Stochastic Finance},
  year      = {2016},
  edition   = {4},
  month     = {7},
  date      = {2016-07-25},
  day       = {25},
  doi       = {10.1515/9783110463453},
}

@Article{GM2020,
  author    = {Gao, Niushan and Munari, Cosimo},
  journal   = {Mathematics of Operations Research},
  title     = {Surplus-Invariant Risk Measures},
  year      = {2020},
  month     = {11},
  number    = {4},
  pages     = {1342-1370},
  volume    = {45},
  date      = {2020-11},
  doi       = {10.1287/moor.2019.1035},
  publisher = {Institute for Operations Research and the Management Sciences (INFORMS)},
}

@Article{HO2018,
  author    = {Hou, Zhaoxu and Obłój, Jan},
  journal   = {Finance and Stochastics},
  title     = {Robust pricing–hedging dualities in continuous time},
  year      = {2018},
  month     = {5},
  number    = {3},
  pages     = {511-567},
  volume    = {22},
  date      = {2018-05-30},
  day       = {30},
  doi       = {10.1007/s00780-018-0363-9},
  publisher = {Springer Science and Business Media LLC},
}

@Article{LS2025,
  author    = {Langner, Johannes and Svindland, Gregor},
  journal   = {Finance and Stochastics},
  title     = {Bipolar theorems for sets of nonnegative random variables},
  year      = {2025},
  month     = {10},
  date      = {2025-10-14},
  day       = {14},
  doi       = {10.1007/s00780-025-00579-z},
  publisher = {Springer Science and Business Media LLC},
}

@Article{LMS2022,
  author    = {Liebrich, Felix-Benedikt and Maggis, Marco and Svindland, Gregor},
  journal   = {SIAM Journal on Financial Mathematics},
  title     = {Model Uncertainty: A Reverse Approach},
  year      = {2022},
  month     = {9},
  number    = {3},
  pages     = {1230-1269},
  volume    = {13},
  date      = {2022-09},
  doi       = {10.1137/21m1425463},
  publisher = {Society for Industrial & Applied Mathematics (SIAM)},
}

@Article{MMS2018,
  author    = {Maggis, Marco and Meyer-Brandis, Thilo and Svindland, Gregor},
  journal   = {Positivity},
  title     = {Fatou closedness under model uncertainty},
  year      = {2018},
  month     = {3},
  number    = {5},
  pages     = {1325-1343},
  volume    = {22},
  date      = {2018-03-24},
  day       = {24},
  doi       = {10.1007/s11117-018-0578-1},
  publisher = {Springer Science and Business Media LLC},
}

@Article{STZ2011,
  author    = {Soner, H. Mete and Touzi, Nizar and Zhang, Jianfeng},
  journal   = {Electronic Journal of Probability},
  title     = {Quasi-sure Stochastic Analysis through Aggregation},
  year      = {2011},
  month     = {1},
  pages     = {1844-1879},
  volume    = {16},
  date      = {2011-01-01},
  day       = {1},
  doi       = {10.1214/ejp.v16-950},
  publisher = {Institute of Mathematical Statistics},
}

@Article{L1978,
  author    = {Luschgy, Harald},
  journal   = {Annales de l'institut Henri Poincar\'e. Section B. Calcul des probabilit\'es et statistiques},
  title     = {Sur l'existence d'une plus petite sous-tribu exhaustive par paire},
  year      = {1978},
  number    = {4},
  pages     = {391--398},
  volume    = {14},
  language  = {fr},
  mrnumber  = {523218},
  publisher = {Gauthier-Villars},
  url       = {https://www.numdam.org/item/AIHPB_1978__14_4_391_0/},
  zbl       = {0397.62003},
}

@Article{MPZ2013,
  author    = {Anis Matoussi and Dylan Possamai and Chao Zhou},
  journal   = {The Annals of Applied Probability},
  title     = {Second order reflected backward stochastic differential equations},
  year      = {2013},
  number    = {6},
  pages     = {2420 -- 2457},
  volume    = {23},
  doi       = {10.1214/12-AAP906},
  keywords  = {reflected backward stochastic differential equation, Second order backward stochastic differential equation},
  publisher = {Institute of Mathematical Statistics},
  url       = {https://doi.org/10.1214/12-AAP906},
}

@Article{NS2012,
  author  = {Nutz, Marcel and Soner, H. Mete},
  journal = {SIAM Journal on Control and Optimization},
  title   = {Superhedging and Dynamic Risk Measures under Volatility Uncertainty},
  year    = {2012},
  number  = {4},
  pages   = {2065-2089},
  volume  = {50},
  doi     = {10.1137/100814925},
  url     = {https://doi.org/10.1137/100814925},
}

@Article{STZ2012,
  author  = {H. M. Soner and N. Touzi and J. Zhang},
  journal = {Probability Theory and Related Fields},
  title   = {Wellposedness of Second Order Backward {SDEs}},
  year    = {2012},
  pages   = {149-190},
  volume  = {153},
}

@Article{STZ2013,
  author    = {H. Mete Soner and Nizar Touzi and Jianfeng Zhang},
  journal   = {The Annals of Applied Probability},
  title     = {{Dual formulation of second order target problems}},
  year      = {2013},
  number    = {1},
  pages     = {308 -- 347},
  volume    = {23},
  doi       = {10.1214/12-AAP844},
  keywords  = {Backward SDEs, Duality, mutually singular probability measures, Stochastic target problem},
  publisher = {Institute of Mathematical Statistics},
  url       = {https://doi.org/10.1214/12-AAP844},
}

@Article{B1972,
  author  = {Truman F Bewley},
  journal = {Journal of Economic Theory},
  title   = {Existence of equilibria in economies with infinitely many commodities},
  year    = {1972},
  issn    = {0022-0531},
  number  = {3},
  pages   = {514-540},
  volume  = {4},
  doi     = {https://doi.org/10.1016/0022-0531(72)90136-6},
  url     = {https://www.sciencedirect.com/science/article/pii/0022053172901366},
}

@Article{G1989,
  author  = {Christian Gilles},
  journal = {Journal of Mathematical Economics},
  title   = {Charges as equilibrium prices and asset bubbles},
  year    = {1989},
  issn    = {0304-4068},
  number  = {2},
  pages   = {155-167},
  volume  = {18},
  doi     = {https://doi.org/10.1016/0304-4068(89)90019-0},
  url     = {https://www.sciencedirect.com/science/article/pii/0304406889900190},
}

@Article{JPS2010,
  author   = {Jarrow, Robert A. and Protter, Philip and Shimbo, Kazuhiro},
  journal  = {Mathematical Finance},
  title    = {Asset Price Bubbles in Incomplete Markets},
  year     = {2010},
  number   = {2},
  pages    = {145-185},
  volume   = {20},
  doi      = {https://doi.org/10.1111/j.1467-9965.2010.00394.x},
  eprint   = {https://onlinelibrary.wiley.com/doi/pdf/10.1111/j.1467-9965.2010.00394.x},
  keywords = {price bubbles, local martingales, NFLVR, no dominance},
  url      = {https://onlinelibrary.wiley.com/doi/abs/10.1111/j.1467-9965.2010.00394.x},
}

@Article{GL1992,
  author    = {Christian Gilles and Stephen F. LeRoy},
  journal   = {International Economic Review},
  title     = {Bubbles and Charges},
  year      = {1992},
  issn      = {00206598, 14682354},
  number    = {2},
  pages     = {323--339},
  volume    = {33},
  publisher = {[Economics Department of the University of Pennsylvania, Wiley, Institute of Social and Economic Research, Osaka University]},
  url       = {http://www.jstor.org/stable/2526897},
  urldate   = {2026-01-10},
}

\end{document}